\documentclass[12pt]{article}
\usepackage{amsfonts}
\usepackage{amssymb}
\usepackage{amsbsy}
\usepackage{amsthm}
\usepackage{amsmath}
\usepackage{amscd}
\usepackage{color}
\usepackage{mathrsfs}
\usepackage{holtpolt}
\usepackage{mathabx}
\usepackage{ifsym}
\usepackage{ulem}
\usepackage{graphicx}
\usepackage{mathtools}
\usepackage[all]{xy}
\usepackage{mathdots}
\usepackage{tikz}
\usepackage[titletoc,toc,title]{appendix}




\topmargin=5pt \oddsidemargin=10pt

\headheight=0pt \headsep=0pt

\textheight=635pt \textwidth=445pt





\newtheorem{thm}{Theorem}[section]
\newtheorem{cor}[thm]{Corollary}
\newtheorem{lem}[thm]{Lemma}
\newtheorem{prop}[thm]{Proposition}
\theoremstyle{definition}
\newtheorem{defn}[thm]{Definition}
\theoremstyle{remark}
\newtheorem{rem}[thm]{Remark}
\newtheorem{ex}[thm]{\bf Example}

\newcommand{\bt}{\begin{thm}}
\newcommand{\et}{\end{thm}}
\newcommand{\bc}{\begin{cor}}
\newcommand{\ec}{\end{cor}}
\newcommand{\bl}{\begin{lem}}
\newcommand{\el}{\end{lem}}
\newcommand{\bp}{\begin{prop}}
\newcommand{\ep}{\end{prop}}
\newcommand{\bd}{\begin{defn}}
\newcommand{\ed}{\end{defn}}
\newcommand{\br}{\begin{rem}}
\newcommand{\er}{\end{rem}}
\newcommand{\bpr}{\begin{proof}}
\newcommand{\epr}{\end{proof}}
\newcommand{\bex}{\begin{ex}}
\newcommand{\eex}{\end{ex}}
\newcommand{\bcd}{\begin{CD}}
\newcommand{\ecd}{\end{CD}}


\newcommand{\bi}{\begin{itemize}}
\newcommand{\ei}{\end{itemize}}
\newcommand{\be}{\begin{enumerate}}
\newcommand{\ee}{\end{enumerate}}
\newcommand{\ba}{\begin{array}}
\newcommand{\ea}{\end{array}}
\newcommand{\beq}{\begin{equation}}
\newcommand{\eeq}{\end{equation}}
\newcommand{\beqa}{\begin{eqnarray}}
\newcommand{\eeqa}{\end{eqnarray}}
\newcommand{\bca}{\begin{cases}}
\newcommand{\eca}{\end{cases}}
\newcommand{\bal}{\begin{aligned}}
\newcommand{\eal}{\end{aligned}}

\newcommand{\ds}{\displaystyle}
\newcommand{\ts}{\textstyle}
\newcommand{\scs}{\scriptstyle}
\newcommand{\sss}{\scriptscriptstyle}


\newcommand{\N}{{\mathbb N}}
\newcommand{\Z}{{\mathbb Z}}
\newcommand{\R}{{\mathbb R}}
\newcommand{\C}{{\mathbb C}}

\newcommand{\PP}{{\mathbb P}}
\newcommand{\LL}{{\mathbb L}}

\newcommand{\bs}{\boldsymbol}

\newcommand{\DVZ}{{\operatorname{DVZ}}}
\newcommand{\Sz}{{\operatorname{Sz}}}
\newcommand{\spn}{{\operatorname{span}}}

\newcommand{\re}{{\operatorname{Re}}}

\newcommand{\sg}{{\operatorname{sg}}}

\newcommand{\ran}{{\operatorname{ran}}}


\newcommand{\<}{\langle}
\renewcommand{\>}{\rangle}
\newcommand{\uk}{|\kern-3.3pt\uparrow\>}
\newcommand{\dk}{|\kern-3.3pt\downarrow\>}
\newcommand{\ub}{\<\uparrow\kern-3.3pt|}
\newcommand{\db}{\<\downarrow\kern-3.3pt|}



\begin{document}

\title{\bf Darboux transformations \break for CMV matrices}

\author{
M. J. Cantero$^{1}$, 
F. Marcell\'an$^{2}$, 
L. Moral$^{1}$, 
L. Vel\'azquez$^{1,}$\footnote{Corresponding author (\texttt{velazque@unizar.es})}
}

\date{\footnotesize 
$^1$ Departamento de Matem\'atica Aplicada and Instituto Universitario de Matem\'aticas y Aplicaciones (IUMA), Universidad de Zaragoza, Spain
\break 
$^2$ Instituto de Ciencias Matem\'aticas (ICMAT) and Departamento de Matem\'aticas, Universidad Carlos III de Madrid, Spain
}

\maketitle

\vspace{-15pt}

\begin{abstract}

{\footnotesize

We develop a theory of Darboux transformations for CMV matrices, canonical representations of the unitary operators. In perfect analogy with their self-adjoint version -- the Darboux transformations of Jacobi matrices -- they are equivalent to Laurent polynomial modifications of the underlying measures. We address other questions which emphasize the similarities between Darboux transformations for Jacobi and CMV matrices, like their (almost) isospectrality or the relation that they establish between the corresponding orthogonal polynomials, showing also that both transformations are connected by the Szeg\H o mapping. 

Nevertheless, we uncover some features of the Darboux transformations for CMV matrices which are in striking contrast with those of the Jacobi case. In particular, when applied to CMV matrices, the matrix realization of the inverse Darboux transformations -- what we call `Darboux transformations with parameters' -- leads to spurious solutions whose interpretation deserves future research. Such spurious solutions are neither unitary nor band matrices, so Darboux transformations for CMV matrices are much more subject to the subtleties of the algebra of infinite matrices than their Jacobi counterparts. 

A key role in our theory is played by the Cholesky factorizations of infinite matrices. Actually, the Darboux transformations introduced in this paper are based on the Cholesky factorizations of degree one Hermitian Laurent polynomials evaluated on CMV matrices. These transformations are also generalized to higher degree Laurent polynomials, as well as to the extension of CMV matrices to quasi-definite functionals -- what we call `quasi-CMV' matrices.

Furthermore, we show that this CMV version of Darboux transformations plays a role in integrable systems like the Schur flows or the Ablowitz-Ladik model which parallels that of Darboux for Jacobi matrices in the Toda lattice.  

} 

\end{abstract}

{\footnotesize

\noindent{\it Keywords and phrases}: Darboux transformations, CMV matrices, Cholesky factorizations, orthogonal Laurent polynomials, measures on the unit circle, spectral transformations, Schur flows, Ablowitz-Ladik system

\smallskip

\noindent{\it (2010) AMS Mathematics Subject Classification}: 42C05, 47B36, 15A23.

}

\tableofcontents

\section{Introduction} 
\label{sec:INT}

Darboux transformations, originally tied to Schr\"odinger operators \cite{Darboux1,Darboux2,I1941,IH1948,IH1951,Moutard,Sch1,Sch2}, have become a powerful tool in many areas of mathematical physics (see \cite{Gr2011,HK,MatSal,MiRO,Teschl,Toda} and references therein). For instance, the role of 1D Schr\"odinger operators in the Lax pair of the KdV equation led to export Darboux transformations to KdV and generalizations \cite{MatSal}. The discrete version of these relations links Darboux for Jacobi matrices (discrete 1D Schr\"odinger) and the Toda lattice (discrete KdV) \cite{Teschl,Toda}. 

Darboux transformations of Jacobi matrices are at the center of a rich interplay among integrable systems, orthogonal polynomials, bispectral problems and numerical linear algebra \cite{BuMar,BuMar2,BuIs,EK,Galant,Galant2,Gau,Gau2,GoKa,GH96,GH97,GHH,HTI,KaGo,MatSal2,SpViZh,SpZh,W1993,Yoon,Zhe}. These topics are also closely related to the unitary counterpart of Jacobi, the CMV matrices, which date back to works on the unitary eigenproblem \cite{AGR1986,BGE1991,W1993}, a decade before their rediscovery in the context of orthogonal polynomials on the unit circle \cite{CMV2003,Si2005,SiII2005}. It was later realized that CMV matrices provide the Lax pair of integrable systems known under the name of Schur flows (discrete mKdV and unitary analogue of Toda) and Ablowitz-Ladik (discrete nonlinear Schr\"odinger) \cite{AFMa,AG94,FaGe,GeNe,Gol,KN2007,LN12,MuNa,Nenciu05,Nenciu06,Si2007bis}. Yet, Darboux has not been applied to CMV matrices so far, the closest precedents being on related issues for isometric Hessenberg matrices \cite{DarHerMar,GarHerMar,GarMar2,Gragg,HuVanB,WG02}. 

This paper develops a theory of Darboux transformations for CMV matrices which intends to be useful for applications which parallel those of the Jacobi case. Indeed, we will see that this extension of Darboux has a role in Ablowitz-Ladik and Schur flows which mimics that of Darboux for Jacobi in Toda: Darboux for CMV is an integrable discretization of such flows which also generates new flows from known ones. Besides, the success of Darboux in Jacobi bispectral problems suggests that Darboux for CMV may open new ways in the search for bispectral situations on the unit circle, where it seems difficult to escape from trivial instances. Further, the recent discovery that CMV drives the evolution of 1D quantum walks \cite{CGMV} (quantum version of random walks coming from the discrete 1D Dirac equation \cite{QRW,Meyer}) could add new uses of Darboux for CMV. 

Let us summarize the main ideas and results of the paper using the Jacobi case as a guide and counterpoint. 

The strategy for the extension of Darboux to CMV is to transform every CMV into a self-adjoint band matrix $M$ to which we apply standard Darboux transformations based on triangular band factorizations followed by a commutation of the factors: $M=AB \to N=BA$, with $A,B$ lower/upper triangular band matrices. 

Darboux is usually implemented on a Jacobi matrix via the LU factorization of a similar non-symmetric tridiagonal matrix. However, it is possible to rewrite these transformations by performing Cholesky factorizations ($B=A^+$, the adjoint of $A$) directly onto the Jacobi matrix, which avoids loosing the hermiticity in the process. We will use the Cholesky approach for the CMV version of Darboux because it permits a closer control of unitarity, a more intricate property than hermiticity. 

In the case of a Jacobi matrix $\cal J$, the Cholesky factorization may require a previous real shift $M={\cal J}+\beta I$ ($I$ stands for the identity matrix) to deal with a positive definite matrix. As a natural CMV translation of this we will introduce, not only an eventual real shift, but also a Hermitian linear combination of a CMV $\cal C$ and its adjoint ${\cal C}^+={\cal C}^{-1}$ mapping it into a self-adjoint band matrix $M=\alpha {\cal C} + \overline\alpha {\cal C}^{-1} + \beta I$. In other words, while Darboux for Jacobi involves a real polynomial $\wp(x)=x+\beta$ evaluated on the Jacobi matrix, in the CMV case we will evaluate a Hermitian Laurent polynomial $\ell(z)=\alpha z+\overline\alpha z^{-1}+\beta$ on the CMV matrix.  

Despite the above difference, we will discover that Darboux for Jacobi and CMV have a similar meaning since they are respectively equivalent to polynomial and Laurent polynomial modifications of the underlying measure. This is part of the first main result of the paper, Theorem~\ref{thm:Darboux}, which yields several equivalences for Darboux transformations of CMV matrices in perfect analogy with the Jacobi case. Thus, we will show that Darboux for Jacobi and CMV share many properties concerning not only the matrices, but also the related measures and orthogonal polynomials. Some of these properties are reflected in the following table of analogies, which also includes a summary of notations used along the paper.

\begin{center}
\renewcommand{\arraystretch}{1.5}
\begin{tabular}{|c|c|}
\hline
 	\parbox{148pt}
	{\begin{center} \vskip-7pt Darboux for Jacobi \end{center} 
	\vskip-14pt 
	with polynomial $\wp(x)=x+\beta$ \vskip5pt}   
 &	\parbox{200pt}
	{\begin{center} \vskip-7pt Darboux for CMV \end{center} 
	\vskip-14pt 
	with Laurent pol. $\ell(z)=\alpha z+\overline\alpha z^{-1}+\beta$ 	
	\vskip5pt} 
\\ \hline
 	$\begin{gathered}
	\\[-15pt]
 	\begin{aligned}
	& \text{Jacobi} 
	& \kern5pt & {\cal J} \overset{\wp}{\mapsto} {\cal K}
	\\
	& \parbox{63pt}{Measures on \vskip-3pt the real line} 
	& & \mu \overset{\wp}{\mapsto} \nu
	\\
	& \parbox{62pt}{Orthogonal \vskip-3pt polynomials} 
	& & p \overset{\wp}{\mapsto} q
	& & \kern-7pt \begin{cases} 
		p=(p_n) \\[-3pt] q=(q_n) 
		\end{cases} 
		\kern-20pt
	\end{aligned}
	\\[3pt]
 	\end{gathered}$  
 & 	$\begin{gathered}
 	\\[-15pt]
	\begin{aligned}
	& \text{CMV} 
	& \kern5pt & {\cal C} \overset{\ell}{\mapsto} {\cal D}
	\\
	& \parbox{70pt}{Measures on \vskip-3pt the unit circle} 
	& & \mu \overset{\ell}{\mapsto} \nu
	\\
	& \parbox{63pt}{Orthogonal \vskip-3pt Laurent pol.} 
	& & \chi \overset{\ell}{\mapsto} \omega
	& & \begin{cases} 
		\chi=(\chi_n) \\[-3pt] \omega=(\omega_n) 
		\end{cases} \kern-20pt
	\end{aligned}
	\\[3pt]
 	\end{gathered}$    
\\ \hline
  	$\begin{gathered}
	\\[-15pt]
 	{\cal K}=A^{-1}{\cal J}A
 	\\
 	\wp({\cal J})=AA^+ \qquad \wp({\cal K})=A^+A
	\\
	A \text{ 2-band lower triangular}
	\\[2pt]
	\end{gathered}$ 
 & 	$\begin{gathered}
 	\\[-15pt]
 	{\cal D}=A^{-1}{\cal C}A
 	\\
 	\ell({\cal C})=AA^+ \qquad \ell({\cal D})=A^+A
	\\
	A \text{ 3-band lower triangular}
	\\[2pt]
	\end{gathered}$ 
\\ \hline
  	$\begin{gathered} d\nu = \wp \, d\mu \\[2pt] \end{gathered}$
 & 	$\begin{gathered} d\nu = \ell \, d\mu \\[2pt] \end{gathered}$
\\ \hline
 	$\begin{aligned}
	\\[-14pt]
 	& p=Aq 
	& \kern5pt & p_n \in \spn\{q_{n-1},q_n\}
	\\
	& \wp q=A^+p 
	& & \wp q_n \in \spn\{p_n,p_{n+1}\}
	\\[4pt]
	\end{aligned}$  
	\kern-14pt
 & 	$\begin{aligned}
 	& \chi=A\omega 
	& \kern5pt & \chi_n \in \spn\{\omega_{n-2},\omega_{n-1},\omega_n\}
	\\
	& \ell\omega=A^+\chi 
	& & \ell\omega_n \in \spn\{\chi_n,\chi_{n+1},\chi_{n+2}\}
	\end{aligned}$
\\ \hline
 	\parbox{171pt}
	{\begin{center} \vskip-7pt Almost isospectral \end{center}
	\vskip-14pt 
	(isospectral up to at most 1 point) \vskip5pt}
 &	\parbox{175pt}
	{\begin{center} \vskip-7pt Almost isospectral \end{center}
	\vskip-14pt 
	(isospectral up to at most 2 points) \vskip5pt}
\\ \hline
\end{tabular}
\end{center}

The above similarities are not the only results supporting this version of Darboux for CMV as the unitary analogue of Darboux for Jacobi. What is more, both transformations turn out to be directly related by a natural link between the real line and the unit circle, the so called Szeg\H{o} connection \cite{Sz}. This relation is given in Theorem~\ref{thm:A-AA}, which can be also considered a central result of the paper.

Having said that, major differences between Jacobi and CMV will appear in the matrix realization of the inverse Darboux transformations, $M=AA^+ \leftarrow N=A^+A$. We refer to such a matrix realization as the Darboux transformations with parameters because they provide parametric solutions originated by the lack of uniqueness of the reversed Cholesky factorizations $N=A^+A$. In the Jacobi case these parametric solutions coincide with the Jacobi matrices obtained by inverse Darboux and depend on a single real parameter. However, they depend on four real parameters in the CMV case, a symptom of a deeper dissimilarity between Jacobi and CMV: Darboux with parameters for CMV not only yields the solutions of inverse Darboux, but also leads to spurious solutions which are not CMV, not even band nor unitary. The origin of this difference with respect to Jacobi is that transforming a unitary matrix into a Hermitian one requires a Laurent polynomial with at least two zeros instead of a polynomial of degree one. This resembles the situation in the case of higher degree Darboux for Jacobi, based on transforming the Jacobi matrix by a polynomial of degree greater than one. Hence, the matrix implementation of inverse Darboux for CMV needs a constraint on the reversed Cholesky factorizations to select those leading to CMV solutions. This constraint is given in Theorem~\ref{thm:spurious}, the second main result of the paper, which summarizes equivalent ways of separating CMV and spurious solutions. 

The need to deal with spurious solutions which are neither unitary nor band matrices requires a careful manipulation of infinite matrices for the Darboux transformations of CMV matrices. Properties like the associativity of matrix multiplication or the uniqueness of the inverse matrix may fail \cite{Cooke}. These troubles are not so evident for Jacobi matrices due to the absence of spurious solutions. Luckily enough, the situation in the CMV case will be successfully handled thanks to the lower Hessenberg type structure of the spurious solutions (only finitely many upper diagonals are non-null). These reasons make it advisable to specify from the very beginning the kind of matrix operations that will be admissible and their properties. This is necessary to legitimate matrix manipulations, but also sheds light on some aspects of Darboux transformations which, even in the Jacobi case, are not usually explicitly addressed.  

Darboux transformations for CMV matrices can be compared with previous matrix transformations based on factorizations. A precedent of this is the QR algorithm for CMV matrices, equivalent to a special type of Laurent polynomial modifications of the orthogonality measure \cite{W1993}. Nevertheless, we will see that QR has a more limited applicability than Darboux, which can be considered as a way to extend the QR algorithm to general Laurent polynomial modifications of measures. 

Analogously to the Jacobi case, Darboux makes sense for higher degree Laurent polynomial transformations of CMV matrices, as well as for situations related to quasi-definite functionals, although this requires the generalization of Cholesky factorizations and CMV matrices beyond the positive definite case \cite{CMV2003}.  

The above ideas are developed in the paper according to the following schedule: Section~\ref{sec:DT-LP-HM} introduces the basic setting in which the rest of the paper will be conducted. It covers the explicit description of the algebra of infinite matrices that will be used, the analysis of the matrix representations of the multiplication operator with respect to ordered bases of Laurent polynomials (`zig-zag' bases) and general Darboux factorizations for Hermitian Laurent polynomials evaluated on such matrix representations. A particularization of these factorizations leads to the Darboux transformations for CMV matrices in Section~\ref{sec:Chris}, whose main result, Theorem~\ref{thm:Darboux}, gives several characterizations of such transformations. Inverse Darboux transformations and their matrix realization, the Darboux transformations with parameters, are accounted for in Section~\ref{sec:Ger}. It includes the discussion about spurious solutions and the characterization of the CMV ones in Theorem~\ref{thm:spurious}, the central result. Section~\ref{sec:J-CMV} compares Darboux for Jacobi and CMV, linking them via the Szeg\H o mapping in Theorem~\ref{thm:A-AA}, and showing their relation with a new connection between the real line and the unit circle recently obtained \cite{DVZ}. In Section~\ref{sec:D-QR} we find a comparison between Darboux transformations and the QR algorithm for CMV matrices, which also serves to rewrite the former ones in operator language. Section~\ref{sec:IS} uncovers the close relation between Darboux for CMV and certain integrable systems, namely, the Schur flows and the Ablowitz-Ladik model. Higher degree Darboux transformations and the extension to quasi-definite functionals appear in Section~\ref{sec:FE}. Section~\ref{sec:CO} summarizes the conclusions, pointing out some open problems suggested by the previous results. Finally, Appendix~\ref{app:cholesky} deals with the existence and uniqueness of Cholesky factorizations for infinite matrices, crucial issues in the development of the paper. Some illustrative examples of Darboux transformations for CMV matrices can be found at the end of Sections~\ref{sec:Ger}, \ref{sec:J-CMV} and \ref{ssec:quasi}.

\section{Hessenberg type matrices, zig-zag bases and Darboux factorizations}
\label{sec:DT-LP-HM}

The Darboux transformation of a Jacobi matrix $\cal J$ follows from a factorization of the symmetric matrix polynomial $\wp({\cal J}) = {\cal J}+\beta I$ for some parameter $\beta\in\R$ (see Section~\ref{sec:J-CMV}). The translation of this idea to the case of a CMV matrix $\cal C$ requires the substitution of the polynomial $\wp(x)=x+\beta$ by a Hermitian Laurent polynomial $\ell(z)=\alpha z+\beta+\overline\alpha z^{-1}$, $\alpha\in\C\setminus\{0\}$, $\beta\in\R$, so that the matrix Laurent polynomial $\ell({\cal C})=\alpha {\cal C}+\beta I+\overline\alpha {\cal C}^{-1}$ becomes self-adjoint too due to the unitarity of $\cal C$. A suitable factorization of $\ell({\cal C})$ should provide the CMV version of the Darboux transformation.

A central role in Darboux for Jacobi matrices is played by the orthogonal polynomials on the real line since their recurrence relation is codified by a Jacobi matrix. The corresponding CMV analogue are the orthogonal Laurent polynomials on the unit circle, which are expected to be an essential ingredient in Darboux for CMV. Among other things, the orthogonal polynomials give to Jacobi matrices the meaning of a matrix representation of a symmetric multiplication operator. This is also true for orthogonal Laurent polynomials and CMV matrices, but in this case the multiplication operator is unitary instead of symmetric. 

A special feature of Darboux for CMV matrices is the apparition of spurious transformations involving non-CMV (actually, even non-unitary) matrices with no band structure, which are related to non-orthogonal Laurent polynomials. Therefore, we will need to deal with general ordered sequences of Laurent polynomials (assuming no orthogonality property) and related matrix representations of a multiplication operator, which forces us to work in a general setting, avoiding any {\it a priori} assumption of a Hilbert space structure. This means that the space of functions that we will consider is simply the complex vector space of Laurent polynomials $\Lambda=\spn\{z^n\}_{n\in\Z}=\C[z,z^{-1}]$. Besides, we will work with general infinite matrices not necessarily attached to operators on Hilbert spaces, a fact that requires a special care with matrix manipulations because the algebra of general infinite matrices suffers from certain pathological problems \cite{Cooke}. We will briefly describe some of these problems below with the aim of stating a simple setting where such pathologies can be well controlled by a few easy rules. This will be enough for our purposes, but the reader can find more comprehensive treatments of infinite matrices in classical references like \cite{Cooke,Maddox} or the recent review \cite{ShSi}. 

The first difficulty of dealing with infinite matrices is that matrix products can be ill defined, as it is the case of $S^tS$ where
\begin{equation} \label{eq:S}
S = 
\left(
\begin{smallmatrix}
	1 & 0 & 0 & 0 & 0 & \cdots
	\\[3pt]
	1 & 1 & 0 & 0 & 0 & \cdots
	\\[3pt]
	1 & 1 & 1 & 0 & 0 & \cdots
	\\[3pt]
	1 & 1 & 1 & 1 & 0 & \cdots
	\\[3pt]
	\cdots & \cdots & \cdots & \cdots & \cdots & \cdots
\end{smallmatrix}
\right).
\end{equation}
To avoid this problem we will restrict matrix products to those whose coefficients can be computed with a finite number of algebraic operations on the matrix coefficients of the factors.  

\begin{defn} \label{def:MP}
We will say that the product $AB$ of two matrices $A,B$ is {\sl admissible} if any matrix entry $(AB)_{i,k}=\sum_jA_{i,j}B_{j,k}$ involves only a finite ($i,k$-dependent) number of non-null summands.
\end{defn} 

The above definition applies not only to square matrices, but also to rectangular ones, so we can talk about admissible products between matrices and vectors, or between vectors. In what follows we will consider only admissible matrix products.

The following special type of infinite matrices, which will arise in the Darboux transformations for CMV matrices, provide admissible matrix products.

\begin{defn} \label{def:Hess}
We will say that a matrix $A$ is lower (upper) {\sl Hessenberg type} if $A_{i,j}$ can be non-null only for $j-i\le N$ ($i-j\le N$) for some $N\in\Z$.
\end{defn}

Graphically, lower Hessenberg type matrices are characterized by the shape 
$$
\left(
\begin{smallmatrix}
	* & * & \cdots & * & 0 & 0 & 0 & 0 & \cdots
	\\[3pt]
	* & * & \cdots & * & * & 0 & 0 & 0 & \cdots
	\\[3pt]
	* & * & \cdots & * & * & * & 0 & 0 & \cdots
    \\[3pt]
	* & * & \cdots & * & * & * & * & 0 & \cdots
    \\[3pt]  
    \cdots & \cdots & \cdots & \cdots & \cdots & \cdots & \cdots & \cdots & 	\cdots 
\end{smallmatrix}
\right),
$$
while upper Hessenberg type matrices correspond to the transpose of this structure. 

Hessenberg type matrices are not necessarily square matrices. In particular, any column (row) vector is trivially a lower (upper) Hessenberg type matrix, and if it has finitely many non-null components then it is also an upper (lower) Hesssenberg type matrix.
 
Any product $AB$ with $A$ lower Hessenberg type or $B$ upper Hessenberg type is admissible. Instances of this are $ST=I=TS$ and $S^tT=T-I$, where $I$ is the infinite identity matrix, $S$ is given in \eqref{eq:S} and 
\begin{equation} \label{eq:T}
T = 
\left(
\begin{smallmatrix}
	1 & 0 & 0 & \kern1pt 0 & \kern2pt 0 & \kern1pt \cdots
	\\[3pt]
	-1 & 1 & 0 & \kern1pt 0 & \kern2pt 0 & \kern1pt \cdots
	\\[3pt]
	0 & -1 & 1 & \kern1pt 0 & \kern2pt 0 & \kern1pt \cdots
	\\[3pt]
	0 & 0 & -1 & \kern1pt 1 & \kern2pt 0 & \kern1pt \cdots
	\\[3pt]
	\cdots & \cdots & \cdots & \kern1pt \cdots & \kern2pt \cdots 
	& \kern1pt \cdots
\end{smallmatrix}
\right).
\end{equation}
Indeed, the set of lower (upper) Hessenberg type matrices is closed under multiplication. Band matrices are precisely those which are simultaneously upper and lower Hessenberg type, thus they are closed under multiplication too, providing always admissible matrix products regardless whether they act as left or right factors.  

Some of the properties of the multiplication of finite matrices are also valid for admissible products of infinite matrices. For instance, the distributive property $A(B+C)=AB+AC$ holds as long as the products $AB$ and $AC$ are admissible, and analogously for $(B+C)A=BA+CA$. Besides, if $AB$ is admissible, then $B^+A^+$ is admissible too and $(AB)^+=B^+A^+$, where $A^+$ denotes the adjoint (transpose conjugate) of $A$. 

However, even if all the involved matrix products are admissible, the associativity can fail, as it is shown by the example
$$
S^t(TS)=S^tI=S^t, \qquad (S^tT)S = (T-I)S = I-S.
$$ 
The root of this problem is the non-associative character of oscillatory series which appear in the multiplication process. To see this clearly, let us fix our attention on the (0,0)-entry of the above products. Denoting $X=(1,1,1,\dots)$,
$$
\begin{aligned}
& (S^t(TS))_{0,0} = X(TX^t) = 
\sum_{j\ge0} X_j \left(\sum_{k\ge0} T_{j,k}X_k\right) = 
X_0^2 + \sum_{j\ge1} X_j (-X_{j-1}+X_j) = 1, 
\\
& ((S^tT)S)_{0,0} = (XT)X^t = 
\sum_{k\ge0} \left(\sum_{j\ge0} X_jT_{j,k}\right) X_k = 
\sum_{k\ge0} (X_k-X_{k+1}) X_k = 0,
\end{aligned}
$$
thus the different value of these entries is a consequence of the non-associativity of the oscillatory series $1-1+1-1+\cdots$.

We can guarantee the associativity of matrix products requiring the absence of infinite series in the multiplication process. More precisely, assuming that the products $(AB)C$ and $A(BC)$ are admissible, a sufficient condition for the associativity $(AB)C = A(BC)$ is the existence for any indices $i,l$ of only a finite ($i,l$-dependent) number of non-null terms $A_{i,j}B_{j,k}C_{k,l}$ since in that case 
$$
\sum_k \left(\sum_j A_{i,j}B_{j,k}\right) C_{k,l} 
= \sum_{j,k} A_{i,j}B_{j,k}C_{k,l} = 
\sum_j A_{i,j} \left(\sum_k B_{j,k}C_{k,l}\right)
$$
due to the presence of a finite number of non-null summands in all the sums.
 
The presence of Hessenberg type matrix factors which provide admissible matrix products also guarantee their associativity according to the following simple rules.

\begin{prop} \label{prop:asoc}
The associative property $(AB)C=A(BC)$ of a matrix product is valid in any of the following cases:
\begin{enumerate}
	\item $A$ and $B$ are lower Hessenberg type.
	\item $B$ and $C$ are upper Hessenberg type.
	\item $A$ is lower Hessenberg type and $C$ upper Hessenberg type.
\end{enumerate}
\end{prop}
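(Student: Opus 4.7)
The plan is to invoke the sufficient condition derived in the paragraph just preceding the statement: whenever the products $(AB)C$ and $A(BC)$ are admissible and, for every pair of indices $i,l$, only finitely many triples $(j,k)$ give $A_{i,j}B_{j,k}C_{k,l}\ne 0$, the identity $(AB)C=A(BC)$ follows. So the task reduces to verifying these two hypotheses in each of the three listed situations.

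Admissibility is immediate from the structural facts already noted in the text, namely that the class of lower (resp.\ upper) Hessenberg type matrices is closed under multiplication and that any product with a lower Hessenberg type left factor, or with an upper Hessenberg type right factor, is admissible. In case~1, $AB$ is itself lower Hessenberg type, so $(AB)C$ is admissible, while $A(BC)$ is admissible through the left factor $A$; case~2 is symmetric, with $BC$ upper Hessenberg type and the roles of $A$ and $C$ reversed; in case~3, $(AB)C$ is admissible through the right factor $C$ and $A(BC)$ through the left factor $A$.

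For the finiteness of non-null triples, I would fix $i,l$ and use the Hessenberg band constraints to bound $(j,k)$, exploiting that matrix indices are nonnegative. In case~1, $A_{i,j}\ne 0$ forces $0\le j\le i+N_A$, a finite range; for each such $j$, $B_{j,k}\ne 0$ forces $0\le k\le j+N_B$, again finite. In case~2 the mirror argument applies: $C_{k,l}\ne 0$ gives $0\le k\le l+N_C$, and then $B_{j,k}\ne 0$ gives $0\le j\le k+N_B$. In case~3 the bounds from $A$ and $C$ alone confine $j$ and $k$ to finite ranges $0\le j\le i+N_A$ and $0\le k\le l+N_C$ independently of $B$. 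In each case only finitely many triples survive, so the sufficient condition applies.

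I do not expect a real obstacle here: the proposition is essentially a bookkeeping verification of the criterion just derived. The only point worth making explicit is that the semi-infinite, lower-bounded indexing is what turns the one-sided Hessenberg inequalities into genuinely finite index ranges for $j$ and $k$, ruling out precisely the kind of oscillatory phenomenon illustrated by $S^tTS$ in the preceding discussion.
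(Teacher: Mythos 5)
Your proposal is correct and follows essentially the same route as the paper's proof: admissibility of both groupings via the left/right Hessenberg factors and closure under multiplication, followed by the finiteness of non-null triples $A_{i,j}B_{j,k}C_{k,l}$ obtained from the one-sided band bounds together with the nonnegativity of the indices, which is exactly the sufficient condition stated just before the proposition. The only difference is cosmetic: the paper spells out case 1 and appeals to symmetry for the other two, whereas you verify all three cases explicitly.
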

 
\begin{proof}
Suppose for instance that $A$ and $B$ are lower Hessenberg type. Then, $AB$ is admissible and lower Hessenberg type too, thus $(AB)C$ is admissible. On the other hand, $BC$ is admissible since $B$ is lower Hessenberg type, thus $A(BC)$ is admissible because $A$ is also lower Hessenberg type.
Besides, due the lower Hessenberg type structure of $A$ and $B$, 
$$
\kern-5pt
\left.
\begin{aligned}
	& A_{i,j} \text{ can be non-null only for } j-i\le N
	\\
	& B_{j,k} \text{ can be non-null only for } k-j\le M
\end{aligned}
\right\}
\,\Rightarrow\,
\begin{gathered} 
	A_{i,j}B_{j,k}C_{k,l} \text{ can be non-null only for}
	\\
	j\le i+N \text{ and } k \le i+N+M.
\end{gathered}
$$ 
This ensures the associativity according to the sufficient condition given above. A similar argument proves the associativity in the two remaining cases.
\end{proof} 

Another pathology of admissible products of infinite matrices is that, in contrast to the case of finite matrices, the existence of inverse can be consistent with a non-trivial left or right kernel. This is illustrated by the matrix $T$ given in \eqref{eq:T} which, despite having the matrix \eqref{eq:S} as an inverse, satisfies
$$
(1,1,1,\dots)T = 0.
$$
In what follows we will use the following notation for the left and right kernel of a matrix $A\in\C^{M \times N}$, $M,N\in\N\cup\{\infty\}$,
$$
\ker_L(A) = \{X\in\C^{1\times M}:XA=0\}, 
\qquad 
\ker_R(A) = \{X\in\C^{N\times1}:AX=0\},
$$
where we understand that the vectors $X$ are such that the products are admissible. So, in the case of the matrix $T$ introduced in \eqref{eq:T}, $\ker_L(T)=\spn\{(1,1,1,\dots)\}$ and $\ker_R(T)=\{0\}$. 

The above fact also reveals possible uniqueness problems for the inverse of a general infinite matrix because such an inverse could be modified by adding a matrix with rows and columns lying on the left and right kernel respectively. An example of this is given by
$$
J = 
\left(
\begin{smallmatrix}
	-1 & 1
	\\[3pt] 
	1 & -2 & 1
	\\[3pt]
	& 1 & -2 & 1
	\\[-3pt]
	& & \ddots & \ddots & \ddots
\end{smallmatrix}
\right),
\qquad
K = 
\left(
\begin{smallmatrix}
	0 & 1 & 2 & 3 & \cdots
	\\[3pt] 
	1 & 1 & 2 & 3 & \cdots
	\\[3pt]
	2 & 2 & 2 & 3 & \cdots
	\\[3pt]
	3 & 3 & 3 & 3 & \cdots
	\\[3pt]
	\cdots & \cdots & \cdots & \cdots & \cdots
\end{smallmatrix}
\right)
+ c
\left(
\begin{smallmatrix}
	1 & 1 & 1 & 1 & \cdots
	\\[3pt] 
	1 & 1 & 1 & 1 & \cdots
	\\[3pt]
	1 & 1 & 1 & 1 & \cdots
	\\[3pt]
	1 & 1 & 1 & 1 & \cdots
	\\[3pt]
	\cdots & \cdots & \cdots & \cdots & \cdots
\end{smallmatrix}
\right),
$$
which satisfy $JK=I=KJ$ for any $c\in\C$ because $\ker_L(J)=\spn\{(1,1,1,\dots)\}$ and $\ker_R(J)=\spn\{(1,1,1,\dots)^t\}$. 

The previous example not only shows that a Hessenberg type matrix can have multiple inverses, but also that these inverses can be non-Hessenberg type. Nevertheless, if a lower (upper) Hessenberg type matrix $A$ has a lower (upper) Hessenberg type inverse $B$, no other inverse exists. For instance, if $A$ and $B$ are lower Hessenberg type, due to Proposition \ref{prop:asoc}.1, any other inverse $C$ should satisfy $C = (BA)C = B(AC) = B$. In the upper Hessenberg case the associativity requirements force us to modify the uniqueness proof as $C=C(AB)=(CA)B=B$ according to Proposition \ref{prop:asoc}.2. This result is the origin of the following definition. 

\begin{defn} \label{def:Hinv}
An infinite lower (upper) Hessenberg type matrix $A$ with a lower (upper) Hessenberg type inverse will be called a lower (upper) {\sl H-matrix}. Since the inverse is unique in this case, it will be denoted by $A^{-1}$.
\end{defn}

Examples of H-matrices are $S$ and $T$ given in \eqref{eq:S} and \eqref{eq:T}, with $S=T^{-1}$. Although H-matrices could seem quite scarce at first sight, we will see that they are precisely the kind of matrices arising in the CMV version of Darboux transformations. Actually, CMV matrices themselves are examples of matrices which are simultaneously lower and upper H-matrices.  

A triangular infinite matrix with non-zero diagonal entries is a special case of H-matrix because it always has a triangular inverse. The condition on the diagonal guarantees for the leading submatrices the existence of triangular inverses obtained by enlarging the smaller ones. An induction process on the size of the leading submatrices generates the triangular inverse of the infinite triangular matrix.  

In general, we will denote by $A^{-1}$ the inverse of a matrix $A$ whenever this inverse is unique. Some of the rules for the manipulation of inverses of finite matrices also hold for infinite ones as long as they have a unique inverse. For instance, taking adjoints we see that $AB=I=BA$ is equivalent to $B^+A^+=I=A^+B^+$, thus $A$ and $A^+$ have a unique inverse simultaneously, and in this case $(A^+)^{-1}=(A^{-1})^+$. In particular, this equality is valid for any H-matrix.

H-matrices are essential in the Darboux transformation for CMV matrices since they allow us to perform standard algebraic manipulations which are forbidden for general infinite matrices. For instance, solving a simple matrix equation in the following way, 
\begin{equation} \label{eq:eq}
AB=C \;\Rightarrow\; B=(A^{-1}A)B=A^{-1}(AB)=A^{-1}C,
\end{equation}
is possible if $A$ is a lower H-matrix due to Proposition \ref{prop:asoc}.1, but it is not possible for general infinite matrices. 
As an application of these ideas, we have the following result.

\begin{prop} \label{prop:kerH}
If $A$ is a lower (upper) H-matrix, then $\ker_R(A)=\{0\}$ {\rm(}$\ker_L(A)=\{0\}${\rm)} and $\ker_L(A)\setminus\{0\}$ {\rm(}$\ker_R(A)\setminus\{0\}${\rm)} does not contain finitely non-null vectors, i.e. vectors with finitely many non-null entries.
\end{prop}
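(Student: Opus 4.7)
The plan is to exploit both properties of a (lower) H-matrix $A$: the existence of a lower Hessenberg type inverse $A^{-1}$, together with its uniqueness. The cancellation arguments that trivialize $\ker_R(A)$ and rule out finitely supported vectors from $\ker_L(A)\setminus\{0\}$ boil down to multiplying by $A^{-1}$ and invoking associativity via Proposition \ref{prop:asoc}. I will prove the lower case in detail; the upper case follows by a symmetric argument, or by taking adjoints and using $(A^+)^{-1}=(A^{-1})^+$, which turns a lower H-matrix into an upper one.

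For $\ker_R(A)=\{0\}$, suppose $X$ is a column vector with $AX=0$. Since both $A^{-1}$ and $A$ are lower Hessenberg type, Proposition \ref{prop:asoc}(1) applies to the triple product $A^{-1}\cdot A\cdot X$, giving
$$X = IX = (A^{-1}A)X = A^{-1}(AX) = 0.$$
This is the same style of manipulation already illustrated in \eqref{eq:eq}, only performed on a single-column right factor.

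For the second statement, suppose $X$ is a row vector with only finitely many non-null entries and $XA=0$. As noted after Definition \ref{def:Hess}, a finitely supported row vector is simultaneously upper and lower Hessenberg type. Viewing $X$ as lower Hessenberg type, Proposition \ref{prop:asoc}(1) applies to the triple product $X\cdot A\cdot A^{-1}$ (first two factors lower Hessenberg type), so
$$0 = (XA)A^{-1} = X(AA^{-1}) = XI = X.$$

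The main obstacle to be mindful of is precisely the reason why the hypotheses are stated the way they are: the cancellation argument uses associativity crucially, and without the Hessenberg type structure of every factor it breaks down. This is not a cosmetic issue but genuine mathematics, as the discussion preceding the statement makes clear with $(1,1,1,\ldots)T=0$: here $T$ is a lower H-matrix, yet $\ker_L(T)\setminus\{0\}$ is non-empty, simply because the non-zero element is not finitely supported and Proposition \ref{prop:asoc}(1) cannot be applied to $(1,1,1,\ldots)\cdot T\cdot S$. The finiteness of the support in the second clause of the statement is therefore essential, and highlighting this subtlety is really the content of the proposition.
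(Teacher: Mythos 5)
Your proposal is correct and follows essentially the same route as the paper: the right-kernel claim is the cancellation argument of \eqref{eq:eq} applied to a column vector, and the left-kernel claim treats the finitely supported row vector as a lower Hessenberg type matrix so that Proposition \ref{prop:asoc}.1 justifies $(XA)A^{-1}=X(AA^{-1})$, with the upper case handled symmetrically via Proposition \ref{prop:asoc}.2. Your closing remark about $(1,1,1,\dots)T=0$ correctly illustrates why the finite-support hypothesis cannot be dropped.
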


\begin{proof}
Suppose that $A$ is a lower H-matrix. Using \eqref{eq:eq} we find that $AX=0 \Rightarrow X=0$. 
On the other hand, if $X$ is a finitely non-null row vector, it can be considered as a lower Hessenberg type matrix, thus $XA=0 \Rightarrow X=X(AA^{-1})=(XA)A^{-1}=0$ due to Proposition \ref{prop:asoc}.1.
The result for upper H-matrices follows analogously from Proposition \ref{prop:asoc}.2.
\end{proof}

The set of lower (upper) H-matrices is closed under multiplication and inversion, and contains the identity matrix. Hence, this set is a multiplicative group since associativity holds for lower (upper) Hessenberg type matrices due to Proposition \ref{prop:asoc}.1 (Proposition \ref{prop:asoc}.2). Actually, the previous results show that the set of infinite lower (upper) Hessenberg type matrices with the sum and multiplication is a ring whose group of units is the set of lower (upper) H-matrices. Interesting subgroups of this multiplicative group are the sets of infinite lower (upper) triangular matrices with positive or non-null diagonal entries.  

Following the general idea of the Darboux transformation, and mimicking the Jacobi case according to previous comments, Darboux for CMV should deal with a factorization $\ell({\cal C})=AB$ for a Hermitian Laurent polynomial $\ell(z)=\alpha z+\beta+\overline\alpha z^{-1}$ ($\alpha\in\C$, $\beta\in\R$), and a matrix $\cal C$ (eventually a CMV matrix) representing in some basis of $\Lambda$ the {\sl multiplication operator}
\begin{equation} \label{eq:z}
 z \colon \kern-5pt
 \mathop{\Lambda \xrightarrow{\kern15pt} \Lambda}
 \limits_{\text{\footnotesize $f(z) \mapsto zf(z)$}}
\end{equation}

Different matrix representations $\cal C$ of this multiplication operator are related by changes of basis and give rise to different factors $A,B$. Besides, we will see that the factors $A,B$ themselves are closely related to certain changes of basis in $\Lambda$. Thus, changes of basis will play a central role in the factorizations related to the Darboux transformation for CMV matrices.
 
In the following section we will analyze the matrix representations of the multiplication operator \eqref{eq:z} for bases of $\Lambda$ with a structure similar to that one of a CMV basis, but assuming no orthogonality requirement. We will see that these representations are H-matrices, a key result to avoid the pathologies of general infinite matrices.

\subsection{Matrix representations of the multiplication operator}
\label{ssec:C}
    
A CMV matrix can be understood as the matrix representation of the multiplication operator \eqref{eq:z} with respect to a basis $\chi=(\chi_0,\chi_1,\chi_2,\dots)^t$ of orthonormal Laurent polynomials. This basis is obtained by applying to the canonical one $\eta=(1,z,z^{-1},z^2,z^{-2},\dots)^t$ the Gram-Schmidt orthonormalization with respect to a positive Borel measure supported on an infinite subset of the unit circle (hereinafter `measure on the unit circle'). Thus, the orthonormal basis $\chi$ satisfies
\begin{equation} \label{eq:zigzag}
\chi_n \in \LL_n:=\left\{
\begin{aligned}
	& \spn\{z^{-k},\dots,z^k\} 
	& & (\text{coefficient of } z^{-k}>0), \quad 
	& & n=2k, 
	\\
	& \spn\{z^{-k},\dots,z^{k+1}\} 
	& & (\text{coefficient of } z^{k+1}>0), 
	& & n=2k+1.
\end{aligned}
\right.
\end{equation}
Due to the needs of Darboux for CMV pointed out previously, we will consider in this section bases $\chi$ of $\Lambda$ satisfying simply \eqref{eq:zigzag}, regardless of their orthogonality properties. 

\begin{defn} \label{def:zigzag}
A basis $\chi$ of $\Lambda$ satisfying \eqref{eq:zigzag} will be called a {\sl zig-zag} basis. 
\end{defn}

The nested structure of the subspaces $\LL_n$ implies that, for any zig-zag basis $\chi$,
$$
z\chi_0 \in \LL_1\setminus\LL_0,
\qquad
z\chi_n \in 
\begin{cases}
	\LL_{n+1}, & \text{ even } n,
	\\
	\LL_{n+2}\setminus\LL_{n+1}, & \text{ odd } n.
\end{cases}
$$
This can be rewritten in matrix form as  
\begin{equation} \label{eq:C}
z\chi = {\cal C} \chi,
\qquad
{\cal C} = 
\left(
\begin{smallmatrix}  
	* & + 
	\\[3pt] 
	* & * & * & + 
	\\[3pt]
	* & * & * & * 
	\\[3pt]
	* & * & * & * & * & + 
	\\[3pt]
	* & * & * & * & * & * 
	\\[3pt]
	* & * & * & * & * & * & * & + 
	\\[3pt]
	* & * & * & * & * & * & * & *
	\\[3pt]
	\cdots & \cdots & \cdots & \cdots & \cdots & \cdots & \cdots & \cdots 
	& \cdots
\end{smallmatrix}
\right),
\end{equation}
where the coefficients denoted by $+$ are positive and the omitted ones are zero. The matrix representation ${\cal C}$ of the multiplication operator \eqref{eq:z} in a zig-zag basis $\chi$ is therefore lower Hessenberg type.

The inverse $z^{-1}$ of the multiplication operator is represented in a zig-zag basis $\chi$ by a lower Hessenberg type matrix too. Indeed,
$$
z^{-1}\chi_0 \in \LL_2\setminus\LL_1,
\qquad
z^{-1}\chi_n \in 
\begin{cases}
	\LL_{n+2}\setminus\LL_{n+1}, & \text{ even } n,
	\\
	\LL_{n+1}, & \text{ odd } n,
\end{cases}
$$
or equivalently  
\begin{equation} \label{eq:Cinv}
z^{-1}\chi = \widetilde{\cal C} \chi,
\qquad
\widetilde{\cal C} = 
\left(
\begin{smallmatrix}  
	* & * & + 
	\\[3pt] 
	* & * & *  
	\\[3pt]
	* & * & * & * & + 
	\\[3pt]
	* & * & * & * & * 
	\\[3pt]
	* & * & * & * & * & * & +
	\\[3pt]
	* & * & * & * & * & * & *
	\\[3pt]
	\cdots & \cdots & \cdots & \cdots & \cdots & \cdots & \cdots & \cdots
\end{smallmatrix}
\right).
\end{equation}
On the other hand, from Proposition \ref{prop:asoc}.1 we find that $(\widetilde{\cal C}{\cal C}-I)\chi = (z^{-1}z-1)\chi=0$, thus $\widetilde{\cal C}{\cal C}=I$ by the linear independence of $\chi$. A similar proof yields ${\cal C}\widetilde{\cal C}=I$. Hence, $\widetilde{\cal C}$ is a lower Hessenberg type inverse of $\cal C$, which thus has a unique inverse ${\cal C}^{-1}=\widetilde{\cal C}$. 

In consequence, the matrix representation $\cal C$ of the multiplication operator in a zig-zag basis $\chi$ is always an H-matrix.
  
The simplest case corresponds to the canonical basis $\eta$, which leads to     
\begin{equation} \label{eq:shift}
z \eta = {\cal S} \eta,
\qquad
{\cal S} = 
\left(
\begin{smallmatrix}
  	0&1
	\\[3pt]
  	0 & 0 & 0 & 1
	\\[3pt]
  	1 & 0 & 0 & 0
	\\[3pt]
  	0 & 0 & 0 & 0 & 0 & 1
	\\[3pt]
  	0 & 0 & 1 & 0 & 0 & 0
	\\[3pt]
  	\cdots & \cdots & \cdots & \cdots & \cdots & \cdots & \cdots
\end{smallmatrix}
\right).
\end{equation}
The matrix $\cal S$ is unitary, i.e.  ${\cal S}^{-1} = {\cal S}^+$. We will refer to $\cal S$ as the {\sl shift matrix} since it represents the multiplication shift operator $z^n \mapsto z^{n+1}$ with respect to the basis $\eta$.

Any zig-zag basis $\chi$ is characterized up to a positive factor by the corresponding matrix representation $\cal C$ of the multiplication operator. A way to understand this is to note that $\cal C$ determines $\widetilde{\cal C}={\cal C}^{-1}$, and the first and even equations of \eqref{eq:C} combined with the odd equations of \eqref{eq:Cinv} give each Laurent polynomial $\chi_n$ as a linear combination of the previous ones $\chi_0,\dots,\chi_{n-1}$ and 
$$
\begin{aligned}
& z\chi_0 & & n=0,
\\
& z\chi_{n-2} & & \text{odd } n, 
\\
& z^{-1}\chi_{n-2} & & \text{even } n>0.
\end{aligned}
$$
Therefore, $\cal C$ and $\chi_0$ determine $\chi_n$ for $n\ge1$.  

The matrix representations of the multiplication operator in a zig-zag basis do not cover all the H-matrices with the shape given in \eqref{eq:C}. A simple counterexample is obtained by slightly perturbing the shift matrix changing 0 by 1 in the (1,2) and (2,3) coefficients,
$$
H = \left(
\begin{smallmatrix}
  	0&1
	\\[3pt]
  	0 & 0 & \kern2.5pt 1 \kern-7.95pt \bigboxvoid & 1
	\\[2pt]
  	1 & 0 & 0 & \kern2.5pt 1 \kern-7.95pt \bigboxvoid
	\\[2pt]
  	0 & 0 & 0 & 0 & 0 & 1
	\\[3pt]
  	0 & 0 & 1 & 0 & 0 & 0
	\\[3pt]
  	0 & 0 & 0 & 0 & 0 & 0 & 0 & 1
	\\[3pt]
  	0 & 0 & 0 & 0 & 1 & 0 & 0 & 0
	\\[3pt]
  	\cdots & \cdots & \cdots & \cdots & \cdots & \cdots & \cdots & \cdots & 	\cdots
\end{smallmatrix}
\right), 
\qquad\quad
H^{-1} =
\left(
\begin{smallmatrix}
  	0 & -1 & 1 & 0 & \kern2.5pt 1 \kern-7.95pt\bigovoid
	\\[1pt]
  	1 & 0 & 0
	\\[3pt]
  	0 & 0 & 0 & 0 & 1
	\\[3pt]
  	0 & 1 & 0 & 0 & -1
	\\[3pt]
  	0 & 0 & 0 & 0 & 0 & 0 & 1
	\\[3pt]
  	0 & 0 & 0 & 1 & 0 & 0 & 0
	\\[3pt]
  	0 & 0 & 0 & 0 & 0 & 0 & 0 & 0 & 1
	\\[3pt]
  	\cdots & \cdots & \cdots & \cdots & \cdots & \cdots & \cdots & \cdots & 	\cdots & \cdots
\end{smallmatrix}
\right).
$$
Thus, $H$ is an H-matrix with the shape \eqref{eq:C}, but its inverse does not fit with the structure \eqref{eq:Cinv} due to the encircled coefficient 1 at site (0,4).    
 
Changes of basis allow us to specify the set of matrix representations under study. By definition, zig-zag bases $\chi$ are those related to the canonical one $\eta$ by
$$
\chi = T \eta, \qquad 
T \in \mathscr{T} = \; \parbox{195pt}{set of lower triangular infinite matrices \centerline{with positive diagonal entries.}}
$$     
Therefore, Proposition \ref{prop:asoc}.1 leads to $({\cal C}T-T{\cal S})\eta = {\cal C}\chi-z\chi=0$, so the linear independence of $\eta$ implies that ${\cal C}T=T{\cal S}$. Then, using again Proposition \ref{prop:asoc}.1 yields ${\cal C}=T{\cal S}T^{-1}$, where parenthesis are omitted due to the associativity of the product. 

Hence, the matrix representations of the multiplication operator in a zig-zag basis constitute the equivalence class   
$$
[{\cal S}]=\{T{\cal S}T^{-1} : T\in\mathscr{T}\}
$$
of the shift matrix with respect to the relation ``conjugation by the subgroup $\mathscr{T}$", which is an equivalence relation in the multiplicative group of lower H-matrices due to the associativity properties of Proposition \ref{prop:asoc}. 

Another subgroup of H-matrices with special interest for us is the set of lower (upper) H-matrices $A$ which are unitary, i.e. $A^{-1}=A^+$. Every such a matrix $A$ is necessarily banded because $A=(A^{-1})^+$ is simultaneously lower and upper Hessenberg type. This ensures that both $A^+A$ and $AA^+$ are admissible products for unitary H-matrices. However, analyzing the unitarity of an H-matrix whose band structure is not guaranteed {\it a priori} requires avoiding to deal with the product $A^+A$ ($AA^+$) for a lower (upper) H-matrix $A$, since it could be non-admissible. Nevertheless, the following result shows that the unitarity of an H-matrix can be checked resorting only to admissible products.  

\begin{prop} \label{prop:uni-H}
A lower (upper) H-matrix $A$ is unitary iff $AA^+=I$ ($A^+A=I$).
\end{prop}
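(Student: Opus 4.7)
The forward implication is immediate from Definition \ref{def:Hinv}: if $A$ is a unitary lower H-matrix then $A^{-1}=A^+$, and so $AA^+=AA^{-1}=I$. The non-trivial direction is the converse, and the whole point of the statement is the admissibility subtlety: for a lower H-matrix $A$ the product $A^+A$ need not be admissible \emph{a priori}, so one cannot simply verify both defining relations of a unitary matrix. The plan is to show that the single admissible equation $AA^+=I$ already forces $A^+=A^{-1}$, and hence (as a consequence) makes $A^+A$ admissible and equal to $I$.

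To do this I would imitate the manipulation of equation \eqref{eq:eq}: multiply $AA^+=I$ on the left by the unique inverse $A^{-1}$ and reassociate,
\[
A^+ \;=\; (A^{-1}A)A^+ \;=\; A^{-1}(AA^+) \;=\; A^{-1}I \;=\; A^{-1}.
\]
Each step lives inside the admissible algebra set up earlier: $A^{-1}A$ is admissible because both factors are lower Hessenberg type, $AA^+$ is admissible by hypothesis (and because $A^+$ is upper Hessenberg type), and both triple products $(A^{-1}A)A^+$ and $A^{-1}(AA^+)$ are admissible for the same reasons. The crucial reassociation is exactly the situation covered by Proposition \ref{prop:asoc}.1, since the two left factors $A^{-1}$ and $A$ are both lower Hessenberg type. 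This identifies $A^+$ with $A^{-1}$.

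From this identity, $A^+$ inherits the lower Hessenberg type structure of $A^{-1}$, while it is upper Hessenberg type by construction (as the adjoint of a lower Hessenberg type matrix). Hence $A^+$, and therefore $A$, is actually a band matrix, which guarantees \emph{a posteriori} that $A^+A$ is admissible. Then $A^+A=A^{-1}A=I$, and $A$ is unitary in the sense of Definition \ref{def:Hinv}.

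The upper H-matrix case is entirely symmetric: starting from the admissible equation $A^+A=I$ one writes $A^+=A^+(AA^{-1})=(A^+A)A^{-1}=A^{-1}$, with the reassociation now justified by Proposition \ref{prop:asoc}.2 applied to the two upper Hessenberg type factors $A$ and $A^{-1}$. Since all the algebraic bookkeeping reduces to the rules already established, I do not foresee any serious obstacle; the mild subtlety to watch is simply that one must never write down the product $A^+A$ (resp.\ $AA^+$) for a lower (resp.\ upper) H-matrix until after its admissibility has been secured.
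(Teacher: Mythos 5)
Your proof is correct and follows essentially the same route as the paper: the converse is established by the very same chain $A^{-1}=A^{-1}(AA^+)=(A^{-1}A)A^+=A^+$ justified by Proposition \ref{prop:asoc}.1 (and its mirror image via Proposition \ref{prop:asoc}.2 in the upper case), which is exactly the paper's argument. Your additional remark that $A^+=A^{-1}$ forces the band structure and hence the \emph{a posteriori} admissibility of $A^+A$ is a harmless elaboration already implicit in the paper's earlier discussion of unitary H-matrices.
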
 

\begin{proof}
Suppose that $A$ is a lower H-matrix, so that it has a unique inverse $A^{-1}$ and this inverse is lower Hessenberg type. Then, $A^{-1}=A^+$ implies that $AA^+=I$. Conversely, $AA^+=I$ leads to $A^{-1}=A^{-1}(AA^+)=(A^{-1}A)A^+=A^+$, where we have used Proposition \ref{prop:asoc}.1. A similar proof runs for upper H-matrices.
\end{proof}

The unitary representatives of the class $[{\cal S}]$ have a special meaning.
   
\begin{prop} \label{prop:CMVshape}
If $\cal C \in [\cal S]$ is unitary, then it is a band matrix with zig-zag shape
\begin{equation} \label{eq:CMVshape}
\cal C =
\left(
\begin{smallmatrix}
	* & + 
	\\[3pt]
   	* & * & * & + 
   	\\[3pt]
     + & * & * & * 
     \\[3pt]
   	&& * & * & * & + 
	\\[3pt]
   	&& + & * & * & * 
    \\[3pt]
   	&&&& * & * & \kern-1pt * & \kern-3pt + 
	\\[3pt]
   	&&&& + & * & \kern-1pt * & \kern-3pt * 
	\\[3pt]
    &&&&&& \cdots & \kern-1pt \cdots & \kern-1pt \cdots
\end{smallmatrix}
\right).
\end{equation}
\end{prop}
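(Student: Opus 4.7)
The plan is to exploit both the lower Hessenberg shape \eqref{eq:C} of every element of $[{\cal S}]$ and the structural constraint forced by unitarity. Since ${\cal C} \in [{\cal S}]$, write ${\cal C} = T{\cal S}T^{-1}$ with $T \in \mathscr{T}$, so that ${\cal C}$ is the matrix representing the multiplication operator \eqref{eq:z} in the zig-zag basis $\chi = T\eta$. By the discussion leading to \eqref{eq:C}, this already establishes that ${\cal C}$ is lower Hessenberg type with precisely the zero pattern depicted in \eqref{eq:C}, and that the entries marked $+$ (namely $(0,1)$ and $(2k+1,2k+3)$ for $k \geq 0$) are strictly positive.

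Next I would invoke unitarity to obtain a second, transposed constraint. Because ${\cal C}$ is a lower H-matrix, its inverse is unique and, as shown in the discussion after \eqref{eq:Cinv}, it is the matrix $\widetilde{\cal C}$ representing $z^{-1}$ in the same basis $\chi$, with the lower Hessenberg shape \eqref{eq:Cinv} and strictly positive entries at $(2k, 2k+2)$ for $k \geq 0$. Unitarity now reads ${\cal C} = ({\cal C}^{-1})^{+} = \widetilde{\cal C}^{+}$, so the non-zero pattern of ${\cal C}$ is also forced to fit the transpose of \eqref{eq:Cinv}; in particular ${\cal C}$ becomes upper Hessenberg type, hence a band matrix, and the entries at $(2k, 2k-2)$, $k \geq 1$, are strictly positive.

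The conclusion is then a simple intersection bookkeeping: combining ``non-zero positions allowed by \eqref{eq:C}'' with ``non-zero positions allowed by the transpose of \eqref{eq:Cinv}'' leaves exactly the band zig-zag silhouette of \eqref{eq:CMVshape}, and the union of the two sets of positivity marks reproduces all the $+$ symbols displayed there. No argument is needed to rule out spurious positivity elsewhere because both shape constraints act independently on different entries.

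I do not foresee a serious obstacle here: the only subtlety is the legitimacy of identifying ${\cal C}^{-1}$ with $\widetilde{\cal C}$ and of writing ${\cal C} = \widetilde{\cal C}^+$, and this is precisely what the uniqueness of the inverse for H-matrices (Definition~\ref{def:Hinv}) and the identity $(A^+)^{-1} = (A^{-1})^+$ (valid for H-matrices, as noted after Definition~\ref{def:Hinv}) guarantee. Everything else is a direct superposition of the two zero patterns, requiring no further computation.
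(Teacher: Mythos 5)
Your argument is correct and is essentially the paper's own proof: the paper likewise combines the shape \eqref{eq:C} of ${\cal C}$ with the shape \eqref{eq:Cinv} of $\widetilde{\cal C}={\cal C}^{-1}$ and concludes from ${\cal C}^+={\cal C}^{-1}$ that ${\cal C}$ must fit both patterns, yielding the band zig-zag silhouette \eqref{eq:CMVshape}. Your extra remarks (writing ${\cal C}=T{\cal S}T^{-1}$, the uniqueness of the inverse for H-matrices, and the bookkeeping of the positive entries) just make explicit what the paper leaves implicit from Section~\ref{ssec:C}.
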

\begin{proof} 
Bearing in mind the shapes of $\cal C$ and $\widetilde{\cal C}={\cal C}^{-1}$ given in \eqref{eq:C} and \eqref{eq:Cinv}, respectively, the result follows directly from the equality ${\cal C}^+={\cal C}^{-1}$. 
\end{proof}

It was proved in \cite[Theorem 3.2]{CMV2005} that the unitary matrices with the zig-zag shape \eqref{eq:CMVshape} are exactly the CMV matrices. Therefore, we get the following identification, which provides an alternative definition of CMV matrices.
       
\begin{cor} \label{cor:CMVdef} 
${\cal C}$ is a CMV matrix iff ${\cal C} \in [{\cal S}]$ and is unitary.
\end{cor}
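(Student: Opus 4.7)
The equivalence will be proved by combining Proposition~\ref{prop:CMVshape} with the cited characterization \cite[Theorem 3.2]{CMV2005} and the Gram--Schmidt construction of CMV matrices from measures on the unit circle.

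For the $(\Leftarrow)$ direction, assume $\mathcal{C}\in[\mathcal{S}]$ is unitary. Proposition~\ref{prop:CMVshape} immediately endows $\mathcal{C}$ with the zig--zag band shape \eqref{eq:CMVshape}, and \cite[Theorem 3.2]{CMV2005} then identifies unitary matrices of that shape with CMV matrices, so $\mathcal{C}$ is CMV.

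For the $(\Rightarrow)$ direction, let $\mathcal{C}$ be a CMV matrix, which by the cited theorem is unitary of shape \eqref{eq:CMVshape}. All that remains is to produce $T\in\mathscr{T}$ with $\mathcal{C}=T\mathcal{S}T^{-1}$. The natural candidate is the Gram--Schmidt change-of-basis matrix associated to $\mathcal{C}$: by its very construction $\mathcal{C}$ arises from the orthonormalization of $\eta$ with respect to some measure on the unit circle, yielding a basis $\chi$ of orthonormal Laurent polynomials with $\chi_n\in\LL_n$ and positive extreme coefficient, so $\chi$ is a zig--zag basis and $\chi=T\eta$ for a lower triangular $T$ with positive diagonal, i.e.\ $T\in\mathscr{T}$. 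The defining property $z\chi=\mathcal{C}\chi$ becomes $(\mathcal{C}T-T\mathcal{S})\eta=0$; linear independence of $\eta$, together with Proposition~\ref{prop:asoc}.1 (applicable because $\mathcal{C}$, $T$ and $T^{-1}$ are all lower Hessenberg type), then gives $\mathcal{C}T=T\mathcal{S}$ and hence $\mathcal{C}=T\mathcal{S}T^{-1}\in[\mathcal{S}]$, exactly mirroring the computation used earlier to define the class $[\mathcal{S}]$.

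I do not expect any serious obstacle: the statement is essentially a repackaging of \cite[Theorem 3.2]{CMV2005} in the language of the class $[\mathcal{S}]$. The only point requiring care is that all matrix manipulations stay within the lower-Hessenberg-type class so that the associativity and left-cancellation steps (as in \eqref{eq:eq}) are legitimate, which is guaranteed by Proposition~\ref{prop:asoc}.
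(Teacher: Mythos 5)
Your proposal is correct and follows essentially the same route as the paper: the $(\Leftarrow)$ direction is Proposition~\ref{prop:CMVshape} combined with \cite[Theorem 3.2]{CMV2005}, and the $(\Rightarrow)$ direction just re-runs the computation the paper already made before the corollary, namely that any matrix representation of the multiplication operator in a zig-zag basis (in particular the orthonormal one defining a CMV matrix) satisfies ${\cal C}T=T{\cal S}$ with $T\in\mathscr{T}$ and hence lies in $[{\cal S}]$. Your attention to admissibility and associativity via Proposition~\ref{prop:asoc}.1 is exactly the care the paper takes, so there is nothing to add.
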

 
The previous result is an algebraic characterization of CMV matrices based on a previous characterization in terms of their shape. An explicit parametrization of CMV matrices is known, and we will introduce it later on.

Zig-zag bases which are orthonormal with respect to a measure on the unit circle yield the CMV matrix representations of the multiplication operator. Bearing in mind the one-to-one correspondence between elements of $[{\cal S}]$ and zig-zag bases of $\Lambda$ (up to a positive factor), Corollary \ref{cor:CMVdef} implies that the unitarity of an element of $[{\cal S}]$ is equivalent to the orthonormality of the corresponding zig-zag basis with respect to a measure on the unit circle. 

The class $[{\cal S}]$ constitutes the framework where we will develop the factorization leading to the Darboux transformation of CMV matrices. The study of this factorization is the objective of the following section.
  
\subsection{Change of basis and Darboux factorization}
\label{ssec:Dfac}
    
Let us introduce the Darboux factorization for the class $[\cal S]$. For this purpose, consider a Hermitian Laurent polynomial $\ell\in\Lambda\setminus\{0\}$, i.e. such that $\ell_*=\ell$, where the substar operation in $\Lambda$ is defined by 
\begin{equation} \label{eq:substar}
\ell_*(z):=\overline{\ell(1/\overline{z})}. 
\end{equation}
The general form of such an $\ell$ is
$$
\ell(z) = \sum_{j=-d}^d \alpha_j z^j,
\qquad \alpha_j\in\C, 
\qquad \alpha_{-j}=\overline\alpha_j,
\qquad \alpha_d\ne0.
$$
Although most of the discussions along the paper are for the simplest non-trivial case, $d=1$, bearing in mind future extensions to $d>1$ (see Section~\ref{ssec:deg}), we will not assume any restriction on $d$ in the present section. For convenience, sometimes we will distinguish the `degree' $d$ of $\ell$ using the number $N_z=2d$ of zeros of $\ell$ (counting multiplicity) rather than the value of $d$ itself.

Due to the ring structure of the set of infinite lower Hessenberg type matrices, $\ell({\cal C})$ is of this type for any ${\cal C}\in[{\cal S}]$. The alluded factorization will have the form $\ell({\cal C})=AB$ for some lower Hessenberg type matrices $A,B$. The hermiticity of $\ell$ ensures that $\ell({\cal C})$ is self-adjoint for a unitary $\cal C$, i.e. when $\cal C$ is a CMV matrix. This situates the Darboux factorization of CMV matrices close to the standard setting of Darboux for self-adjoint operators. 

Let $\chi$ be a zig-zag basis related to ${\cal C}\in[{\cal S}]$. We will see that any choice of a new zig-zag basis $\omega$ generates a factorization of $\ell({\cal C})$. First, note that $\chi$ and $\omega$ are related by a triangular change of basis with positive diagonal entries, i.e. 
\begin{equation} \label{eq:A}
\chi = A \omega, \qquad A\in\mathscr{T}.
\end{equation}
A new matrix $B$ comes from expressing $\ell\omega$ in the basis $\chi$,   
\begin{equation} \label{eq:B}
\ell\omega=B\chi.
\end{equation}
While $A$ is a lower H-matrix because it lies on $\mathscr{T}$, for the moment we only can assure that $B$ is lower Hessenberg type. This last statement follows from the relation
$$
\ell\omega_n \in \ell(\LL_n\setminus\LL_{n-1}) 
\subset \LL_{N_z+n}\setminus\LL_{N_z+n-1},
\qquad 
N_z = \text{number of zeros of } \ell,
$$
which shows that $B$ has the shape
\begin{equation} \label{eq:Bshape}
\begin{aligned}
& \kern36pt \overbracket{\kern43pt}^{N_z}
\\[-7pt]
& B = 
\left(
\begin{smallmatrix}
	* & * & \cdots & * & \circledast 
	\\[3pt]
	* & * & \cdots & * & * & \circledast
	\\[3pt]
	* & * & \cdots & * & * & * & \circledast
    \\[3pt]
	* & * & \cdots & * & * & * & * & \circledast
    \\[3pt]  
    \cdots & \cdots & \cdots & \cdots & \cdots & \cdots & \cdots & \cdots & 	\cdots 
\end{smallmatrix}
\right),
\end{aligned}
\end{equation}
where the symbol $\circledast$ stands for a non-null coefficient. 

The matrix $B$ is never a lower H-matrix for a non-constant $\ell$ because it has a non-trivial right kernel. This follows from \eqref{eq:B}, which implies that $B\chi(\zeta)=0$ for any zero $\zeta$ of $\ell$. Actually, $B$ may have no inverse at all. This occurs for instance if $\ell(1)=0$, $\omega=\eta$ and $\chi=T\eta$ with $T$ given by \eqref{eq:T}, because then $0=B\chi(1)=B(1,0,0,\dots)^t$ so that the first column of $B$ vanishes. Nevertheless, we will see later on that in cases of interest $B$ becomes an upper H-matrix with a band structure.

The matrices $A$ and $B$ are the key ingredients of a factorization not only of $\ell({\cal C})$, but also of $\ell({\cal D})$, where $\cal D \in [{\cal S}]$ is the representation of the multiplication operator in the basis $\omega$.
  
\begin{prop} \label{prop:D-fac} 
Let ${\cal C},{\cal D}$ be the matrix representations of the multiplication operator with respect to the zig-zag bases $\chi,\omega$ respectively, and let $\ell$ be a Hermitian Laurent polynomial. Then, the matrices $A,B$ defined by \eqref{eq:A} and \eqref{eq:B} satisfy the following identities:
\begin{enumerate}
	\item $\ell({\cal C})=AB$ and $\ell({\cal D})=BA$.
	\item ${\cal C}A=A{\cal D}$ and $B{\cal C}={\cal D}B$.
	\item ${\cal C}=A{\cal D}A^{-1}$ and ${\cal D}=A^{-1}{\cal C}A$.
\end{enumerate}
\end{prop}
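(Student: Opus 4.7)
The plan is to derive all three identities from the four defining relations $\chi = A\omega$, $\ell\omega = B\chi$, $z\chi = {\cal C}\chi$ and $z\omega = {\cal D}\omega$, and then extract each matrix equality by invoking the linear independence of the zig-zag bases $\chi$ and $\omega$. The only real subtlety is bookkeeping: every product that appears must be admissible, and every re-parenthesization must be justified by Proposition~\ref{prop:asoc}. Because ${\cal C}$, ${\cal D}$, $A$ and $A^{-1}$ are lower H-matrices and $B$ is lower Hessenberg type, every product below is admissible and Proposition~\ref{prop:asoc}.1 suffices to move all parentheses.

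First I would establish the auxiliary identity $\ell({\cal C})\chi = \ell\chi$ (and symmetrically $\ell({\cal D})\omega = \ell\omega$). The cases $j = \pm 1$ of ${\cal C}^j\chi = z^j\chi$ are just \eqref{eq:C} and \eqref{eq:Cinv}. An induction, legal because ${\cal C}^{\pm1}$ are lower Hessenberg type, extends this to all $j \in \Z$, and summing with the coefficients of $\ell$ yields the claim.

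Given that, item~(1) drops out in a single chain: $\ell({\cal C})\chi = \ell\chi = \ell(A\omega) = A(\ell\omega) = A(B\chi) = (AB)\chi$, and linear independence of $\chi$ forces $\ell({\cal C}) = AB$. The companion identity $\ell({\cal D}) = BA$ is obtained by the mirror computation starting from $\ell({\cal D})\omega$. For item~(2) I would run the analogous chains $({\cal C}A)\omega = {\cal C}(A\omega) = z\chi = z(A\omega) = A(z\omega) = (A{\cal D})\omega$ and $(B{\cal C})\chi = B(z\chi) = z(B\chi) = z(\ell\omega) = \ell(z\omega) = {\cal D}(\ell\omega) = ({\cal D}B)\chi$, each using the fact that entry-wise scalar multiplication by a Laurent polynomial commutes with infinite-matrix multiplication, and then read off ${\cal C}A = A{\cal D}$ and $B{\cal C} = {\cal D}B$ by linear independence. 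Item~(3) is immediate once $A^{-1}$ is invoked: right- and left-multiplying ${\cal C}A = A{\cal D}$ by $A^{-1}$ gives the two similarity formulas, the only subtlety being one more appeal to Proposition~\ref{prop:asoc}.1 to collapse $AA^{-1}$ and $A^{-1}A$.

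The hardest part, to the extent there is one, is not algebraic but hygienic: making sure every matrix-vector or matrix-matrix rearrangement we write is legitimized by the admissibility/associativity framework of Section~\ref{sec:DT-LP-HM}, and that the scalar-by-matrix commutations such as $A(\ell\omega) = \ell(A\omega)$ and $z(B\chi) = B(z\chi)$ are handled at the entrywise level where only finite sums appear. Once the Hessenberg-type status of each matrix is on the table, every identity collapses to a one-line calculation on the basis columns.
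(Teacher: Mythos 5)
Your proposal is correct and follows essentially the same route as the paper: verify each identity by applying both sides to the basis columns (using $z\chi={\cal C}\chi$, $z^{-1}\chi={\cal C}^{-1}\chi$, the analogues for ${\cal D}$, $\chi=A\omega$, $\ell\omega=B\chi$, with Proposition~\ref{prop:asoc}.1 handling admissibility and associativity), conclude by linear independence of $\chi$ and $\omega$, and obtain item~3 from ${\cal C}A=A{\cal D}$ by multiplying with $A^{-1}$. Your explicit induction establishing ${\cal C}^{j}\chi=z^{j}\chi$ for all $j\in\Z$ merely spells out a step the paper leaves implicit, so there is no substantive difference.
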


\begin{proof} From \eqref{eq:C}, \eqref{eq:Cinv}, the analogous relations for $\cal D$ and ${\cal D}^{-1}$, \eqref{eq:A}, \eqref{eq:B} and Proposition \ref{prop:asoc}.1, we obtain
$$
\begin{aligned}
	& (\ell({\cal C})-AB)\chi = \ell\chi-\ell A\omega = 0,
	\\[3pt]
	& (\ell({\cal D})-BA)\omega = \ell\omega-B\chi = 0,
	\\[3pt]
	& ({\cal C}A-A{\cal D})\omega = {\cal C}\chi-zA\omega 
	= z\chi-zA\omega = 0, 
	\\[3pt]
	& (B{\cal C}-{\cal D}B)\chi = zB\chi-\ell{\cal D}\omega 
	= zB\chi-z\ell\omega = 0. 
\end{aligned}
$$
Relations 1 and 2 follow from the linear independence of $\chi$ and $\omega$. The remaining equalities are a consequence of the first identity in 2 and Proposition \ref{prop:asoc}.1. 
\end{proof}

Parenthesis are omitted in Proposition \ref{prop:D-fac}.3 due to the associativity of products of lower H-matrices. We will use this standard convention for associative products in what follows, making explicit the parenthesis only when possible non-associative products appear.  

Proposition \ref{prop:D-fac}.1 is what we will call the Darboux factorization for the matrices in the class $[{\cal S}]$. It is worth remarking that this factorization is not defined for single elements of $[{\cal S}]$, but for every ordered pair of matrices in this class. Besides, there is a trivial dependence on the zig-zag basis chosen for each pair, since these bases are defined up to a positive factor. This degree of freedom amounts to the substitution $A\to c A$, $B\to c^{-1}B$, $c>0$, in Proposition \ref{prop:D-fac}. In the next section we will study a particularization of these factorizations for CMV matrices. 

Proposition \ref{prop:D-fac}.3 is a direct consequence of identifying the factor $A$ as the change of basis between $\chi$ and $\omega$. We cannot formulate the analogue of this statement for the second identity of Proposition \ref{prop:D-fac}.2 due to the invertibility problems of $B$. 

The identities of Proposition \ref{prop:D-fac}.3 provide a way to obtain $\cal D$ starting from $\cal C$ and the factorization $\ell({\cal C})=AB$, as well as a way to obtain $\cal C$ starting from $\cal D$ and the factorization $\ell({\cal D})=BA$. Despite the apparent symmetry, there is a significant difference between these two identities when compared to the equality ${\cal C}A=A{\cal D}$ of Proposition \ref{prop:D-fac}.2 which originates them. As in \eqref{eq:eq}, the equation ${\cal C}A=A{\cal Y}$ has a unique solution in the unknown matrix $\cal Y$ because $A$ is a lower H-matrix. However, the equation ${\cal X}A=A{\cal D}$ can have multiple solutions in the unknown matrix $\cal X$ because $\ker_L(A)$ can be non-trivial. The general solution has the form ${\cal X} ={\cal C}+K_A$ where ${\cal C}=A{\cal D}A^{-1}$ and $K_A$ is any matrix whose rows belong to $\ker_L(A)$. Of course, due to Proposition \ref{prop:asoc}.1, we know that imposing a lower Hessenberg type structure on $\cal X$ leaves only the solution $\cal C$. The conclusion is that no non-null lower Hessenberg type matrix $K_A$ can be generated from $\ker_L(A)$. This is in agreement with Proposition~\ref{prop:kerH}.

We will finish this section with an exact description of $\ker_R(B)$ which will be of interest later on. 

\begin{prop} \label{prop:kerB}
Let ${\cal C}$ be the matrix representation of the multiplication operator with respect to the zig-zag basis $\chi$ and let $\ell$ be a Hermitian Laurent polynomial with $N_z$ zeros counting multiplicity. If $\omega$ is any other zig-zag basis and $B$ is the matrix given by \eqref{eq:B}, then $\ker_R(\ell({\cal C})) = \ker_R(B)$ and a basis of this subspace is given by
$$ 
\bigcup_{\substack{
	\zeta \text{ {\rm zero of} } \ell 	
	\\
	m \text{ {\rm multiplicity of} } \zeta}}
\kern-20pt
\{\chi(\zeta),\chi'(\zeta),\dots,\chi^{(m-1)}(\zeta)\}.
$$
Moreover, a matrix of $\mathscr{K}_B=\{K=K^+:BK=0\}$ vanishes iff its leading submatrix of order $N_z$ is null.
\end{prop}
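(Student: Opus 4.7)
The plan is to exploit the factorization $\ell({\cal C})=AB$ from Proposition~\ref{prop:D-fac}.1 together with the already-established algebra of H-matrices. Since $A\in\mathscr{T}$ is a lower H-matrix, Proposition~\ref{prop:kerH} gives $\ker_R(A)=\{0\}$; combined with the associativity $(AB)v=A(Bv)$ from Proposition~\ref{prop:asoc}.1 (the product is admissible because $B$ is lower Hessenberg type), this immediately yields $\ker_R(\ell({\cal C}))=\ker_R(B)$.

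To exhibit kernel elements I would differentiate the row-by-row identity $\ell\,\omega_n=\sum_k B_{n,k}\chi_k$, a finite-sum identity between Laurent polynomials, and evaluate at a zero $\zeta$ of $\ell$ of multiplicity $m$. By the Leibniz rule,
$$
(B\chi^{(j)}(\zeta))_n = (\ell\,\omega_n)^{(j)}(\zeta) = \sum_{i=0}^{j}\binom{j}{i}\ell^{(i)}(\zeta)\,\omega_n^{(j-i)}(\zeta)=0 \quad (0\le j<m),
$$
since $\ell^{(i)}(\zeta)=0$ for $i<m$. This produces the $N_z$ candidate vectors listed in the statement.

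Next I would count the dimension of $\ker_R(B)$ from the shape \eqref{eq:Bshape}: row $n$ ends at column $n+N_z$ with a non-null coefficient $B_{n,n+N_z}$, so the system $Bv=0$ recursively determines $v_{n+N_z}$ from $v_0,\dots,v_{n+N_z-1}$, leaving $v_0,\dots,v_{N_z-1}$ as free parameters. Hence $\dim\ker_R(B)=N_z$ and it only remains to check that the candidate vectors are linearly independent. A relation $\sum c_{\zeta,j}\chi^{(j)}(\zeta)=0$ as a column says that the linear functional $\phi=\sum c_{\zeta,j}\phi_\zeta^{(j)}$ on $\Lambda$, with $\phi_\zeta^{(j)}(f):=f^{(j)}(\zeta)$, vanishes on the basis $\chi$, hence on all of $\Lambda$; Hermite interpolation by Laurent polynomials on the finite set of zeros of $\ell$ in $\C\setminus\{0\}$ shows that such functionals are linearly independent on $\Lambda$, forcing every $c_{\zeta,j}=0$.

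For the final claim on $\mathscr{K}_B$, each column of $K$ belongs to $\ker_R(B)$ and is therefore determined by its first $N_z$ entries via the same recurrence. If the leading $N_z\times N_z$ block of $K$ vanishes, then the columns $j<N_z$ are identically zero, and the Hermiticity $K_{j,k}=\overline{K_{k,j}}$ propagates this to show that the first $N_z$ entries of every column $k\ge N_z$ also vanish; the recurrence then forces $K=0$. The main obstacle I anticipate is the Hermite-interpolation step: once it is cleanly phrased as linear independence of evaluation-and-derivative functionals on $\Lambda$, everything else is bookkeeping with the band structure of $B$ and Proposition~\ref{prop:asoc}.
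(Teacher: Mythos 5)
Your proof is correct, and while its skeleton (the identity $\ker_R(\ell({\cal C}))=\ker_R(B)$ via the H-matrix property of $A$ and Proposition~\ref{prop:asoc}.1, and the production of the candidate vectors by differentiating $\ell\omega=B\chi$) coincides with the paper's, the two remaining steps are handled by genuinely different means. For linear independence, the paper proves the stronger fact that the leading $N_z\times N_z$ block $\Delta_{N_z}$ of the matrix of candidate vectors is nonsingular, by observing that a vanishing determinant would give a nonzero element of $\spn\{\chi_0,\dots,\chi_{N_z-1}\}=\LL_{N_z-1}$ with $N_z$ zeros, contradicting the zero count forced by the zig-zag structure; you instead pass to the dual side and use linear independence of the Hermite evaluation functionals $f\mapsto f^{(j)}(\zeta)$ on $\Lambda$, which needs only that $\chi$ is a basis of $\Lambda$ and standard Hermite interpolation at the (nonzero, finitely many) zeros of $\ell$. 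Correspondingly, for the claim about $\mathscr{K}_B$ the paper writes $K=\Delta V$ and exploits $\det\Delta_{N_z}\neq0$ twice (once for the columns, once, after Hermiticity, for the rows), whereas you use the band recursion coming from the non-null outer diagonal $B_{n,n+N_z}$ in \eqref{eq:Bshape}: kernel vectors are determined by their first $N_z$ entries, so vanishing of the leading block kills the first $N_z$ columns, Hermiticity transfers this to the first $N_z$ entries of all remaining columns, and the recursion finishes. Your recursion also pins down $\dim\ker_R(B)=N_z$ exactly, where the paper only needs the bound $\le N_z$. Both routes ultimately rest on the zig-zag structure — the paper through the zero count in $\LL_{N_z-1}$, you through the non-vanishing of $B_{n,n+N_z}$ (i.e.\ $\ell\omega_n\in\LL_{N_z+n}\setminus\LL_{N_z+n-1}$) — but your treatment of the $\mathscr{K}_B$ statement is arguably more elementary, while the paper's yields the explicit nonsingularity of $\Delta_{N_z}$ as a byproduct (a fact which your two observations together also imply, though you never need to state it).
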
  
 
\begin{proof} 
First of all note that, due to the structure \eqref{eq:Bshape} of $B$, $\dim\ker_R(B)$ cannot be greater than the number $N_z$ of zeros of $\ell$. 

Besides, using the fact that $A$ is a lower H-matrix and $B$ is lower Hessenberg type we find from Proposition \ref{prop:asoc}.1 that $(AB)X=0 \Leftrightarrow BX=0$. Bearing in mind Proposition \ref{prop:D-fac}.1, this means that $\ker_R(\ell({\cal C})) = \ker_R(B)$.

Taking derivatives in \eqref{eq:B} we conclude that $\chi^{(j)}(\zeta)\in\ker_R(B)$ for any zero $\zeta$ of $\ell$ and any $j=0,1,\dots,m-1,$ smaller than its multiplicity $m$. Since the set
$$
\bigcup_{\substack{
	\zeta \text{ {\rm zero of} } \ell 	
	\\
	m \text{ {\rm multiplicity of} } \zeta}}
\kern-20pt
\{\chi(\zeta),\chi'(\zeta),\dots,\chi^{(m-1)}(\zeta)\}
$$
has $N_z$ vectors lying on $\ker_R(B)$ and $\dim\ker_R(B)\le N_z$, to prove the first statement of the proposition it only remains to see that this set is linearly independent. 

For this purpose it is enough to show that $\det\Delta_{N_z}\ne0$ where, denoting by $\zeta_1,\dots,\zeta_k$ the zeros of $\ell$, $\Delta_{N_z}$ is the leading submatrix of order $N_z$ of the matrix $\Delta$ of order $\infty\times N_z$ given by
$$
\begin{gathered}
\Delta = (\Delta(\zeta_1),\Delta(\zeta_2),\dots,\Delta(\zeta_k)),
\qquad
\begin{aligned}
& \Delta(\zeta) = (\chi(\zeta),\chi'(\zeta),\dots,\chi^{(m-1)}(\zeta)),
\\
& m = \text{multiplicity of } \zeta \text{ as a zero of } \ell.
\end{aligned}
\end{gathered}
$$
If $\det\Delta_{N_z}=0$ then there exists a row vector $X\ne0$ such that $X\Delta_{N_z}=0$. This is also equivalent to state that the Laurent polynomial $f=X(\chi_0,\chi_1,\dots,\chi_{N_z-1})^t$ satisfies $f^{(j)}(\zeta)=0$ for any zero $\zeta$ of $\ell$ and any $j=0,1,\dots,m-1,$ smaller than its multiplicity $m$. In other words, $f$ should have at least $N_z$ zeros, counting multiplicities. This is impossible because $f$ is a linear combination of $\chi_0,\dots,\chi_{N_z-1}$, thus it has no more than $N_z-1$ zeros due to the zig-zag structure of the basis $\chi$.

Suppose now that an infinite square matrix $K$ satisfies $BK=0$. This is equivalent to state that the columns of $K$ belong to $\ker_R(B)$, i.e. $K=\Delta V$ for a matrix $V$ of size $N_z\times\infty$. Indicating by the subindex $N_z$ the leading submatrix of order $N_z$, we have that $K_{N_z}=\Delta_{N_z}V_{N_z}$. Since $\det\Delta_{N_z}\ne0$, the condition $K_{N_z}=0$ implies that $V_{N_z}=0$, so the first $N_z$ columns of $V$ and $K$ are null. If, besides, $K=K^+$, then the first $N_z$ rows of $K$ vanish too, which can be expressed as $\Delta_{N_z}V=0$. The condition $\det\Delta_{N_z}\ne0$ implies that $V=0$, thus $K=0$.
\end{proof}

We have described the Darboux factorization in the general framework of matrix representations of the multiplication operator with respect to zig-zag bases. In the next section we will explore some consequences of the previous results for the case of CMV matrix representations, i.e. when the zig-zag bases are orthonormal with respect to a measure on the unit circle. Our purpose is to show that suitable changes of basis connect standard factorizations (usual in the context of Darboux for self-adjoint operators) with well known transformations of measures on the unit circle.

\section{Darboux for CMV and Christoffel transformations}
\label{sec:Chris}

In this section we will analyze an especially interesting Darboux factorization for CMV matrices which is behind the CMV version of Darboux transformations. 

The zig-zag bases leading to CMV matrices are precisely those which are orthonormal with respect to a measure on the unit circle. This establishes a one-to-one correspondence between CMV matrices and such measures, up to normalization.  

Apart from introducing Darboux transformations for CMV matrices, one of the purposes of the present section is to translate these transformations to the corresponding measures. However, in tackling the CMV version of Darboux transformations we will need to deal with zig-zag bases which are not necessarily orthogonal with respect to a measure on the unit circle. This forces us to work in the more general setting of linear functionals in $\Lambda$, a subset of which can be identified with the set of measures on the unit circle. Besides, this general setting will allow us to simplify the notation along the paper, providing at the same time a direct extension of Darboux transformations to quasi-CMV matrices related to quasi-definite functionals which are not necessarily associated with positive measures (see Section~\ref{ssec:quasi}). Thus, we will first comment on the relation between measures on the unit circle and linear functionals in $\Lambda$.

Any measure $\mu$ on the unit circle generates a linear functional $u$ in $\Lambda$ defined by $u[f]=\int f\,d\mu$, so that no different measures give rise to the same functional \cite{GrSz}. For convenience, we will write $u \equiv d\mu$ to indicate the functional $u$ generated by $\mu$. Behind this notational convention lies the identification of a measure on the unit circle and the corresponding functional, much in the same way as in the case of functions and distributions.  

The inner product in $\Lambda$ associated with the measure $\mu$ can be rewritten in terms of its functional $u$ as $\int f\overline{g}\,d\mu = u[fg_*]$, the substar operation in $\Lambda$ being as in \eqref{eq:substar}. This provides a one-to-one correspondence between measures on the unit circle and positive definite Hermitian linear functionals in $\Lambda$, i.e. those linear functionals $u\colon\Lambda\to\C$ satisfying
$$
u=u_* 
\quad (\text{Hermitian}),
\qquad\qquad
u[ff_*]>0, \quad f\in\Lambda\setminus\{0\} 
\quad (\text{positive definite}),
$$
where the substar operation is defined for linear functionals in $\Lambda$ by
$$
u_*[f]=\overline{u[f_*]}, \qquad f\in\Lambda.
$$
Equivalently, $u_*[z^n]=\overline{u[z^{-n}]}$ for every $n\in\Z$.  

We will extend the substar operation to Laurent polynomial matrices $M(z)$ by $M_*(z) := \overline{M(1/\overline{z})}$, defining also the new operation 
$M^+(z) := M_*(z)^t = M(1/\overline{z})^+$. For convenience, in what follows $M^+$ will refer to $M^+(z)$, while the adjoint of $M(z)$ will be explicitly denoted by $M(z)^+$. Then, using the natural notation $u[M]=(u[M_{i,j}])$, the orthonormality of a zig-zag basis $\chi$ with respect to a measure $\mu$ on the unit circle can be compactly expressed as $u[\chi\chi^+]=I$ in terms of the functional $u\equiv d\mu$. This orthonormal basis $\chi$ is completely determined by the positive definite functional $u$.

If $\chi$ is the zig-zag basis which is orthonormal with respect to a measure $\mu$, the functional $u\equiv d\mu$ is determined by the conditions $u[\chi_n\chi_{0*}]=\delta_{n,0}$. Actually, these conditions determine a linear functional $u$ in $\Lambda$ for any zig-zag basis $\chi$ (hereinafter `the functional of the zig-zag basis'). Since the zig-zag basis $\chi$ of a given matrix $\cal C\in[{\cal S}]$ is determined up to a positive factor, this associates a unique functional $u$, up to normalization, to any matrix $\cal C\in[{\cal S}]$. When ${\cal C}$ is a CMV matrix, $u$ is the orthogonality functional of $\chi$, otherwise $u$ can be even non-Hermitian.

The Gram matrix of a general linear functional $u$ in $\Lambda$ with respect to an arbitrary basis $l=(l_0,l_1,\dots)^t$ of $\Lambda$ is defined as $u[l \kern1pt l^+]$, so that it coincides with the standard Gram matrix of an inner product when $u$ is positive definite. The Hermitian functionals are precisely those with a Hermitian Gram matrix and, among them, the positive definite functionals are characterized by a positive definite Gram matrix, i.e. $Xu[l \kern1pt l^+]X^+>0$ for every finitely non-null row vector $X\ne0$ (equivalently, the leading principal minors of the Gram matrix are all positive). Therefore, a basis $l$ of $\Lambda$ is orthonormal with respect to a measure on the unit circle iff there exists a linear functional $u$ such that $u[l \kern1pt l^+]=I$ because the fact that this Gram matrix is trivially Hermitian and positive definite implies that $u$ is Hermitian and positive definite.

Any transformation of a measure on the unit circle can be understood as a transformation of the corresponding functional. For instance, if a Laurent polynomial $\ell$ is non-negative in the support of a positive measure $\mu$ on the unit circle, then $\ell d\mu$ is again a positive measure with associated functional $u\ell[f]:=u[\ell f]$. We use this identity to extend the meaning of $u\ell$ to any linear functional $u$ in $\Lambda$ and any $\ell\in\Lambda$, non of them necessarily Hermitian. Then, it easy to see that $(u\ell)_*=u_*\ell_*$.   
 
Let us introduce now the specific factorization which will be the origin of the Darboux transformations for CMV matrices.
 
Given a CMV matrix $\cal C$, suppose that a Hermitian Laurent polynomial $\ell$ of degree one makes the self-adjoint matrix $\ell({\cal C})$ positive definite. This is equivalent to state that there exists a Cholesky factorization
$$
\ell({\cal C}) = AA^+, \quad A\in\mathscr{T},
$$
which is known to be unique (see Appendix~\ref{app:cholesky}). 

Moreover, in view of the zig-zag shape \eqref{eq:CMVshape} of $\cal C$, the matrix $\ell({\cal C})$ has the five-diagonal structure 
\begin{equation} \label{eq:LC}
\ell({\cal C}) = 
\left(
\begin{smallmatrix}
	* & \kern2pt * & \kern2pt \circledast 
	\\[3pt]
   	* & \kern2pt * & \kern2pt * & \kern1pt \circledast 
   	\\[3pt]
    \circledast & \kern2pt * & \kern2pt * & \kern1pt * & \circledast 
    \\[3pt]
   	& \kern2pt \circledast & \kern2pt * & \kern1pt * & * & \circledast 
	\\[3pt]
   	&& \kern2pt \circledast & \kern1pt * & * & * & \circledast
	\\[3pt]
    &&& \kern1pt \cdots & \cdots & \cdots & \cdots & \cdots
\end{smallmatrix}
\right)
\end{equation}
with non-null entries in the upper and lower diagonals. This implies that $A$ is not only lower triangular with positive main diagonal, but also 3-band with non-null entries in the lower subdiagonal,
\begin{equation} \label{eq:A-3band}
A = 
\left(
\begin{smallmatrix}
	+ 
	\\[3pt]
   	* & \kern1pt + 
   	\\[3pt]
    \circledast & \kern1pt * & \kern1pt + 
    \\[3pt]
   	& \kern1pt \circledast & \kern1pt * & \kern0.5pt + 
	\\[3pt]
   	&& \kern1pt \circledast &  \kern0.5pt * & +
	\\[3pt]
    &&&  \kern0.5pt \cdots & \cdots & \cdots
\end{smallmatrix}
\right).
\end{equation}

The above Cholesky factorization is a particular case of the Darboux factorization analyzed in the previous section. To show this, consider a zig-zag basis $\chi$ related to $\cal C$ and a new one $\omega$ given by $\chi=A\omega$. Then, Proposition \ref{prop:D-fac}.1 shows that $\ell({\cal C})=AB$, where $B$ is the lower Hessenberg type matrix defined by $\ell\omega=B\chi$. Since $\ell({\cal C})=AA^+$ and $A$ is a lower H-matrix we conclude by Proposition \ref{prop:asoc}.1 that $B=A^+$. Hence, if $\cal D$ is the matrix representation  of the multiplication operator in the basis $\omega$, Proposition \ref{prop:D-fac} reads in this case as
\begin{align}
& \ell({\cal C})=AA^+, & & \kern-70pt \ell({\cal D})=A^+A, 
\label{eq:LC-LD}
\\ 
& {\cal C}A=A{\cal D}, & & \kern-70pt {\cal C}^+A=A{\cal D}^+,
\label{eq:CA-AD}
\\ 
& {\cal C}=A{\cal D}A^{-1}, & & \kern-70pt {\cal D}=A^{-1}{\cal C}A.
\label{eq:CD-DC}
\end{align} 
Thus, the triangular matrix $A$ coming from the Cholesky factorization of $\ell({\cal C})$ provides a `reversed' Cholesky factorization of $\ell({\cal D})$. Nevertheless, as we pointed out after Proposition \ref{prop:D-fac}, the matrix $\cal D$ can be obtained directly via the second identity in \eqref{eq:CD-DC}. Note that the reversed factorization is admissible because $A$ is a band matrix. The freedom of the zig-zag basis $\chi\to c\chi$ in a positive factor $c$ changes $\omega\to c\omega$, so it does not alter the matrix $\cal D$ neither the relations \eqref{eq:LC-LD}, \eqref{eq:CA-AD}, \eqref{eq:CD-DC}.

Summarizing, any Hermitian Laurent polynomial $\ell$ of degree one defines a mapping ${\cal C}\overset{\ell}{\mapsto}{\cal D}$ between the set 
$$
\mathscr{C}_\ell := 
\{{\cal C} \text{ CMV} : \ell({\cal C}) \text{ is positive definite}\}
$$
and the class $[{\cal S}]$. This mappping satisfies \eqref{eq:LC-LD}, \eqref{eq:CA-AD}, \eqref{eq:CD-DC} with $A$ as in \eqref{eq:A-3band}, and, up to a positive rescaling, any zig-zag bases $\chi$, $\omega$ associated with $\cal C$, $\cal D$, respectively, are related by
\begin{equation} \label{eq:chi-omega}
\chi=A\omega, \qquad\qquad \ell\omega=A^+\chi.
\end{equation}
The mapping ${\cal C} \overset{\ell}{\mapsto}{\cal D}$ can be defined using the first relation of \eqref{eq:LC-LD} and the second one of \eqref{eq:CD-DC} together with the condition $A\in\mathscr{T}$.
  
The following questions regarding the mapping ${\cal C} \overset{\ell}{\mapsto}{\cal D}$ appear naturally:

\begin{itemize}
\item[(Q1)] When is ${\cal D}$ a CMV matrix? This is equivalent to ask about the unitarity of $\cal D$, or, alternatively, about the orthonormality of $\omega$.
\item[(Q2)] Is ${\cal C} \overset{\ell}{\mapsto}{\cal D}$ an isospectral mapping? If not, what changes may it induce in the spectrum?
\item[(Q3)] What is the relation between two linear functionals $u,v$ associated with the matrices ${\cal C},{\cal D}$? In other words, what is the transformation $u\overset{\ell}{\mapsto}v$ generated by the mapping ${\cal C}\overset{\ell}{\mapsto}{\cal D}$?
\item[(Q4)] Relations \eqref{eq:chi-omega} yield $\chi_n\in\spn\{\omega_{n-2},\omega_{n-1},\omega_n\}$ and $\ell\omega_n\in\spn\{\chi_n,\chi_{n+1},\chi_{n+2}\}$. Does any of these conditions characterize the relation between ${\cal C}$ and ${\cal D}$ given by the mapping ${\cal C}\overset{\ell}{\mapsto}{\cal D}$?
\item[(Q5)] It is known that CMV matrices are parametrized by sequences of complex numbers in the unit disk, the so-called Schur parameters. If $\cal D$ is a CMV matrix, how is the relation between the Schur parameters of $\cal C$ and $\cal D$ encoded in the factor $A$? 
\end{itemize}

The aim of the rest of this section is to answer these questions. 

The answer to (Q1) follows easily from the properties of the mapping ${\cal C}\overset{\ell}{\mapsto}{\cal D}$, using Proposition \ref{prop:asoc} to take care of the associativity.

\begin{prop} \label{prop:D-CMV}
The mapping \kern1pt ${\cal C}\overset{\ell}{\mapsto}{\cal D}$ preserves the set $\mathscr{C}_\ell$, i.e. it transforms every CMV matrix $\cal C$ with $\ell({\cal C})$ positive definite into a CMV matrix $\cal D$ with $\ell({\cal D})$ positive definite.
\end{prop}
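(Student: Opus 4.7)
The plan is to verify two things: that $\mathcal{D}$ is a CMV matrix, and that $\ell(\mathcal{D})$ is positive definite. By construction $\mathcal{D}\in[\mathcal{S}]$, so Corollary~\ref{cor:CMVdef} reduces CMV-ness to unitarity of $\mathcal{D}$, and since $\mathcal{D}$ is a lower H-matrix, Proposition~\ref{prop:uni-H} further reduces this to the single admissible identity $\mathcal{D}\mathcal{D}^+=I$.

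To establish $\mathcal{D}\mathcal{D}^+=I$ I would combine the two sides of \eqref{eq:CA-AD}. The first gives $\mathcal{D}=A^{-1}\mathcal{C}A$ as in \eqref{eq:CD-DC}; the second, $\mathcal{C}^+A=A\mathcal{D}^+$, together with \eqref{eq:eq} applied to the lower H-matrix $A$, yields $\mathcal{D}^+=A^{-1}\mathcal{C}^+A$. All of $A$, $A^{-1}$, $\mathcal{C}$, $\mathcal{C}^+$ are band (indeed $A$ is 3-band by \eqref{eq:A-3band} and $\mathcal{C}$ has the shape \eqref{eq:CMVshape}), hence all subsequent products are admissible and all rearrangements are legitimate by Proposition~\ref{prop:asoc}.1. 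The computation
\[
\mathcal{D}\mathcal{D}^+ = A^{-1}\mathcal{C}A\,A^{-1}\mathcal{C}^+A = A^{-1}\mathcal{C}\mathcal{C}^+A = A^{-1}A = I
\]
then closes the argument, where $\mathcal{C}\mathcal{C}^+=I$ comes from the CMV (hence unitary) character of $\mathcal{C}$ combined with Proposition~\ref{prop:uni-H}.

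For positive definiteness of $\ell(\mathcal{D})$, I would read off the reversed Cholesky factorization $\ell(\mathcal{D})=A^+A$ from \eqref{eq:LC-LD}. Given any finitely non-null row vector $X\ne 0$, the product $AX^+$ is again finitely non-null because $A$ is 3-band, and
\[
X\,\ell(\mathcal{D})\,X^+ = (XA^+)(AX^+) = (AX^+)^+(AX^+) \ge 0,
\]
with equality iff $AX^+=0$. But $A$ is a lower H-matrix, so Proposition~\ref{prop:kerH} gives $\ker_R(A)=\{0\}$, forcing $X=0$ and thereby proving the strict inequality for $X\ne 0$.

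I do not anticipate a real obstacle. The whole argument is a short bookkeeping exercise; the one place requiring attention is ensuring every three- or four-factor product I manipulate falls under one of the associativity rules of Proposition~\ref{prop:asoc}, which is automatic here because every factor in sight is band (hence both lower and upper Hessenberg type). The role played by the H-matrix nature of $A$ is crucial on both fronts: it legitimizes the step $\mathcal{D}^+=A^{-1}\mathcal{C}^+A$ and it rules out a non-trivial right kernel that would spoil positive definiteness.
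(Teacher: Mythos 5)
Your proposal is correct and takes essentially the same route as the paper: unitarity of ${\cal D}$ via ${\cal D}^+=A^{-1}{\cal C}^+A$ combined with ${\cal D}=A^{-1}{\cal C}A$ and Proposition~\ref{prop:uni-H}, and positive definiteness of $\ell({\cal D})$ via $X\ell({\cal D})X^+=\|AX^+\|^2$ together with the trivial right kernel of the H-matrix $A$. One minor slip: $A^{-1}$ is lower triangular but not band in general, so your justification that ``every factor is band'' is not literally accurate; the needed associativity nevertheless holds by Proposition~\ref{prop:asoc}.1 because all factors in the product are lower Hessenberg type, which is exactly the argument the paper uses.
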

 
\begin{proof}
Since $\cal C$ is a CMV matrix, from the second relation in \eqref{eq:CA-AD} and Proposition \ref{prop:asoc}.1 we obtain ${\cal D}^+=A^{-1}{\cal C}^+A$. Combining this identity with the second relation in \eqref{eq:CD-DC} and bearing in mind that the product of lower Hessenberg type matrices is associative, yields
$
{\cal D}{\cal D}^+ = A^{-1}{\cal C}A A^{-1}{\cal C}^+A = I. 
$
Thus, Proposition \ref{prop:uni-H} implies that $\cal D$ is unitary and hence CMV. 

From the second identity in \eqref{eq:LC-LD}, and taking into account that any finitely non-null row vector $X$ is a lower Hessenberg type matrix, using again Proposition \ref{prop:asoc}.1 we find that
$
X\ell({\cal D})X^+ = XA^+AX^+ = \|AX^+\|^2 \ge0,
$
so $X\ell({\cal D})X^+=0$ iff $AX^+=0$, which implies that $X=0$ because $A$ is an H-matrix. Therefore, $\ell({\cal D})$ is positive definite.
\end{proof}  

We will refer to the mappings ${\cal C}\overset{\ell}{\mapsto}{\cal D}$ as the {\sl Darboux transformations for CMV matrices}. Both, the definition of these mappings (which uses the Cholesky factorization of $\ell({\cal C})$) as well as their domains ($\mathscr{C}_\ell$) depend on the choice of the Hermitian Laurent polynomial $\ell$. Nevertheless, a rescaling $\ell \to c \ell$, $c>0$, changes $A \to \sqrt{c}A$, so it yields the same transformation ${\cal C}\overset{\ell}{\mapsto}{\cal D}$.

An important consequence of Proposition~\ref{prop:D-CMV} is that it guarantees that Darboux transformations of CMV matrices can be iterated because $\mathscr{C}_\ell$ constitutes the class of CMV matrices $\cal C$ for which the Cholesky factorization of $\ell(\cal C)$ exists (see Appendix~\ref{app:cholesky}).

As for (Q3) and (Q4), Proposition \ref{prop:D-CMV} states that the transformation $u\overset{\ell}{\mapsto}v$ preserves the positive definiteness of the functionals, so when answering these questions we can suppose without loss of generality that $u$, $v$ are positive definite and $\chi$, $\omega$ the corresponding orthonormal zig-zag bases. The answer to (Q3) and (Q4) is given by the following theorem, which is the main result of this section.

\begin{thm} \label{thm:Darboux}
Let $u$, $v$ be positive definite functionals in $\Lambda$ with orthonormal zig-zag bases $\chi$, $\omega$ and CMV matrices $\cal C$, $\cal D$ respectively. Then, the following statements are equivalent:
\begin{itemize}
\item[(i)] ${\cal C}\overset{\ell}{\mapsto}{\cal D}$ for some Hermitian Laurent polynomial $\ell$ of degree one, i.e. ${\cal C}\in\mathscr{C}_\ell$ and ${\cal D}=A^{-1}{\cal C}A$, where $A\in\mathscr{T}$ comes from the Cholesky factorization $\ell({\cal C})=AA^+$.
\item[(ii)] $v=u\ell$ for some Hermitian Laurent polynomial $\ell$ of degree one.
\item[(iii)] $\chi_n\in\spn\{\omega_{n-2},\omega_{n-1},\omega_n\}$ for $n\ge2$ and $\chi_n\notin\spn\{\omega_n\}$ for some $n$.
\item[(iv)] There exists a Hermitian Laurent polynomial $\ell\notin\spn\{\chi_1,\chi_2\}$ of degree one such that $\ell\omega_n\in\spn\{\chi_n,\chi_{n+1},\chi_{n+2}\}$ for $n\ge0$.
\end{itemize}
The Laurent polynomials $\ell$ mentioned in $(i)$ and $(ii)$ coincide up to a constant positive factor, and they also coincide with that one in $(iv)$ up to a constant real factor. Moreover, if $\ell$ coincides exactly in $(i)$ and $(ii)$ then $\chi=A\omega$ and $\ell\omega=A^+\chi$ without any rescaling.
\end{thm}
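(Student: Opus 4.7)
My plan is to prove (i)$\Leftrightarrow$(ii) first, then deduce (iii) and (iv) from (i), and finally close the cycle via (iii)$\Rightarrow$(i) and (iv)$\Rightarrow$(i); the normalization clauses follow by tracking scalar ambiguities throughout. For (i)$\Rightarrow$(ii), I would set $\tilde v := u\ell$ and, using the identities $\chi = A\omega$ and $\ell\omega = A^+\chi$ from \eqref{eq:chi-omega} together with $\omega^+ = \chi^+(A^+)^{-1}$ and the orthonormality $u[\chi\chi^+] = I$, perform the direct computation $\tilde v[\omega\omega^+] = u[A^+\chi\omega^+] = A^+\,u[\chi\chi^+]\,(A^+)^{-1} = I$. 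Positive definiteness of $\ell({\cal C})$ (built into ${\cal C}\in\mathscr{C}_\ell$) makes $\tilde v$ Hermitian and positive definite, so its orthonormal zig-zag basis is $\omega$; uniqueness of the measure from its orthonormal basis gives $\tilde v = v$. For (ii)$\Rightarrow$(i), positivity of $v = u\ell$ propagates to $\ell({\cal C})$ via $X\ell({\cal C})X^+ = u[\ell ff_*] > 0$ for $f = X\chi\ne 0$, so the Cholesky factorization exists; applying the direction just proved to the resulting CMV data $({\cal D}',\omega') = (A^{-1}{\cal C}A,\,A^{-1}\chi)$ identifies it with $({\cal D},\omega)$ by the same uniqueness.

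The implications (i)$\Rightarrow$(iii) and (i)$\Rightarrow$(iv) are immediate from \eqref{eq:chi-omega} combined with the three-band shape \eqref{eq:A-3band} of the Cholesky factor and its upper-triangular adjoint. The supplementary non-triviality clauses follow from two positivity facts: the lowest subdiagonal of $A$ is non-null (a consequence of the non-null outermost diagonals of $\ell({\cal C})$ in \eqref{eq:LC}), which gives $\chi_2\notin\spn\{\omega_2\}$; and $A_{00}>0$, which produces a nonzero $\chi_0$-component in $\ell\omega_0 = \bar A_{00}\chi_0 + \bar A_{10}\chi_1 + \bar A_{20}\chi_2$, so $\ell\notin\spn\{\chi_1,\chi_2\}$.

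The heart of the proof is (iii)$\Rightarrow$(i), with (iv)$\Rightarrow$(i) running in parallel. Given (iii), write $\chi = A\omega$ with $A\in\mathscr{T}$ of three-band shape. From $z\chi = {\cal C}\chi$, $z^{-1}\chi = {\cal C}^+\chi$ and the analogous relations for $\omega$ one obtains ${\cal C}A = A{\cal D}$ and ${\cal C}^+A = A{\cal D}^+$; taking the adjoint of the second and right-multiplying by $A$ yields $[A^+A,{\cal D}]=0$, with all products admissible and associative by Proposition~\ref{prop:asoc} because every factor is band. The key algebraic step---which I expect to be the main obstacle---is that a self-adjoint positive definite $5$-band matrix commuting with the unitary CMV matrix ${\cal D}$ must coincide with $\ell({\cal D})$ for a unique Hermitian Laurent polynomial $\ell$ of degree at most one. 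Granting this, conjugation yields $\ell({\cal C}) = A\ell({\cal D})A^{-1} = AA^+$, precisely the Cholesky factorization of (i); the hypothesis $\chi_n\notin\spn\{\omega_n\}$ forces the second subdiagonal of $A$ to be non-null somewhere, making $A^+A$ genuinely $5$-band and $\ell$ of degree exactly one. For (iv)$\Rightarrow$(i), I would instead start from $\ell({\cal C}) = AM$ with $M$ the upper three-band matrix defined by $\ell\omega = M\chi$, combine the self-adjointness $AM = M^+A^+$ with a Hessenberg bandwidth argument (of the type developed around Proposition~\ref{prop:asoc}) to force $M(A^+)^{-1} = A^{-1}M^+$ to be simultaneously upper and lower Hessenberg and hence diagonal, and then chain through the non-null entries of $A$ to reduce this diagonal to a single real scalar $c$; absorbing $c$ into the real rescaling of $\ell$ permitted by (iv) recovers (i), and keeping track of positive versus real scalars throughout supplies the closing normalization statements.
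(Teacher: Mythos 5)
Your handling of (i)$\Leftrightarrow$(ii) and of (i)$\Rightarrow$(iii),(iv) is essentially the paper's. Where you genuinely depart is in closing the cycle: the paper returns through (ii) at the level of functionals --- for (iii)$\Rightarrow$(ii) it solves $u\ell=v$ by noting that $(u\ell-v)[\chi_n]=0$ holds automatically for $n\ge3$ and then fixing $\ell$ on $\LL_2$ through a nonsingular $3\times3$ Gram system, hermiticity and degree one following from uniqueness; for (iv)$\Rightarrow$(ii) it merely normalizes $c=v[1]/u[\ell]$ --- whereas you attempt (iii)$\Rightarrow$(i) and (iv)$\Rightarrow$(i) by matrix algebra alone.

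As written, though, your (iii)$\Rightarrow$(i) has a genuine gap exactly at its crux: the assertion that a self-adjoint $5$-band matrix commuting with the CMV matrix ${\cal D}$ must equal $\ell({\cal D})$ for some Hermitian Laurent polynomial of degree at most one is granted, not proved, and all the content of the implication sits there. The claim is in fact true and can be filled in with the paper's own observation in Section~\ref{ssec:C} that the pair of recurrences $zf={\cal D}f$, $z^{-1}f={\cal D}^+f$ determines the sequence $f$ from its first entry: since $M=A^+A$ commutes with ${\cal D}$, and hence with ${\cal D}^+={\cal D}^{-1}$, the sequence $f:=M\omega$ satisfies both recurrences, the band structure gives $f_0\in\spn\{\omega_0,\omega_1,\omega_2\}$, so $M\omega=g\,\omega$ with $g=f_0/\omega_0\in\LL_2$, whence $M=g({\cal D})$ by linear independence of $\omega$, and $M=M^+$ forces $g=g_*$. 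Two further details also need repair. First, the hypothesis $\chi_n\notin\spn\{\omega_n\}$ only says $A$ is not diagonal, not that its second subdiagonal is non-null somewhere; to get degree exactly one argue instead that $\alpha=0$ would give $A^+A=\beta I$, hence $A$ diagonal, a contradiction. Second, in (iv)$\Rightarrow$(i) the matrix $M(A^+)^{-1}=A^{-1}M^+$ is simultaneously upper and lower \emph{triangular} (not merely Hessenberg type), which is what makes it diagonal; reducing that diagonal to a single real scalar then requires the non-vanishing of the $(n,n+2)$ and $(0,1)$ entries of $\ell({\cal D})$ (the latter because $|b_2|<1$, as in \eqref{eq:LC-bis}), an argument you only gesture at with ``chain through the non-null entries of $A$''. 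With these points supplied your matrix route does work, but it is considerably heavier than the paper's short functional argument through (ii).
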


\begin{proof}
The implications $(i)\Rightarrow(iii)$ and $(i)\Rightarrow(iv)$ are already proved because we know that, up to a positive rescaling, $(i)$ gives the relation \eqref{eq:chi-omega} between $\chi$ and $\omega$, and $A$ has the structure \eqref{eq:A-3band}.

\smallskip

\noindent \fbox{$(i)\Leftrightarrow(ii)$} 

Assuming $(i)$, we can suppose without loss of generality that $\chi=A\omega$ and $\ell\omega=A^+\chi$ by rescaling $\ell$ with a positive factor if necessary. Then, bearing in mind that any column (row) vector is a lower (upper) Hessenberg type matrix and using Proposition \ref{prop:asoc}, we find that $u\ell[\omega\omega^+] = u[\omega(\ell\omega)^+] = A^{-1}u[\chi\chi^+]A = I$. Due to the uniqueness of the orthonormality functional of $\omega$, we conclude that $v=u\ell$.

Suppose now that $v=u\ell$. Then, \eqref{eq:C} and Proposition \ref{prop:asoc} imply that $v[\chi\chi^+]=u[\ell\chi\chi^+]=\ell({\cal C})u[\chi\chi^+]=\ell({\cal C})$. This means that, assuming $u$ positive definite, $v$ is positive definite too iff ${\cal C}\in\mathscr{C}_\ell$. Therefore, the hypothesis of the theorem guarantee the existence of the Cholesky factorization $\ell({\cal C})=AA^+$. Also, $\chi=A\omega$ because, using again Proposition \ref{prop:asoc}, we get $v[(A^{-1}\chi)(A^{-1}\chi)^+]=A^{-1}v[\chi\chi^+](A^+)^{-1}=A^{-1}\ell({\cal C})(A^+)^{-1}=I$. Then, the previous results prove that $\ell\omega=A^+\chi$ and ${\cal D}=A^{-1}{\cal C}A$.  

\smallskip

\noindent \fbox{$(iii)\Rightarrow(ii)$}

Let us look for a solution $\ell(z)=\alpha z+\beta+\gamma z^{-1}$ of $u\ell=v$, once $(iii)$ is assumed. 

First, for any Laurent polynomial of degree not bigger than one, $(u\ell-v)[\chi_n]=0$ when $n\ge3$ because, from \eqref{eq:C} and the five-diagonal structure \eqref{eq:LC} of $\ell({\cal C})$, 
$$
\begin{aligned}
& \ell\chi_n = (\ell({\cal C})\chi)_n \in \spn\{\chi_{n-2},\dots,\chi_{n+2}\}
\;\Rightarrow\;
u\ell[\chi_n]=u[\ell\chi_n]=0, \quad n\ge3,
\\
& \chi_n\in\spn\{\omega_{n-2},\omega_{n-1},\omega_n\}
\;\Rightarrow\;
v[\chi_n]=0, \quad n\ge3.
\end{aligned}
$$
Hence, $u\ell-v$ vanishes on the orthogonal complement of $\LL_2$ with respect to $u$.

Second, there is a choice of $\ell$ such that $u\ell-v$ vanishes on $\LL_2$. Using the canonical basis $\eta^{(2)}=(1,z,z^{-1})^t$ and expressing $\ell=\eta^{(2)+}(\beta,\gamma,\alpha)^t$, the condition $(u\ell-v)[\eta^{(2)}]=0$ becomes the linear system $u[\eta^{(2)}\eta^{(2)+}](\beta,\gamma,\alpha)^t=v[\eta^{(2)}]$. Since $u$ is positive definite, $\det(u[\eta^{(2)}\eta^{(2)+}])>0$ and this system has a unique solution $(\alpha,\beta,\gamma)$. 

We conclude that $v=u\ell$ for a unique Laurent polynomial $\ell$ of degree not bigger than one. Since $u$ and $v$ are Hermitian, taking adjoints we find that $v=u\ell_*$. Therefore, the uniqueness of $\ell$ implies that $\ell=\ell_*$, so that $\gamma=\overline\alpha$. If $\alpha=0$, then $v$ is proportional to $u$, thus $\chi$ is proportional to $\omega$, which contradicts the hypothesis. Therefore, $\ell$ is a Hermitian polynomial of degree one.

\smallskip

\noindent \fbox{$(iv)\Rightarrow(ii)$} 

The condition $\ell\omega_n\in\spn\{\chi_n,\chi_{n+1},\chi_{n+2}\}$ implies that $u\ell[\omega_n]=u[\ell\omega_n]=0$ for $n\ge1$. 
Hence, $(uc\ell-v)[\omega_n]=0$ for $n\ge1$ and any $c\in\C$. Since $\ell\in\LL_2\setminus\spn\{\chi_1,\chi_2\}$, necessarily $u[\ell]\ne0$. Therefore, we can choose $c\in\R$ such that $uc\ell=v$ because $(uc\ell-v)[\omega_0]=0$ iff $c=v[1]/u[\ell]$, which is real because $u$, $v$ and $\ell$ are Hermitian.
\end{proof}

This theorem states that the Darboux transformations of CMV matrices correspond to the Laurent polynomial modifications of degree one $d\mu\overset{\ell}{\mapsto}\ell d\mu$ for measures on the unit circle. In the particular case $\ell(z)=(z-\zeta)(z^{-1}-\overline\zeta)$, $|\zeta|<1$, these are known as the Christoffel transformations on the unit circle \cite{DarHerMar,MarHer}, which preserve the whole set of positive definite functionals (i.e. $\mathscr{C}_\ell$ is the whole set of CMV matrices in this case) because such an $\ell$ is non-negative on the unit circle. Nevertheless, a general Laurent polynomial modification $d\mu\overset{\ell}{\mapsto}\ell d\mu$ preserves positive definiteness only when $\ell$ is non-negative on the support of $\mu$. The proof of $(i)\Leftrightarrow(ii)$ in Theorem \ref{thm:Darboux} uncovers a matrix characterization of such measures, which we enunciate separately. 

\begin{cor} \label{cor:pd}
Given a Hermitian Laurent polynomial $\ell$ and a positive definite functional $u$ with CMV matrix $\cal C$, 
$$
u\ell \text{ is positive definite} 
\;\Leftrightarrow\; 
\ell({\cal C}) \text{ is positive definite}. 
$$ 
\end{cor}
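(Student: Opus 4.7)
The plan is to extract the corollary directly from the Gram matrix computation that already appears in the proof of the implication $(i)\Leftrightarrow(ii)$ of Theorem~\ref{thm:Darboux}. The basic observation is that, since $u$ and $\ell$ are both Hermitian, the functional $v=u\ell$ is automatically Hermitian: $(u\ell)_*=u_*\ell_*=u\ell$. Hence positive definiteness of $v$ is equivalent to positive definiteness of its Gram matrix with respect to any basis of $\Lambda$.

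I would then choose the orthonormal zig-zag basis $\chi$ associated with $u$ as that reference basis. Using $u[\chi\chi^+]=I$ together with the eigenvalue-like relation $\ell\chi=\ell({\cal C})\chi$ obtained from $z\chi={\cal C}\chi$, and invoking Proposition~\ref{prop:asoc} to move $\ell({\cal C})$ (a lower Hessenberg type matrix) across the inner product brackets, I compute
\begin{equation*}
v[\chi\chi^+] = u[\ell\chi\chi^+] = \ell({\cal C})\,u[\chi\chi^+] = \ell({\cal C}).
\end{equation*}
By the characterization of positive definiteness recalled earlier in the paper -- namely that a Hermitian functional is positive definite iff the Gram matrix on a basis has all positive leading principal minors -- the positive definiteness of $v$ is equivalent to the positive definiteness of $\ell({\cal C})$.

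The only delicate point is the associativity step, since $\chi\chi^+$ is an infinite matrix obtained from a column vector times a row vector. This is handled by Proposition~\ref{prop:asoc}: $\ell({\cal C})$ is lower Hessenberg type, so $\ell({\cal C})(\chi\chi^+)=(\ell({\cal C})\chi)\chi^+$ is legitimate and $u$ may be applied entrywise to obtain the desired identity. No other obstacles arise, so the corollary follows in a few lines.
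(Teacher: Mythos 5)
Your proposal is correct and follows essentially the same route as the paper: the proof of $(i)\Leftrightarrow(ii)$ in Theorem~\ref{thm:Darboux} rests on exactly the identity $v[\chi\chi^+]=u[\ell\chi\chi^+]=\ell({\cal C})\,u[\chi\chi^+]=\ell({\cal C})$, justified by Proposition~\ref{prop:asoc}, and the corollary is read off from the Gram matrix characterization of positive definiteness just as you do (and, as the paper notes, nothing in this computation uses that $\ell$ has degree one).
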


Although the proof of this result given in Theorem \ref{thm:Darboux} was enunciated for the case of a degree one $\ell$, the proof for an arbitrary degree remains unchanged. This characterization is important when an unknown measure is determined only by its Schur parameters, which provide directly the corresponding CMV matrix (see \eqref{eq:CMV-schur}). 

The equivalence with Laurent polynomial modifications of measures gives information about the spectral behaviour of Darboux transformations. Let $\cal C$ be a CMV matrix with associated measure $\mu$ and consider the Hilbert space $L^2_\mu$ of $\mu$-square-integrable functions. Then, $\cal C$ can be understood as the representation of the unitary multiplication operator 
\begin{equation} \label{eq:Umu}
 \mathsf{U}_\mu \colon 
 \mathop{L^2_\mu \xrightarrow{\kern15pt} L^2_\mu}
 \limits_{\text{\footnotesize $f(z) \mapsto zf(z)$}}
\end{equation}
with respect to the orthonormal basis of $L^2_\mu$ given by a zig-zag basis $\chi$ of $\cal C$. Therefore, the spectrum of $\cal C$ coincides with that of $\mathsf{U}_\mu$, which is the support of $\mu$. The mass points are the corresponding eigenvalues, which are simple because their eigenvectors are spanned by the characteristic functions of the mass points \cite{CMV2005,Si2005}. On the other hand, every transformation $\mu\xmapsto{\ell}\nu$ preserves the support of $\mu$ up to the mass points located at the zeros of $\ell$, which do not appear in $\nu$. Therefore, we have the following spectral consequence of the previous theorem, which answers (Q2).

\begin{cor} \label{cor:spec}
Given a Hermitian Laurent polynomial $\ell$ of degree one, the mapping ${\cal C}\xmapsto{\ell}{\cal D}$ preserves the spectrum for every CMV matrix $\cal C\in\mathscr{C}_\ell$, with the exception of at most two points: The spectrum of $\cal D$ is obtained from that of $\cal C$ by excluding the eigenvalues which are zeros of $\ell$.    
\end{cor}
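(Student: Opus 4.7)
The plan is to reduce the spectral statement to a statement about supports of measures, via Theorem~\ref{thm:Darboux}. Concretely, the equivalence $(i)\Leftrightarrow(ii)$ in that theorem identifies the Darboux transformation ${\cal C}\xmapsto{\ell}{\cal D}$ with the Laurent polynomial modification $v=u\ell$ of the orthogonality functional, i.e.\ with the measure transformation $d\nu=\ell\,d\mu$ where $\mu$, $\nu$ are the measures on the unit circle corresponding to ${\cal C}$, ${\cal D}$. The spectral content of the corollary then becomes a comparison between $\supp\mu$ and $\supp\nu$.

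Next I would invoke the description of ${\cal C}$ as a matrix representation of the unitary multiplication operator $\mathsf{U}_\mu$ on $L^2_\mu$ (via an orthonormal zig-zag basis $\chi$ of $L^2_\mu$), and similarly ${\cal D}\sim\mathsf{U}_\nu$ on $L^2_\nu$. This is exactly the observation made in the paragraph just above the corollary, and it yields $\spec({\cal C})=\supp\mu$ and $\spec({\cal D})=\supp\nu$, with the mass points of each measure being the (simple) eigenvalues.

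I would then compare the two supports. Since ${\cal C}\in\mathscr{C}_\ell$, Corollary~\ref{cor:pd} guarantees that $v=u\ell$ is again positive definite, which forces $\ell\ge 0$ on $\supp\mu$. Writing $\ell(z)=\alpha z+\beta+\overline\alpha z^{-1}$ with $\alpha\ne 0$, the zeros of $\ell$ are the roots of $\alpha z^2+\beta z+\overline\alpha=0$, so $\ell$ has at most two zeros on the unit circle. For any point $\zeta\in\supp\mu$ which is not a zero of $\ell$, $\zeta$ clearly remains in $\supp\nu$, and conversely $\supp\nu\subseteq\supp\mu$ is obvious from $d\nu=\ell\,d\mu$. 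Hence $\supp\nu$ differs from $\supp\mu$ only possibly at the zeros of $\ell$, and at such a zero $\zeta$ the mass $\nu(\{\zeta\})=\ell(\zeta)\mu(\{\zeta\})=0$, so $\zeta\in\supp\nu$ only if $\zeta$ is an accumulation point of $\supp\mu$. In particular, every eigenvalue of ${\cal C}$ (i.e.\ isolated mass point of $\mu$) that is a zero of $\ell$ is removed when passing to ${\cal D}$.

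The main subtlety — and really the only thing beyond bookkeeping — is the last distinction: one must not claim that $\supp\nu$ is exactly $\supp\mu$ minus the zeros of $\ell$, but only that the difference is contained in the zero set of $\ell$, which consists of at most two points. I expect this to be the one spot requiring some care to state cleanly, but no genuine obstacle arises: once Theorem~\ref{thm:Darboux} and Corollary~\ref{cor:pd} are in place, the whole argument is purely spectral-theoretic and follows from the elementary behaviour of supports under multiplication by a non-negative Laurent polynomial with finitely many zeros.
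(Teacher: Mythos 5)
Your proposal is correct and takes essentially the same route as the paper: Theorem~\ref{thm:Darboux} converts the Darboux map into $d\nu=\ell\,d\mu$, the spectrum of a CMV matrix is identified with the support of its measure (mass points being the simple eigenvalues) via the multiplication operator $\mathsf{U}_\mu$, and the supports can differ only at the at most two zeros of $\ell$, where the mass is annihilated. The only quibble is your parenthetical equating eigenvalues of ${\cal C}$ with \emph{isolated} mass points of $\mu$ --- the eigenvalues of $\mathsf{U}_\mu$ are all mass points, isolated in $\supp\mu$ or not --- but this does not affect your conclusion and the paper's own discussion operates at the same level of precision.
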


In particular, ${\cal C}\xmapsto{\ell}{\cal D}$ is a isospectral transformation whenever $\ell$ has its zeros outside of the unit circle. Otherwise it is almost isospectral, in the sense that it preserves the spectrum up to finitely many points. More precisely, if $\ell$ has its zeros on the unit circle the only spectral changes that the transformation may produce is the elimination of one or two eigenvalues depending whether $\ell$ has one or two different zeros.    

Remember that the factor $B$ of general Darboux factorizations was only lower Hessenberg type and not necessarily invertible. However, in the case of the Darboux transformations for CMV, the factor $B=A^+$ is not only invertible but also 3-band and upper triangular, so it is an upper H-matrix. Therefore, in this case we can complete the relations \eqref{eq:LC-LD}, \eqref{eq:CA-AD}, \eqref{eq:CD-DC} with the additional ones
$$
{\cal C} = B^{-1}{\cal D}B, \qquad {\cal D} = B{\cal C}B^{-1}.
$$
Moreover, the relation between the factors $A$ and $B=A^+$ gives information about the left kernel of $A$ (the right one is trivial): $\ker_L(A)=\ker_R(B)^+$, where $\ker_R(B)$ is given by Proposition \ref{prop:kerB}.  

Question (Q5) refers to the explicit parametrization of CMV matrices given by \cite{CMV2003,Si2005,W1993} (our notation is related to that of \cite{Si2005} by $a_n=-\overline\alpha_{n-1}$, while we take the transposed of the CMV matrix primarily used in \cite{Si2005}),
\begin{equation} \label{eq:CMV-schur}
{\cal C} = 
\begin{pmatrix} 
	-\overline{a}_0a_1 & \overline{a}_0\rho_1
	\\
	-\rho_1a_2 & -\overline{a}_1a_2 & 
	-\rho_2a_3 & \rho_2\rho_3 
	\\
    \rho_1\rho_2 & \overline{a}_1\rho_2 & 
    -\overline{a}_2a_3 & \overline{a}_2\rho_3
    \\
    && -\rho_3a_4 & -\overline{a}_3a_4 &
    -\rho_4a_5 & \rho_4\rho_5
    \\
    && \rho_3\rho_4 & \overline{a}_3\rho_4 &
    -\overline{a}_4a_5 & \overline{a}_4\rho_5
    \\
    &&&& \cdots & \cdots & \cdots
\end{pmatrix},
\qquad
a_0=1.
\end{equation}
where $a_{n}\in\C$ are the so-called Schur parameters (or Verblunsky coefficients), which satisfy $|a_n|<1$ for $n\ge1$, and $\rho_{n}=\sqrt{1-|a_{n}|^{2}}$. Schur parameters establish a one-to-one correspondence between sequences in the open unit disk and CMV matrices, thus they parametrize also the measures on the unit circle up to normalization. The Schur parameters also determine the orthonormal polynomials $\varphi_n$ with respect to the corresponding functional $u\equiv d\mu$ via the forward and backward recurrence relations
\begin{equation} \label{eq:RR}
\begin{cases}
 \rho_n \varphi_n(z) = z \varphi_{n-1}(z) + a_n \varphi_{n-1}^*(z),
 \\[3pt]
 \rho_n z\varphi_{n-1}(z) = \varphi_n(z) - a_n \varphi_ n^*(z),
\end{cases}
\qquad \varphi_n^*(z)=z^n\varphi_{n*}(z),
\end{equation}
where $\varphi_n^*$ is known as the reversed polynomial of $\varphi_n$. As a consequence,
\begin{equation} \label{eq:e}
\varphi_n(z)=\kappa_n(z^n+\cdots+a_n),
\qquad \kappa_{n-1}=\rho_n\kappa_n,
\qquad \kappa_0=\frac{1}{\sqrt{u[1]}}.
\end{equation} 
The orthonormal polynomials (ONP) are connected to the orthonormal Laurent polynomials (ONLP) by the relations
\begin{equation} \label{eq:OLP-OP}
\chi_{2n}(z)= z^{-n}\varphi_{2n}^*(z), 
\qquad
\chi_{2n+1}(z)=z^{-n} \varphi_{2n+1}(z),
\end{equation}
which, combined with \eqref{eq:RR}, lead to 
\begin{equation} \label{eq:OLP*}
\rho_{2n}\chi_{2n} = \chi_{2n-1*} + \overline{a}_{2n}\chi_{2n-1},
\qquad
\rho_{2n}\chi_{2n-1} = \chi_{2n*} - a_{2n}\chi_{2n}.
\end{equation}

Before answering (Q5) let us fix a notation concerning the mapping ${\cal C}\overset{\ell}{\mapsto}{\cal D}$, which will be used in the rest of the paper. 
\begin{equation} \label{eq:table}
\renewcommand{\arraystretch}{1.5}
\begin{tabular}{|c|c|c|c|c|c|}
\hline
CMV & Functional & ONLP & ONP & Schur & 
\\ \hline
$\cal C$ & $u\equiv d\mu$ & $\chi_n$ & $\varphi_n$ & $a_n$ & 
$\begin{aligned}
&
\\[-15pt]
& \rho_n=\sqrt{1-|a_n|^2}
\\[-2pt]
& \kappa_n^{-1}=\rho_n\cdots\rho_1\sqrt{u[1]}
\\[-15pt]
&
\end{aligned}$
\\ \hline
$\cal D$ & $v\equiv d\nu$ & $\omega_n$ & $\psi_n$ & $b_n$ & 
$\begin{aligned}
&
\\[-15pt]
& \sigma_n=\sqrt{1-|b_n|^2} 
\\[-2pt]
& \lambda_n^{-1}=\sigma_n\cdots\sigma_1\sqrt{v[1]}
\\[-15pt]
&
\end{aligned}$
\\ \hline
\end{tabular}
\end{equation}
The coefficients of the factor $A$ will be denoted as  
\begin{equation} \label{eq:Aexplicit}
A = 
\begin{pmatrix} 
	r_0
	\\ 
	s_0 & r_1
	\\ 
	t_0 & \overline{s}_1 & r_2
	\\
	& \overline{t}_1 & s_2 & r_3
	\\
	&& t_2 & \overline{s}_3 & r_4
	\\
	&&& \overline{t}_3 & s_4 & r_5
	\\
	&&&& \ddots & \ddots & \ddots
\end{pmatrix},
\qquad 
r_n>0, \quad s_n\in\C, \quad t_n\in\C\setminus\{0\}.
\end{equation}
From the explicit parametrization \eqref{eq:CMV-schur} of $\cal C$ we see that, not only the upper and lower diagonals of the five-diagonal matrix $\ell({\cal C})$ are non-null, but its $(1,0)$ and $(0,1)$ matrix coefficients cannot vanish either, so that the structure \eqref{eq:LC} becomes 
\begin{equation} \label{eq:LC-bis}
\ell({\cal C}) = 
\left(
\begin{smallmatrix}
	* & \kern2pt \circledast & \kern2pt \circledast 
	\\[3pt]
   	\circledast & \kern2pt * & \kern2pt * & \kern1pt \circledast 
   	\\[3pt]
    \circledast & \kern2pt * & \kern2pt * & \kern1pt * & \circledast 
    \\[3pt]
   	& \kern2pt \circledast & \kern2pt * & \kern1pt * & * & \circledast 
	\\[3pt]
   	&& \kern2pt \circledast & \kern1pt * & * & * & \circledast
	\\[3pt]
    &&& \kern1pt \cdots & \cdots & \cdots & \cdots & \cdots
\end{smallmatrix}
\right).
\end{equation}
This implies that not only $t_n\ne0$ for all $n\ge0$, but also $s_0\ne0$.

Using \eqref{eq:OLP-OP}, the relations $\chi=A\omega$ and $\ell\omega=A^+\chi$ read as (taking the reverse of the equations for even $n$)
$$
\begin{aligned}
 & \varphi_{n}(z) = 
 \overline{t}_{n-2} z\psi_{n-2}(z) + s_{n-1} \psi_{n-1}^*(z) + 
 r_{n} \psi_{n}(z),
 \\
 & z\ell(z)\psi_{n}(z) = 
 r_{n} z\varphi_{n}(z) + s_{n} \varphi_{n+1}^*(z) + 
 t_{n} \varphi_{n+2}(z),
\end{aligned}
\kern15pt 
n\ge0,
\kern15pt 
t_k=s_k=0 \,\text{ if }\, k<0.
$$
If $\ell(z)=\alpha z+\beta+\overline\alpha z^{-1}$, identifying in the above equalities the terms of highest and lowest degree and using \eqref{eq:e} leads to
\begin{equation} \label{eq:a-b}
\begin{aligned}
 & r_n = \frac{\kappa_n}{\lambda_n}, 
 \qquad 
 t_n = \alpha\frac{\lambda_n}{\kappa_{n+2}} =
 \alpha\rho_{n+2}\frac{\rho_{n+1}}{r_n},
 \\[3pt]
 & b_{n} 
 = a_{n} - s_{n-1}\frac{\lambda_{n-1}}{\kappa_n}
 = a_{n} - s_{n-1}\frac{\rho_{n}}{r_{n-1}},
 \\[3pt]
 & \overline\alpha b_{n} 
 = \alpha a_{n+2} + s_{n} \frac{\kappa_{n+1}}{\lambda_{n}}
 = \alpha a_{n+2} + s_{n} \frac{r_{n}}{\rho_{n+1}}, 
\end{aligned}
 \qquad 
 n\ge0,
 \qquad 
 a_0=b_0=1.
\end{equation}
The equation of the second line is the translation to the Schur parameters of the Darboux transformations for CMV matrices. The last equation is an additional constraint that appears when fixing the Laurent polynomial $\ell$. It is worth noting that the identities of the first line lead to the simple relations
\begin{equation} \label{eq:r-t}
 \frac{r_{n-1}}{r_n} = \frac{\rho_n}{\sigma_n},
 \qquad\qquad 
 \frac{t_{n-1}}{t_n} = \frac{\sigma_n}{\rho_{n+2}}.
\end{equation}
Besides, combining the two last equations in \eqref{eq:a-b} we get
\begin{equation} \label{eq:D=dS}
 b_n - a_n = 
 \frac{\lambda_{n-1}^2}{\kappa_n^2} (\alpha a_{n+1}-\overline\alpha b_{n-1}),
\end{equation}
a nonlinear recurrence relation connecting directly the Schur parameters $a_n$ and $b_n$. Actually, given $a_n$, \eqref{eq:D=dS} generates inductively $b_n$ starting from $b_0=1$.

The factorization $\ell({\cal C})=AA^+$, explicitly written in terms of \eqref{eq:Aexplicit}, reads as
\begin{equation} \label{eq:direct} 
\left\{
\begin{aligned}
& |t_{n-2}|^2+|s_{n-1}|^2+r_n^2 = 
\beta-2\re(\alpha\overline{a}_na_{n+1}),
\\
& s_{n-1}\overline{t}_{n-1}+r_ns_n = 
\rho_{n+1}(\overline\alpha a_n-\alpha a_{n+2}),
\\
& r_nt_n = \alpha\rho_{n+1}\rho_{n+2},
\end{aligned}
\right.
\qquad 
n\ge0,
\end{equation}
which, starting from the initial conditions $t_{-2}=t_{-1}=s_{-1}=0$, determines inductively the coefficients $r_n$, $s_n$, $t_n$ for $n\ge0$. This shows that, if a solution of \eqref{eq:direct} exists, it is unique. The existence of such a solution requires the positivity of $\beta-2\re(\alpha\overline{a}_na_{n+1})-|s_{n-1}|^2-|t_{n-2}|^2$ at every induction step, and it is equivalent to the positive definiteness of $\ell({\cal C})$ (see Appendix~\ref{app:cholesky}).
 
\section{Inverse Darboux for CMV and Geronimus transformations}
\label{sec:Ger}

In this section we will study the `inverse' of the Darboux transformations for CMV matrices introduced in the previous section. More precisely, given a Hermitian Laurent polynomial $\ell$ of degree one and a CMV matrix $\cal D$, we will search for the CMV matrices $\cal C$ which are transformed into $\cal D$ by the mapping ${\cal C}\overset{\ell}{\mapsto}{\cal D}$. In view of Theorem  \ref{thm:Darboux}, this amounts to characterizing the CMV matrices $\cal C$ of the positive measures $\mu$ satisfying $\ell d\mu=d\nu$, where $\nu$ is a measure associated with $\cal D$. Corollary \ref{cor:spec} implies that the inverse Darboux transformations are almost isospectral, since they preserve the spectrum up to the addition of at most two eigenvalues.

In contrast to Darboux transformations, the inverse Darboux transformations can have no solution or multiple solutions. For instance, if $\ell(z)=(z-\zeta)(z^{-1}-\overline\zeta)$, $|\zeta|=1$, no measure $\mu$ on the unit circle solves the equation $\ell d\mu=d\vartheta$, where $d\vartheta(e^{i\theta})=d\theta/2\pi$ is the Lebesgue measure. On the other hand, under the same choice of $\ell$, if $\ell d\mu=d\nu$, then $\ell(d\mu+m\delta_\zeta)=d\nu$ for any positive mass $m$, where $\delta_\zeta$ is the Dirac delta at $\zeta$.

The parametric structure of the set of solutions for the inverse Darboux transformations depends on the localization of the zeros $\zeta_1$, $\zeta_2$ of $\ell$ with respect to the unit circle. Due to the hermiticity of $\ell$, the transformation $\zeta\to\overline\zeta^{-1}$ leaves invariant the set of zeros of $\ell$. Hence, given a measure $\nu$ on the unit circle and a Hermitian Laurent polynomial $\ell$, we have the following possibilities for the solutions $\mu$ of $\ell d\mu=d\nu$:

\begin{itemize}
\item Zeros outside of the unit circle: $\zeta_1=\zeta=\overline\zeta_2^{-1}$, $|\zeta|<1$.

Then, $\ell(z)=c|z-\zeta|^2$, $c\in\R\setminus\{0\}$, for $|z|=1$, so $\ell d\mu=d\nu$ is solved by a unique positive measure $d\mu=d\nu/\ell$ or by no one depending whether $c>0$ or $c<0$. 

\item Zeros on the unit circle: $\zeta_1=e^{i\theta_1}$, $\zeta_2=e^{i\theta_2}$.

Then, $\ell(z)=c\sin(\frac{\theta-\theta_1}{2})\sin(\frac{\theta-\theta_2}{2})$, $c\in\R\setminus\{0\}$, for $z=e^{i\theta}$. Thus, depending on the integrability and positivity of $d\nu/\ell$, either no positive measure solves $\ell d\mu=d\nu$, or there are infinitely many solutions 
$$
\begin{aligned}
& d\mu=\frac{d\nu}{\ell}+m\delta_{\zeta}, 
& &  m\ge0,
& & \text{ if } \zeta_1=\zeta=\zeta_2,
\\
& d\mu=\frac{d\nu}{\ell}+m_1\delta_{\zeta_1}+m_2\delta_{\zeta_2}, 
& & m_1,m_2\ge0, 
& & \text{ if } \zeta_1\ne\zeta_2,
\end{aligned}
$$
parametrized by one or two real parameters, namely, the masses of $\mu$ at $\zeta_1$, $\zeta_2$.

\end{itemize}

The first of these cases is closely related to the Geronimus transformations on the unit circle \cite{GarHerMar,GarMar2}, defined by 
$$
d\mu=\frac{d\nu}{\ell}+m\delta_\zeta+\overline{m}\delta_{\overline{\zeta}^{-1}},
\qquad
\ell(z)=(z-\zeta)(z^{-1}-\overline\zeta), \qquad |\zeta|<1, \qquad m\in\C.
$$ 
Starting from a positive measure $\nu$ on the unit circle, the Geronimus transformations yield in general a complex measure $\mu$ which generates a Hermitian linear functional $u[f]=\int fd\mu$ in $\Lambda$. Since positive definite functionals are represented by measures supported on the unit circle, this functional is positive definite iff $m=0$, which is the case covered by the inverse Darboux transformations. Geronimus transformations can provide quasi-definite functionals $u$ for $m\ne0$, but the study of these cases in the setting of Darboux transformations requires their generalization to quasi-CMV matrices, i.e. to the matrices playing the role of CMV in the case of quasi-definite functionals on the unit circle (see Section~\ref{ssec:quasi}).

Summarizing, the inverse Darboux transformation of a CMV matrix $\cal D$ leads to a set 
$$
\mathscr{C}_\ell({\cal D}) := 
\{{\cal C} \text{ CMV} : {\cal C}\overset{\ell}{\mapsto}{\cal D}\}
\subset \mathscr{C}_\ell
$$
which can be eventually empty, and whose elements can be naturally labelled by at most two real parameters. These elements $\cal C$ must satisfy the identities \eqref{eq:LC-LD}, \eqref{eq:CA-AD}, \eqref{eq:CD-DC}, and their orthonormal zig-zag bases $\chi$ must be related to an orthonormal zig-zag basis $\omega$ of $\cal D$ by \eqref{eq:chi-omega}. 

According to the results of the previous section, a matrix procedure to obtain the inverse Darboux transforms $\mathscr{C}_\ell({\cal D})$ of $\cal D$ starts by performing a reversed Cholesky factorization $\ell({\cal D})=A^+A$, $A\in\mathscr{T}$, which implies that $A$ must have the 3-band structure \eqref{eq:A-3band}. It is possible to prove that, similarly to the standard Cholesky factorization, the reversed one is possible iff ${\cal D}\in\mathscr{C}_\ell$ while, in contrast to the case of finite matrices, reversed Cholesky factorizations of infinite matrices are not unique in general (see Appendix~\ref{app:cholesky}). This is in agreement with the fact that $\mathscr{C}_\ell({\cal D})$ can have more than one element. For each of these reversed factorizations we can obtain a matrix ${\cal C}=A{\cal D}A^{-1}\in[{\cal S}]$, and any solution of the inverse Darboux transformation should be obtained in this way. Since we expect in general a parametric solution for this matrix procedure, we will refer to it as the {\sl Darboux transformation with parameters} corresponding to the Hermitian polynomial $\ell$.

Let us analyze in detail the Darboux transformation with parameters. Suppose ${\cal D}\in\mathscr{C}_\ell$ with a zig-zag basis $\omega$, and consider a reversed Cholesky factorization $\ell({\cal D})=A^+A$, $A\in\mathscr{T}$. Defining the zig-zag basis $\chi=A\omega$ and the matrix $B$ by $\ell\omega=B\chi$ leads to $B\chi=\ell\omega=\ell({\cal D})\omega=A^+A\omega=A^+\chi$, so that \eqref{eq:chi-omega} holds and $B=A^+$ due to the linear independence of $\chi$. Then, we know by Proposition \ref{prop:D-fac} that \eqref{eq:LC-LD}, \eqref{eq:CA-AD} and \eqref{eq:CD-DC} are valid for the matrix ${\cal C}\in[{\cal S}]$ related to $\chi$. Hence, $\cal C \in \mathscr{C}_\ell({\cal D})$ as long as it is a CMV matrix. However, up to now there is no reason to assume that $\cal C$ is CMV. Bearing in mind Corollary \ref{cor:CMVdef}, to prove this we only need to show that $\cal C$ is unitary, which in view of Proposition \ref{prop:uni-H} is equivalent to the identity ${\cal C}{\cal C}^+=I$.

Trying for $\cal C$ a proof similar to that of the unitarity of $\cal D$ for the direct Darboux transformations would start from the identities ${\cal C}^+A=A{\cal D}^+$ and ${\cal C}=A{\cal D}A^{-1}$ in \eqref{eq:CA-AD} and \eqref{eq:CD-DC}. However, the temptation to use the first of these identities to write ${\cal C}^+=A{\cal D}^+A^{-1}$ fails because, using Proposition \ref{prop:asoc}, we can only deduce that
\begin{equation} \label{eq:no-unit-1}
({\cal C}^+A)A^{-1}=(A{\cal D}^+)A^{-1}=A{\cal D}A^{-1},
\end{equation}
since $A$ and ${\cal D}^+$ are band matrices. The associativity of the first term is not guaranteed by Proposition \ref{prop:asoc} because $A$ is band but $A^{-1}$ is lower Hessenberg type and ${\cal C}^+$ is upper Hessenberg type (we do not know yet if $\cal C$ is a CMV!). 

An attempt to overcome this problem resorts to the direct use of the two identities in \eqref{eq:CA-AD}, together with Proposition \ref{prop:asoc}, as follows
\begin{equation} \label{eq:no-unit-2}
({\cal C}{\cal C}^+)A = {\cal C}({\cal C}^+A) = {\cal C}(A{\cal D}^+) = 
({\cal C}A){\cal D}^+ = (A{\cal D}){\cal D}^+ = A({\cal D}{\cal D}^+) = A.
\end{equation}
However, the same associativity problem found in \eqref{eq:no-unit-1} reappears now when trying to prove that ${\cal C}{\cal C}^+=I$ using \eqref{eq:no-unit-2}. The only conclusion from \eqref{eq:no-unit-2} is that the rows of ${\cal C}{\cal C}^+-I$ belong to $\ker_L(A)=\ker_R(A^+)^+$, which is known to be non trivial by Proposition \ref{prop:kerB}. Since ${\cal C}{\cal C}^+-I$ is Hermitian, this is equivalent to state that their columns lie on $\ker_R(A^+)$, or using the notation of Proposition \ref{prop:kerB}, 
\begin{equation} \label{eq:no-unit}
{\cal C}{\cal C}^+ = I + K,
\qquad 
K \in \mathscr{K}_{A^+} = \{K=K^+ : A^+K=0\}. 
\end{equation}

There is reason for the failure of the previous attempts to prove the unitarity of ${\cal C}$: not all the reversed Cholesky factorizations $\ell({\cal D})=A^+A$, $A\in\mathscr{T}$, lead to a unitary ${\cal C}=A{\cal D}A^{-1}$. In other words, the Darboux transformation with parameters not only yields the set $\mathscr{C}_\ell({\cal D})$ of inverse Darboux transforms of $\cal D$, but it also gives spurious solutions ${\cal C}\in[{\cal S}]$ which are not CMV. 

Take for instance $\nu=\vartheta$ as the Lebesgue measure and $\ell(z)=(z-\zeta)(z^{-1}-\overline\zeta)$, $|\zeta|=1$. Then ${\cal D}={\cal S}$ and there is no solution $\mu$ of $\ell d\mu=d\nu$, i.e. $\mathscr{C}_\ell({\cal S})=\emptyset$. However, since $\ell d\vartheta$ is a positive measure on the unit circle, we know by Corollary \ref{cor:pd} that ${\cal S}\in\mathscr{C}_\ell$ and, hence, there exist reversed Cholesky factorizations $\ell({\cal S})=A^+A$, $A\in\mathscr{T}$, leading to spurious solutions ${\cal C}=A{\cal D}A^{-1}$. Actually, the functionals of these spurious solutions can be even non-Hermitian. This will be illustrated with an explicit example later on.

The appearance of spurious solutions can be made apparent by rewriting explicitly the reversed factorization $\ell({\cal D})=A^+A$ in terms of \eqref{eq:Aexplicit} and $\ell(z)=\alpha z+\beta+\overline\alpha z^{-1}$,
\begin{equation} \label{eq:inverse}
\left\{
\begin{aligned}
& r_n^2+|s_n|^2+|t_n|^2 = 
\beta-2\re(\alpha\overline{b}_nb_{n+1}),
\\
& s_nr_{n+1}+t_ns_{n+1} = 
\sigma_{n+1}(\overline\alpha b_n-\alpha b_{n+2}),
\\
& t_nr_{n+2} = \alpha\sigma_{n+1}\sigma_{n+2},
\end{aligned}
\right.
\qquad n\ge0.
\end{equation}
In contrast to \eqref{eq:direct}, these relations do not determine the coefficients $r_n$, $s_n$, $t_n$. Actually, expressing \eqref{eq:inverse} as
\begin{equation} \label{eq:inverse2}
\begin{gathered}
r_{n+2} = |\alpha|
\frac{\sigma_{n+1}\sigma_{n+2}}{\tau_n},
\quad
s_{n+1} = \frac{\overline\alpha}{|\alpha|} 
\frac{\sigma_{n+1}(\overline\alpha b_n-\alpha b_{n+2})-s_nr_{n+1}}{\tau_n},
\quad 
t_n = \frac{\alpha}{|\alpha|}\tau_n,
\\[2pt] 
\tau_n = \sqrt{\beta-2\re(\alpha\overline{b}_nb_{n+1})-r_n^2-|s_n|^2},
\end{gathered}
\end{equation}
shows that $r_n$, $s_n$, $t_n$ are determined inductively for $n\ge0$ by the three parameters $r_0$, $s_0$, $r_1$, which enclose the freedom in the reversed factorization of $\ell({\cal D})$. Nevertheless, this does not mean that any choice of $r_0$, $s_0$, $r_1$ should give such a reversed factorization, a fact that depends on the positivity of $\beta-2\re(\alpha\overline{b}_nb_{n+1})-r_n^2-|s_n|^2$ at every induction step. The existence of a solution of \eqref{eq:inverse} for at least one choice of the ``free" parameters $r_0$, $s_0$, $r_1$ is equivalent to state that $\ell({\cal D})$ is positive-definite (see Appendix~\ref{app:cholesky}). 

Therefore, the general solution of a Darboux transformation with parameters depends on two real ($r_0$, $r_1$) and one complex ($s_0$) initial conditions, i.e. four real parameters. This is in striking contrast with the corresponding inverse Darboux transforms, which depend on at most two real parameters. Of course, this counting is indicative of the amount of spurious solutions only up to initial conditions $r_0$, $s_0$, $r_1$ giving no reversed factorization $\ell({\cal D})=A^+A$.

Before presenting an explicit example of a spurious solution, we will give a characterization which allows us to distinguish the CMV solutions from the spurious ones with a few calculations. Such a characterization is based on Proposition \ref{prop:kerB} and some results concerning the hermiticity of the solutions $u$ of $u\ell=v$. 

\begin{lem} \label{lem:herm}
Let $\ell$ be a Hermitian Laurent polynomial of degree one and $v$ a Hermitian linear functional in $\Lambda$ with $v[1]\ne0$. Then, a linear functional solution $u$ of $u\ell=v$ is Hermitian iff $u[l_{i*}]=\overline{u[l_i]}$ for a basis $l_0,l_1$ of $\LL_1$. 
\end{lem}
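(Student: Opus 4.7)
The forward implication is immediate from the definition of a Hermitian functional. For the nontrivial direction, I would set $w := u - u_*$ and aim to show $w = 0$. The first step is to check that $u_*$ also solves the equation: from $(u\ell)_* = u_*\ell_*$ (a direct consequence of the definitions of the substar operations), combined with $\ell_* = \ell$ and $v_* = v$, one gets $u_*\ell = (u\ell)_* = v_* = v$, so $w\ell = 0$. Equivalently, $w$ annihilates the subspace $\ell\Lambda \subset \Lambda$.

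The decisive structural fact is then $\Lambda = \LL_1 + \ell\Lambda$. Writing $\ell(z) = \alpha z + \beta + \overline{\alpha}z^{-1}$ with $\alpha \neq 0$, the containment $z^n\ell \in \ell\Lambda$ gives
$$\alpha z^{n+1} + \beta z^n + \overline{\alpha}\, z^{n-1} \in \ell\Lambda, \qquad n\in\Z.$$
Since $\alpha,\overline{\alpha}\ne 0$, for each $n$ one can solve this relation either for $z^{n+1}$ (in terms of $z^n,z^{n-1}$) or for $z^{n-1}$ (in terms of $z^n,z^{n+1}$) modulo $\ell\Lambda$. Iterating these two-term recursions in both directions from the base $\{1,z\}\subset\LL_1$ shows that every monomial $z^m$, $m\in\Z$, lies in $\LL_1 + \ell\Lambda$, hence so does all of $\Lambda$.

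Combining the two steps, $w=0$ on $\Lambda$ iff $w=0$ on $\LL_1$, which, by linearity, is equivalent to $w[l_0]=w[l_1]=0$ for the given basis. The elementary identity
$$w[l_i] \;=\; u[l_i] - u_*[l_i] \;=\; u[l_i] - \overline{u[l_{i*}]}$$
identifies $w[l_i]=0$ precisely with the hypothesis $u[l_{i*}]=\overline{u[l_i]}$, yielding $u = u_*$. The only step with any subtlety is the complementarity $\Lambda = \LL_1 + \ell\Lambda$: it depends essentially on the hypothesis that $\ell$ has degree exactly one, so that $\alpha\ne 0$ makes the recursion invertible in both directions, and the appearance of the two-dimensional space $\LL_1$ reflects that $\ell$ has $N_z=2$ zeros counted with multiplicity, giving $\ell\Lambda$ codimension $2$ in $\Lambda$.
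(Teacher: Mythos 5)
Your proof is correct and follows essentially the same route as the paper's: hermiticity of $u$ on $\ell\Lambda$ is extracted from $u\ell=v$ with $\ell,v$ Hermitian (your reformulation via $w=u-u_*$ and the observation that $u_*$ solves the same equation is just a repackaging of the paper's direct computation $u[(\ell f)_*]=\overline{u[\ell f]}$), and this is combined with hermiticity on $\LL_1$ through the decomposition $\Lambda=\LL_1+\ell\Lambda$. The only difference is that you spell out, via the two-sided recursion using $\alpha\ne0$, the splitting $\Lambda=\LL_1\oplus\ell\Lambda$ that the paper invokes without proof.
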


\begin{proof}
The hermiticity of $u$ on a basis of $\LL_1$, necessary for its hermiticity on $\Lambda$, is equivalent to its hermiticity on $\LL_1$. On the other hand, the relation $u\ell=v$, together with the hermiticity of $v$ and $\ell$, leads to
$$
u[(\ell f)_*] = u[\ell f_*] = v[f_*] = \overline{v[f]} = \overline{u[\ell f]}, 
\qquad f\in\Lambda,
$$
which means that $u$ is Hermitian in the subspace $\ell\Lambda$ of $\Lambda$. This, combined with the hermiticity of $u$ in $\LL_1$, implies its hermiticity on $\Lambda=\LL_1\oplus \ell\Lambda$.
\end{proof}

The previous lemma has the following consequences concerning the hermiticity of the functionals associated with the solutions of the Darboux transformations with parameters.

\begin{prop} \label{prop:herm}
Let $\ell$ be a Hermitian Laurent polynomial of degree one, ${\cal D}\in\mathscr{C}_\ell$ with Schur parameters $b_n$, $\sigma_n=\sqrt{1-|b_n|^2}$ and $v$ a related functional. Consider an arbitrary solution ${\cal C}\in[{\cal S}]$ of the corresponding Darboux transformation with parameters, i.e. ${\cal C}=A{\cal D}A^{-1}$ with $\ell({\cal D})=A^+A$, $A\in\mathscr{T}$. If $u$ is the functional of a zig-zag basis $\chi$ related to $\cal C$, then $u\ell=v$ up to a positive rescaling of $\ell$. Besides, $u$ is Hermitian iff any of the following equivalent conditions is satisfied:
\begin{itemize}
\item[(i)] $u[z^{-1}]=\overline{u[z]}$.
\item[(ii)] $u[\chi_{1*}]=0$.
\item[(iii)] $\chi_{1*}\in\spn\{\chi_1,\chi_2\}$.
\item[(iv)] The matrix $A$ has the form \eqref{eq:Aexplicit} with
\begin{equation} \label{eq:det=0}
 r_0^2 = \beta - 2\re(\alpha a), \qquad a = b_1+\frac{s_0}{r_1}\sigma_1.
\end{equation}
\end{itemize}
\end{prop}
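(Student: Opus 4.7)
\medskip\noindent\textbf{Proof proposal.} The plan is to first establish the identity $u\ell=v$ so that Lemma~\ref{lem:herm} becomes applicable, and then chain the equivalences $(i)\Leftrightarrow(ii)\Leftrightarrow(iii)\Leftrightarrow(iv)$, reserving serious calculation only for the last one.

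To obtain $u\ell=v$, I would exploit the fact that the setup of Darboux with parameters forces $\chi=A\omega$ and $\ell\omega=A^+\chi$. Because $A$ is lower triangular with $A_{0,k}=r_0\delta_{k,0}$, and $\chi_0,\omega_0$ are positive real constants with $\chi_0=r_0\omega_0$ (so $\omega_{0*}=\chi_{0*}/r_0$), an admissible-product computation gives
$$
(u\ell)[\omega_n\omega_{0*}]
=\sum_k\overline{A_{k,n}}\,u[\chi_k\omega_{0*}]
=r_0^{-1}\sum_k\overline{A_{k,n}}\,\delta_{k,0}
=\delta_{n,0},
$$
which is exactly the characterizing condition for the functional $v$ of $\omega$. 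This gives $u\ell=v$, up to a positive scalar coming from the residual rescaling freedom of the zig-zag bases.

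With $v[1]>0$, Lemma~\ref{lem:herm} applied to the basis $\{1,z\}$ of $\LL_1$ reduces hermiticity of $u$ to $u[1]\in\R$ (automatic since $\chi_0>0$ gives $u[1]=\chi_0^{-2}$) together with $u[z^{-1}]=\overline{u[z]}$, which is $(i)$. For $(i)\Leftrightarrow(ii)$ I would write $\chi_1=a+bz$ with $b>0$; the orthogonality $u[\chi_1\chi_{0*}]=0$ forces $u[z]=-au[1]/b$, so that
$$
u[\chi_{1*}]=\overline{a}\,u[1]+b\,u[z^{-1}]=b\bigl(u[z^{-1}]-\overline{u[z]}\bigr),
$$
and $b\ne0$ yields the equivalence. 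For $(ii)\Leftrightarrow(iii)$ the key identity is $u[\chi_j]=\chi_0^{-1}\delta_{j,0}$: expanding $\chi_{1*}\in\LL_2=\spn\{\chi_0,\chi_1,\chi_2\}$ makes $u[\chi_{1*}]$ equal to $\chi_0^{-1}$ times the $\chi_0$-coefficient, so its vanishing is precisely $\chi_{1*}\in\spn\{\chi_1,\chi_2\}$.

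The main obstacle will be $(ii)\Leftrightarrow(iv)$, where the abstract condition must be converted into the explicit relation between $r_0$, $s_0$, $r_1$, $b_1$ and $\alpha$, $\beta$. Starting from $\chi_1=s_0\omega_0+r_1\omega_1$ and the identity $\omega_{1*}=\sigma_2\omega_2-\overline{b_2}\omega_1$ obtained from \eqref{eq:OLP*} applied to $\omega$, taking the $*$-conjugate gives
$$
\chi_{1*}=\overline{s_0}\,\omega_0-r_1\overline{b_2}\,\omega_1+r_1\sigma_2\,\omega_2.
$$
Translating to the $\chi$ basis via the first column of $A^{-1}$, which is easily read off from the $3\times3$ leading block of $A$, the $\chi_0$-coefficient of $\chi_{1*}$ becomes
$$
c_0=r_0^{-1}\Bigl[\,\overline{s_0}+\overline{b_2}\,s_0+(\sigma_2/r_2)(\overline{s_1}\,s_0-t_0r_1)\,\Bigr].
$$
At this point I would substitute the three relations of \eqref{eq:inverse} at $n=0$: $t_0r_2=\alpha\sigma_1\sigma_2$, $s_0r_1+t_0s_1=\sigma_1(\overline{\alpha}-\alpha b_2)$ and $r_0^2+|s_0|^2+|t_0|^2=\beta-2\re(\alpha b_1)$. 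Using the second to eliminate $\overline{s_1}$ and the first to eliminate $t_0$, the $\overline{b_2}\,s_0$ terms cancel; multiplying by $\overline{\alpha}$ reduces $c_0=0$ to
$$
|s_0|^2+|t_0|^2=2\,\re\!\bigl(\alpha s_0\sigma_1/r_1\bigr),
$$
which, combined with the third relation, is exactly $r_0^2=\beta-2\re(\alpha a)$ with $a=b_1+(s_0/r_1)\sigma_1$, namely $(iv)$. The real subtlety lies in verifying that this algebraic collapse actually occurs when the three $n=0$ factorization identities are used in concert; everything else is bookkeeping from the zig-zag structure, the relations \eqref{eq:chi-omega}, and the definition of the functional of a zig-zag basis.
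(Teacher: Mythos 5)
Your proposal is correct and follows essentially the same route as the paper: establish $u\ell=v$ from $\chi=A\omega$, $\ell\omega=A^+\chi$ and the defining condition of the functional of a zig-zag basis, invoke Lemma~\ref{lem:herm} for the hermiticity characterizations, reduce $(ii)\Leftrightarrow(iii)$ to the $\chi_0$-coefficient of $\chi_{1*}$, and obtain $(iv)$ by expanding $\chi_{1*}=\overline{s}_0\omega_0-r_1\overline{b}_2\omega_1+r_1\sigma_2\omega_2$ and using the $n=0$ relations of \eqref{eq:inverse} (the paper phrases this as a $3\times3$ determinant and substitutes \eqref{eq:inverse2}, which is the same computation). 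I checked the algebraic collapse you flagged as the main risk: eliminating $\overline{s}_1$ and $t_0$ does cancel the $\overline{b}_2 s_0$ terms and yields $|s_0|^2+|t_0|^2=2\re(\alpha s_0\sigma_1/r_1)$, which with the third relation gives exactly \eqref{eq:det=0}.
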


\begin{proof}
Assume the hypothesis of the statement and let $\omega$ be the zig-zag basis which is orthonormal with respect to $v$. Using Proposition \ref{prop:asoc} we find that
$
{\cal C}A\omega = A{\cal D}A^{-1}A\omega = A{\cal D}\omega = zA\omega,
$
which means that $A\omega$ is a zig-zag basis of $\cal C$, so a positive rescaling of $\ell$ yields $\chi=A\omega$ and $\ell\omega=A^+\chi$. Since $A$ has the structure \eqref{eq:Aexplicit}, from the latter relation we conclude that $\ell\omega_n\in\spn\{\chi_n,\chi_{n+1},\chi_{n+2}\}$ and the functional $u$ of $\chi$ satisfies $u[\ell\omega_0]=r_0u[\chi_0]>0$ because $\chi_0$ is a positive constant and, by definition, $u[\chi_n\chi_{0*}]=\delta_{n,0}$. As a consequence, $u[\ell]>0$ and $(uc\ell-v)[\omega_n] = cu[\ell\omega_n]-v[\omega_n] = 0$ for $n\ge1$ and any $c\in\C$. Since $(uc\ell-v)[\omega_0]=0$ iff $c=v[1]/u[\ell]$, we get $uc\ell=v$ for such a positive value of $c$.

The characterizations of the hermiticity of $u$ given in $(i)$ and $(ii)$ follow from the previous result, Lemma \ref{lem:herm} and the fact that  $u[\chi_1]=0$ and $u[\chi_0]=u[\chi_{0*}]>0$ for the functional $u$ of $\chi$. 

Since $u[\chi_n]$ is non-zero only for $n=0$, the relation $\chi_{1*}\in\spn\{1,z^{-1}\}\subset\LL_2=\spn\{\chi_0,\chi_1,\chi_2\}$ shows that $u[\chi_{1*}]=0$ iff $\chi_{1*}\in\spn\{\chi_1,\chi_2\}$. This proves the equivalence $(ii)\Leftrightarrow(iii)$. 

As for the equivalence $(iii)\Leftrightarrow(iv)$, note that $\chi_{1*}\in\spn\{\chi_1,\chi_2\}$ iff $\{\chi_1,\chi_2,\chi_{1*}\}$ is linearly dependent because $\{\chi_1,\chi_2\}$ is linearly independent. From \eqref{eq:OLP*} we know that $\sigma_2\omega_2=\omega_{1*}+\overline{b}_2\omega_1$, which, combined with $\chi=A\omega$, provides the expansion
$$
\chi_{1*} = \overline{s}_0\omega_0+r_1\omega_{1*} =
\overline{s}_0\omega_0-r_1\overline{b}_2\omega_1+r_1\sigma_2\omega_2.
$$
Since $\chi_1=s_0\omega_0+r_1\omega_1$ and $\chi_2=t_0\omega_0+\overline{s}_1\omega_1+r_2\omega_2$, the linear dependence of $\{\chi_1,\chi_2,\chi_{1*}\}$ reads as  
$$
\left|
\begin{matrix}
s_0 & r_1 & 0
\\
t_0 & \overline{s}_1 & r_2
\\
\overline{s}_0 & -r_1\overline{b}_2 & r_1\sigma_2 
\end{matrix}
\right|
=0.
$$
Introducing the expressions of $t_0$, $s_1$, $r_2$ in terms of $r_0$, $s_0$, $r_1$ given in \eqref{eq:inverse2}, this determinantal condition becomes $(iv)$ after direct algebraic manipulations. 
\end{proof}

We are now ready to prove the characterization of the CMV solutions for the Darboux transformations with parameters.

\begin{thm} \label{thm:spurious}
Let $\ell$ be a Hermitian Laurent polynomial of degree one, ${\cal D}\in\mathscr{C}_\ell$ with Schur parameters $b_n$, $\sigma_n=\sqrt{1-|b_n|^2}$ and $v$ a related functional. Consider a reversed Cholesky factorization $\ell({\cal D})=A^+A$, $A\in\mathscr{T}$, the corresponding solution ${\cal C}=A{\cal D}A^{-1}$ of the Darboux transformation with parameters and a related zig-zag basis $\chi$. Then, $\cal C$ is CMV iff any of the following equivalent conditions is satisfied:
\begin{itemize}
\item[(i)] The first two rows of $\cal C$ constitute an orthonormal system, i.e. the leading submatrix of ${\cal C}{\cal C}^+$ of order 2 is the identity.
\item[(ii)] There exists a linear functional $u$ which solves $u\ell=v$, up to a positive rescaling of $\ell$, and such that $u[\chi^{(1)}\chi^{(1)+}]$ is the identity, where $\chi^{(1)}=(\chi_0,\chi_1)^t$.
\item[(iii)] The functional $u$ of $\chi$ satisfies $u[\chi_{1*}]=0$ and $u[\chi_1\chi_{1*}]=1$.
\item[(iv)] The basis $\chi$ satisfies $\chi_{1*}\in\spn\{\chi_1,\chi_2\}$ and
$$
(\chi_1/\chi_0)(z)=\rho^{-1}(z+a),
\qquad
|a|<1, \qquad \rho=\sqrt{1-|a|^2}.
$$
\item[(v)] If $\ell(z)=\alpha z+\beta+\overline\alpha z^{-1}$, the matrix $A$ has the form \eqref{eq:Aexplicit} with 
\begin{equation} \label{eq:CMVcond} 
 \kern-15pt 
 r_0^2 = \beta-2\re(\alpha a),
 \kern15pt 
 a = b_1 + \frac{s_0}{r_1}\sigma_1,
 \kern15pt 
 r_1 = r_0\frac{\sigma_1}{\rho}, 
 \kern15pt
 \begin{aligned} 
 	& |a|<1, 
 	\\ 
 	& \rho=\sqrt{1-|a|^2}.
 \end{aligned}
\end{equation}
\end{itemize}
Moreover, the parameter $a$ in $(iv)$ and $(v)$ is the first Schur parameter $a_1$ of $\cal C$.
\end{thm}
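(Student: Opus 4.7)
The plan is to prove the central equivalence ``$\cal C$ is CMV $\Leftrightarrow$ (i)'' first, and then derive (ii)--(v) as different repackagings of (i) in functional, basis, and matrix-coefficient language respectively.

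For ``$\cal C$ CMV $\Leftrightarrow$ (i)'': since ${\cal C}=A{\cal D}A^{-1}\in[{\cal S}]$ is automatic from the construction, Corollary~\ref{cor:CMVdef} combined with Proposition~\ref{prop:uni-H} reduces the CMV property to the identity ${\cal C}{\cal C}^+=I$. The analysis in the preamble has already established ${\cal C}{\cal C}^+=I+K$ with $K=K^+$ and $A^+K=0$, so $\cal C$ is CMV iff $K=0$. Now $\chi=A\omega$ gives $\ell\omega=A^+\chi$, hence Proposition~\ref{prop:kerB} applies with $B=A^+$ and $N_z=2$ (a Hermitian Laurent polynomial of degree one has two zeros counting multiplicity): $K$ vanishes iff its leading $2\times 2$ submatrix does, which is exactly (i).

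For (ii)--(v), Proposition~\ref{prop:herm} is the workhorse. It already yields the equivalences among $u[\chi_{1*}]=0$, $\chi_{1*}\in\spn\{\chi_1,\chi_2\}$, and $r_0^2=\beta-2\re(\alpha a)$ with $a=b_1+s_0\sigma_1/r_1$, together with the fact that the functional $u$ of $\chi$ satisfies $u\ell=v$ up to a positive rescaling of $\ell$. This takes care of the $\chi_{1*}$ clauses of (iii), (iv) and the first clause of (v). For (ii)$\Leftrightarrow$(iii): $\chi_0$ is a positive constant normalized by $u[\chi_0\chi_{0*}]=1$, and $u[\chi_1\chi_{0*}]=0$ holds by definition of $u$, so the $2\times 2$ block $u[\chi^{(1)}\chi^{(1)+}]$ equals $I$ iff $u[\chi_{1*}]=0$ and $u[\chi_1\chi_{1*}]=1$; the existential quantifier in (ii) is not weaker, since the two already-satisfied conditions together with $u\ell=v$ pin down $u$ as the functional of $\chi$.

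It then remains to match the normalization $u[\chi_1\chi_{1*}]=1$ with the remaining clauses of (iv) and (v). Writing $\chi_1=c_0+c_1 z$ with $c_1>0$, the orthogonality $u[\chi_1\chi_{0*}]=0$ forces $c_0=-c_1 u[z]/u[1]$, and Hermiticity of $u$ supplies $u[z^{-1}]=\overline{u[z]}$; a short direct computation then yields $u[\chi_1\chi_{1*}]=|c_1|^2(u[1]^2-|u[z]|^2)/u[1]$. Demanding this equal $1$ and using $\chi_0=1/\sqrt{u[1]}$ produces $\chi_1/\chi_0=\rho^{-1}(z+a)$ with $a=-u[z]/u[1]$ and $\rho=\sqrt{1-|a|^2}$, the bound $|a|<1$ being automatic from $c_1\in\R_{>0}$. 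To pass to (v), decompose $\chi_0=r_0\omega_0$, $\chi_1=s_0\omega_0+r_1\omega_1$ via $\chi=A\omega$, and use the CMV identity $\omega_1/\omega_0=\sigma_1^{-1}(z+b_1)$ coming from \eqref{eq:RR}--\eqref{eq:OLP-OP}; matching coefficients of $z^0$ and $z^1$ in $\chi_1/\chi_0=\rho^{-1}(z+a)$ recovers $r_1=r_0\sigma_1/\rho$ and $a=b_1+s_0\sigma_1/r_1$. The ``moreover'' clause $a=a_1$ follows at once: once $\cal C$ is known to be CMV with first Schur parameter $a_1$, \eqref{eq:e} and \eqref{eq:OLP-OP} give $\chi_1/\chi_0=\rho_1^{-1}(z+a_1)$, forcing $a=a_1$. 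The chief obstacle is precisely the non-associativity issue flagged in the preamble: the $N_z=2$ reduction of Proposition~\ref{prop:kerB} is what converts the otherwise intractable infinite-dimensional unitarity check into a finite $2\times 2$ condition, separating CMV from spurious solutions.
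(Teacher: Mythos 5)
Your reduction of ``${\cal C}$ is CMV'' to condition $(i)$ coincides with the paper's argument (Corollary~\ref{cor:CMVdef}, Proposition~\ref{prop:uni-H}, the identity ${\cal C}{\cal C}^+=I+K$ with $K\in\mathscr{K}_{A^+}$, and the second statement of Proposition~\ref{prop:kerB} with $B=A^+$, $N_z=2$), and your coefficient computations tying $u[\chi_1\chi_{1*}]=1$ to $(\chi_1/\chi_0)(z)=\rho^{-1}(z+a)$ and then, via $\chi=A\omega$ and $\omega_1/\omega_0=\sigma_1^{-1}(z+b_1)$, to $r_1=r_0\sigma_1/\rho$ and $a=b_1+s_0\sigma_1/r_1$, are correct. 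The genuine gap is that you never prove that any of $(ii)$--$(v)$ implies that ${\cal C}$ is CMV. What your argument actually delivers is the internal chain $(ii)\Leftrightarrow(iii)\Leftrightarrow(iv)\Leftrightarrow(v)$, phrased through the functional $u$ of $\chi$ and the entries of $A$; this chain is never connected to $(i)$ or to ${\cal C}{\cal C}^+=I$, so the announced ``repackaging of $(i)$'' is not carried out. The heart of the theorem is exactly that these finitely many conditions force the full Gram identity $u[\chi\chi^+]=I$. In the paper this is done by deriving $u[\chi\chi^+]=I+K$ with $A^+K=0$ from $u\ell=v$ (using $\chi=A\omega$, $\ell\omega=A^+\chi$ and Proposition~\ref{prop:asoc}), then using the order-two block conditions together with Lemma~\ref{lem:herm} to conclude $K=K^+\in\mathscr{K}_{A^+}$, and finally invoking the second statement of Proposition~\ref{prop:kerB} to get $K=0$. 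Proposition~\ref{prop:herm}, which you call the workhorse, only characterizes hermiticity of $u$, and the paper stresses that hermiticity is strictly weaker than the CMV property (there are spurious solutions with Hermitian functionals, e.g.\ those with $|a|\ge1$ or $r_1\rho\ne r_0\sigma_1$), so it cannot by itself separate the CMV solutions from the spurious ones.

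The same omission resurfaces inside your $(ii)\Leftrightarrow(iii)$ step: the assertion that $u[\chi_0\chi_{0*}]=1$, $u[\chi_1\chi_{0*}]=0$ together with $u\ell=v$ ``pin down $u$ as the functional of $\chi$'' is unjustified as stated. A solution of $u\ell=v$ (up to a positive rescaling of $\ell$) is determined by its values on $\LL_1$ plus the rescaling constant, and identifying it with the functional of $\chi$ requires knowing in addition that $u[\chi_n\chi_{0*}]=0$ for all $n\ge2$; that is again precisely the content of $u[\chi\chi^+]=I$, i.e.\ of the $I+K$ and Proposition~\ref{prop:kerB} argument you omit, so the step is circular. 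To repair the proof, establish $(ii)\Rightarrow{\cal C}$ CMV by the kernel argument above, and close the cycle with the easy implications ${\cal C}$ CMV $\Rightarrow(iii)\Rightarrow(ii)$ and the equivalences with $(iv)$ and $(v)$, into which your explicit computations can then be fed.
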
 

\begin{proof}
From Proposition \ref{prop:uni-H} and Corollary \ref{cor:CMVdef}, the identity ${\cal C}{\cal C}^+=I$ is equivalent to the unitarity of $\cal C$ and thus to stating that it is CMV. Therefore, the equivalence with $(i)$ follows directly from \eqref{eq:no-unit} and the second statement of Proposition \ref{prop:kerB}, bearing in mind that $B=A^+$ and the number of zeros of $\ell$ is $N_z=2$ in the present case. 

Also, $\cal C$ is CMV iff $u[\chi\chi^+]=I$ for a linear functional $u$ in $\Lambda$, and in this case Theorem \ref{thm:Darboux} states that $u\ell=v$ up to a positive rescaling of $\ell$, thus $u$ satisfies $(ii)$. To prove the converse, consider the zig-zag basis $\omega$ which is orthonormal with respect to $v$. As in the proof of Proposition \ref{prop:herm}, $\chi=A\omega$ and $\ell\omega=A^+\chi$ up to a positive rescaling of $\ell$. Suppose that $u$ is an arbitrary linear functional solution of $u\ell=v$. Using Proposition \ref{prop:asoc} we get
$
A^+ = u\ell[\omega\omega^+] A^+ = 
u[\ell\omega(A\omega)^+] = A^+u[\chi\chi^+],
$
so that
\begin{equation} \label{eq:uK} 
u[\chi\chi^+] = I + K, \qquad A^+K=0.
\end{equation}
If $u[\chi^{(1)}\chi^{(1)+}]$ is the identity, then $u[\chi_0]=u[\chi_{0*}]>0$ and $u[\chi_1]=0=u[\chi_{1*}]$. This guarantees the hermiticity of $u$ due to Lemma \ref{lem:herm}. As a consequence, $K=K^+$, i.e. $K\in\mathscr{K}_{A^+}$. Then, \eqref{eq:uK} and the second statement of Proposition \ref{prop:kerB} ensure that $u[\chi\chi^+] = I$ whenever the leading submatrix of $u[\chi\chi^+]$ of order 2, which is $u[\chi^{(1)}\chi^{(1)+}]$, is the identity.

Concerning the equivalence with $(iii)$, note that a linear functional $u$ satisfying $u[\chi\chi^+]=I$ is necessarily the functional of $\chi$, which therefore must satisfy $u[\chi_{1*}]=0$ and $u[\chi_1\chi_{1*}]=1$ when $\cal C$ is unitary. Conversely, if the functional $u$ of $\chi$ satisfies these conditions, then $u[\chi^{(1)}\chi^{(1)+}]$ is the identity because by definition $u[\chi_n\chi_{0*}]=\delta_{n,0}$. Proposition \ref{prop:herm} ensures that the functional $u$ of $\chi$ solves $u\ell=v$ up to a positive rescaling of $\ell$, thus we conclude that $u$ satisfies $(ii)$ and hence $\cal C$ is CMV. 

As for the equivalence with $(iv)$, let us write explicitly $(\chi_1/\chi_0)(z)=\kappa(z+a)$, $\kappa>0$, $a\in\C$. If $\cal C$ is CMV then, from ${\cal C}\chi=z\chi$ and \eqref{eq:CMV-schur}, we get $a=a_1$ and $\kappa=\rho_1^{-1}$. 
Besides, \eqref{eq:OLP*} gives $\rho_2\chi_2=\chi_{1*}+\overline{a}_2\chi_1$, so $\chi_{1*}\in\spn\{\chi_1,\chi_2\}$. For the converse, note that the functional $u$ of $\chi$ satisfies $u[\chi_1]=u[\chi_2]=0$, thus $u[\chi_{1*}]=0$ under the condition $\chi_{1*}\in\spn\{\chi_1,\chi_2\}$. This condition also implies that $u[\chi_1\chi_{1*}]=\kappa^2(1-|a|^2)$ because
$
\chi_1\chi_{1*} = \chi_0\chi_{0*} \kappa^2 (1+|a|^2+\overline{a}z+az^{-1}) =
\chi_0\chi_{0*}\kappa^2(1-|a|^2) + 
\kappa\overline{a}\chi_1\chi_{0*} + \kappa a\chi_0\chi_{1*}
$
and $u[\chi_n\chi_{0*}]=\delta_{n,0}$. Bearing in mind the equivalence with $(iii)$, this proves that $\cal C$ is CMV under the hypothesis given in $(iv)$.

Finally, let us prove the equivalence with $(v)$. If $\omega$ is the orthonormal basis with respect to $v$ we know that $\chi=A\omega$ is a zig-zag basis of $\cal C$ and $\ell\omega=A^+\chi$. If $\cal C$ is CMV, using Proposition \ref{prop:asoc} we get $u\ell[\omega\omega^+] = u[\omega(\ell\omega)^+] = A^{-1}u[\chi\chi^+]A = I$, so $v=u\ell$. From ${\cal C}\chi=z\chi$ and \eqref{eq:CMV-schur} we find that $\chi_1=\chi_0\rho_1^{-1}(z+a_1)$, thus
$
\ell\chi_0\chi_{0*} = \alpha\rho_1\chi_1\chi_{0*} + \overline\alpha\chi_0\chi_{1*} + (\beta-2\re(\alpha a_1))\chi_0\chi_{0*}.
$
Since $\chi_0=r_0\omega_0$, we get $r_0^2=v[\chi_0\chi_{0*}]=u[\ell\chi_0\chi_{0*}]=\beta-2\re(\alpha a_1)$, which proves the first identity of $(v)$ for $a=a_1$. The rest of the identities follow from \eqref{eq:a-b}. Let us prove now the converse. The Schur parametrization \eqref{eq:CMV-schur} of $\cal D$ combined with ${\cal D}\omega=z\omega$ yields $-b_1\omega_0+\sigma_1\omega_1=z\omega_0$. Therefore, assuming $(v)$, $\chi=A\omega$ yields
$$
\chi_0 = r_0\omega_0,
\qquad
\chi_1 = s_0\omega_0+r_1\omega_1 = 
\frac{r_0}{\rho} (a\omega_0-b_1\omega_0+\sigma_1\omega_1) = 
\chi_0\frac{z+a}{\rho},
$$
which gives the second condition in $(iv)$. To prove that $\cal C$ is CMV we only need to show additionally that $\chi_{1*}\in\spn\{\chi_1,\chi_2\}$. This is equivalent to \eqref{eq:det=0}, which coincides with part of the conditions given in $(v)$. 
\end{proof} 

Proposition \ref{prop:herm}.$(iv)$ and Theorem \ref{thm:spurious}.$(v)$ provide characterizations of those reversed factorizations $\ell({\cal D})=A^+A$ leading respectively to Hermitian or positive definite functionals, via the Darboux transformation with parameters. Both characterizations are in the same spirit, i.e. they are given as restrictions on the initial parameters $r_0$, $s_0$, $r_1$ determining the matrix factor $A$. The (hermiticity) positive definiteness restrictions have the effect of reducing from four to (three) two the number of ``free" real parameters. Of course, not any choice of the parameters $r_0$, $s_0$, $r_1$ satisfying such restrictions leads necessarily to a reversed factorization $\ell({\cal D})=A^+A$, so the only conclusion is that the solutions of the Darboux transformation with parameters leading to (Hermitian) positive definite functionals are parametrized by at most (three) two real parameters. This is in agreement with our previous discussion based on the interpretation of inverse Darboux transformations in terms of measures on the unit circle. We will refer to \eqref{eq:det=0} and \eqref{eq:CMVcond} as the hermiticity and the CMV conditions respectively. Note that the hermiticity condition \eqref{eq:det=0} coincides with the first part of the CMV conditions \eqref{eq:CMVcond}. Therefore, the spurious solutions leading to Hermitian functionals are characterized by satisfying \eqref{eq:det=0}, but not the rest of the CMV conditions \eqref{eq:CMVcond}, i.e. either $|a|\ge1$ or $r_1\rho \ne r_0\sigma_1$.

When the CMV conditions are taken into account, the Darboux transformations with parameters can be iterated because the set $\mathscr{C}_\ell({\cal D})$ of CMV solutions is a subset of $\mathscr{C}_\ell$, the class of CMV matrices $\cal C$ which allow for a reversed Cholesky factorization of $\ell({\cal C})$ (see Appendix~\ref{app:cholesky}). 

We will finish this section illustrating in a detailed example the coexistence of CMV and spurious solutions of the Darboux transformation with parameters. To develop the example it will be useful to rewrite the relations \eqref{eq:a-b} from the perspective of the inverse Darboux transformations, that is,
\begin{equation} \label{eq:b-a}
\begin{gathered}
 r_n=\frac{\kappa_n}{\lambda_n}, 
 \qquad 
 t_n=\alpha\sigma_{n+1}\frac{\sigma_{n+2}}{r_{n+2}},
 \\[3pt]
 a_{n} = b_{n} + s_{n-1}\frac{\sigma_{n}}{r_n},
 \qquad
 \alpha a_{n+2} = \overline\alpha b_{n} - s_{n}\frac{r_{n+1}}{\sigma_{n+1}}, 
\end{gathered}
\qquad 
n\ge0.
\end{equation}

\begin{ex} \label{ex:PD-spurious} 
{\it Darboux transformation with parameters for $\ell$ given by $\alpha=-1$, $\beta=2$,
$$
\ell(z)=2-z-z^{-1}=(1-z)(1-z^{-1}),
$$
and ${\cal D}$ with constant positive Schur parameters $b_n=b\in(0,1)$ for $n\ge1$, 
$$
{\cal D} = 
\left(
\begin{smallmatrix} 
	-b & \kern1pt \sigma
	\\[3pt]
	-\sigma b & \kern1pt -b^2 & 
	\kern1pt -\sigma b & \kern1pt \sigma^2 
	\\[3pt]
    \sigma^2 & \kern1pt \sigma b & 
    \kern1pt -b^2 & \kern1pt \sigma b
    \\[3pt]
    & & \kern1pt -\sigma b & \kern1pt -b^2 &
    \kern1pt -\sigma b & \kern3pt \sigma^2
    \\[3pt]
    & & \kern1pt \sigma^2 & \kern1pt \sigma b &
    \kern1pt -b^2 & \kern3pt \sigma b
    \\[3pt]
    & & & & \kern1pt -\sigma b & \kern1pt -b^2 &
    \kern1pt -\sigma b & \kern3pt \sigma^2
    \\[3pt]
    & & & & \kern1pt \sigma^2 & \kern1pt \sigma b &
    \kern1pt -b^2 & \kern3pt \sigma b
    \\[3pt]
    & & & & & & \kern1pt \cdots & \kern3pt \cdots & \kern6pt \cdots
\end{smallmatrix}
\right),
\qquad
\sigma=\sqrt{1-b^2}.
$$} 

For simplicity, we will study only the solutions with $s_1=0$, a choice which fixes two of the four real parameters describing the solutions. Then, the relations \eqref{eq:inverse2} giving inductively the coefficients of the matrix $A$ can be written as
\begin{equation} \label{eq:inv-ex}
\begin{gathered}
s_0 = -\frac{\sigma}{r_1}(1-b),
\qquad
r_0^2 = 2(1+b)-s_0^2-\frac{\sigma^4}{r_2^2} 
= 2(1+b)-\frac{\sigma^2}{r_1^2}(1-b)^2-\frac{\sigma^4}{r_2^2},
\\
s_n = 0,
\qquad
r_{n+2}^2 = \frac{\sigma^4}{2(1+b^2)-r_n^2},
\qquad
t_{n-1} = -\frac{\sigma^2}{r_{n+1}}, 
\qquad 
n\ge1. 
\end{gathered}
\end{equation}
These relations have been expressed using $r_1$, $r_2$, instead of $r_0$, $r_1$, as the remaining two free parameters. This change is suggested by the role of $r_1^2$, $r_2^2$ as initial conditions of two identical recurrence relations, namely
\begin{equation} \label{eq:rec-x}
x_{n+1} = \frac{\sigma^4}{2(1+b^2)-x_n}, 
\qquad
x_n = 
\begin{cases} 
r_{2n+1}^2, 
\\ 
\kern9pt \text{\footnotesize or} 
\\ 
r_{2n+2}^2, 
\end{cases}
\quad
n\ge0.
\end{equation}
The recurrence relation with initial condition $r_1^2$ ($r_2^2$) yields the coefficients $r_n$, hence also $t_{n-2}=-\sigma^2/r_n$, for any odd (even) index $n$. 

Among the possible initial conditions $x_0=r_1^2,r_2^2>0$, those generating solutions of the Darboux transformation with parameters are characterized by the fact that \eqref{eq:inv-ex} gives $x_n=r_{2n+1}^2,r_{2n+2}^2>0$ for $n\ge0$, as well as by the condition 
\begin{equation} \label{eq:cond-r0}
\frac{\sigma^2}{r_1^2}(1-b)^2+\frac{\sigma^4}{r_2^2} < 2(1+b)
\end{equation} 
guaranteeing that $r_0^2>0$.

Due to the positivity requirement of $x_n$, the initial condition must satisfy $0<x_0<2(1+b^2)=\xi$. The recurrence relation \eqref{eq:rec-x} has two fixed points, $\xi_\pm=(1\pm b)^2<\xi$, which provide its constant positive solutions $x_n=\xi_\pm$. As for the non-constant solutions $x_n$, a quick analysis of their positivity follows from the interpretation of \eqref{eq:rec-x} as the implementation of Newton's method to the function $h$ given by
$$
h(x) =  \frac{|x-\xi_-|^{\frac{\xi_+}{4b}}}{|x-\xi_+|^{\frac{\xi_-}{4b}}}.
$$
The typical behaviour of $h$ is represented in Figure \ref{fig:h}. It is a non-negative concave function with a zero at $\xi_-$, a divergence at $\xi_+$ and a local minimum at $\xi=2(1+b^2)$. This guarantees that $x_n$ converges monotonically to $\xi_-$ whenever $x_0<\xi_+$, being strictly increasing if $x_0<\xi_-$ and strictly decreasing if $\xi_-<x_0<\xi_+$. On the other hand, every $x_0>\xi_+$ leads eventually to $x_n>\xi$ for some $n$, thus no such a choice yields a positive solution of \eqref{eq:rec-x}. Therefore, the positive solutions $x_n$ of \eqref{eq:rec-x} are those associated with an initial condition $x_0\in(0,\xi_+]$, and, up to the fixed point $x_0=\xi_+$, all these solutions converge monotonically to $\xi_-$.

From these results, and bearing in mind the restriction \eqref{eq:cond-r0}, we conclude that the solutions of the Darboux transformation with parameters corresponding to $s_1=0$ are given by \eqref{eq:inv-ex} for any choice of $r_1$, $r_2$ with the constraints
\begin{equation} \label{eq:sol-ex}
0 < r_1,r_2 \le 1+b, 
\qquad
\frac{1-b}{r_1^2}+\frac{1+b}{r_2^2} < \frac{2}{(1-b)^2}.
\end{equation}
 
\begin{figure}
\begin{center}
\includegraphics[width=12.5cm]{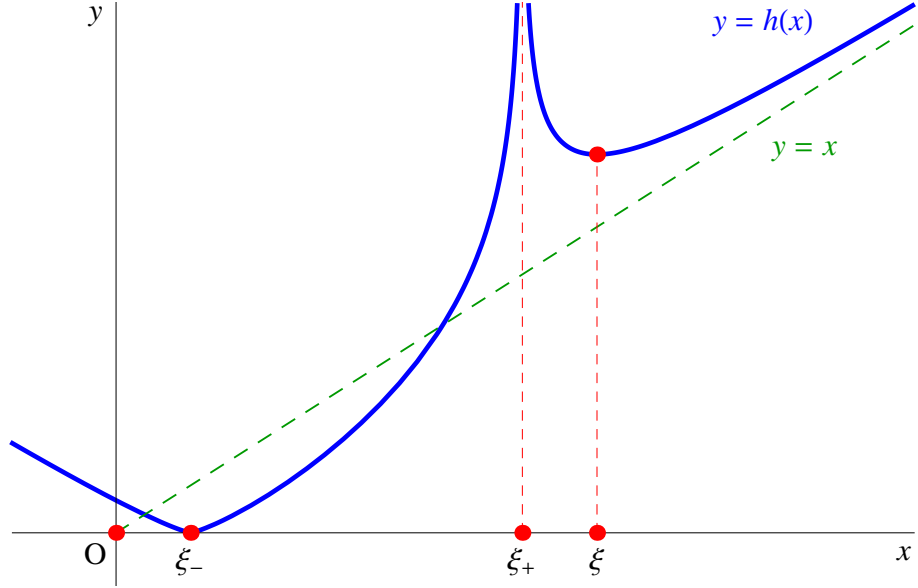}
\end{center}
\caption{The function $h(x)$ whose Newton's algorithm yields the recurrence for the sequence $x_n$ given by \eqref{eq:rec-x}. It is a non-negative concave function with a zero at $\xi_-=(1-b)^2$, a divergence at $\xi_+=(1+b)^2$ and a local minimum at $\xi=2(1+b^2)$.} \label{fig:h}
\end{figure}

Let us discuss now the restrictions imposed on these solutions by the hermiticity and CMV conditions. According to \eqref{eq:det=0}, the solutions associated with an Hermitian linear functional are characterized by
\begin{equation} \label{eq:herm-ex}
r_0^2 = 2(1+a), 
\qquad
a = b - \frac{\sigma^2(1-b)}{r_1^2}.
\end{equation} 
Taking into account the expression of $r_0$ in terms of $r_1$, $r_2$ given in \eqref{eq:inv-ex}, the hermiticity condition becomes $r_1 = r_2$. In view of \eqref{eq:sol-ex}, the solutions of the Darboux transformations leading to Hermitian functionals are given by \eqref{eq:inv-ex} with
$$
r_1=r_2=r \in (1-b,1+b],
$$
and the related value of $r_0$ is
$$
r_0^2 = 2(1+b) \left(1-\frac{(1-b)^2}{r^2}\right).
$$ 

The above solutions include the CMV ones which, according to \eqref{eq:CMVcond}, are characterized by the additional conditions $|a|<1$ and $r\rho=r_0\sigma$, $\rho=\sqrt{1-a^2}$. Using \eqref{eq:herm-ex} we obtain
$$
|a|<1 
\;\Leftrightarrow\; 
r > 1-b,
\qquad\qquad
r\rho = r_0\sigma
\;\Leftrightarrow\;
r = 1+b.
$$
Therefore, the only CMV solution with $s_1=0$ is that one obtained from \eqref{eq:inv-ex} with
$$
r_1=r_2=1+b,
$$ 
which corresponds to constant coefficients $r_n=1+b$ for $n\ge1$, associated with the fixed point $\xi_+$ of \eqref{eq:rec-x}.

From \eqref{eq:herm-ex}, \eqref{eq:inv-ex} and \eqref{eq:b-a} we find the corresponding Schur parameters
$$
a_n = 
\begin{cases}
\displaystyle \frac{3b-1}{1+b}, & n=1,
\\ 
b, & n\ge2,
\end{cases}
$$
which provide the referred CMV solution ${\cal C}=A{\cal D}A^{-1}$ of the Darboux transformation with parameters, 
$$
{\cal C} = 
\left(
\begin{smallmatrix} 
	-a & \kern1pt \rho
	\\[3pt]
	-\rho b & \kern1pt -ab & 
	\kern1pt -\sigma b & \kern1pt \sigma^2 
	\\[3pt]
    \rho\sigma & \kern1pt \sigma a & 
    \kern1pt -b^2 & \kern1pt \sigma b
    \\[3pt]
    & & \kern1pt -\sigma b & \kern1pt -b^2 &
    \kern1pt -\sigma b & \kern3pt \sigma^2
    \\[3pt]
    & & \kern1pt \sigma^2 & \kern1pt \sigma b &
    \kern1pt -b^2 & \kern3pt \sigma b
    \\[3pt]
    & & & & \kern1pt -\sigma b & \kern1pt -b^2 &
    \kern1pt -\sigma b & \kern3pt \sigma^2
    \\[3pt]
    & & & & \kern1pt \sigma^2 & \kern1pt \sigma b &
    \kern1pt -b^2 & \kern3pt \sigma b
    \\[3pt]
    & & & & & & \kern1pt \cdots & \kern3pt \cdots & \kern6pt \cdots
\end{smallmatrix}
\right),
\qquad
\begin{aligned}
& a=\frac{3b-1}{1+b}, 
\\[2pt]
& \rho=\sqrt{1-a^2}=\frac{2}{1+b}\sqrt{b(1-b)}.
\end{aligned}
$$
The matrix $A$, coming from a reversed Cholesky factorization $\ell({\cal D})=A^+A$, and giving the Cholesky factorization $\ell({\cal C})=AA^+$, has the form
$$
A = 
\left(
\begin{smallmatrix} 
	\\
	r_0
	\\[3pt] 
	s_0 & \kern3pt r
	\\[3pt] 
	t & \kern3pt 0 & \kern5pt r
	\\[3pt]
	& \kern3pt t & \kern5pt 0 & \kern2pt r
	\\[3pt]
	& \kern3pt & \kern5pt t & \kern2pt 0 & r
	\\[-3pt]
	& \kern3pt & \kern5pt 
	& \kern2pt \scriptsize\ddots 
	& \scriptsize\ddots
	& \scriptsize\ddots
\end{smallmatrix}
\right),
\qquad
\begin{aligned}
& r_0 = 2\sqrt{\frac{b}{1+b}}, & \quad & s_0=-\sigma\frac{1-b}{1+b},
\\[4pt]
& r=1+b, & & t=-(1-b).
\end{aligned}
$$

The CMV matrix $\cal D$ is related to an absolutely continuous measure $\nu$ supported on the arc $\Gamma=\{e^{i\theta}:|\sin\frac{\theta}{2}|\ge b\}$ given by
$$
d\nu(e^{i\theta}) = w(\theta)\,d\theta,
\qquad
w(\theta) = \frac{\sqrt{\sin^2\frac{\theta}{2}-b^2}}{|\sin\frac{\theta}{2}|},
\qquad
e^{i\theta}\in\Gamma.
$$
Since $\ell(z)=(z-1)(z^{-1}-1)$ has a single zero at $z=1$, the above CMV solution $\cal C$ of the Darboux transformation with parameters must be associated with a measure $d\mu=d\nu/\ell+m\delta_1$ for some $m\ge0$. 

To analyze the value of the mass $m$, let us have a look at the Schur function $f$ of $\mu$. The Schur function $f$ has an analytic continuation through the essential gap $\{e^{i\theta}:|\sin\frac{\theta}{2}|<b\}$, and $z=1$ is a mass point of $\mu$ iff it is a solution of $zf(z)=1$, i.e. $f(1)=1$. On the other hand, Geronimus' theorem asserts that the application of the Schur algorithm to $f$ generates the sequence $(-a,-b,-b,-b,\dots)$. Analogously, the Schur function $g$ of $\nu$ is characterized by the constant sequence $(-b,-b,-b,\dots)$ arising from the Schur algorithm. Therefore, $g$ is obtained from $f$ after a single step of the Schur algorithm,
$$
g(z) = \frac{1}{z} \frac{f(z)+a}{1+af(z)}.
$$
This relation implies that $f(1)=1$ iff $g(1)=1$, which is not possible because 1 is not a mass point of $\nu$. We conclude that $m=0$ and 
$$
d\mu(e^{i\theta}) = \frac{w(\theta)}{|e^{i\theta}-1|^2}\,d\theta
= \frac{w(\theta)}{2\sin^2\frac{\theta}{2}}\,d\theta, 
\qquad
e^{i\theta}\in\Gamma.
$$
In other words, the arc $\Gamma$ is the common spectrum of $\cal C$ and $\cal D$, which are isospectral.   

The spurious solutions can be also explicitly described. For instance, the choice $s_1=0$, $r_1=1-b$, $r_2=1+b$ satisfies \eqref{eq:sol-ex} and yields $s_n=0$, $r_{2n-1}=r_1$, $r_{2n}=r_2$ for $n\ge1$, so that
$$
A = 
\left(
\begin{smallmatrix} 
	\\
	r_0
	\\[3pt] 
	s_0 & \kern-1pt r_1
	\\[3pt] 
	-r_1 & \kern-1pt 0 & \kern-1pt r_2
	\\[3pt]
	& \kern-1pt -r_2 & \kern-1pt 0 & \kern1pt r_1
	\\[3pt]
	& \kern-1pt & \kern-1pt -r_1 & \kern1pt 0 & \kern2pt r_2
	\\[-3pt]
	& \kern-1pt & \kern-1pt  	
	& \kern1pt \scriptsize\ddots 
	& \kern2pt \scriptsize\ddots 
	& \kern2pt \scriptsize\ddots
\end{smallmatrix}
\right),
\qquad
\begin{aligned}
& r_0 = 2\sqrt{b}, & \quad & s_0=-\sigma,
\\[4pt]
& r_1=1-b, & & r_2=1+b.
\end{aligned}
$$
Since $r_1 \ne r_2$, the corresponding solution ${\cal C}=A{\cal D}A^{-1}$ is related to a non-Hermitian functional. Explicitly,
$$
{\cal C} =
\left(
\begin{smallmatrix}
 1 &  \sigma \sqrt{b} \frac{2}{1-b} & 0 & 0 & 0 & 0 & \cdots
 \\[2pt]
 \sigma \sqrt{b} \frac{2b}{1+b} & b & -\sigma b \frac{1-b}{1+b} 
 & \sigma^2 & 0 & 0 & \cdots 
 \\[2pt]
 \sqrt{b}\frac{2(1+b^2)}{1-b} & \sigma \frac{b^2+4b-1}{(1-b)^2} & -b^2 
 & \sigma b \frac{1+b}{1-b} & 0 & 0 & \cdots
 \\[2pt]
 \sigma \sqrt{b} \frac{4b}{(1+b)^2} & 0 & \sigma b \frac{b^2+4b-1}{(1+b)^2} 
 & -b^2 & -\sigma b \frac{1-b}{1+b} & \sigma^2 & \cdots
 \\[2pt]
 \sqrt{b} \frac{4b(1+b)}{(1-b)^2} & \sigma \frac{8b^2}{(1-b)^3} & \sigma^2 
 & \sigma b \frac{1+4b-b^2}{(1-b)^2} & -b^2 & \sigma b \frac{1+b}{1-b} & \cdots
 \\[4pt]
 \cdots & \cdots & \cdots & \cdots & \cdots & \cdots & \cdots
\end{smallmatrix}
\right),
$$
which, solving ${\cal C}\chi=z\chi$, yields an associated zig-zag basis $\chi$ with 
$$
\chi_0 = 1, 
\quad 
\chi_1 = \frac{1-b}{2\sigma\sqrt{b}}(z-1),
\quad
\chi_2 = \frac{1}{2\sqrt{b}(1-b)}(bz+3b-1+z^{-1}),
\quad
\dots
$$
Hence, the conditions $u[\chi_n\chi_{0*}]=\delta_{n,0}$ defining the associated functional $u$ give 
$$
u[1]=1=u[z], \qquad u[z^{-1}]=1-4b \ne \overline{u[z]}, 
$$
which show directly the lack of hermiticity of $u$.
\end{ex}

\section{Jacobi versus CMV}
\label{sec:J-CMV} 

So far, we have defined the Darboux transformations for CMV matrices by analogy with the Darboux transformations for Jacobi matrices. Nevertheless, this does not mean necessarily that Darboux for Jacobi and CMV share all their properties. An explicit comparison between these two versions of Darboux is necessary to understand to which extent the uses of Darboux for Jacobi could be exported to CMV. This comparison should highlight the similarities and differences between these two transformations, showing also the direct links between them if any. These are the objectives of the present section. Thus, we will first review the main features of Darboux for Jacobi, translating the standard approach based on LU factorizations into an equivalent one which uses Cholesky factorizations for a better comparison with Darboux for CMV. This review will be used simultaneously to exhibit the analogies and differences between Darboux for Jacobi and CMV. Besides, the classical Szeg\H{o} connection \cite{Sz} between orthogonal polynomials on the real line and the unit circle will provide a direct link between both transformations. This not only supports the present version of Darboux for CMV as the natural unitary analogue of Darboux for Jacobi, but also serves as a communicating channel between both transformations and their applications. We will also show the role of the Darboux transformations in a more recent connection between the real line and the unit circle due to Derevyagin, Vinet and Zhedanov \cite{DVZ}.    

The standard procedure for the Darboux transformation of a Jacobi matrix 
$$
{\cal J} = 
\left(
\begin{smallmatrix}
 * & \kern2pt +
 \\[3pt] 
 + & \kern2pt * & \kern1pt + 
 \\[3pt] 
 & \kern2pt + & \kern1pt * & + 
 \\[-2pt] 
 && \kern1pt \scriptsize\ddots & \scriptsize\ddots & \scriptsize\ddots 
\end{smallmatrix}
\right)
= {\cal J}^+
$$
uses LU instead of Cholesky factorizations. Actually, the usual starting point is not the Jacobi matrix itself, but the tridiagonal one    
\begin{equation} \label{eq:Jmonic}
\widetilde{\cal J} = 
\left( 
\begin{smallmatrix}
 * & \kern3pt 1
 \\[3pt] 
 + & \kern3pt * & \kern2pt 1 
 \\[3pt] 
 & \kern3pt + & \kern2pt * & 1
 \\[-1pt] 
 && \kern2pt \scriptsize\ddots & \scriptsize\ddots 
 & \scriptsize\ddots 
\end{smallmatrix}
\right)
\end{equation}
obtained by conjugating $\cal J$ with a positive diagonal matrix $\Pi_{\cal J}$, i.e. $\widetilde{\cal J}=\Pi_{\cal J}{\cal J}\Pi_{\cal J}^{-1}$. The Darboux transformation starts by choosing $\beta\in\R$ such that the LU factorization $\widetilde{\cal J}+\beta I=\widetilde{L}\widetilde{U}$ is available, and then generates via the identity $\widetilde{\cal K}+\beta I=\widetilde{U}\widetilde{L}$ a new tridiagonal matrix $\widetilde{\cal K}$ with the shape \eqref{eq:Jmonic}, except that the entries of its lower diagonal can be signed. When such a diagonal is positive, $\widetilde{\cal K}$ can be symmetrized by conjugation with a positive diagonal matrix $\Pi_{\cal K}$, leading to a Jacobi matrix ${\cal K}=\Pi_{\cal K}^{-1}\widetilde{\cal K}\Pi_{\cal K}$. 

Note that the LU factorization $\widetilde{\cal J}+\beta I=\widetilde{L}\widetilde{U}$ exists simultaneously to that of ${\cal J}+\beta I$, whose LU factors are $L=\Pi_{\cal J}^{-1}\widetilde{L}\Pi_{\cal J}$ and $U=\Pi_{\cal J}^{-1}\widetilde{U}\Pi_{\cal J}$. Therefore, both LU factorizations are possible whenever ${\cal J}+\beta I$ is positive definite. We are going to see that in this case the Darboux transformations of Jacobi matrices can be understood  in terms of Cholesky factorizations, similarly to the developed Darboux transformations for CMV matrices. This will make it easier the comparison between Darboux for Jacobi and CMV.   

The matrices involved in the above discussion are determined by two real sequences $v_n$ and $u_n$ defined by the lower and main diagonals of the factors $\widetilde{L}$ and $\widetilde{U}$, respectively,
\begin{equation} \label{eq:LU}
\widetilde{L} = 
\left( 
\begin{smallmatrix}
 1
 \\[3pt] 
 v_1 & 1 
 \\[3pt] 
 & v_2 & 1
 \\[-1pt] 
 && \scriptsize\ddots & \scriptsize\ddots 
\end{smallmatrix}
\right),
\qquad
\widetilde{U} = 
\left( 
\begin{smallmatrix}
 u_1 & 1
 \\[3pt] 
 & u_2 & 1 
 \\[3pt] 
 && u_3 & 1
 \\[-1pt] 
 &&& \scriptsize\ddots & \scriptsize\ddots 
\end{smallmatrix}
\right).
\end{equation}
More precisely, $\widetilde{\cal J}$, $\widetilde{\cal K}$ and their symetrizing matrices $\Pi_{\cal J}$, $\Pi_{\cal K}$ are given by 
\begin{equation} \label{eq:uv}
\begin{gathered}  
\kern-9pt \widetilde{\cal J}+\beta I = 
\left(
\begin{smallmatrix}
 u_1 & 1
 \\[5pt] 
 v_1u_1 & v_1+u_2 & 1 
 \\[5pt] 
 & v_2u_2 & v_2+u_3 & \kern9pt 1
 \\ 
 && \kern-22pt \scriptsize\ddots & \kern-10pt \scriptsize\ddots 
 & \kern7pt \scriptsize\ddots 
\end{smallmatrix}
\right),
\quad 
\widetilde{\cal K}+\beta I = 
\left(
\begin{smallmatrix}
 v_1+u_1 & 1
 \\[5pt] 
 v_1u_2 & v_2+u_2 & 1 
 \\[5pt] 
 & v_2u_3 & v_3+u_3 & \kern9pt 1
 \\ 
 && \kern-22pt \scriptsize\ddots & \kern-10pt \scriptsize\ddots 
 & \kern7pt \scriptsize\ddots 
\end{smallmatrix}
\right),
\\[5pt]
\kern-9pt \Pi_{\cal J}^2 = 
\left( 
\begin{smallmatrix}
 1 
 \\[3pt] 
 & \pi_1  
 \\[3pt] 
 && \pi_2 
 \\[-2pt] 
 &&& \scriptsize\ddots  
\end{smallmatrix}
\right),
\quad
\Pi_{\cal K}^2 = \frac{1}{u_1} \Pi_{\cal J}^2 U_0,
\quad
U_0 =
\left( 
\begin{smallmatrix}
 \\
 u_1 
 \\[3pt] 
 & u_2  
 \\[3pt] 
 && u_3 
 \\[-2pt] 
 &&& \scriptsize\ddots  
\end{smallmatrix}
\right),
\quad 
\pi_n = \prod_{k=1}^n u_kv_k.
\end{gathered}
\end{equation} 
By hypothesis, $\widetilde{\cal J}$ is symmetrizable, i.e. $u_nv_n>0$ for all $n$. Then, $\widetilde{\cal K}$ is also symmetrizable iff $v_nu_{n+1}>0$ for all $n$, which, bearing in mind the previous hypothesis, means that the sequences $u_n$ and $v_n$ have the same constant sign. This is the case when ${\cal J}+\beta I$ is positive definite, which is equivalent to state that $u_n,v_n>0$ for all $n$ and, thus, also to the positive definiteness of ${\cal K}+\beta I$.

It is known that LU and Cholesky factorizations of positive definite matrices are closely related. If ${\cal J}+\beta I$ is positive definite, its LU factors provide the Cholesky factorization 
\begin{equation} \label{eq:Ch-J}
{\cal J}+\beta I=AA^+, \qquad A=LU_0^{1/2}, \qquad A^+=U_0^{-1/2}U,
\end{equation} 
where $U_0^{1/2}$ is the positive square root of the matrix $U_0$ given in \eqref{eq:uv}. Then, using \eqref{eq:uv} and \eqref{eq:Ch-J} we get
$$
{\cal K}+\beta I = \Pi_{\cal K}^{-1} \widetilde{U}\widetilde{L} \Pi_{\cal K} 
= \Pi_{\cal K}^{-1}\Pi_{\cal J} UL \Pi_{\cal J}^{-1}\Pi_{\cal K}
= U_0^{-1/2} UL U_0^{1/2} = A^+A.
$$

We conclude that the standard Darboux transformations relating Jacobi matrices $\cal J$ and $\cal K$ can be rewritten in the following way: take a real polynomial $\wp(x)=x+\beta$ such that $\wp({\cal J})$ is positive definite. Then, a new Jacobi matrix $\cal K$ is defined resorting to the Cholesky factorization $\wp({\cal J})=AA^+$, $A\in\mathscr{T}$, and reversing it as $\wp({\cal K})=A^+A$. The above procedure can be directly generalized to an arbitrary real polynomial $\wp(x)=\alpha x+\beta$ of degree one such that $\wp({\cal J})$ is positive definite, leading to a triangular matrix $A$ with the 2-band structure
$$
A = 
\left(
\begin{smallmatrix}
 + &
 \\[4pt] 
 \circledast & \kern2pt + 
 \\[4pt] 
 & \kern2pt \circledast & + 
 \\[-2pt] 
 && \scriptsize\ddots & \scriptsize\ddots 
\end{smallmatrix}
\right),
$$ 
whose lower diagonal has non-zero entries with the same sign as $\alpha$. 
This defines the Darboux transformation ${\cal J}\overset{\wp}{\mapsto}{\cal K}$. 

Jacobi matrices encode the three-term recurrence relation of orthogonal polynomials with respect to a positive Borel measure supported on an infinite subset of the real line (`measure on the real line' in short). Analogously to the CMV case, this links Jacobi matrices and positive definite linear functionals in the space of real polynomials. More generally, tridiagonal matrices with non-null entries in the lower and upper diagonals are related to polynomials which are orthogonal with respect to a quasi-definite linear functional not necessarily associated with a positive measure on the real line. An advantage of defining Darboux transformations via LU factorizations is that they apply directly to the general quasi-definite case. Although the Cholesky approach to Darboux transformations previously described has {\it a priori} the drawback of being applicable only to the positive definite case, Section~\ref{ssec:quasi} will show how to deal with Darboux transformations relating quasi-definite functionals by using generalized Cholesky factorizations.  

Analogously to the CMV case, the Darboux transformations of Jacobi matrices based on Cholesky factorizations can be embedded into a general procedure for the factorization of a real polynomial $\wp$ evaluated on a Jacobi matrix $\cal J$. A key role is played by the basis $p=(p_0,p_1,p_2,\dots)^t$ of the real vector space $\PP=\spn\{x^n\}_{n=0}^\infty=\R[x]$ of real polynomials defined up to a constant factor by 
$$
xp={\cal J}p.
$$
These polynomials, which we will assume with positive leading coefficient in what follows, are orthonormal with respect to a positive Borel measure on the real line, or equivalently, with respect to a positive definite linear functional on $\PP$. Given any other basis $q=(q_0,q_1,q_2,\dots)^t$ of orthonormal polynomials related to a Jacobi matrix $\cal K$, the matrix $A\in\mathscr{T}$ and the lower Hessenberg type one $B$ defined by
\begin{equation} \label{eq:pAq}
p=Aq, \qquad \wp q=Bp,
\end{equation}
satisfy
$$
\wp({\cal J})=AB, \qquad \wp({\cal K})=BA, 
\qquad {\cal J}A=A{\cal K}, \qquad B{\cal J}={\cal K}B.
$$
The result is valid for any arbitrary real polynomial $\wp$ of any degree, and the proof is a direct translation of that of Proposition \ref{prop:D-fac}. As in the CMV case, the factor $A$ plays the role of a change of basis relating the Jacobi matrices by conjugation since ${\cal K}=A^{-1}{\cal J}A$ and ${\cal J}=A{\cal K}A^{-1}$. Nevertheless, in contrast to the CMV case, if $\wp$ has degree one, the relations $\wp({\cal J})=AB$ and $\wp({\cal K})=BA$ are enough to generate one of the matrices $\cal J$, $\cal K$ starting from the other one.

When $\wp$ has degree one, $\wp({\cal J})$ is tridiagonal and $B$ is strictly lower Hessenberg. The Darboux transformation amounts to choosing $A$ as the left factor of the Cholesky factorization of $\wp({\cal J})$ when it is positive definite, so that $A$ is 2-band with positive entries and $B=A^+$. Furthermore, Theorem \ref{thm:Darboux} has an exact analogue for Jacobi matrices whose proof follows from similar arguments. 

\begin{thm} \label{thm:Darboux-J}
Let $u$, $v$ be positive definite functionals in $\PP$ with orthonormal polynomials $p$, $q$ and Jacobi matrices $\cal J$, $\cal K$ respectively. Then, the following statements are equivalent:
\begin{itemize}
\item[(i)] ${\cal J}\overset{\wp}{\mapsto}{\cal K}$ for a real polynomial $\wp$ of degree one, i.e. $\wp({\cal J})$ is positive definite and ${\cal K}=A^{-1}{\cal J}A$, where $A\in\mathscr{T}$ comes from the Cholesky factorization $\wp({\cal J})=AA^+$.
\item[(ii)] $v=u\wp$ for a real polynomial $\wp$ of degree one.
\item[(iii)] $p_n\in\spn\{q_{n-1},q_n\}$ for $n\ge1$ and $p_n\notin\spn\{q_n\}$ for some $n$.
\item[(iv)] There is a real polynomial $\wp\notin\spn\{p_1\}$ of degree one such that $\wp q_n\in\spn\{p_n,p_{n+1}\}$ for $n\ge0$.
\end{itemize}
The polynomials $\wp$ mentioned in $(i)$ and $(ii)$ coincide up to a constant positive factor, and they also coincide with that one in $(iv)$ up to a constant real factor. If $\wp$ coincides exactly in $(i)$ and $(ii)$ then $p=Aq$ and $\wp q=A^+p$ without any rescaling.
\end{thm}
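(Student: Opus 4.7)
My plan is to mirror the proof of Theorem~\ref{thm:Darboux} step by step, exploiting the Jacobi analogue of Proposition~\ref{prop:D-fac} established through \eqref{eq:pAq} together with the fact that when $\wp$ has degree one the Cholesky factor $A$ of $\wp({\cal J})$ is $2$-band with positive diagonal (and non-zero subdiagonal entries with sign of the leading coefficient of $\wp$).

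For $(i)\Rightarrow(iii)$ and $(i)\Rightarrow(iv)$ I would argue directly from the band structure of $A$: since $p=Aq$ with $A$ lower triangular $2$-band we get $p_n\in\spn\{q_{n-1},q_n\}$, and the non-vanishing subdiagonal entries of $A$ force $p_n\notin\spn\{q_n\}$ for every $n\ge1$; similarly $\wp q=A^+p$ with $A^+$ upper triangular $2$-band gives $\wp q_n\in\spn\{p_n,p_{n+1}\}$, and $\wp\notin\spn\{p_1\}$ because $\wp$ has a non-zero constant term when expressed in the basis $p$ (otherwise $A$ would have a vanishing $(0,0)$ entry).

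For $(i)\Leftrightarrow(ii)$ I would reproduce the CMV argument with the evident simplifications. Assuming $(i)$, after a positive rescaling of $\wp$ one has $p=Aq$ and $\wp q=A^+p$; then
\begin{equation*}
u\wp[qq^+]=u[q(\wp q)^+]=u[q\,p^+A]=A^{-1}u[pp^+]A=I,
\end{equation*}
and uniqueness of the orthonormality functional of $q$ yields $v=u\wp$. Conversely, $v=u\wp$ together with $xp={\cal J}p$ gives $v[pp^+]=u[\wp\,pp^+]=\wp({\cal J})u[pp^+]=\wp({\cal J})$, so $\wp({\cal J})$ is positive definite and its Cholesky factorization $\wp({\cal J})=AA^+$ with $A\in\mathscr{T}$ exists. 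Then $v[(A^{-1}p)(A^{-1}p)^+]=A^{-1}\wp({\cal J})(A^+)^{-1}=I$ forces $q=A^{-1}p$ by uniqueness of the orthonormal basis with positive leading coefficients, and ${\cal K}=A^{-1}{\cal J}A$ follows from ${\cal J}A=A{\cal K}$ exactly as in the CMV case.

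For $(iii)\Rightarrow(ii)$ I would look for $\wp(x)=\alpha x+\beta$ solving $u\wp=v$. The tridiagonal structure of $\wp({\cal J})$ combined with $(iii)$ gives $(u\wp-v)[p_n]=0$ for $n\ge2$, so the equation reduces to matching $u\wp$ and $v$ on $\spn\{1,x\}=\spn\{p_0,p_1\}$, which is a $2\times2$ linear system in $(\alpha,\beta)$ whose matrix $u[(1,x)^t(1,x)^+]$ is positive definite, hence invertible. This yields a unique $\wp$ of degree at most one; the case $\alpha=0$ would make $v$ proportional to $u$ and hence $q$ proportional to $p$, contradicting the second part of $(iii)$. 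For $(iv)\Rightarrow(ii)$ I would use that $\wp q_n\in\spn\{p_n,p_{n+1}\}$ implies $u\wp[q_n]=0$ for $n\ge1$, and that $\wp\notin\spn\{p_1\}$ means $u[\wp]\ne0$ (since the orthogonal complement of $p_0$ in polynomials of degree $\le1$ is exactly $\spn\{p_1\}$); then $c=v[1]/u[\wp]\in\R$ gives $uc\wp=v$. The only subtlety — and the place where the Jacobi case is genuinely simpler than CMV — is that no hermiticity check is needed at the end (real functionals and real polynomials throughout), so the argument is uniformly shorter; I expect the mild obstacle to be merely keeping track of rescalings of $\wp$ between the four clauses, which is handled exactly as in Theorem~\ref{thm:Darboux}.
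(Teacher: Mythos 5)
Your proposal is correct and is exactly what the paper intends: the text states that Theorem~\ref{thm:Darboux-J} "follows from similar arguments" to Theorem~\ref{thm:Darboux}, and your adaptation (using the 2-band Cholesky factor, the relations $p=Aq$, $\wp q=A^+p$, the tridiagonality of $\wp({\cal J})$ for $(iii)\Rightarrow(ii)$, and $u[\wp]\ne0$ for $(iv)\Rightarrow(ii)$) reproduces that argument faithfully, with the hermiticity step correctly noted as unnecessary on the real line.
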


In particular, the transformation ${\cal J}\overset{\wp}{\mapsto}{\cal K}$ is equivalent to multiplying by (a positive rescaling of) $\wp$ in terms of orthogonality measures, which is known as a Christoffel transformation on the real line. As a consequence, the Darboux transformations of Jacobi matrices are almost isospectral since they preserve the spectrum up to the elimination of, at most, a single eigenvalue. This is similar to what happens to Darboux for CMV, although these transformations may delete two eigenvalues.  

The only spectral change caused by the corresponding inverse transformations for Jacobi matrices is therefore the possible addition of an eigenvalue.
The inverse Darboux transformations of Jacobi matrices are the matrix realization of the Geronimus transformations for measures on the real line, which divide a measure by a polynomial $\wp$ of degree one and add a Dirac delta at the zero of $\wp$. On the other hand, in the language of Cholesky factorizations, the Darboux transformations with parameters for a Jacobi matrix $\cal K$ start with a reversed factorization $\wp({\cal K})=A^+A$, $A\in\mathscr{T}$, and then generate a new Jacobi matrix $\cal J$ by either the relation $\wp({\cal J})=AA^+$ or equivalently the conjugation ${\cal J}=A{\cal K}A^{-1}$. 

At this point, a major difference appears concerning the comparison between Darboux for CMV and Jacobi matrices: if $\wp$ is of degree one, the freedom in the reversed Cholesky factorization $\wp({\cal K})=A^+A$ is encoded by a single real parameter, the $(0,0)$ entry of $A$. This coincides with the number of free parameters in the Geronimus transformations on the real line, which are parametrized by the mass of the Dirac delta at the single zero of $\wp$. Actually, contrary to the CMV case, every solution of the Jacobi version for the Darboux transformations with parameters corresponds to a solution of the corresponding inverse Darboux transformations. In other words, the Darboux transformations with parameters for Jacobi matrices present no spurious solutions, so that these transformations become an exact matrix realization of the Geronimus transformations for measures on the real line. This is in striking contrast with the CMV version of Darboux transformations with parameters. 

Due to the absence of spurious solutions, the Jacobi version of Darboux transformations only involves band matrices and their inverses. Therefore, in view of Proposition \ref{prop:asoc}, in the Jacobi case the associativity of matrix multiplications holds automatically for all the Darboux relations which exclude such inverses. This will be used in what follows to omit discussing associativity issues when dealing with relations involving only band matrices.    

Despite the differences above mentioned, we have found many analogies between Jacobi and CMV which justify the name `Darboux' for the transformations of CMV matrices introduced in this paper. This similarity can be further highlighted by resorting to the Szeg\H o connection between orthogonal polynomials on the real line and the unit circle \cite{Sz}. This connection will be used in next section for a closer comparison between Darboux for Jacobi and CMV, but it can eventually be used to exchange ideas, results, techniques and applications between these two versions of Darboux.

\subsection{The Szeg\H o connection}
\label{ssec:Szego}

The aim of this section is to show that the Szeg\H{o} connection \cite{Sz} maps exactly Darboux for Jacobi into Darboux for CMV as defined in this paper. As a byproduct, we will obtain a direct relation between the Darboux factors for Jacobi and CMV. With this in mind, we will first review a matrix approach to the Szeg\H{o} connection due to Killip and Nenciu \cite{KN2004} (see also \cite{Si2005}) which serves our interests.

The Szeg\H o connection is based on the map $z \mapsto x=z+z^{-1}$, which transforms the unit circle onto the interval $[-2,2]$ mapping conjugated points onto the same image. This induces the Szeg\H o projection $\mu\xmapsto{\Sz}\Sz(\mu)$, a one-to-one correspondence between symmetric measures $\mu$ on the unit circle (i.e., measures which are invariant under conjugation $z\to\overline{z}$) and measures $\Sz(\mu)$ on $[-2,2]$, which is given by
$$
\int P(x)\,d(\Sz(\mu))(x) = \int P(z+z^{-1})\,d\mu(z), 
\qquad
\forall P\in\PP. 
$$
The symmetry of $\mu$ means that it has orthonormal polynomials $\varphi_n$ with real coefficients, i.e. the corresponding Schur parameters $a_n$ are real. Then, the relations
\begin{equation} \label{eq:pepo}
\begin{aligned}
	& p_n^{(e)}(x) = 
	\frac{z^{-n}(\varphi_{2n}^*(z)+\varphi_{2n}(z))}
	{\sqrt{2(1+a_{2n})}},
	\\[2pt] 
	& p^{(o)}_{n-1}(x) = 
	\frac{z^{-n}(\varphi_{2n}^*(z)-\varphi_{2n}(z))}
	{\sqrt{2(1-a_{2n})}(z^{-1}-z)}, 
\end{aligned}
\qquad\qquad
x=z+z^{-1},
\end{equation}
provide two sequences of polynomials in the variable $x=z+z^{-1}$, 
$$
p^{(k)}=(p^{(k)}_0,p^{(k)}_1,p^{(k)}_2,\dots)^t, \qquad k=e,o.
$$
We use the convention $a_0=1$ so that $p^{(e)}_0(x)=\varphi_0(z)$. The sequences $p^{(e)}$ and $p^{(o)}$ are orthonormal with respect to the measures on $[-2,2]$ given respectively by
$$
\mu_e = \Sz(\mu), \qquad d\mu_o(x)=(4-x^2)\,d\mu_e(x),
$$
Due to the orthonormality with respect to a measure on the real line, there exist Jacobi matrices ${\cal J}_e$, ${\cal J}_o$, such that 
\begin{equation} \label{eq:rec-p}
xp^{(k)}={\cal J}_kp^{(k)}, \qquad k=e,o.
\end{equation}
The relation between the measures $\mu_e$ and $\mu_o$ implies that ${\cal J}_e$ and ${\cal J}_o$ are connected by a Darboux transformation ${\cal J}_e \xmapsto{4-x^2} {\cal J}_o$ generated by a polynomial of degree two (see Section \ref{ssec:deg} for the generalization of Darboux transformations to polynomials of higher degree). 

For convenience, we will work with the matrix functions
\begin{equation} \label{eq:pepobis}
\bs{p}_n =
\begin{pmatrix}
 p_n & 0 \\ 0 & \widetilde{p}_n
\end{pmatrix},
\qquad\quad
\begin{aligned}
 & p_n(z) = p^{(e)}_n(x), 
 \\
 & \widetilde{p}_n(z) = (z^{-1}-z)p^{(o)}_n(x),
\end{aligned}
\qquad\quad
x=z+z^{-1}, 
\end{equation}
which should be considered as functions of $z$ instead of $x$ due to the presence of the factor $z-z^{-1}$ in $\widetilde{p}_n$. Bearing in mind \eqref{eq:pepobis}, the recurrence relations \eqref{eq:rec-p} can be rewritten in terms of $p=(p_0,p_1,p_2,\dots)^t$ and $\widetilde{p}=(\widetilde{p}_0,\widetilde{p}_1,\widetilde{p}_2,\dots)^t$ as
$$
xp(z)={\cal J}_ep(z), 
\qquad 
x\widetilde{p}(z)={\cal J}_o\widetilde{p}(z), 
\qquad x=z+z^{-1}.
$$
Equivalently, using the notation $\bs{p}=(\bs{p}_0,\bs{p}_1,\bs{p}_2,\dots)^t$,
\begin{equation} \label{eq:rec-pp}
x \bs{p}(z) = \bs{\cal J} \bs{p}(z),
\qquad
\bs{\cal J} = {\cal J}_e \oplus {\cal J}_o,
\qquad
x=z+z^{-1},
\end{equation}
with ${\cal J}_e$ (${\cal J}_o$) acting on even (odd) indices. More explicitly, $\bs{\cal J}$ is the $2\times2$-block Jacobi matrix with entries 
$$
\bs{\cal J}_{2i,2j} = ({\cal J}_e)_{i,j}, 
\qquad 
\bs{\cal J}_{2i+1,2j+1} =({\cal J}_o)_{i,j}, 
\qquad 
\bs{\cal J}_{2i,2j+1} = \bs{\cal J}_{2i+1,2j}=0,
$$
and can be understood as the block Jacobi matrix associated with the diagonal matrix of measures 
$$
\bs{\mu} =
\begin{pmatrix}
\mu_e & 0 \\ 0 & \mu_o
\end{pmatrix}.
$$
On the other hand, \eqref{eq:OLP-OP} allows us to rewrite $p_n$ and $\widetilde{p}_n$ in terms of the orthonormal Laurent polynomials $\chi_n$ associated with $\mu$,
\begin{equation} \label{eq:p-chi}
p_n = \frac{\chi_{2n}+\chi_{2n*}}{\sqrt{2(1+a_{2n})}},
\qquad\qquad
\widetilde{p}_{n-1} = \frac{\chi_{2n}-\chi_{2n*}}{\sqrt{2(1-a_{2n})}}.
\end{equation}
If $\cal C$ is the CMV matrix related to $\mu$, we denote $\bs{\mu}=\bs{\Sz}(\mu)$, $\bs{p}=\bs{\Sz}(\chi)$, $\bs{\cal J}=\bs{\Sz}({\cal C})$, referring to them as the matrix Szeg\H o projections of $\mu$, $\chi$, $\cal C$, respectively, in contrast to the scalar Szeg\H o projections $\mu_e=\Sz(\mu)$, $p^{(e)}=\Sz(\mu)$, ${\cal J}_e=\Sz({\cal C})$.  

An explicit expression of the matrix Szeg\H o projection $\bs{\cal J}=\bs{\Sz}({\cal C})$ follows by inserting \eqref{eq:OLP*} into \eqref{eq:p-chi}, which yields
$$
\begin{aligned}
& p_n =  
\frac{\rho_{2n}\chi_{2n-1} + (1+a_{2n})\chi_{2n}}{\sqrt{2(1+a_{2n})}}
= \frac{1}{\sqrt{2}} (\sqrt{1-a_{2n}}\chi_{2n-1} + \sqrt{1+a_{2n}}\chi_{2n}),
\\
& \widetilde{p}_{n-1} = 
\frac{\rho_{2n}\chi_{2n-1} - (1-a_{2n})\chi_{2n}}{\sqrt{2(1-a_{2n})}}
= \frac{1}{\sqrt{2}} (-\sqrt{1+a_{2n}}\chi_{2n-1} + \sqrt{1-a_{2n}}\chi_{2n}),
\end{aligned}
$$   
or, in matrix form,
$$
\begin{pmatrix} 
 \widetilde{p}_{n-1} \\ p_n
\end{pmatrix}
= \frac{1}{\sqrt{2}} 
\begin{pmatrix}
 -\sqrt{1+a_{2n}} & \sqrt{1-a_{2n}}
 \\  
 \sqrt{1-a_{2n}} & \sqrt{1+a_{2n}}
\end{pmatrix}
\begin{pmatrix} 
 \chi_{2n-1} \\ \chi_{2n}
\end{pmatrix}.
$$
The above relation can be compactly written as 
\begin{equation} \label{eq:p-S}
\bs{p}
\begin{pmatrix}
 1 \\ 1
\end{pmatrix}
= \bs{S}\chi,
\kern15pt 
\bs{S} = 
\left(
\begin{smallmatrix}
 1 
 \\
 & S_2
 \\
 && S_4
 \\[-4pt]
 &&& \ddots
\end{smallmatrix}
\right),
\kern15pt
S_n =
\frac{1}{\sqrt{2}} 
\begin{pmatrix}
 -\sqrt{1+a_n} & \sqrt{1-a_n}
 \\  
 \sqrt{1-a_n} & \sqrt{1+a_n}
\end{pmatrix},
\end{equation}
in terms of a block diagonal symmetric unitary matrix $\bs{S}$. Combining \eqref{eq:p-S} with \eqref{eq:rec-pp} and \eqref{eq:C} we get
$$
\bs{\cal J} \bs{S} \chi 
= \bs{\cal J} \bs{p} \left(\begin{smallmatrix} 1 \\ 1 \end{smallmatrix}\right)
= (z+z^{-1}) \bs{p} \left(\begin{smallmatrix} 1 \\ 1 \end{smallmatrix}\right)
= (z+z^{-1}) \bs{S} \chi = \bs{S}({\cal C} + {\cal C}^+)\chi,
$$
which, bearing in mind that $\bs{S}^2=I$, leads to the following explicit expression for the matrix Szeg\H o projection of a CMV matrix $\cal C$,
\begin{equation} \label{eq:J-C}
\bs{\Sz}({\cal C}) = \bs{\cal J} = \bs{S}({\cal C}+{\cal C}^+)\bs{S}.
\end{equation}
That is, the matrix Szeg\H{o} projection of $\cal C$ stems not only from the evaluation of the Szeg\H{o} map $z+z^{-1}$ on $\cal C$, as one would naively expect, but also from a subsequent conjugation with the orthogonal matrix $\bs S$. We will refer to the matrix $\bs S$, given in \eqref{eq:p-S} in terms of the Schur parameters $a_n$ of $\cal C$, as the Szeg\H{o} rotation for the CMV matrix $\cal C$.

The Szeg\H o connection is restricted to symmetric measures on the unit circle, i.e. to real CMV matrices because symmetric measures are characterized by having real Schur parameters. Therefore, in order to relate Darboux transformations on the unit circle and the real line through the Szeg\H o connection we should consider a Darboux transformation ${\cal C}\overset{\ell}{\mapsto}{\cal D}$ between two real CMV matrices $\cal C$, $\cal D$. This means that $d\nu=\ell\,d\mu$ for certain symmetric measures $\mu$, $\nu$ on the unit circle related to $\cal C$, $\cal D$, respectively. If $\ell(z)=\alpha z+\beta+\overline\alpha z^{-1}$, the symmetry requirement for the measures implies that $\ell(z^{-1})=\ell(z)$, thus $\alpha\in\R$.  
Then, we have the following result which yields the announced Szeg\H{o} connection between Darboux for Jacobi and CMV.
\begin{thm} \label{thm:A-AA}
Let ${\cal C}\overset{\ell}{\mapsto}{\cal D}$ be a Darboux transformation between real CMV matrices $\cal C$, $\cal D$ with Laurent polynomial $\ell(z)=\alpha z+\beta+\overline\alpha z^{-1}$, $\alpha,\beta\in\R$. Then, the Szeg\H{o} projections ${\cal J}_e=\Sz({\cal C})$, ${\cal K}_e=\Sz({\cal D})$, $\bs{\cal J}=\bs{\Sz}({\cal C})={\cal J}_e\oplus{\cal J}_o$, $\bs{\cal K}=\bs{\Sz}({\cal D})={\cal K}_e\oplus{\cal K}_o$ provide Jacobi matrices related by the Darboux transformations ${\cal J}_e\overset{\wp}{\mapsto}{\cal K}_e$, ${\cal J}_o\overset{\wp}{\mapsto}{\cal K}_o$ with polynomial $\wp(x)=\alpha x+\beta$. The corresponding Cholesky factorizations
$$
\wp({\cal J}_k)=A_kA_k^+, 
\qquad\quad \wp({\cal K}_k)=A_k^+A_k,
\qquad\quad k=e,o,
$$
yield the following ones for the matriz Szeg\H{o} projections 
$$
\wp(\bs{\cal J})=\bs{AA^+}, 
\qquad\quad \wp(\bs{\cal K})=\bs{A^+A},
\qquad\quad 
\bs{A}=A_e\oplus A_o,
$$
where the direct sum $\bs{A}=A_e\oplus A_o$ is such that $A_e$ ($A_o$) acts on even (odd) indices, so that $\bs{A}$ has the structure
$$
\bs{A} =
\left(
\begin{smallmatrix}
 + &
 \\[3pt] 
 0 & \kern3pt + 
 \\[3pt] 
 \circledast & \kern3pt 0 & \kern2pt + 
 \\[3pt]
 & \kern3pt \circledast & \kern2pt 0 & +
 \\[-2pt] 
 && \kern2pt \scriptsize\ddots & \scriptsize\ddots & \scriptsize\ddots
\end{smallmatrix}
\right).
$$
The relation between the Cholesky factorizations for the CMV matrices,
$$
\ell({\cal C})=AA^+, 
\qquad\quad \ell({\cal D})=A^+A,
$$
and those of their Szeg\H{o} projections is given by
$$
A=\bs{S}\bs{A}\bs{T}, 
\qquad\quad 
\bs{A} = \bs{S}A\bs{T},
$$
where $\bs S$ and $\bs T$ are the Szeg\H{o} rotations for $\cal C$ and $\cal D$ respectively. 
\end{thm}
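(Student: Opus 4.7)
The plan is to lift the CMV Cholesky factorizations $\ell({\cal C})=AA^+$ and $\ell({\cal D})=A^+A$ to Cholesky factorizations of $\wp$ evaluated on the matrix Szeg\H{o} projections, and then to identify the new factor with $\bs{A}=A_e\oplus A_o$ via Theorem \ref{thm:Darboux-J}. First, I would apply \eqref{eq:J-C} to both $\cal C$ and $\cal D$: since $\bs S$ and $\bs T$ are real, symmetric involutions and $\alpha,\beta\in\R$,
\[
\wp(\bs{\cal J})=\alpha\bs{\cal J}+\beta I=\bs S\,\ell({\cal C})\,\bs S,
\qquad
\wp(\bs{\cal K})=\bs T\,\ell({\cal D})\,\bs T.
\]
Because $\ell({\cal C})$ is positive definite, $\wp(\bs{\cal J})$ is an orthogonal conjugate of a positive definite matrix, hence positive definite itself, and the same holds for $\wp(\bs{\cal K})$; consequently each summand $\wp({\cal J}_k)$ and $\wp({\cal K}_k)$ ($k=e,o$) in the block-diagonal decompositions $\bs{\cal J}={\cal J}_e\oplus{\cal J}_o$, $\bs{\cal K}={\cal K}_e\oplus{\cal K}_o$ is positive definite too.

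Next I would identify the Jacobi Darboux transformations. Writing $\ell(z)=\alpha(z+z^{-1})+\beta$ shows that the Szeg\H{o} map sends $\ell$ to $\wp(x)=\alpha x+\beta$, so the symmetry of $\mu,\nu$ combined with $d\nu=\ell\,d\mu$ yields $d\nu_e=\wp\,d\mu_e$, and multiplying by $4-x^2$ gives also $d\nu_o=\wp\,d\mu_o$. Applying Theorem \ref{thm:Darboux-J} to the pairs $({\cal J}_k,{\cal K}_k)$ then produces Cholesky factorizations $\wp({\cal J}_k)=A_kA_k^+$ and $\wp({\cal K}_k)=A_k^+A_k$ with $A_k\in\mathscr T$ (so 2-band lower triangular with positive diagonal), and, via the equivalence $(i)\Leftrightarrow(iii)$, the changes of basis $p=A_eq$ and $\widetilde p=A_o\widetilde q$ for the Szeg\H{o}-projected orthonormal polynomials of $\mu,\nu$.

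The core step is to prove the matrix identity $\bs A=\bs SA\bs T$. I would introduce the interleaved sequences $\hat\chi:=\bs S\chi$ and $\hat\omega:=\bs T\omega$; the identity \eqref{eq:p-S} and its analogue for $\cal D$ give $\hat\chi_0=p_0$, $\hat\chi_{2n-1}=\widetilde p_{n-1}$ and $\hat\chi_{2n}=p_n$ for $n\ge1$ (and similarly for $\hat\omega$ in terms of $q,\widetilde q$). The relations $p=A_eq$ and $\widetilde p=A_o\widetilde q$ therefore read as $\hat\chi=\bs A\hat\omega$, with $\bs A=A_e\oplus A_o$ acting on even/odd indices exactly as displayed in the statement. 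On the other hand, $\chi=A\omega$ combined with $\chi=\bs S\hat\chi$ and $\omega=\bs T\hat\omega$ (which follow from $\bs S^2=\bs T^2=I$) yields $\hat\chi=\bs SA\bs T\hat\omega$; linear independence of $\hat\omega$, guaranteed by the invertibility of $\bs T$ and the fact that $\omega$ is a basis of $\Lambda$, then forces $\bs A=\bs SA\bs T$.

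Finally, multiplying $\bs A=\bs SA\bs T$ by $\bs S$ on the left and $\bs T$ on the right produces $A=\bs S\bs A\bs T$, and the matrix Cholesky factorizations follow by direct substitution:
\[
\wp(\bs{\cal J})=\bs S\ell({\cal C})\bs S=\bs SAA^+\bs S=(\bs SA\bs T)(\bs SA\bs T)^+=\bs A\bs A^+,
\]
using $\bs T^2=I$ and $(\bs SA\bs T)^+=\bs TA^+\bs S$; the same computation starting from $\ell({\cal D})=A^+A$ delivers $\wp(\bs{\cal K})=\bs A^+\bs A$. The hard part will be the identification $\bs A=\bs SA\bs T$ in the third paragraph: it requires carefully tracking the Szeg\H{o} interleaving and invoking Theorem \ref{thm:Darboux-J}(iii) on both the even and the odd sides to produce the direct-sum structure of $\bs A$.
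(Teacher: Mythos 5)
Your proposal is correct and follows essentially the same route as the paper: the measure-level argument $d\nu_k=\wp\,d\mu_k$ for the Jacobi Darboux relations, and the comparison of $\chi=A\omega$ with the Szeg\H{o} rotation identity \eqref{eq:p-S} (your interleaved $\hat\chi=\bs S\chi$, $\hat\omega=\bs T\omega$ is just a rephrasing of the paper's $\bs p\left(\begin{smallmatrix}1\\1\end{smallmatrix}\right)=\bs S\chi$ formalism) to get $\bs SA=\bs A\bs T$ and hence $\bs A=\bs SA\bs T$. The only minor divergence is that the paper obtains $\wp(\bs{\cal J})=\bs{AA^+}$, $\wp(\bs{\cal K})=\bs{A^+A}$ by regrouping the scalar factorizations for $k=e,o$, whereas you derive them from \eqref{eq:J-C} via $\bs S\ell({\cal C})\bs S$ — a computation the paper itself records right after the theorem as a consistency check, so both are fine.
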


\begin{proof}
If $\mu$ is a measure for ${\cal C}$, the relation ${\cal C}\overset{\ell}{\mapsto}{\cal D}$ implies that $d\nu=\ell\,d\mu$ is associated with ${\cal D}$. Since $\wp(x)=\ell(z)$ under the Szeg\H{o} mapping $x=z+z^{-1}$, the Szeg\H{o} projections $\mu_e=\Sz(d\mu)$, $\nu_e=\Sz(d\nu)$ are related by $d\nu_e = \wp\,d\mu_e$. A similar relation is obviously valid for $d\mu_o(x)=(4-x^2)\,d\mu_e(x)$ and $d\nu_o(x)=(4-x^2)\,d\nu_e(x)$. This proves the Darboux relations ${\cal J}_k\overset{\wp}{\mapsto}{\cal K}_k$ for $k=e,o$. The Cholesky factorizations of $\bs{\cal J}$ and $\bs{\cal K}$ arise from regrouping those of ${\cal J}_k$ and ${\cal K}_k$ for $k=o,e$. Concerning the relation between the Darboux factors $A$ and $\bs A$, let us consider $\bs{p}=\bs{\Sz}(\chi)$, $\bs{q}=\bs{\Sz}(\omega)$, where $\chi$, $\omega$ are the orthonormal Laurent polynomials with respect to $\mu$, $\nu$ respectively. From Theorems~\ref{thm:Darboux} and \ref{thm:Darboux-J} we know that $\chi=A\omega$ and $\bs{p}=\bs{A}\bs{q}$. Combining these equalities with \eqref{eq:p-S} and the analogous relation 
$
\bs{q}
\left(
\begin{smallmatrix}
 1 \\ 1
\end{smallmatrix}
\right)
= \bs{T}\omega
$,
we get
$
\bs{S}A\omega = \bs{S}\chi 
= \bs{p} \left(\begin{smallmatrix} 1 \\ 1 \end{smallmatrix}\right)
= \bs{A} \bs{q} \left(\begin{smallmatrix} 1 \\ 1 \end{smallmatrix}\right)
= \bs{A} \bs{T} \omega,
$
so that $\bs{S}A=\bs{A}\bs{T}$. Since $\bs{S}^2=\bs{T}^2=I$ we can rewrite this identity as $A=\bs{S}\bs{A}\bs{T}$ or $\bs{A} = \bs{S}A\bs{T}$.
\end{proof}

The previous result is in agreement with \cite{GarHerMar2}, which shows that, in the case of symmetric measures, the linear spectral transformations associated with the Christoffel and Geronimus transformations on the unit circle are projected by the Szeg\H o mapping onto similar transformations on the real line. However, we have seen that there exist Darboux transformations of CMV matrices which do not correspond to Christoffel or Geronimus on the unit circle, namely, those associated with a Laurent polynomial $\ell(z)=\alpha z+\beta+\alpha z^{-1}$, $\alpha,\beta\in\R$, whose zeros lie on the unit circle, i.e. $|\beta|\le2|\alpha|$. In other words, there exist Christoffel and Geronimus transformations on the real line which are not the Szeg\H o projection of a Christoffel or Geronimus transformation on the unit circle, but correspond via the Szeg\H o mapping to general Laurent polynomial modifications of degree one on the unit circle.

The close relationship between Darboux for Jacobi and CMV is highlighted by the simple relation between the CMV Darboux factor $A$ and its matrix Szeg\H o projection $\bs{A}=\bs{\Sz}(A)$ given in Theorem~\ref{thm:A-AA}. This theorem also identifies $\bs{A}$ as the Darboux factor for the Darboux transformation $\bs{\cal J}\overset{\wp}{\mapsto}\bs{\cal K}$, which is consistent with the relation \eqref{eq:J-C} since it implies that 
$$
\wp(\bs{\cal J}) = \bs{S}\ell({\cal C})\bs{S} = \bs{S}AA^+\bs{S} 
= \bs{AA^+}, 
\qquad 
\wp(\bs{\cal K}) = \bs{T}\ell({\cal D})\bs{T} = \bs{T}A^+A\bs{T} 
= \bs{A^+A}.
$$  

We will illustrate the Szeg\H o connection between Darboux for Jacobi and CMV with an explicit example.

\begin{ex} \label{ex:Sz}
{\it The Szeg\H o connection and the Darboux transformation for 
$$
\ell(z)=(z+a)(z^{-1}+a)=a(z+z^{-1})+1+a^2, \qquad a\in(-1,1),
$$
and ${\cal C}$ with Schur parameters $a_1=a$ and $a_n=0$ for $n\ge2$, 
$$
{\cal C} = 
\left(
\begin{smallmatrix} 
	-a & \kern1pt \rho
	\\[3pt]
	0 & 0 & \kern4pt 0 & \kern4pt 1 
	\\[3pt]
    \rho & a & \kern4pt 0 & \kern4pt 0
    \\[3pt]
    & & \kern4pt 0 & \kern4pt 0 & \kern2pt 0 & 1
    \\[3pt]
    & & \kern4pt 1 & \kern4pt 0 & \kern2pt 0 & 0
    \\[3pt]
    & & & & \kern4pt 0 & \kern4pt 0 & \kern2pt 0 & 1
    \\[3pt]
    & & & & \kern4pt 1 & \kern4pt 0 & \kern2pt 0 & 0
    \\[3pt]
    & & & & \kern4pt & \kern4pt & \kern2pt \cdots & \cdots & \cdots
\end{smallmatrix}
\right),
\qquad
\rho=\sqrt{1-a^2}.
$$}
\end{ex}

It is known that $\cal C$ is associated with the measure 
$$
 d\mu(e^{i\theta}) = \frac{1}{|z+a|^2} \frac{d\theta}{2\pi} 
 = \frac{1}{\ell(z)} \frac{d\theta}{2\pi},
$$
whose orthonormal polynomials are the Bernstein-Szeg\H o ones
$$
\varphi_n(z) = 
\begin{cases}
\rho, & n=0,
\\
z^{n-1}(z+a), & n\ge1.
\end{cases}
$$
Therefore, the corresponding orthonormal zig-zag basis is given by
$$
\chi_{2n-1}(z) = z^{n-1}(z+a),
\qquad\quad
\chi_{2n}(z) = 
\begin{cases}
 \rho, & n=0,
 \\
 z^{-n}(1+az), & n\ge1.
\end{cases}
$$

The Darboux transformation ${\cal C}\overset{\ell}{\mapsto}{\cal D}$ is associated with the Christoffel transformation $\mu\overset{\ell}{\mapsto}\vartheta$, where $d\vartheta(e^{i\theta})=d\theta/2\pi$ is the Lebesgue measure. Hence, ${\cal D}={\cal S}$ is the shift matrix given in \eqref{eq:shift} corresponding to the Schur parameters $b_n=0$, and the related orthonormal zig-zag basis is 
$$
\omega_{2n-1}(z)=z^n, \qquad \omega_{2n}(z)=z^{-n}.
$$
The matrix $A$ of the change of basis
$$
\chi = A\omega,
\qquad
A = 
\left(
\begin{smallmatrix}
 \rho
 \\[3pt]
 a & \kern4pt 1
 \\[3pt]
 a & \kern4pt 0 & \kern2pt 1
 \\[3pt]
 & \kern4pt a & \kern2pt 0 & 1
 \\[-2pt]
 & \kern4pt & \kern2pt \scriptsize\ddots & \scriptsize\ddots  
 & \scriptsize\ddots 
\end{smallmatrix}
\right), 
$$
is the Darboux factor for the Cholesky fatorizations $\ell({\cal C})=AA^+$ and $\ell({\cal D})=A^+A$.

Concerning the Szeg\H o connection, in this case the matrices $\bs{S}$ and $\bs{T}$ coincide because $a_{2n}=b_{2n}=0$, thus
$$
S_n = T_n = 
\frac{1}{\sqrt{2}}
\begin{pmatrix}
 -1 & 1 \\ 1 & 1
\end{pmatrix}.
$$
The matrix Szeg\H o projection $\bs{A}=\bs{S}A\bs{T}$ of $A$  
$$
\bs{A} = \kern-2pt
\left(
\begin{smallmatrix}
 \rho
 \\[3pt]
 0 & 1
 \\[3pt]
 \sqrt{2}a & 0 & \kern5pt 1
 \\[3pt]
 & a & \kern5pt 0 & \kern2pt 1
 \\[3pt]
 & & \kern5pt a & \kern2pt 0 & 1
 \\[-1pt]
 & & \kern5pt & \kern2pt \scriptsize\ddots & \scriptsize\ddots 
 & \scriptsize\ddots 
\end{smallmatrix}
\right)
= A_e \oplus A_o,
\kern9pt
A_e = \kern-2pt
\left(
\begin{smallmatrix}
 \rho
 \\[3pt]
 \sqrt{2}a & 1
 \\[3pt]
 & a & \kern3pt 1
 \\[3pt]
 & & \kern3pt a & 1
 \\[-2pt]
 & & & \scriptsize\ddots & \scriptsize\ddots  
\end{smallmatrix}
\right),
\kern9pt
A_o = \kern-2pt 
\left(
\begin{smallmatrix}
 1
 \\[3pt]
 a & \kern3pt 1
 \\[3pt]
 & \kern3pt a & \kern3pt 1
 \\[3pt]
 & \kern3pt & \kern3pt a & 1
 \\[-2pt]
 & \kern3pt & & \scriptsize\ddots & \scriptsize\ddots  
\end{smallmatrix}
\right),
$$
provides the factorizations $\wp(\bs{\cal J})=\bs{AA^+}$ and $\wp(\bs{\cal K})=\bs{A^+A}$, where 
$$
\wp(x)=ax+(1+a^2),
$$ 
$\bs{\cal J}=\bs{S}({\cal C}+{\cal C}^+)\bs{S}$ is the block Jacobi matrix 
$$
\begin{gathered}
\bs{\cal J} =
\left(
\begin{smallmatrix}
 -2a & 0 & \sqrt{2}\rho
 \\[4pt]
 0 & -a & 0 & 1
 \\[4pt]
 \sqrt{2}\rho & 0 & a & 0 & \kern2pt 1
 \\[4pt]
 & 1 & 0 & 0 & \kern2pt 0 & \kern2pt 1
 \\[5pt]
 & & 1 & 0 & \kern2pt 0 & \kern2pt 0 & \kern2pt 1
 \\[-2pt]
 & & & \scriptsize\ddots & \kern2pt \scriptsize\ddots 
 & \kern2pt \scriptsize\ddots & \kern2pt \scriptsize\ddots 
 & \kern2pt \scriptsize\ddots
\end{smallmatrix}
\right)
= {\cal J}_e \oplus {\cal J}_o,
\\[3pt] 
{\cal J}_e = 
\left(
\begin{smallmatrix}
 -2a & \sqrt{2}\rho
 \\[4pt]
 \sqrt{2}\rho & a & 1
 \\[4pt]
 & 1 & 0 & \kern3pt 1
 \\[4pt]
 & & 1 & \kern3pt 0 & 1
 \\[-2pt]
 & & & \kern3pt \scriptsize\ddots & \scriptsize\ddots 
 & \scriptsize\ddots
\end{smallmatrix}
\right),
\qquad 
{\cal J}_o = 
\left(
\begin{smallmatrix}
 -a & \kern2pt 1
 \\[4pt]
 1 & \kern2pt 0 & \kern5pt 1
 \\[4pt]
 & \kern2pt 1 & \kern5pt 0 & \kern2pt 1
 \\[4pt]
 & & \kern5pt 1 & \kern2pt 0 & 1
 \\[-2pt]
 & & \kern5pt & \kern2pt \scriptsize\ddots & \scriptsize\ddots 
 & \scriptsize\ddots
\end{smallmatrix}
\right),
\end{gathered}
$$
and $\bs{\cal K}=\bs{T}({\cal S}+{\cal S}^+)\bs{T}$ is obtained by setting $a=0$ in $\bs{\cal J}$. The orthonormal polynomials related to 
$$
d\vartheta_e(x)=\frac{dx}{\pi\sqrt{4-x^2}}, 
\qquad\quad
d\vartheta_o(x)=\frac{1}{\pi}\sqrt{4-x^2}\,dx,
$$ 
are, up to normalization and rescaling of the variable, the first and second kind Chebyshev polynomials, respectively, 
$$
q^{(e)}_n(x) = 
\begin{cases}
 1, & n=0,
 \\
 \frac{1}{\sqrt{2}}(z^n+z^{-n}), & n\ge1,
\end{cases}
\qquad
q^{(o)}_{n-1}(x) = \frac{1}{\sqrt{2}} \frac{z^n-z^{-n}}{z-z^{-1}}, 
\qquad
x=z+z^{-1},
$$ 
while the orthonormal polynomials associated with $d\mu_k=d\vartheta_k/\wp$ 
can be obtained from the relations $p^{(k)}=A_kq^{(k)}$, i.e.
$$
 p^{(e)}_n = 
 \begin{cases}
 	\rho, & n=0,
	\\[2pt] 
	q^{(e)}_1+\sqrt{2}a, & n=1,
	\\[2pt] 
	q^{(e)}_n+aq^{(e)}_{n-1}, & n\ge2,
 \end{cases}
 \qquad\quad
 p^{(o)}_n = 
 \begin{cases}
 	1/\sqrt{2}, & n=0,
	\\[2pt] 
	q^{(o)}_n+aq^{(o)}_{n-1}, & n\ge1.
 \end{cases}
$$   

\subsection{The DVZ connection}
\label{ssec:DVZ}

Christoffel transformations, and thus Darboux transformations, play a central role in a new connection between orthogonal polynomials on the unit circle and the real line recently discovered by Derevyagin, Vinet and Zhedanov \cite{DVZ} (in short, the DVZ connection). In this section we will briefly describe the interrelations between the DVZ and Szeg\H o connections via Darboux transformations. For the details see \cite{DVZ} and also \cite{CMMV-DVZ}, where a different approach to the DVZ connection is presented. 

An important ingredient in these interrelations is the symmetrization process for orthogonal polynomials on the unit circle \cite{IsLi,MaSa}, which associates with any measure $\mu$ on the unit circle the only one $\widehat\mu$ which is invariant under $z \to -z$ and satisfies
$$
 \int f(z)\,d\mu(z) = \int f(z^2)\,d\widehat\mu(z), 
 \qquad \forall f\in\Lambda.
$$ 
We will express this relation as $\mu\xmapsto{z \mapsto z^2}\widehat\mu$.  
The corresponding orthonormal polynomials, $\varphi_n$ and $\widehat\varphi_n$, are related by 
\begin{equation} \label{eq:sym}
 \widehat\varphi_{2n}(z) = \varphi_n(z^2), 
 \qquad\quad 
 \widehat\varphi_{2n+1}(z) = z \varphi_n(z^2),
\end{equation}
so that the induced transformation of CMV matrices ${\cal C} \xmapsto{z \mapsto z^2}\widehat{\cal C}$ amounts to the transformation of Schur parameters
$
 (a_n)_{n\ge1} \xmapsto{z \mapsto z^2} (\widehat{a}_n)_{n\ge1}=(0,a_1,0,a_2,0,a_3,\dots).
$

Since $z \mapsto z^2$ reads as $x \mapsto x^2-2$ if $x=z+z^{-1}$, the combination of the Szeg\H o projection and the above symmetrization process maps the measure $\mu_e=\Sz(\mu)$ onto the only one $\widehat\mu_e=\Sz(\widehat\mu)$ on $[-2,2]$ which is symmetric under $x\to-x$ and satisfies
$$
 \int P(x)\,d\mu_e(x) = \int P(x^2-2)\,d\widehat\mu_e(x),
 \qquad \forall P\in\PP.
$$
We will write $\mu_e \xmapsto{x \mapsto x^2-2} \widehat\mu_e$. The orthonormal polynomials $\widehat{p}^{(e)}_n$ of $\widehat\mu_e$ are given by   
\begin{equation} \label{eq:hatp-pq}
 \widehat{p}^{(e)}_{2n}(x) = p^{(e)}_n(x^2-2),
 \qquad
 \widehat{p}^{(e)}_{2n+1}(x) = x \, q^{(e)}_n(x^2-2),
\end{equation}
where $p^{(e)}_n$ and $q^{(e)}_n$ are orthonormal with respect to $\mu_e$ and its Christoffel transform $(x+2)\,d\mu_e(x)$, respectively \cite{CMMV-DVZ}. The Jacobi matrices ${\cal J}_e$ and ${\cal K}_e$ of $p^{(e)}_n$ and $q^{(e)}_n$ are related by the Darboux transformation ${\cal J}_e \xmapsto{x+2} {\cal K}_e$. Since ${\cal J}_e=\Sz({\cal C})$, we conclude that ${\cal K}_e=\Sz({\cal D})$ with $\cal D$ the CMV matrix given by the Darboux transformation ${\cal C}\xmapsto{z+z^{-1}+2}{\cal D}$. The relations \eqref{eq:hatp-pq} imply that the Jacobi matrix $\widehat{\cal J}_e$ of $\widehat{p}^{(e)}_n$ satisfies
$$
 \widehat{\cal J}_e^2 - 2I = {\cal J}_e \oplus {\cal K}_e,
$$
where ${\cal J}_e$ and ${\cal K}_e$ act on even and odd indices respectively. We will express the above connections by writing ${\cal K}_e \xleftarrow{x+2} {\cal J}_e \xRightarrow{x \mapsto x^2-2} \widehat{\cal J}_e$, the double line in the right arrow indicating that both, ${\cal J}_e$ and ${\cal K}_e$, are involved in $\widehat{\cal J}_e$.

The DVZ connection starts from a known factorization of CMV matrices into block-diagonal unitary factors \cite{CMV2003},
$$
 {\cal C} = {\cal M}{\cal L}, 
 \quad 
 {\cal L} = 
 \left(
 \begin{smallmatrix}
 	\Theta_1 
 	\\[2pt]
 	& \Theta_3 
 	\\[2pt] 
 	& & \Theta_5 
 	\\[-3pt]
 	& & & \ddots
 \end{smallmatrix}
 \right),
 \quad
 {\cal M} = 
 \left(
 \begin{smallmatrix}
  	\\
 	a_0 
 	\\[2pt]
 	& \Theta_2 
 	\\[2pt] 
 	& & \Theta_4 
 	\\[-3pt]
 	& & & \ddots
 \end{smallmatrix}
 \right),
 \quad
 \Theta_n = 
 \begin{pmatrix}
 -a_n & \rho_n
 \\
 \rho_n & \overline{a}_n
 \end{pmatrix}. 
$$
Here $a_n$ are the Schur parameters of $\cal C$ and, as previously, we use the convention $a_0=1$. When these parameters are real, a Jacobi matrix ${\cal K}$ is defined by 
\begin{equation} \label{eq:L+M}
 {\cal K} = {\cal L}+{\cal M} =
 \left(
 \begin{smallmatrix}
 a_0-a_1 & \rho_1
 \\[3pt]
 \rho_1 & a_1-a_2 & \rho_2
 \\[3pt]
 & \rho_2 & a_2-a_3 & \kern9pt \rho_3
 \\[-1pt]
 & & \kern-15pt \ddots & \kern-12pt \ddots & \ddots
 \end{smallmatrix}
 \right).
\end{equation} 
This establishes the DVZ connection ${\cal C} \xmapsto{\DVZ} {\cal K}$. Derevyagin, Vinet and Zhedanov identify in \cite{DVZ} the measure and orthonormal polynomials of ${\cal K}$ in terms of those of ${\cal C}$. This identification appears surprisingly when combining a Christoffel transformation on the real line with a connection between measures on the unit circle and on the real line due to Delsarte and Genin \cite{DG}. A more direct approach to this problem can be found in \cite{CMMV-DVZ}, which also uncovers the role of the symmetrization process and the Szeg\H o mapping in the DVZ connection. If $\mu$ and $\varphi_n$ are the measure and orthonormal polynomials related to $\cal C$, then 
$$
 (x+2) \, d\widehat\mu_e(x),
 \qquad\qquad
 q_n(x) 
 = \frac{z^{-n}(\varphi_n^*(z^2)+z\varphi_n(z^2))}{\sqrt{2}(1+z)},
 \qquad
 x=z+z^{-1},
$$
are the measure and orthonormal polynomials associated with $\cal K$. Here, $\widehat\mu_e=\Sz(\widehat\mu)$ is the Szeg\H o projection of the symetrization $\widehat\mu$ of $\mu$. It turns out that ${\cal K}=\Sz(\widehat{\cal D})$ is the Szeg\H o projection of the CMV matrix $\widehat{\cal D}$ arising from the Darboux transformation $\widehat{\cal C}\xmapsto{z+z^{-1}+2}\widehat{\cal D}$, where $\widehat{\cal C}$ comes from the symmetrization ${\cal C} \xmapsto{z \mapsto z^2}\widehat{\cal C}$.

These results can be summarized in the following commutative diagram, where Darboux transformations play a prominent role. It shows explicitly that DVZ follows from a mix of symmetrization, Szeg\H o and Darboux. 

$$
 \xymatrix{
	{\cal D} \ar[d]|{\Sz} &&
 	{\cal C} \ar[ll]_{z+z^{-1}+2} \ar[rr]^{z \mapsto z^2} \ar[d]|{\Sz} 
	\ar@(r,r)[rrrrd]^{\DVZ} && 
	\widehat{\cal C} \ar[rr]^{z+z^{-1}+2} \ar[d]|{\Sz} &&
	\widehat{\cal D} \ar[d]|{\Sz}
	\\
	{\cal K}_e && 
	{\cal J}_e \ar[ll]_{x+2} \ar@{=>}[rr]^{x \mapsto x^2-2} && 
	\widehat{\cal J}_e \ar[rr]^{x+2} && 
	{\cal K} & 
	\kern-27pt = {\cal L}+{\cal M}   
 }  
$$
  
\section{Darboux versus QR}
\label{sec:D-QR}

Prior to Cholesky factorizations, other matrix factorizations have been related to Laurent polynomial modifications of measures on the unit circle. In the same work \cite{W1993} that introduces systematically for the first time the CMV matrices, D. Watkins links Christoffel transformations on the unit circle to QR factorizations of shifted CMV matrices. This can be seen as the unitary analogue of a similar connection between Jacobi matrices and polynomial modifications of measures on the real line first established by Kautsky and Golub \cite{KaGo} (see also \cite{BuMar2,BuIs,Gau2,W1993}). A version of this result for Hessenberg matrices related to orthogonal polynomials on the unit circle appears in \cite{DarHerMar,GarHerMar,HuVanB}. We will describe the QR interpretation of Christoffel transformations on the unit circle based on CMV matrices to compare with the Darboux transformations developed in the present paper. 

Given a measure $\mu$ on the unit circle, consider a general Christoffel transformation $\mu\overset{\ell}{\mapsto}\nu$, i.e.  
\begin{equation} \label{eq:Chris}
 \ell = q_*q, 
 \qquad q(z) = z-\zeta,
 \qquad |\zeta|<1.
\end{equation}
Let $\cal C$, $\cal D$ be the CMV matrices related to $\mu$, $\nu$, respectively, and $\chi$, $\omega$ the corresponding orthonormal zig-zag bases. We know that $\chi=A\omega$ with $A$ the 3-band lower triangular factor appearing in the Darboux transformation ${\cal C}\overset{\ell}{\mapsto}{\cal D}$.  

Since $d\nu(z) = |z-\zeta|^2\,d\mu(z)$, we have that $(1-|\zeta|)^2\mu \le \nu \le (1+|\zeta|)^2\mu$. Thus, the sets $L^2_\mu$ and $L^2_\nu$ of square integrable functions are identical as topological spaces, although they have different Hilbert structures given by the inner products
$$
 \<f,g\>_\mu = \int f\overline{g}\,d\mu,
 \kern50pt
 \<f,g\>_\nu = \int f\overline{g}\,d\nu.
$$ 
In consequence, the following mappings between Hilbert spaces define bounded linear operators with a bounded inverse,
$$
 \mathsf{A} \colon \mathop{L^2_\mu \to L^2_\nu} 
 \limits_{\ds f \xmapsto{\kern10pt} f}
 \kern30pt 
 \mathsf{U} \colon \mathop{L^2_\nu \to L^2_\mu} 
 \limits_{\ds f \xmapsto{\kern8pt} q_*f}
 \kern30pt
 \mathsf{UA} \colon \mathop{L^2_\mu \to L^2_\mu} 
 \limits_{\ds f \xmapsto{\kern8pt} q_*f}
 \kern30pt
 \mathsf{AU} \colon \mathop{L^2_\nu \to L^2_\nu} 
 \limits_{\ds f \xmapsto{\kern8pt} q_*f}
$$
Indeed, $\mathsf{U}$ is unitary because
$\<\mathsf{U}f,\mathsf{U}g\>_\mu = \<f,g\>_\nu$ for every $f,g\in L^2_\nu$ and $\ran(U)=L^2_\mu$.

To generate matrix representations of these operators, we will choose the zig-zag bases $\chi$ and $\omega$, respectively, as orthonormal bases for the Hilbert spaces $L^2_\mu$ and $L^2_\nu$ connected by such operators. Since $(\mathsf{A}\chi_0,\mathsf{A}\chi_1,\dots)^t=\chi=A\omega$, the corresponding matrix for the operator $\mathsf{A}$ is simply the Darboux factor $A$, so that $\mathsf{A}$ reads in coordinates as $X \mapsto Y=XA$. Here $X$ and $Y$ stand for row vectors of coordinates with respect to $\chi$ and $\omega$ respectively, i.e. $f=X\chi$ and $\mathsf{A}f=f=Y\omega$. Analogously, $q_*({\cal C})=({\cal C}-\zeta I)^+$ and $q_*({\cal D})=({\cal D}-\zeta I)^+$ are the matrices of the operators $\mathsf{UA}$ and $\mathsf{AU}$ respectively. 

Contrary to the previous discussions, in this context we do not need to worry about the associativity of matrix products because it is guaranteed by the associativity of compositions of operators in Hilbert spaces. Likewise, $A$ has a unique bounded inverse, that one representing the bounded operator $\mathsf{A}^{-1}$ in the bases $\omega$ and $\chi$. Hence, the linear operator $X \to XA$ has a trivial kernel in the space $\mathfrak{H}$ of square-summable sequences. This result is not in contradiction with Proposition~\ref{prop:kerB}, which refers to $\ker_R(B)=\ker_R(A^+)=\ker_L(A)^+$, since this formal kernel can have no non-trivial square-summable vectors. 

Let $U$ be the matrix of the unitary operator $\mathsf{U}$, i.e. the unitary matrix defined by the relation
$$
 q_*\omega = U\chi.
$$
Since $\mathsf{UA}$ and $\mathsf{AU}$ are obtained by composition of $\mathsf{A}$ and $\mathsf{U}$, their matrices are products of $A$ and $U$,
$$
\begin{aligned}
 & ({\cal C}-\zeta I)^+ \chi = q_*\chi = q_*A\omega = AU\omega 
 \;\Rightarrow\; ({\cal C}-\zeta I)^+ = AU,
 \\
 & ({\cal D}-\zeta I)^+ \omega = q_*\omega = U\chi = UA\omega 
 \;\Rightarrow\; ({\cal D}-\zeta I)^+ = UA.
\end{aligned} 
$$
Therefore, 
$$
 q({\cal C}) = {\cal C}-\zeta I = QR, 
 \qquad 
 q({\cal D}) = {\cal D}-\zeta I = RQ,
 \qquad 
 Q=U^+,
 \qquad
 R=A^+,
$$
with $Q$ unitary and $R$ upper triangular. In other words, $q({\cal C})$ and $q({\cal D})$ are connected by a QR factorization and a reversed one, much in the same way as $\ell({\cal C})=AA^+$ and $\ell({\cal D})=A^+A$ are connected by Cholesky factorizations. Moreover, these two kinds of factorizations are closely related because $R$ is the adjoint of the Darboux factor $A$. Actually, the Cholesky factorizations are a direct consequence of the QR ones because, due to the unitarity of $Q$, 
$$
\begin{aligned}
 & q({\cal C}) = QR \;\Rightarrow\; 
 \ell({\cal C}) = q_*({\cal C}) q({\cal C}) = R^+R,
 \\
 & q({\cal D}) = RQ \;\Rightarrow\; 
 \ell({\cal C}) = q({\cal D}) q_*({\cal D}) = RR^+.
\end{aligned}
$$

Like the upper triangular factor $R$, which is 3-band, the unitary factor $Q$ has a remarkable structure. Bearing in mind that $R=A^+$ has a bounded inverse, we find that $Q=({\cal C}-\zeta I)R^{-1}$. This implies that $Q$ has an upper CMV structure, i.e. it is upper Hessenberg type with the shape
\begin{equation} \label{eq:Q}
 Q = \left(\begin{smallmatrix}
      *&*&*&*&*&*&\cdots\\[2pt]
      *&*&*&*&*&*&\cdots\\[2pt]
      *&*&*&*&*&*&\cdots\\[2pt]
       & &*&*&*&*&\cdots\\[2pt]
       & &*&*&*&*&\cdots\\[2pt]
       & & & &*&*&\cdots\\[2pt]
       & & & &*&*&\cdots\\[2pt]
       \kern9pt&\kern9pt&\kern9pt&\kern9pt&\kern9pt&\kern9pt&\cdots
      \end{smallmatrix}\right).
\end{equation}

So far, we have described a QR interpretation of Christoffel transformations on the unit circle based on the use of CMV matrices. This result was first obtained by D.Watkins in \cite{W1993} using a different approach. Indeed, he proves that the transformations $d\mu(x) \to |x-\beta|^2\,d\mu(x)$, $\beta\in\R$, on the real line have a similar QR interpretation in terms of Jacobi matrices (see also \cite{BuMar2,BuIs,KaGo}). Therefore, QR suggests that the unit circle version of such transformations on the real line should be the Christoffel transformations on the unit circle. On the contrary, the Darboux transformations developed in the present paper highlight the role of the Christoffel transformations on the unit circle as the counterpart of the Christoffel transformations $d\mu(x) \to (x-\beta)\,d\mu(x)$ on the real line.  

As in the QR case, the Darboux transformation associated with a Christoffel transformation $\mu\overset{\ell}{\mapsto}\nu$ can be interpreted in terms of factorizations of operators in Hilbert spaces. To show this, let us introduce the following bounded linear operators with a bounded inverse
$$ 
 \mathsf{B} \colon \mathop{L^2_\nu \to L^2_\mu} 
 \limits_{\ds f \xmapsto{\kern8pt} \ell f}
 \kern30pt
 \mathsf{BA} \colon \mathop{L^2_\mu \to L^2_\mu} 
 \limits_{\ds f \xmapsto{\kern8pt} \ell f}
 \kern30pt
 \mathsf{AB} \colon \mathop{L^2_\nu \to L^2_\nu} 
 \limits_{\ds f \xmapsto{\kern8pt} \ell f}
$$
The equality $\<\mathsf{B}f,g\>_\mu = \<f,\mathsf{A}g\>_\nu$ for every $f\in L^2_\nu$ and $g\in L^2_\mu$ identifies $\mathsf{B=A^+}$ as the adjoint operator of $\mathsf{A}$. Therefore, $A^+$ is the matrix of $\mathsf{B}$ with respect to the bases $\chi$ of $L^2_\mu$ and $\omega$ of $L^2_\nu$, so that we recover the identity $\ell\omega=(\mathsf{B}\omega_0,\mathsf{B}\omega_1,\dots)^t=A^+\chi$. The corresponding matrices of $\mathsf{BA}$ and $\mathsf{AB}$ are $\ell({\cal C})$ and $\ell({\cal D})$, respectively. The fact that these two last operators are compositions of $\mathsf{A}$ and $\mathsf{B}$ translates into the Cholesky factorizations related to the Darboux transformation ${\cal C}\overset{\ell}{\mapsto}{\cal D}$. The relation between the QR and Cholesky factorizations is simply the matrix realization of the operator identities $\mathsf{BA=A^+U^+UA}$ and $\mathsf{AB=AUU^+A^+}$, which use the fact that $\mathsf{U^+=U^{-1}}$ due to the unitarity of $\mathsf{U}$. 

From the operator point of view, the main difference between QR and Darboux is that QR looks for the unitarity of one of the factors, $\mathsf{U}$, requiring no symmetry for the factorized operators, $\mathsf{UA}$ and $\mathsf{AU}$, while Darboux needs the self-adjointness of the factorized operators, $\mathsf{BA}$ and $\mathsf{AB}$. 

We have seen that the QR interpretation of Christoffel transformations on the unit circle yields also the Darboux transformations of the corresponding CMV matrices. Nevertheless, QR does not provide all the richness given by Darboux, which allows more general transformations of measures $\mu\overset{\ell}{\mapsto}\nu$, not necessarily with the form \eqref{eq:Chris} corresponding to $\ell$ with zeros outside of the unit circle. In particular, QR can never deal with the case of $\ell$ having two different zeros on the unit circle, a situation covered by Darboux as long as $\ell$ is non-negative on the support of $\mu$. As for the case of a double zero on the unit circle, i.e. \eqref{eq:Chris} with $|\zeta|=1$, it can always be tackled by Darboux but not by QR. When the double zero does not lie in the support of $\mu$ the situation is as good as for Christoffel transformations concerning QR. Otherwise the operator $\mathsf{U}$ is certainly isometric but not necessarily unitary because $\ran(U)=L^2_\mu$ iff $1/q\in L^2_\nu$. 

Let us clarify from the operator point of view the situation which distinguishes Darboux from QR, i.e. the case of $\ell$ having its zeros on the unit circle. Suppose $\ell$ non-negative on the support of $\mu$, so that $\ell({\cal C})$ is positive definite and Darboux can be implemented. Then, $d\nu=\ell\,d\mu$ is such that any function in $L^2_\mu$ lies also in $L^2_\nu$, but the converse is not necessarily true. On the other hand, 
the supports of $\mu$ and $\nu$ may differ only in the zeros of $\ell$, where $\mu$ may have mass points but $\nu$ not. Hence, $\ell L^2_\nu$ makes sense as a subset of $L^2_\mu$. All this ensures that $\mathsf{A}$ and $\mathsf{B}$ are well defined bounded operators which are adjoint of each other, but now the first of these factors may have a non-trivial kernel. It is not difficult to see that $\ker(\mathsf{B})=\{0\}$, while $\ker(\mathsf{A})$ is spanned by the characteristic functions of the zeros of $\ell$, which is non-trivial iff $\mu$ has a mass point at some of these zeros. From general results (see for instance \cite[Theorem 5.39]{We1980}) we know that $\mathsf{BA=A^+A}$ and $\mathsf{AB=AA^+}$ are unitarily equivalent when restricted to the orthogonal complement of $\ker(\mathsf{A^+A})=\ker(\mathsf{A})$ and $\ker(\mathsf{AA^+})=\ker(\mathsf{A^+})=\{0\}$, respectively. We conclude that $\mathsf{BA}$ and $\mathsf{AB}$ are isospectral unless $\mu$ has a mass point at a zero of $\ell$, in which case the spectrum of $\mathsf{BA}$ is obtained by adding the null eigenvalue to the spectrum of $\mathsf{AB}$. Bearing in mind that, using the notation \eqref{eq:Umu}, $\mathsf{BA}=\ell(\mathsf{U}_\mu)$ and $\mathsf{AB}=\ell(\mathsf{U}_\nu)$, this agrees with the fact that the spectrum of $\mathsf{U}_\mu$ differs from that of $\mathsf{U}_\nu$ only in the mass points of $\mu$ located at the zeros of $\ell$. Note that $\chi(z)^+$ are the coordinates of the characteristic function of $\{z\}$ with respect to the orthonormal basis $\chi$ of $L^2_\mu$. Therefore, the previous comments show that the square-summable vectors of the formal kernel $\ker_R(B)=\ker_R(A^+)=\ker_L(A)^+$ are spanned by the evaluations of $\chi$ at the zeros of $\ell$.


\section{Darboux, Ablowitz-Ladik and Schur flows}
\label{sec:IS}

It is well known that the Lax pair of the Toda lattice is built out of Jacobi matrices. This is the origin of a fruitful three-way relationship among integrable Toda chains, Darboux transformations of Jacobi matrices and orthogonal polynomials on the real line. On the other hand, the unitary analogue of the Toda lattice is given by the Schur flows, a name which refers to an integrable system closely related to the orthogonal polynomials on the unit circle. These polynomials also have a close link to another integrable system of interest in mathematical physics, the Ablowitz-Ladik model. However, no Darboux tool was available yet for the analysis of Schur flows or the Ablowitz-Ladik system. In this section we will show that such a tool is provided by Darboux transformations of CMV matrices, which play for Schur and Ablowitz-Ladik flows a similar role to that of Darboux for Jacobi in Toda chains. More precisely, we will see that Darboux for CMV constitutes an integrable discretization of such integrable flows, and also provides a mechanism to generate new flows from known ones. In order to highlight the analogies between Darboux for Jacobi and CMV regarding integrable systems, we will first summarize known connections between Darboux for Jacobi and the Toda lattice. It is worth stressing that this section illustrates eloquently the benefits of the interpretation of Darboux as a transformation of measures, a central result in this paper which now finds a very useful goal: the simplification of the arguments for statements related to integrable systems.     

Using Flaschka's variables, the evolution of a semi-infinite Toda lattice chain can be rewritten equivalently as a flow ${\cal J}(t)$ of Jacobi matrices governed by the Lax equations \cite{Teschl,Toda}
\begin{equation} \label{eq:Lax-T}
 \frac{d{\cal J}}{dt}=[\pi({\cal J}),{\cal J}] 
 = \pi({\cal J})\,{\cal J}-{\cal J}\pi({\cal J}),
 \qquad\quad
 \pi({\cal J}) = P_+{\cal J}-P_-{\cal J},
\end{equation}
where $P_\pm$ are the upper/lower triangular projectors 
$$
 P_+ 
 \left(\begin{smallmatrix} 
 	* & * & * & \cdots 
	\\[3pt] 
	* & * & * & \cdots 
	\\[3pt] 
	* & * & * & \cdots 
	\\[3pt] 
	\cdots & \cdots & \cdots & \cdots
 \end{smallmatrix}\right)
 = 
 \left(\begin{smallmatrix} 
 	0 & * & * & \cdots 
	\\[3pt] 
	0 & 0 & * & \cdots 
	\\[3pt] 
	0 & 0 & 0 & \cdots 
	\\[3pt] 
	\cdots & \cdots & \cdots & \cdots
 \end{smallmatrix}\right),
 \qquad\quad
 P_- 
 \left(\begin{smallmatrix} 
 	* & * & * & \cdots 
	\\[3pt] 
	* & * & * & \cdots 
	\\[3pt] 
	* & * & * & \cdots 
	\\[3pt] 
	\cdots & \cdots & \cdots & \cdots
 \end{smallmatrix}\right)
 = 
 \left(\begin{smallmatrix} 
 	0 & 0 & 0 & \cdots 
	\\[3pt] 
	* & 0 & 0 & \cdots 
	\\[3pt] 
	* & * & 0 & \cdots 
	\\[3pt] 
	\cdots & \cdots & \cdots & \cdots
 \end{smallmatrix}\right).
$$
The integrability, which relies on the isospectrality of the flow generated by a Lax pair, is also obvious when this flow is translated to the orthogonality measure $d\mu(x,t)$ associated with ${\cal J}(t)$, namely \cite{KaMo}
\begin{equation} \label{eq:mu-T}
 d\mu(x,t) = e^{-xt} \, d\mu(x,0).
\end{equation}
Actually, the relations \eqref{eq:Lax-T} and \eqref{eq:mu-T} are completely equivalent \cite{NiSo}. 

On the other hand, the iteration of a Darboux transformation with polynomial $\wp(x)=x+\beta$ starting with a given Jacobi matrix ${\cal J}_0$, 
$$
 {\cal J}_0 \overset{\wp}{\mapsto} {\cal J}_1 \overset{\wp}{\mapsto} 
 {\cal J}_2 \overset{\wp}{\mapsto} \cdots,
$$
leads to a sequence of Jacobi matrices ${\cal J}_n$. In view of \eqref{eq:uv}, this Darboux iteration reads as the difference equations
\begin{equation} \label{eq:D-Toda}
\begin{aligned}
 & u_{k,n+1} + v_{k-1,n+1} = u_{k,n} + v_{k,n}, 
 \\ 
 & u_{k,n+1} v_{k,n+1} = u_{k+1,n} v_{k,n},
\end{aligned}
\end{equation}
for the LU variables $u_{k,n}$, $v_{k,n}$ of $\widetilde{\cal J}_n+\beta I$, where we use the natural adaptation of the notation in \eqref{eq:LU} and \eqref{eq:uv}. If we think of ${\cal J}_n={\cal J}(n\delta)$ as a time discretization of a differentiable flow ${\cal J}(t)$ with time-step $\delta$, the difference equations \eqref{eq:D-Toda} become the Toda lattice equations \eqref{eq:Lax-T} in the continuous limit $\delta \to 0$ \cite{HTI,SpZh}. A quick argument for this assertion follows from the identity
$$
 e^{-xt} \, d\mu(x,0) = \lim_{n\to\infty} \, (1-\delta x)^n \, d\mu(x,0), 
 \qquad \delta=\frac{t}{n},
$$
which identifies the Toda flow as a limit of iterated Darboux transformations. In other words, the Darboux transformations of Jacobi matrices constitute an integrable time discretization of the Toda lattice. 

Such an integrable discretization is useful to integrate the dynamical system numerically with higher accuracy. Nevertheless, this is not the only interest of Darboux transformations for the Toda lattice. For instance, Darboux transformations can be used to generate new solutions of the Toda lattice starting from a known one. More precisely, if ${\cal J}(t)$ solves \eqref{eq:Lax-T}, for any polynomial $\wp$ of degree one, the Darboux transformation ${\cal J}(t) \overset{\wp}{\mapsto} {\cal K}(t)$ yields a new solution ${\cal K}(t)$ of \eqref{eq:Lax-T}. The simplest proof of this relies on the interpretation of Toda and Darboux in terms of the measures $d\mu(x,t)$, $d\nu(x,t)$ associated with ${\cal J}(t)$, ${\cal K}(t)$. Since a Toda flow ${\cal J}(t)$ is equivalent to $d\mu(x,t) = e^{-xt} \, d\mu(x,0)$ and Darboux reads as $d\nu(x,t)=\wp(x)\,d\mu(x,t)$, we get $d\nu(x,t) = e^{-xt} \, d\nu(x,0)$, so ${\cal K}(t)$ is a Toda flow too. Therefore, the iterative Darboux transformations of a Toda solution ${\cal J}(t)$ given by any sequence $\wp_n$ of polynomials of degree one leads to a hierarchy of Toda solutions
$$
 {\cal J}(t) \overset{\wp_1}{\mapsto} {\cal J}_1(t) \overset{\wp_2}{\mapsto} 
 {\cal J}_2(t) \overset{\wp_3}{\mapsto} \cdots.
$$

Inverse Darboux transformations can be also used to generate such hierarchies of Toda solutions, although this needs a careful choice of the free parameter appearing in inverse Darboux for Jacobi. If ${\cal K}(t)$ follows from an inverse Darboux transformation ${\cal K}(t) \xmapsto{x-\zeta} {\cal J}(t)$ of a Toda flow ${\cal J}(t)$, the measures $d\mu(x,t)$, $d\nu(x,t)$ associated with ${\cal J}(t)$, ${\cal K}(t)$ are related by 
$$
 d\nu(x,t) = \frac{d\mu(x,t)}{x-\zeta} + m(t) \, \delta_\zeta.
$$
Then, ${\cal K}(t)$ is another Toda flow provided that the mass has the time dependence $m(t)=m(0)\,e^{-\zeta t}$, so that $d\mu(x,t)=e^{-xt}\,d\mu(x,0)$ implies $d\nu(x,t)=e^{-xt}\,d\nu(x,0)$. On the other hand, the freedom in the mass at $\zeta$ is equivalent to the freedom in the $(0,0)$ entry of the triangular factor $A$ which determines the reversed factorization ${\cal J}-\zeta I=A^+A$. Denoting this entry by $r_0$ as in the CMV case, from the relation $q=Ap$ between the orthonormal polynomials $p$, $q$ related to ${\cal J}$, ${\cal K}$, we find that
$r_0^2 = q_0^2/p_0^2 = \int d\mu/\int d\nu$. Thus, the time dependence of the mass which preserves the Toda solutions is equivalent to a time dependent choice of the free parameter $r_0(t)$ given by
\begin{equation} \label{eq:r0-T}
 r_0^2(t) = 
 \frac{\int e^{(\zeta-x)t} \, d\mu(x,0)}
 {\int e^{(\zeta-x)t} \, \frac{d\mu(x,0)}{x-\zeta} + m(0)}.
\end{equation}
The arbitrariness of $m(0)$ is equivalent to the arbitrariness of $r_0(0)$, which therefore determines $r_0(t)$ at any time $t$. This means that, once $\wp(x)=x-\zeta$ is chosen, the inverse Darboux transformations of ${\cal J}(t)$ which preserve Toda solutions constitute a one-parameter family parametrized by $m(0)$, equivalently by the entry $r_0(0)$ of $A(0)$ which determines the factorization $\wp({\cal J}(0))=A(0)^+A(0)$. The factorizations $\wp({\cal J}(t))=A(t)^+A(t)$ giving at other times the Toda preserving inverse Darboux transformations are determined by \eqref{eq:r0-T}. 

Therefore, inverse Darboux transformations are more efficient than direct ones in generating new Toda solutions since, for each choice of the polynomial driving the transformation, they yield a one-parameter family of Toda solutions instead of a single one. Hence, given a Toda flow ${\cal J}(t)$, the iterative inverse Darboux transformations defined by a sequence $\wp_n$ of polynomials of degree one leads to a hierarchy of families of Toda flows ${\cal J}_n(m_1,\dots,m_n;t)$ depending on an increasing number of parameters, which can be interpreted as a mass $m_n=m_n(0)$ at the zero of $\wp_n$ at initial time, 
$$
 \cdots \overset{\wp_3}{\mapsto} {\cal J}_2(m_1,m_2;t) 
 \overset{\wp_2}{\mapsto} {\cal J}_1(m_1;t) \overset{\wp_1}{\mapsto} 
 {\cal J}(t).  
$$
 
The above connections between Darboux for Jacobi and the Toda lattice have unitary analogues that have been ignored so far. They involve Darboux for CMV and known unitary counterparts of Toda, the so called Schur and Ablowitz-Ladik flows. Schur flows are the unit circle version of the Toda lattice, connected to Toda via the Szeg\H{o} mapping, while the Ablowitz-Ladik model follows from the space discretization of the nonlinear Schr\"odinger equation \cite{AFMa,AG94,FaGe,GeNe,Gol,KN2007,LN12,MuNa,Nenciu05,Nenciu06,Si2007bis}. Both integrable systems define a dynamics in the space of ``Schur sequences" (sequences in the open unit disk) via the difference-differential equations
$$
\begin{gathered}
 \begin{gathered}
 	\text{\underline{Schur flows}} 
 	\\
 	\frac{da_n}{dt} = \rho_n^2 (a_{n+1}-a_{n-1}),
 \end{gathered}
 \hskip50pt
 \begin{gathered}
 	\text{\underline{Ablowitz-Ladik}} 
 	\\
 	-i\frac{da_n}{dt} = \rho_n^2 (a_{n+1}+a_{n-1}),
 \end{gathered}
 \\[3pt]
 \quad
 |a_n|<1, 
 \qquad
 \rho_n=\sqrt{1-|a_n|^2},
 \qquad
 n\ge1,
 \qquad
 a_0=1.
\end{gathered}
$$
L.~Golinskii discovered in 2006 \cite{Gol} that the Schur flows are equivalent to flows ${\cal C}(t)$ of CMV matrices with Schur parameters $a_n(t)$ governed by the Lax equations
\begin{equation} \label{eq:Lax-S}
 \frac{d{\cal C}}{dt}=[\pi(\re\,{\cal C}),{\cal C}],
 \qquad
 \re\,{\cal C} = \frac{1}{2}({\cal C}+{\cal C}^+),
\end{equation}
where $\pi(\cdot)$ is defined as in \eqref{eq:Lax-T}. In \cite{Gol}, this author also proved the equivalence of the Schur flows and the evolution 
\begin{equation} \label{eq:mu-S}
 d\mu(z,t) = e^{(z+z^{-1})t} \, d\mu(z,0)
\end{equation}
for the measure $d\mu(z,t)$ related to ${\cal C}(t)$ (see \cite{IsWi} for an analysis of the flow \eqref{eq:mu-S} starting from the Lebesgue measure). These results constitute the unitary analogue of the equivalence among the semi-infinite Toda lattice, \eqref{eq:Lax-T} and \eqref{eq:mu-T}.

The equivalence among Schur flows, \eqref{eq:Lax-S} and \eqref{eq:mu-S} can be easily extended to slightly more general flows including Ablowitz-Ladik, which allows for a unified treatment of both integrable systems. The transformation $a_n \to \lambda^n a_n$, $|\lambda|=1$, exchanges Schur flows and what we will call Schur flows with parameter $\lambda$, given by
\begin{equation} \label{eq:gS}
 \frac{da_n}{dt} = \rho_n^2 (\lambda a_{n+1} - \overline\lambda a_{n-1}).
\end{equation}
The Ablowitz-Ladik flows become the Schur flows with parameter $\lambda=i$. Since the transformation $a_n \to \lambda^n a_n$ is equivalent to the rotation $d\mu(z,t) \to d\mu(\overline\lambda z,t)$ of the measure, from \eqref{eq:mu-S} we find that \eqref{eq:gS} reads as
\begin{equation} \label{eq:mu-gS}
 d\mu(z,t) = e^{(\lambda z + \overline\lambda z^{-1})t} \, d\mu(z,0).
\end{equation}
Besides, from the explicit Schur parametrization \eqref{eq:CMV-schur} of CMV matrices we can see that $a_n \to \lambda^n a_n$ is equivalent to ${\cal C}\to\lambda\Lambda{\cal C}\Lambda^+$ with $\Lambda$ the diagonal unitary matrix
$$
 \Lambda = 
 \left(\begin{smallmatrix}
    \\
 	1 
	\\ 
	& \kern3pt \lambda 
	\\ 
	& & \kern3pt \overline\lambda 
	\\ 
	& & & \kern3pt \lambda^2 
	\\ 
	& & & & \kern1pt \overline\lambda^2
	\\[-3pt]
	& & & & & \kern-3pt \ddots
 \end{smallmatrix}\right).
$$ 
Hence, using \eqref{eq:Lax-S} we conclude that \eqref{eq:gS} can be rewritten as the Lax equation
\begin{equation} \label{eq:Lax-gS}
 \ds \frac{d{\cal C}}{dt}=[\pi(\re(\lambda{\cal C})),{\cal C}].
\end{equation}
Actually, the equivalence among \eqref{eq:gS}, \eqref{eq:mu-gS} and \eqref{eq:Lax-gS} is a particular case of a much more general one concerning what is known as generalized Schur flows \cite{Si2007bis}. Nevertheless, Schur flows with parameter $\lambda$ (in short, Schur flows) are enough to understand the role of Darboux for CMV in integrable systems.

The exponential character of the evolution for the measure under Schur flows allows us to search for an integrable discretization of such dynamical systems using arguments which parallel those of the Toda case. Rewriting
$$
 e^{(\lambda z+\overline\lambda z^{-1})t} \, d\mu(z,0) 
 = \lim_{n\to\infty} \, 
 (1+\delta\lambda z+\delta\overline\lambda z^{-1})^n \, d\mu(z,0), 
 \qquad \delta=\frac{t}{n},
$$
shows that Darboux for CMV can be understood as an integrable discretization of the Schur flows \cite{MuNa}. This means that \eqref{eq:gS} should be obtained as a continuous limit of the relation \eqref{eq:D=dS} between the Schur parameters of CMV matrices connected by Darboux. To check this consider the iteration of a Darboux transformation with Laurent polynomial $\ell(z)=1+\alpha z+\overline\alpha z^{-1}$ starting with a CMV matrix ${\cal C}_0$,
$$
 {\cal C}_0 \overset{\ell}{\mapsto} {\cal C}_1 \overset{\ell}{\mapsto} 
 {\cal C}_2 \overset{\ell}{\mapsto} \cdots,
$$
and suppose that ${\cal C}_n={\cal C}(n\delta)$ come from a time discretization of a differentiable CMV flow ${\cal C}(t)$ with time-step $\delta=|\alpha|$. Then, \eqref{eq:D=dS} yields the following relations among the Schur parameters $a_{k,n}$ of ${\cal C}_n$,
\begin{equation} \label{eq:dS}
\begin{aligned}
 a_{k,n+1}-a_{k,n} & = 
 \frac{\kappa_{k-1,n+1}^2}{\kappa_{k,n}^2} 
 (\alpha a_{k+1,n} - \overline\alpha a_{k-1,n+1})
 \\
 & = \frac{1}{1-2\re(\alpha a_{1,n})}
 \frac{\rho_{1,n}^2\rho_{2,n}^2\cdots\rho_{k,n}^2}
 {\rho_{1,n+1}^2\rho_{2,n+1}^2\cdots\rho_{k-1,n+1}^2}
 (\alpha a_{k+1,n} - \overline\alpha a_{k-1,n+1}),
\end{aligned}
\end{equation}
where $\rho_{k,n}=\sqrt{1-|a_{k,n}|^2}$, $\kappa_{k,n}^{-1}=\rho_{k,n}\cdots\rho_{k,1}\sqrt{\int d\mu_n}$ and the measures $\mu_n$ associated with each ${\cal C}_n$ are related by $d\mu_{n+1}=\ell d\mu_n$ so that $\int d\mu_{n+1} = (1-2\re(\alpha a_{1,n})) \int d\mu_n$. The difference equations \eqref{eq:dS} define explicitly an integrable discretization of the Schur flows \cite{MuNa}. Indeed, expressing $\alpha=\delta\lambda$, $|\lambda|=1$, we get
$$
 \frac{a_{k,n+1}-a_{k,n}}{\delta} 
 = \frac{1}{1-2\delta\re(\lambda a_{1,n})}
 \frac{\rho_{1,n}^2\rho_{2,n}^2\cdots\rho_{k,n}^2}
 {\rho_{1,n+1}^2\rho_{2,n+1}^2\cdots\rho_{k-1,n+1}^2}
 (\lambda a_{k+1,n} - \overline\lambda a_{k-1,n+1}),
$$
which in the limit $\delta\to0$ obviously gives a Schur flow with parameter $\lambda$.

It is also possible to generate hierarchies of Schur flows starting from a given one ${\cal C}(t)$ by iterating Darboux transformations of CMV matrices,
$$
 {\cal C}(t) \overset{\ell_1}{\mapsto} {\cal C}_1(t) \overset{\ell_2}{\mapsto} 
 {\cal C}_2(t) \overset{\ell_3}{\mapsto} \cdots.
$$
Analogously to the Toda case, this is a simple consequence of the characterization of Schur flows and Darboux for CMV in terms of measures. Any Darboux transformation ${\cal C}(t) \overset{\ell}{\mapsto} {\cal D}(t)$ translates into the relation $d\nu(z,t)=\ell(z)\,d\mu(z,t)$ for the measures $d\mu(z,t)$, $d\nu(z,t)$ of ${\cal C}(t)$, ${\cal D}(t)$, thus $d\mu(z,t) = e^{(\lambda z+\overline\lambda z^{-1})t} \, d\mu(z,0)$ implies $d\nu(z,t) = e^{(\lambda z+\overline\lambda z^{-1})t} \, d\nu(z,0)$. 

Inverse Darboux transformations can improve the generation of new Schur flows because each iteration of inverse Darboux provides in general a parametric family of such flows under a suitable choice of the freedom in the reversed Cholesky factorizations. Consider for instance the case $\ell(z)=(z-\zeta)(z^{-1}-\overline\zeta)$ with $|\zeta|=1$. In terms of the measures $d\mu(z,t)$, $d\nu(z,t)$ of ${\cal C}(t)$, ${\cal D}(t)$, an inverse Darboux transformation ${\cal D}(t) \overset{\ell}{\mapsto} {\cal C}(t)$ reads as 
$$
 d\nu(z,t) = \frac{d\mu(z,t)}{|z-\zeta|^2} + m(t)\,\delta_\zeta.
$$
Bearing in mind the characterization \eqref{eq:mu-gS} of Schur flows, this transformation maps a Schur flow with parameter $\lambda$ into another one exactly when $m(t)=m(0)e^{2\re(\lambda\zeta)t}$. On the other hand, Theorem~\ref{thm:spurious} shows that the freedom in the reversed factorizations $\ell({\cal C})=A^+A$ leading to CMV solutions of inverse Darboux is encoded in the first Schur parameter $b_1$ of the inverse transform ${\cal D}$. Since $b_1=-\int z\,d\nu/\int d\nu$, we find that the inverse Darboux transformations which preserve Schur flows with parameter $\lambda$ are characterized by a time dependent choice  
\begin{equation} \label{eq:r0-S}
 b_1(t) = 
 - \frac{\int z \, e^{2\re(\lambda(z-\zeta))t} \, \frac{d\mu(z,0)}{|z-\zeta|^2}
 + m(0)\zeta}
 {\int e^{2\re(\lambda(z-\zeta))t} \, \frac{d\mu(z,0)}{|z-\zeta|^2} + m(0)},
\end{equation}
determined by its initial value $b_1(0)$ via the arbitrary parameter $m(0)$. Hence, this mass parametrizes the inverse Darboux transformations preserving Schur flows. 

Thus, given a sequence $\ell_n$ of Hermitian Laurent polynomials of degree one with a double zero, the iterative inverse Darboux transformations of a Schur flow ${\cal C}(t)$ yield a hierarchy of families of Schur flows ${\cal C}_n(m_1,\dots,m_n;t)$ parametrized by the initial mass $m_n=m_n(0)$ at the zero of each $\ell_n$, 
$$
 \cdots \overset{\ell_3}{\mapsto} {\cal C}_2(m_1,m_2;t) 
 \overset{\ell_2}{\mapsto} {\cal C}_1(m_1;t) \overset{\ell_1}{\mapsto} 
 {\cal C}(t).  
$$
In case of $\ell_n$ with two distinct zeros on the unit circle, the number of parameters increases by two at the $n$-th iteration of inverse Darboux, namely, the initial masses at both zeros of $\ell_n$.

The above ideas suggest that exploiting the links between Darboux for CMV and Schur flows should be as fruitful as for Darboux for Jacobi and Toda lattices, although this requires further research beyond the scope of the present work.   
 
\section{Further extensions}
\label{sec:FE}

The Darboux transformations of CMV matrices introduced in this paper admit a couple of generalizations of interest. One of them has to do with the degree of the Hermitian Laurent polynomial involved in the transformation. The results of the previous sections carry over directly to polynomial modifications of higher degree. Indeed, announcing this generalization, some of the results previously obtained were proved for Laurent polynomials of arbitrary degree (see Section~\ref{ssec:Dfac}).

On the other hand, as we pointed out previously, Darboux transformations of Jacobi matrices can deal also with tridiagonal matrices whose lower and upper diagonals are not necessarily positive, but simply non-null. This connects such transformations with general quasi-definite functionals on the real line or, in other words, with orthogonal polynomials with respect to signed measures supported on the real line. Although this generalization seems to be missing in the Cholesky approach to Darboux transformations, the quasi-definite case can be tackled resorting to generalized Cholesky factorizations $AEA^+$, $A\in\mathscr{T}$, involving diagonal matrices $E$ whose diagonal entries are $\pm1$ (`sign matrices' in short). The matrices playing the role of CMV for this generalization constitute a wider class of matrices $\cal C$ which can be non-unitary, but are always quasi-unitary \cite{Wil1937} with respect to a certain sign matrix $E$, i.e. 
$$
 {\cal C}E{\cal C}^+ = E = {\cal C}^+E{\cal C}.
$$
We will call quasi-CMV matrices to such a generalization of CMV, given by \eqref{eq:q-CMV-schur}. Although not named in this way, quasi-CMV matrices were introduced for the first time by some of the authors in the paper \cite{CMV2003} which rediscovered CMV a decade after Watkins' one \cite{W1993}. Actually, \cite{CMV2003} introduces directly quasi-CMV matrices, from which CMV arises as a subclass.  

In this section we will briefly discuss these two generalizations of Darboux transformations for CMV matrices. 

\subsection{Higher degree transformations}
\label{ssec:deg}

Darboux transformations of CMV matrices can be associated with any Hermitian Laurent polynomial of arbitrary degree. They are defined exactly as in the case of degree one. Since Section~\ref{ssec:Dfac}, central for the rest of the paper, is developed for an arbitrary degree, the generalization to higher degree of the results previously obtained for degree one is straightforward. Therefore, the aim of this section is not a detailed discussion of higher degree Darboux transformations, but to give an account of those features which depend on the degree. 

Let $\ell$ be a Hermitian Laurent polynomial of degree $d$, i.e. with $N_z=2d$ zeros counting multiplicity. Given a CMV matrix $\cal C$, the Cholesky factorization $\ell({\cal C})=AA^+$, $A\in\mathscr{T}$, exists iff $\ell({\cal C})$ is positive definite, and then it is unique (see Appendix~\ref{app:cholesky}). Due to the CMV shape \eqref{eq:CMVshape} of $\cal C$, the Hermitian matrix $\ell({\cal C})$ is $2N_z+1$-diagonal with non-null entries in the upper and lower diagonals, so $A$ is $N_z+1$-band with non-null entries in the lower diagonal, i.e.
\begin{equation} \label{eq:A-Nband}
\begin{gathered}
 \scs N_z \times N_z
 \\[-5pt]
 \tikz \draw[rounded corners=2pt]
 (0,0) -- (0,-1.85) -- (2.1,-1.85) -- (2.1,0) -- cycle;
 \\[-61.5pt]
 \kern21.5pt
 A = 
 \left(
 \begin{smallmatrix}
 	\\
	+ 
	\\[2pt]
   	* & + 
   	\\[2pt]
	* & * & \kern-1.5pt +
	\\[2pt]
	* & * & \kern-1.5pt * & \kern-3pt +
	\\[-4pt]
	\sss\vdots & \sss\vdots & \kern-1.5pt \sss\ddots & \kern-3pt \sss\ddots 
	& \kern-3pt \sss\ddots
	\\
	* & * & \kern-1.5pt \text{\tiny$\cdots$} & \kern-3pt * & \kern-3pt * 
	& \kern-3pt +
	\\[2pt]
    \circledast & * & \kern-1.5pt * & \kern-3pt \text{\tiny$\cdots$} 
    & \kern-3pt * & \kern-3pt * & \kern-3pt +
    \\[2pt]
   	& \circledast & \kern-1.5pt * & \kern-3pt * 
	& \kern-3pt \text{\tiny$\cdots$} & \kern-3pt * & \kern-3pt * & \kern-3pt + 
	\\[2pt]
   	&& \kern-1.5pt \circledast & \kern-3pt * & \kern-3pt * 
	& \kern-3pt \text{\tiny$\cdots$} & \kern-3pt * & \kern-3pt * & \kern-3pt +
	\\[1pt]
    &&& \kern-3pt \text{\tiny$\cdots$} & \kern-3pt \text{\tiny$\cdots$} 
    & \kern-3pt \text{\tiny$\cdots$} & \kern-3pt \text{\tiny$\cdots$} 
    & \kern-3pt \text{\tiny$\cdots$} & \kern-3pt \text{\tiny$\cdots$} 
    & \kern-3pt \text{\tiny$\cdots$}
 \end{smallmatrix}
 \right).
\end{gathered}
\end{equation}
This defines the higher degree Darboux transformations ${\cal C}\xmapsto{\ell}{\cal D}=A^{-1}{\cal C}A$ of CMV matrices. As in the case of degree one, they constitute the matrix translation of the mapping $u \xmapsto{\ell} v=u\ell$ between positive definite functionals. Actually, the positive definiteness of $\ell({\cal C})$, which guarantees its Cholesky factorization, is also equivalent to the positive definiteness of $u\ell$, as well as to the unitarity of $\cal D$, which is therefore CMV. Besides, the same factor $A$ appearing in the Cholesky factorization of $\ell({\cal C})$ gives the reversed Cholesky factorization $\ell({\cal D})=A^+A$.

Since every non-constant Hermitian Laurent polynomial factorizes into degree one ones, the functional interpretation of higher degree Darboux transformations implies their equivalence with compositions of degree one transformations. However, this is not the case for higher degree Darboux transformations with parameters, which exhibit a higher richness of spurious solutions than mere compositions of degree one transformations. To show this, consider a Hermitian Laurent polynomial $\ell$ with $N_z$ zeros, a CMV matrix $\cal D$ and a $N_z+1$-band triangular matrix $A\in\mathscr{T}$. Let us rewrite $\ell({\cal D})$ and $A$ as $N_z \times N_z$-block matrices,
$$
\begin{aligned}
 & \ell({\cal C}) = 
 \left(
 \begin{smallmatrix}
 	\\
 	L_0 & \kern-1pt M_0^+ \\[2pt] 
 	M_0 & \kern-1pt L_1 & \kern-1pt M_1^+ \\[2pt] 
 	& \kern-1pt M_1 & \kern-1pt L_2 & \kern-1pt M_2^+ \\[-3pt]
 	& & \kern-4pt \ddots & \kern-6pt \ddots & \kern-2pt \ddots
 \end{smallmatrix}
 \right),
 \qquad 
 L_n = L_n^+,
 \qquad 
 M_n = 
 \left(
 \begin{smallmatrix}
	\\
 	\circledast & \kern-1pt * & \kern-2pt * & \kern-2pt \text{\tiny$\cdots$} 
	& \kern-2pt * & \kern0.5pt * 
	\\[-4pt]	
    & \kern-1pt \sss\ddots & \kern-2pt \sss\ddots & \kern-2pt \sss\ddots 
    & \kern-2pt \sss\vdots & \kern0.5pt \sss\vdots 
    \\
   	& & \kern-2pt \circledast & \kern-2pt * & \kern-2pt * & \kern0.5pt *
	\\[1pt]
   	&&& \kern-2pt \circledast & \kern-2pt * & \kern0.5pt *
	\\[1pt]
   	&&&& \kern-2pt \circledast & \kern0.5pt * 
	\\[1pt]
	&&&&& \kern0.5pt \circledast
 \end{smallmatrix}
 \right),
 \\[3pt]
 & A = 
 \left(
 \begin{smallmatrix}
 	\\
 	R_0 \\[3pt] 
 	S_0 & R_1 \\[3pt] 
 	& S_1 & R_2 \\[-2pt]
 	& & \ddots & \ddots
 \end{smallmatrix}
 \right),
 \qquad
 R_n =
 \left(
 \begin{smallmatrix}
 	+ 
	\\[2pt]
	* & + 
	\\[2pt]
	* & * & \kern-1.5pt + 
	\\[2pt]
	* & * & \kern-1.5pt * & \kern-3pt + 
	\\[-4pt]
	\sss\vdots & \sss\vdots & \kern-1.5pt \sss\ddots & \kern-3pt \sss\ddots 
	& \kern-3pt \sss\ddots 
	\\
	* & * & \kern-1.5pt \text{\tiny$\cdots$} & \kern-3pt * & \kern-3pt * 
	& \kern-1pt +
 \end{smallmatrix}
 \right),
 \qquad
 S_n = 
 \left(
 \begin{smallmatrix}
	\\
 	\circledast & \kern-1pt * & \kern-2pt * & \kern-2pt \text{\tiny$\cdots$} 
	& \kern-2pt * & \kern0.5pt * 
	\\[-4pt]	
    & \kern-1pt \sss\ddots & \kern-2pt \sss\ddots & \kern-2pt \sss\ddots 
    & \kern-2pt \sss\vdots & \kern0.5pt \sss\vdots 
    \\
   	& & \kern-2pt \circledast & \kern-2pt * & \kern-2pt * & \kern0.5pt *
	\\[1pt]
   	&&& \kern-2pt \circledast & \kern-2pt * & \kern0.5pt *
	\\[1pt]
   	&&&& \kern-2pt \circledast & \kern0.5pt * 
	\\[1pt]
	&&&&& \kern0.5pt \circledast
 \end{smallmatrix}
 \right).
\end{aligned}
$$
Note that $R_0$ is the $N_z \times N_z$ leading submatrix of $A$ highlighted in \eqref{eq:A-Nband}, and $M_n$, $R_n$, $S_n$ are non-singular for all $n\ge0$. Then, the reversed Cholesky factorization $\ell({\cal D})=A^+A$ reads as
\begin{equation} \label{eq:b-inverse}
 R_n^+R_n + S_n^+S_n = L_n, 
 \qquad
 R_{n+1}^+S_n = M_n,
 \qquad
 n\ge0.
\end{equation} 
Since $S_n$ is non-singular, it has a unique polar decomposition $S_n=V_n|S_n|$ with $V_n$ unitary and $|S_n|$ the positive definite square root of $S_n^+S_n$. This allows us to rewrite \eqref{eq:b-inverse} as
$$
 |S_n|^2 = L_n - R_n^+R_n, 
 \qquad 
 |S_n|M_n^{-1} = V_n^+(R_n^+)^{-1}.
$$
Therefore, $R_n$ determines $|S_n|$ via the first equation, while $|S_n|$ fixes $V_n$ and $R_{n+1}$, provided by the QR factors of $|S_n|M_n^{-1}$ as the second equation shows. This implies that all the blocks of $A$ are inductively generated starting from a choice of $R_0$, which is only restricted by the condition that $|S_n|$ must be positive definite for all $n\ge0$. Therefore, apart from this constraint, the entries of $R_0$ can be considered as `free' parameters describing all the reversed Cholesky factorizations of $\ell({\cal D})$. In the case $N_z=2$, using the notation \eqref{eq:Aexplicit} we get
$R_0=\left(\begin{smallmatrix}r_0&0\\s_0&r_1\end{smallmatrix}\right)$, which encodes the `free' parameters already known for degree one.

The number of non-trivial entries of $R_0$ is $N_z(N_z+1)/2$, among them $N_z$  positive (the diagonal ones) and the rest are, in general, complex numbers. Therefore, the number of `free' real parameters of a Darboux transformation with parameters is $N_z^2$, where $N_z$ is the number of zeros of the corresponding Hermitian Laurent polynomial. This in contrast with the composition of degree one Darboux transformations with parameters associated with $d=N_z/2$ factors of the Hermitian Laurent polynomial, which only leads to  $4d=2N_z$ `free' real parameters. 

Although the composition of degree one Darboux transformations with parameters gives in principle less spurious solutions than higher degree ones, this iterative procedure to deal with higher degree inverse Darboux has a drawback because such a composition is not feasible in most of the cases. The reason is that every single degree one step gives many spurious solutions which are not CMV neither unitary, preventing us from composing with a new CMV Darboux transformation. This forces us to take into account the CMV conditions \eqref{eq:CMVcond} at any step. Then, the resulting contrained iterative procedure should end up with a set of solutions parametrized by $d$ complex numbers in the open unit disk, playing the role of the first $d$ Schur parameters of the CMV solutions. 

Another reason could increase even more the interest of higher degree Darboux transformations with parameters for CMV: The spurious solutions might have their own interest if associated with non-standard orthogonal polynomials on the unit circle. There is a strong reason to expect an interesting connection with non-standard orthogonality. Higher degree Darboux transformations with parameters for Jacobi matrices have been already considered in the literature \cite{EK,Gau}. Contrary to the case of degree one, they present spurious solutions which are related to Sobolev orthogonal polynomials on the real line \cite{DGAM,DM}.  

The parametric analysis of spurious solutions for Jacobi follows the same arguments given previously, since they work for any matrix which, like $\ell({\cal C})$, can be considered as a block-Jacobi one. Then, if a real polynomial $\wp$ has $N_z$ zeros (coinciding now with the degree), the corresponding Darboux transformations with parameters of a Jacobi matrix $\cal J$ are described by $N_z(N_z+1)/2$ `free' parameters, those encoded in the $N_z \times N_z$ leading submatrix of $\wp({\cal J})$. The only quirks of the Jacobi case is that all these parameters are now real and that, in contrast to the CMV case, any (not necessarily even) number $N_z$ of zeros is possible. Actually, the fact that the minimal step for Jacobi and CMV is $N_z=1$ and $N_z=2$, respectively, is the cause of the possibility of avoiding spurious solutions in the Jacobi but not in the CMV case.

Due to the equivalence with polynomial transformations of measures, higher degree Darboux transformations of Jacobi and CMV matrices are both almost isospectral. More precisely, the Darboux and inverse Darboux transformations may delete or add, respectively, at most $N_z$ eigenvalues.

\subsection{Quasi-CMV matrices}
\label{ssec:quasi}

Quasi-CMV matrices are the matrix representations of the multiplication operator \eqref{eq:z} with respect to zig-zag bases which are `orthonormal' with respect to quasi-definite functionals on the unit circle. Such functionals are those Hermitian linear functionals $u$ on $\Lambda$ whose Gram matrix in $\PP_n=\spn\{1,z,z^2,\dots,z^n\}$ is non-singular for all $n=0,1,2,\dots$. (equivalently, the Gram matrix of $u$ in $\PP$ with respect to a basis $p_n\in\PP_n\setminus\PP_{n-1}$ is quasi-definite, i.e. all its leading submatrices are non-singular). This property, independent of the basis chosen for the Gram matrix, is equivalent to the existence of a sequence of polynomials $\varphi_n\in\PP_n\setminus\PP_{n-1}$ which are `orthonormal' with respect to $u$, i.e. $u[\varphi_n\varphi_{m*}]=e_n\delta_{n,m}$ with $e_n\in\{1,-1\}$. The polynomials $\varphi_n$ satisfy forward and backward recurrence relations which generalize \eqref{eq:RR}, 
\begin{equation} \label{eq:q-RR}
\begin{cases}
 \rho_n \varphi_n(z) = z \varphi_{n-1}(z) + a_n \varphi_{n-1}^*(z),
 \\[3pt]
 \hat\rho_n z\varphi_{n-1}(z) = \varphi_n(z) - a_n \varphi_ n^*(z),
\end{cases}
\quad
\begin{gathered}
 \rho_n = \sqrt{|1-|a_n|^2|},
 \\[2pt] 
 \hat\rho_n = \frac{e_n}{e_{n-1}}\rho_n = \sg(1-|a_n|^2)\rho_n,
\end{gathered}
\end{equation}
where $\sg(x)=x/|x|$, $x\in\R\setminus\{0\}$, is the sign function and the Schur parameters $a_n$ are no longer restricted to the open unit disk, but they can lie anywhere in the complex plane outside of the unit circle. 

The previous extension of orthogonal polynomials on the unit circle goes back at least to the works of Geronimus \cite{Ger54}. However, the connection of the quasi-definite case with `orthonormal' Laurent polynomials and quasi-CMV matrices, which we sumarize below, is of much more recent vintage \cite{CMV2003}.
   
Since $f(z) \mapsto z^{-[n/2]}f(z)$ is a one-to-one correspondence between $\PP_n$ and $\LL_n$ which preserves the sesquilinear form $(f,g) \mapsto u[fg_*]$, the quasi-definiteness of $u$ also means that its Gram matrix in $\LL_n$ is non-singular for all $n=0,1,2,\dots$ (equivalently, the Gram matrix of $u$ in $\Lambda$ with respect to a basis $l_n\in\LL_n\setminus\LL_{n-1}$ is quasi-definite). Therefore, analogously to the previous discussion, the quasi-definiteness of $u$ is equivalent to the existence of a zig-zag basis $\chi$ which is `orthonormal' with respect to $u$, i.e. $u[\chi_n\chi_{m*}]=e'_n\delta_{n,m}$, $e'_n\in\{1,-1\}$. Actually, as in the positive definite case, the relation between the orthonormal polynomials $\varphi_n$ and the orthonormal Laurent polynomials $\chi_n$ is given by \eqref{eq:OLP-OP}. This implies that $e'_n=e_n$, so that the orthonormality condition for $\chi$ can be rewritten as $u[\chi\chi^+]=E$, where $E$ denotes the sign matrix 
\begin{equation} \label{eq:sign}
 E = 
 \begin{pmatrix}
 	e_0 \\[-3pt] 
 	& \kern-3pt e_1 \\[-3pt] 
 	& & \kern-3pt e_2 \\[-3pt] 
	& & & \kern-3pt \ddots
 \end{pmatrix},
 \qquad 
 \begin{aligned}
  	& e_0=\sg(u[1]),
 	 \\
  	& e_n=e_0\prod_{k=1}^n\sg(1-|a_k|^2), \quad n\ge1.
 \end{aligned}
\end{equation} 
It follows from \eqref{eq:OLP-OP} and \eqref{eq:q-RR} that $z\chi={\cal C}\chi$, where
\begin{equation} \label{eq:q-CMV-schur}
{\cal C} = 
\begin{pmatrix} 
	-\overline{a}_0a_1 & \overline{a}_0\rho_1
	\\
	-\hat\rho_1a_2 & -\overline{a}_1a_2 & 
	-\rho_2a_3 & \rho_2\rho_3 
	\\
    \hat\rho_1\hat\rho_2 & \overline{a}_1\hat\rho_2 & 
    -\overline{a}_2a_3 & \overline{a}_2\rho_3
    \\
    && -\hat\rho_3a_4 & -\overline{a}_3a_4 &
    -\rho_4a_5 & \rho_4\rho_5
    \\
    && \hat\rho_3\hat\rho_4 & \overline{a}_3\hat\rho_4 &
    -\overline{a}_4a_5 & \overline{a}_4\rho_5
    \\
    &&&& \cdots & \cdots & \cdots
\end{pmatrix},
\qquad
a_0=1.
\end{equation}
This matrix, which is the representation of the multiplication operator with respect to $\chi$, has a unique inverse, the matrix representation of $z^{-1}$ with respect to $\chi$. We will refer to $\cal C$ as a quasi-CMV matrix due to its quasi-unitarity with respect to the sign matrix $E$ given in \eqref{eq:sign}. Since $\pm u$ have the same orthonormal zig-zag basis, the quasi-definite functional related to a quasi-CMV matrix is unique up to a non-null real rescaling. 

A drawback of Darboux transformations for a quasi-CMV matrix $\cal C$ is the possible lack of hermiticity of $\ell({\cal C})$ for a Hermitian Laurent polynomial $\ell(z)=\alpha z+\beta+\overline\alpha z^{-1}$. This is overcome by the quasi-unitarity of $\cal C$, which implies that ${\cal C}^{-1}=E{\cal C}^+E$. As a consequence, we get two Hermitian matrices, 
$$
 \ell({\cal C})E = \alpha {\cal C}E + \beta E + \overline\alpha E{\cal C}^+,
 \qquad
 E\ell({\cal C}) = \alpha E{\cal C} + \beta E + \overline\alpha {\cal C}^+E,
$$
which will share in the quasi-CMV case the role played by $\ell({\cal C})$ for CMV matrices. The hermiticity of $\ell({\cal C})E$ or $\ell({\cal C})E$ characterizes the sign matrix $E$ which makes $\cal C$ quasi-unitary, a result proved in the following lemma, which also shows that such a sign matrix is determined by $\cal C$ up to a sign. 

\begin{lem} \label{lem:q-sign}
Let $\cal C$ be a quasi-CMV matrix, $\widehat{E}$ a sign matrix and $\ell$ a Hermitian Laurent polynomial of degree one. Then, the following statements are equivalent:
\begin{itemize}
\item[(i)] $\cal C$ is quasi-unitary with respect to $\widehat{E}$.
\item[(ii)] $\ell({\cal C})\widehat{E}$ is Hermitian.
\item[(iii)] $\widehat{E}\ell({\cal C})$ is Hermitian.
\item[(iv)] $\widehat{E}=\pm E$, where $E$ is given by \eqref{eq:sign}.
\end{itemize}
\end{lem}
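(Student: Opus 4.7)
My approach is to establish the cycle $(iv)\Rightarrow(i)\Rightarrow(ii)$, $(i)\Rightarrow(iii)$, and $(ii),(iii)\Rightarrow(iv)$. Every matrix involved — $\cal C$, ${\cal C}^+$, ${\cal C}^{-1}$, $E$, and $\widehat E$ — is a band matrix ($\cal C$ is $5$-diagonal, the sign matrices are diagonal), so Proposition~\ref{prop:asoc} grants associativity at every step and the uniqueness of ${\cal C}^{-1}$ never becomes an issue.

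The implication $(iv)\Rightarrow(i)$ is trivial, since ${\cal C}E{\cal C}^+=E$ immediately gives ${\cal C}(\pm E){\cal C}^+=\pm E$. For $(i)\Rightarrow(ii)$ and $(i)\Rightarrow(iii)$, I would rewrite $(i)$ as ${\cal C}^{-1}=\widehat E{\cal C}^+\widehat E$ and substitute into $\ell({\cal C})=\alpha{\cal C}+\beta I+\overline\alpha{\cal C}^{-1}$, obtaining
\[
 \ell({\cal C})\widehat E=\alpha{\cal C}\widehat E+\beta\widehat E+\overline\alpha\widehat E{\cal C}^+,\qquad \widehat E\ell({\cal C})=\alpha\widehat E{\cal C}+\beta\widehat E+\overline\alpha{\cal C}^+\widehat E,
\]
each of which is manifestly equal to its own adjoint.

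The heart of the argument is $(ii)\Rightarrow(iv)$; the proof of $(iii)\Rightarrow(iv)$ will run identically. Since $\cal C$ is quasi-unitary with respect to the canonical $E$ by the very definition of quasi-CMV, the already established $(i)\Rightarrow(ii)$ applied to $E$ gives $\ell({\cal C})^+=E\ell({\cal C})E$. Substituting this into the hermiticity identity $\ell({\cal C})\widehat E=\widehat E\ell({\cal C})^+$ coming from $(ii)$ and right-multiplying by $E$ yields
\[
 D\,\ell({\cal C})=\ell({\cal C})\,D,\qquad D:=\widehat E E,
\]
where $D$ is a diagonal $\pm1$-matrix. Reading this entry-wise, $D_i=D_j$ whenever $\ell({\cal C})_{i,j}\ne0$, so the task reduces to exhibiting enough non-null entries of $\ell({\cal C})$ to force $D$ to be constant.

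This last step is where the main obstacle lies. Using the zig-zag shape of $\cal C$ together with ${\cal C}^{-1}=E{\cal C}^+E$, I plan to verify that the distance-$2$ entries $\ell({\cal C})_{2k+2,2k}=\alpha\,\hat\rho_{2k+1}\hat\rho_{2k+2}$ and $\ell({\cal C})_{2k+1,2k+3}=\alpha\,\rho_{2k+2}\rho_{2k+3}$, together with their transposes (proportional to $\overline\alpha$), never vanish, since the companion contribution from $\cal C$ or ${\cal C}^{-1}$ is identically zero at these positions. This alone only forces $D$ to be constant on even and on odd indices separately. To bridge both chains I would compute
\[
 \ell({\cal C})_{0,1}=\alpha\,\rho_1+\overline\alpha\,({\cal C}^{-1})_{0,1}=\rho_1\bigl(\alpha-\overline\alpha\,\overline{a_2}\bigr),
\]
whose vanishing is equivalent to $|a_2|=1$, a condition explicitly excluded for quasi-CMV matrices. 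Hence $D_0=D_1$, $D=\pm I$, and $\widehat E=\pm E$. The delicacy lies precisely in this last connection: the restriction $|a_n|\ne1$ built into the definition of a quasi-CMV matrix is exactly what rules out spurious sign matrices, so the statement is sharp.
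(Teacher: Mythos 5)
Your proof is correct and follows essentially the same route as the paper: both reduce $(ii)\Rightarrow(iv)$ to showing that a sign matrix commuting with $\ell({\cal C})$ (equivalently with the Hermitian matrix $\ell({\cal C})E$) must be $\pm I$, using the non-null $(0,1)$ entry and the non-null outer diagonals of the five-diagonal structure \eqref{eq:LC-bis}. The only difference is that you verify these non-vanishing entries explicitly from \eqref{eq:q-CMV-schur} (where the paper simply asserts the shape), and your phrase ``equivalent to $|a_2|=1$'' should read ``implies $|a_2|=1$'', which is the direction you actually need.
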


\begin{proof}
We already know that $(i) \Rightarrow (ii)$, $(i) \Rightarrow (iii)$ and $(iv) \Rightarrow (i)$. On the other hand, $(ii) \Leftrightarrow (iii)$ obviously because $\widehat{E}\ell({\cal C})=\widehat{E}(\ell({\cal C})\widehat{E})\widehat{E}$. Hence, it only remains to see that $(ii) \Rightarrow (iv)$. Let us write $\ell(C)\widehat{E}=M\widetilde{E}$, where $M=\ell({\cal C})E$ is a five-diagonal matrix with the shape \eqref{eq:LC-bis} and $\widetilde{E}=E\widehat{E}$ is a sign matrix. Assuming $(ii)$ means that $M\widetilde{E}$ is Hermitian, which is equivalent to state that $M\widetilde{E}=\widetilde{E}M$ because $M$ and $\widetilde{E}$ are also Hermitian. It is easy to see that, due to the structure \eqref{eq:LC-bis} of non-null matrix coefficients of $M$, the only sign matrices which commute with it are $\pm I$. We conclude that $\widetilde{E}=\pm I$, which proves $(iv)$.   
\end{proof}  

If $u$ is a quasi-definite functional associated with the quasi-CMV matrix $\cal C$ and $E$ is the corresponding sign matrix, then we get 
$$
 u\ell[\chi\chi^+] = u[\ell\chi\chi^+] = \ell({\cal C}) \, u[\chi\chi^+] 
 = \ell({\cal C})E.
$$
As a consequence, we obtain the quasi-definite version of Corollary~\ref{cor:pd}: $u\ell$ is quasi-definite iff $\ell({\cal C})E$ is quasi-definite. The last condition is also equivalent to the existence of a generalized Cholesky factorization (see Appendix~\ref{app:cholesky})
\begin{equation} \label{eq:LCE}
 \ell({\cal C})E = AFA^+, 
 \qquad A\in\mathscr{T}, 
 \qquad F\in\mathscr{S} = \, \text{set of infinite sign matrices},
\end{equation} 
which is also unique. Since $\ell({\cal C})$ has the five-diagonal structure \eqref{eq:LC}, as in the CMV case, $A$ has the 3-band shape \eqref{eq:A-3band}. The factorization \eqref{eq:LCE} is the starting point for the Darboux transformations of quasi-CMV matrices.

To introduce such transformations we just follow the same steps as in the case of CMV matrices. From Proposition \ref{prop:D-fac}, the zig-zag basis $\omega$ given by $\chi=A\omega$ and the matrix $B$ defined by $\ell\omega=B\chi$ yield the factorization $\ell({\cal C})=AB$. Comparing this with \eqref{eq:LCE}, taking care of the associativity with Proposition \ref{prop:asoc} and bearing in mind that $\ker_R(A)=\{0\}$, we conclude that $B$ is the 3-band upper triangular matrix
\begin{equation} \label{eq:q-B}
 B=FA^+E. 
\end{equation}
Using again Proposition \ref{prop:D-fac} we also find that ${\cal D}=A^{-1}{\cal C}A$ is the matrix of the multiplication operator with respect to $\omega$ and satisfies
\begin{equation} \label{eq:FLD}
 F\ell({\cal D}) = A^+EA,
 \qquad A\in\mathscr{T}, 
 \qquad E\in\mathscr{S},
\end{equation}
which is a generalized reversed Cholesky factorization. 
Besides, the associativity properties of Proposition \ref{prop:asoc} and \eqref{eq:q-B} yield
$$
 u\ell[\omega\omega^+] = u[\ell\omega\omega^+] = Bu[\chi\chi^+](A^+)^{-1} 
 = F.
$$ 
This identity has several consequences: The zig-zag basis $\omega$ is orthonormal with respect to the quasi-definite functional $v=u\ell$, thus ${\cal D}$ is a quasi-CMV matrix and $F$ is the corresponding sign matrix.    

Sumarizing, given a quasi-CMV matrix $\cal C$ with sign matrix $E$ and a Hermitian Laurent polynomial $\ell$ such that $\ell(C)E$ is quasi-definite, we have found a transformation ${\cal C}\xmapsto{\ell}{\cal D}=A^{-1}{\cal C}A$ which generates a new quasi-CMV matrix $\cal D$. The ingredients of this transformation, i.e. the lower triangular matrix $A$ and the sign matrix $F$ of $\cal D$, are given by the generalized Cholesky factorization \eqref{eq:LCE} of $\ell(C)E$. The  matrix $F\ell(\cal D)$ satisfies a generalized reversed Cholesky factorization obtained from that of $\ell({\cal C})E$ by changing $A \to A^+$ and $F \to E$. This is what we call the Darboux transformation for quasi-CMV matrices. In perfect agreement with the CMV case, this transformation is the matrix translation of the transformation $u \xmapsto{\ell} v=u\ell$ connecting quasi-definite functionals.

Concerning inverse Darboux transformations, starting from a quasi-CMV matrix $\cal D$ and its sign matrix $F$, the matrix transformation ${\cal C}=A{\cal D}A^{-1}$, with $A$ and $E$ given by \eqref{eq:FLD}, defines the Darboux transformations with parameters for quasi-CMV matrices. This matrix procedure solves the inverse problem, which consists in finding all the quasi-CMV matrices $\cal C$ such that ${\cal C}\xmapsto{\ell}{\cal D}$, but also yields spurious solutions $\cal C$ which are neither quasi-CMV nor quasi-unitary. Analogously to the CMV case, the solutions of the inverse Darboux transformations coincide with the quasi-CMV solutions of the Darboux transformations with parameters. To prove this note that, if $\omega$ is an orthonormal zig-zag basis for $\cal D$ and $\chi=A\omega$ then, from \eqref{eq:FLD} and Proposition \ref{prop:asoc}, $\ell\omega = \ell({\cal D})\omega = FA^+EA\omega = B\chi$ with $B$ given by \eqref{eq:q-B}. In view of Proposition~\ref{prop:D-fac}, we find that ${\cal D}=A^{-1}{\cal C}A$ with $\ell({\cal C})=AB=AFA^+E$. Since $\ell({\cal C})E=AFA^+$ is Hermitian, Lemma \ref{lem:q-sign} proves that, whenever $\cal C$ is quasi-CMV, $E$ is its sign matrix up to a sign. Therefore, ${\cal C}\xmapsto{\ell}{\cal D}$ for such a quasi-CMV matrix $\cal C$.  

The theory of Darboux transformations for quasi-CMV matrices can be developed just by mimicking the steps and arguments given for the CMV case. All the results obtained in the previous sections have a natural generalization to the quasi-CMV case, the details are left to the reader. We will simply mention some specific results of interest for the example discussed below.

Assuming the notation \eqref{eq:Aexplicit} for the matrix coefficients of the factor $A$ and denoting by $f_n$ the diagonal entries of the sign matrix $F$, the factorization \eqref{eq:LCE} reads as 
\begin{equation} \label{eq:q-direct} 
\left\{
\begin{aligned}
 & f_{n-2}|t_{n-2}|^2+f_{n-1}|s_{n-1}|^2+f_nr_n^2 = 
 e_n\left(\beta-2\re(\alpha\overline{a}_na_{n+1})\right),
 \\
 & f_{n-1}s_{n-1}\overline{t}_{n-1}+f_nr_ns_n = 
 e_{n+1}\rho_{n+1}(\overline\alpha a_n-\alpha a_{n+2}),
 \\
 & f_nr_nt_n = \alpha e_{n+2}\rho_{n+1}\rho_{n+2},
\end{aligned}
\right.
\qquad 
n\ge0.
\end{equation}
These equations, together with the initial conditions $t_{-2}=t_{-1}=s_{-1}=0$, make explicit the Darboux transformations for quasi-CMV matrices. Besides, generalizing the identities in \eqref{eq:a-b}, \eqref{eq:r-t} and \eqref{eq:b-a} we get the following direct relations between the matrix coefficients of $A$ and the Schur parameters of the quasi-CMV matrices involved in a Darboux transformation, 
\begin{equation} \label{eq:q-aAb}
\begin{gathered}
 \frac{r_{n-1}}{r_n} = \frac{\rho_n}{\sigma_n},
 \qquad 
 \frac{t_{n-1}}{t_n} = \frac{\hat\sigma_n}{\hat\rho_{n+2}},
 \\[3pt]
 a_{n} = b_{n} + s_{n-1}\frac{\sigma_{n}}{r_n},
 \qquad
 \alpha a_{n+2} = 
 \overline\alpha b_{n} - e_{n+1}f_ns_{n}\frac{r_{n+1}}{\sigma_{n+1}}. 
\end{gathered}
\end{equation}

As for the Darboux transformations with parameters, inserting \eqref{eq:Aexplicit} into the factorization \eqref{eq:FLD} shows that they are explicitly given by 
\begin{equation} \label{eq:q-inverse}
\left\{
\begin{aligned}
 & e_nr_n^2+e_{n+1}|s_n|^2+e_{n+2}|t_n|^2 = 
 f_n\left(\beta-2\re(\alpha\overline{b}_nb_{n+1})\right),
 \\
 & e_{n+1}s_nr_{n+1}+e_{n+2}t_ns_{n+1} = 
 f_n\sigma_{n+1}(\overline\alpha b_n-\alpha b_{n+2}),
 \\
 & e_{n+2}t_nr_{n+2} = \alpha f_n\sigma_{n+1}\sigma_{n+2},
\end{aligned}
\right.
\qquad n\ge0,
\end{equation}
where $b_n$ are the Schur parameters of $\cal D$ and $\sigma_n=\sqrt{|1-|b_n|^2|}$. These relations determine $e_n$, $r_n$, $s_n$, $t_n$ once $e_0$, $r_0$, $s_0$, $e_1$, $r_1$ are chosen, as follows from rewriting \eqref{eq:q-inverse} as
\begin{equation} \label{eq:q-inverse2}
\begin{gathered}
e_{n+2} = \sg(\varepsilon_n),
\qquad 
r_{n+2} = |\alpha| \frac{\sigma_{n+1}\sigma_{n+2}}{\tau_n},
\qquad
t_n = \frac{\alpha}{|\alpha|} e_{n+2} f_n \tau_n,
\\[2pt]
s_{n+1} = \frac{\overline\alpha}{|\alpha|} 
\frac{\sigma_{n+1}(\overline\alpha b_n-\alpha b_{n+2})-e_{n+1}f_ns_nr_{n+1}}{\tau_n},
\\[3pt] 
\tau_n = \sqrt{|\varepsilon_n|},
\qquad
\varepsilon_n = f_n
\left(\beta-2\re(\alpha\overline{b}_nb_{n+1})\right)-e_nr_n^2-e_{n+1}|s_n|^2.
\end{gathered}
\end{equation}
Therefore, the solutions of the Darboux transformations with parameters can be parametrized by $e_0r_0^2$, $s_0$, $e_1r_1^2$, which give again four real `free' parameters. The existence of such a solution for a given value of these free parameters is equivalent to state that $\varepsilon_n\ne0$ for all $n\ge0$. Among these solutions, the quasi-CMV ones are characterized by a generalization of the CMV conditions \eqref{eq:CMVcond}, namely,  
\begin{equation} \label{eq:q-CMVcond} 
 \begin{aligned} 
 	& e_0f_0r_0^2 = \beta-2\re(\alpha a),
 	& \kern15pt & r_1=r_0\frac{\sigma_1}{\rho},
	& \kern15pt & |a|\ne1,
 	\\
	& a = b_1 + \frac{s_0}{r_1}\sigma_1,
	& & e_1=e_0\,\sg(1-|a|^2),
	& & \rho=\sqrt{|1-|a|^2|}.
 \end{aligned}
\end{equation}
The conditions in the second column of \eqref{eq:q-CMVcond} can be combined into $e_1f_1r_1^2(1-|a|^2)=e_0f_0r_0^2(1-|b_1|^2)$. Like in the CMV case, $a$ is the first Schur parameter $a_1$ of the corresponding quasi-CMV solution, while the pair of equalities in the first column of the quasi-CMV conditions \eqref{eq:q-CMVcond} select the solutions associated with (not necessarily quasi-definite) Hermitian functionals.

\begin{ex} \label{ex:QD-spurious} 
{\it Darboux transformation with parameters for a general Hermitian Laurent polynomial of degree one $\ell(z)=\alpha z+\beta+\overline\alpha z^{-1}$ and
${\cal D}={\cal S}$ the shift matrix given in \eqref{eq:shift}.}

We will illustrate with this example the coexistence of CMV, quasi-CMV and spurious solutions in the Darboux transformations with parameters. The shift matrix ${\cal D}={\cal S}$ is a CMV with Schur parameters $b_n=0$ (so, $\sigma_n=1$) for $n\ge1$, thus $F=I$ is a sign matrix for $\cal S$. We will search for solutions $\cal C$ of the corresponding Darboux transformation with parameters, considered in the generalized sense described in this section. This allows the appearance of quasi-CMV solutions $\cal C$ with a non-trivial sign matrix $E$. Without loss of generality we can take $|\alpha|=1$, which simply means that we avoid a trivial positive rescaling of the factor $A$ in the reversed factorization $\ell({\cal S})=A^+EA$, where 
\begin{equation} \label{eq:LS}
\ell({\cal S}) = 
\left(
\begin{smallmatrix}
  	\beta & \kern4pt \alpha & \kern4pt \overline\alpha
	\\[3pt]
  	\overline\alpha & \kern4pt \beta & \kern4pt 0 & \kern2pt \alpha
	\\[3pt]
  	\alpha & \kern4pt 0 & \kern4pt \beta & \kern2pt 0 & \overline\alpha
	\\[3pt]
  	& \kern4pt \overline\alpha & \kern4pt 0 & \kern2pt \beta & 0 & \alpha
	\\[3pt]
  	& & \kern4pt \alpha & \kern2pt 0 & \beta & 0 & \overline\alpha
	\\[-3pt]
  	& & & \kern2pt \ddots & \ddots & \ddots & \ddots & \ddots
\end{smallmatrix}
\right).
\end{equation}

The diagonal entries $e_n$ of the sign matrix $E$, as well as the coefficients $r_n$, $s_n$, $t_n$ giving and the three diagonals of $A$, are determined by \eqref{eq:q-inverse2}. 
In what follows we will restrict our attention to the subset of solutions with $s_1=0$, a condition which becomes a constraint between the parameters $e_0r_0^2$, $s_0$, $e_1r_1^2$ describing the whole set of solutions. Therefore, the referred subset of solutions can be parametrized by $e_0r_0^2$ and $e_1r_1^2$, although instead we will use $e_1r_1^2$ and $e_2r_2^2$ for convenience. The corresponding parametrization of $A$ and $E$, as follows from \eqref{eq:q-inverse2}, is given by 
\begin{equation} \label{eq:q-inv-ex}
\begin{gathered}
 s_0 = \overline\alpha\frac{e_1}{r_1},
 \qquad 
 e_0r_0^2 = \beta-e_1|s_0|^2-\frac{1}{e_2r_2^2} 
 = \beta-\frac{1}{e_1r_1^2}-\frac{1}{e_2r_2^2},
 \\
 s_n = 0,
 \qquad
 e_{n+2}r_{n+2}^2 = \frac{1}{\beta-e_nr_n^2},
 \qquad
 t_{n-1} = \alpha\frac{e_{n+1}}{r_{n+1}}, 
 \qquad 
 n\ge1. 
\end{gathered}
\end{equation}
These equations are governed by a single recurrence relation 
\begin{equation} \label{eq:q-rec-x}
 x_{n+1} = \frac{1}{\beta-x_n}, 
 \qquad
 x_n = 
 \begin{cases} 
 	e_{2n+1}r_{2n+1}^2, 
	\\ 
	\kern9pt \text{\footnotesize or} 
	\\ 
	e_{2n+2}r_{2n+2}^2, 
 \end{cases}
 \quad
 n\ge0,
\end{equation}
which provides separately the odd and even coefficients $r_n$, $t_n$ depending whether we use the initial condition $x_0=e_1r_1^2$ or $x_0=e_2r_2^2$. The initial conditions generating true solutions of the Darboux transformation with parameters are those giving $x_n\ne\beta$ for $n\ge0$ and guaranteeing that $e_0r_0^2\ne0$, i.e.  
\begin{equation} \label{eq:q-cond-r0}
 \frac{1}{e_1r_1^2}+\frac{1}{e_2r_2^2} \ne \beta.
\end{equation}

Recurrence \eqref{eq:q-rec-x} yields the continued fraction expansion 
$$
 x_n = 
 \polter{1}{\beta}-\polter{1}{\beta}-\polter{1}{\beta}-\polter{1}{\beta}
 -\cdots-\polter{1}{\beta}-\polter{1}{\beta-x_0}.
$$
Using basic results from the theory of continued fractions \cite{Wall}, we obtain
$$
 x_n = 
 \frac{U_{n-1}(\frac{\beta}{2})-x_0U_{n-2}(\frac{\beta}{2})}
 {U_n(\frac{\beta}{2})-x_0U_{n-1}(\frac{\beta}{2})},
 \qquad n\ge1,
$$ 
where $U_n$ are the second kind Chebyshev polynomials, given by
\begin{equation} \label{eq:cheby}
 U_{n+1}(t) = 2tU_n(t) - U_{n-1}(t), \qquad U_{-1}=0, \qquad U_0=1.
\end{equation}
Therefore, if $s_1=0$, the reversed Cholesky factorization $\ell({\cal C})=A^+EA$ is possible exactly for those choices of $e_1r_1^2$ and $e_2r_2^2$ satisfying \eqref{eq:q-cond-r0} and  
\begin{equation} \label{eq:q-ex-rev}
 e_1r_1^2, e_2r_2^2 \ne \frac{U_n(\frac{\beta}{2})}{U_{n-1}(\frac{\beta}{2})},
 \qquad n\ge1.
\end{equation}

The solutions associated with a Hermitian functional are characterized by the two equalities in the first column of \eqref{eq:q-CMVcond}, which in this case become
\begin{equation} \label{eq:q-ex-herm}
 e_0r_0^2 = \beta - \frac{2}{e_1r_1^2}.
\end{equation}
Using the expression for $r_0$ given in \eqref{eq:q-inv-ex}, the above hermiticity condition turns out to be $e_1=e_2$ and $r_1=r_2$. Therefore, bearing in mind \eqref{eq:q-cond-r0} and \eqref{eq:q-ex-rev}, the solutions leading to Hermitian functionals are those generated by \eqref{eq:q-rec-x} with initial conditions satisfying
\begin{equation} \label{eq:q-ex-herm2} 
 \frac{2}{\beta} \ne x_0 \ne 
 \frac{U_n(\frac{\beta}{2})}{U_{n-1}(\frac{\beta}{2})},
 \qquad
 e_1r_1^2 = x_0 = e_2r_2^2,
 \qquad n\ge1.
\end{equation}

The quasi-CMV solutions are those satisfying the previous hermiticity conditions together with those summarized in the second and third column of \eqref{eq:q-CMVcond}. In our case, these additional conditions read as  
$$ 
 |a|\ne1, \qquad x_0(1-|a|^2)=e_0r_0^2, \qquad a=\frac{\overline\alpha}{x_0}, 
$$
which, using \eqref{eq:q-ex-herm}, become
$$
 |x_0|\ne1, \qquad x_0 = \frac{1}{\beta-x_0}.
$$
This means that the quasi-CMV solutions are associated with those fixed points $x_n=x_0$ of \eqref{eq:q-rec-x} lying outside of the unit circle. Since $s_n=0$ for $n\ge1$, we find from \eqref{eq:q-aAb} that the Schur parameters of these quasi-CMV solutions are $a_1=a$ and $a_n=0$ for $n\ge2$. In other words, all the quasi-CMV solutions with $s_1=0$ is are Bernstein-Szeg\H o type. 

The fixed points $x_n=x_0$ of \eqref{eq:q-rec-x} are 
$x_0 = \frac{\beta}{2} \pm \sqrt{\left(\frac{\beta}{2}\right)^2-1}$, which are inverse of each other, while the requirement $|x_0|\ne1$ translates into the inequality $|\beta|>2$. It is convenient to rewrite these two fixed points as
\begin{equation} \label{eq:q-fp}
 x_0^\pm = \sg(\beta)  
 \left(\ts\left|\frac{\beta}{2}\right| 
 \pm \sqrt{\ts\left(\frac{\beta}{2}\right)^2-1}\right),
\end{equation}
so that $|x_0^\pm|=\left|\frac{\beta}{2}\right| \pm \sqrt{\ts\left(\frac{\beta}{2}\right)^2-1}$ for $|\beta|>2$. Then, $|x_0^+|>1$ and $|x_0^-|<1$. Both fixed points satisfy the hermiticity conditions \eqref{eq:q-ex-herm2}, thus they give rise to two different quasi-CMV solutions characterized by the Schur parameters $(a_\pm,0,0,0,\dots)$, with $a_\pm=\overline\alpha/x_0^\pm=\overline\alpha x_0^\mp$.
Since $|a_+|<1$ and $|a_-|>1$, the Bernstein-Szeg\H o solution related to the initial condition $x_0=x_0^+$ is positive definite, while that one corresponding to $x_0=x_0^-$ is quasi-definite but not positive definite. The related CMV matrix, ${\cal C}_+$, and quasi-CMV matrix, ${\cal C}_-$, have the form
$$
{\cal C}_\pm = 
 \left(
 \begin{smallmatrix} 
	-a_\pm & \rho_\pm
	\\[3pt]
	0 & 0 & \kern6pt 0 & \kern9pt 1 
	\\[3pt]
    \pm\rho_\pm & \overline{a}_\pm & \kern6pt 0 & \kern9pt 0
    \\[3pt]
    & & \kern6pt 0 & \kern9pt 0 & \kern7pt 0 & \kern5pt 1
    \\[3pt]
    & & \kern6pt 1 & \kern9pt 0 & \kern7pt 0 & \kern5pt 0
    \\[3pt]
    & & & & \kern6pt 0 & \kern9pt 0 & \kern7pt 0 & \kern5pt 1
    \\[3pt]
    & & & & \kern6pt 1 & \kern9pt 0 & \kern7pt 0 & \kern5pt 0
    \\[3pt]
    & & & & & & \kern7pt \cdots & \kern5pt \cdots & \kern4.5pt \cdots
 \end{smallmatrix}
 \right),
 \qquad
 \begin{aligned}
 	& a_\pm = \overline\alpha x_0^\mp, 
 	\\[2pt]
 	& \rho_\pm = \sqrt{|1-|a_\pm|^2|} = \sqrt{|1-(x_0^\mp)^2|}.
 \end{aligned}
$$
From the previous results we can obtain the explicit expressions of the factors $A_\pm$ and $E_\pm$ for both solutions, given by
$$
\begin{aligned}
 & A_\pm = 
 \left(
 \begin{smallmatrix} 
	\\
	r_0
	\\[3pt] 
	\overline{t}_\pm & r_\pm
	\\[3pt] 
	t_\pm & 0 & r_\pm
	\\[3pt]
	& \overline{t}_\pm & 0 & r_\pm
	\\[3pt]
	& & t_\pm & 0 & r_\pm
	\\[3pt]
	& & & \overline{t}_\pm & 0 & r_\pm
	\\[-3pt]
	& & & & \scriptsize\ddots & \scriptsize\ddots & \scriptsize\ddots
 \end{smallmatrix}
 \right),
 \qquad
 \begin{aligned}
 	& r_0^2 = 2\sqrt{\ts\left(\frac{\beta}{2}\right)^2-1}, 
	\qquad r_\pm^2 = |x_0^\pm|,
	\\[6pt]
	& t_\pm = \sg(\beta) \, \alpha r_\mp.
 \end{aligned}
 \\[2pt]
 & E_\pm = \sg(\beta)
 \left(
 \begin{smallmatrix} 
	\\
	\pm1
	\\[3pt] 
	& \kern4pt 1
	\\[3pt] 
	& & \kern4pt 1
	\\[3pt]
	& & & \kern4pt 1	
	\\[-3pt]
	& & & & \scriptsize\ddots
 \end{smallmatrix}
 \right).
\end{aligned}
$$
The CMV solution ${\cal C}_+$ has a sign matrix $E_+=\sg(\beta)I$ which is $-I$ for $\beta<0$, which is in agreement with the indetermination of the global sign for $E_+$. This means that we can take $E_+=I$ for ${\cal C}_+$, although, in that case, preserving the factorizations involved in the Darboux transformation would require to choose the sign matrix $F=\sg(\beta)I$ for the shift matrix ${\cal D}={\cal S}$. 
   
Summarizing, we have found that quasi-CMV solutions for the inverse Darboux problem with $s_1=0$ exist iff $|\beta|>2$. Under these conditions, we have a pair of such solutions, both Bernstein-Szeg\H o type, one of them CMV and the other one strictly quasi-CMV. Since the shift matrix $\cal S$ is associated with the Lebesgue measure $d\vartheta(e^{i\theta})=d\theta/2\pi$ on the unit circle, the CMV solution is that one of the Bernstein-Szeg\H o measure $d\vartheta/\ell$, and the sign of $E_+$ has to do with the fact $\ell$ is positive or negative on the unit circle depending whether $\beta>2$ or $\beta<-2$. Bearing in mind that $\ell$ has roots $\zeta_\pm=-a_\pm$ for $|\beta|>2$, every Hermitian solution $u$ of $u\ell\equiv d\vartheta$ is in that case given by $u \equiv d\vartheta/\ell+m\delta_{\zeta_+}+\overline{m}\delta_{\zeta_-}$ for some complex mass $m$. Thus, the quasi-CMV solution ${\cal C}_-$ must be related to such a functional $u$ for a value of $m\in\C$ which makes it quasi-definite.  

If $s_1=0$, no quasi-CMV solution appears when $|\beta|\le2$, which corresponds to $\ell$ having its roots on the unit circle. This is expected when searching for CMV solutions because no positive measure $\mu$ on the unit circle satisfies $\ell d\mu = d\vartheta$ in that case. Nevertheless, it is a less trivial result about quasi-CMV solutions of inverse Darboux and quasi-definite solutions $u$ of $u \ell \equiv d\vartheta$.  

So far, we have completed the search for quasi-CMV matrices in the subset of solutions with $s_1=0$. Now we will have a look at some spurious solutions of this subset. The simplest ones should be those obtained by fixing $e_1r_1^2=e_2r_2^2=x_0$ as a fixed point of \eqref{eq:q-rec-x} for $|\beta|\le2$. These fixed points lie on the unit circle, so they are candidates to generate spurious solutions only when $x_0=\pm1$, which corresponds to $\beta=\pm2$. However, choosing $e_1r_1^2=e_2r_2^2=\pm1$ for $\beta=\pm2$ gives no solution of the Darboux transformation with parameters because, using \eqref{eq:q-inv-ex}, we find that $e_0r_0^2=0$. Thus, no spurious solution with $s_1=0$ has an eventually constant sequence $e_nr_n^2$.

We can reinterpret the previous results by stating that, in the case $s_1=0$, the quasi-CMV solutions are those whose factors $A$ and $E$ have constant diagonals up to finitely many entries. Therefore, the solutions given by factors $A$ and $E$ with eventually periodic diagonals are spurious whenever the period is greater than one. These are the simplest spurious solutions, which are generated by the periodic solutions of \eqref{eq:q-rec-x}.  

The periodic solutions of \eqref{eq:q-rec-x} with period $j$ are characterized by the relation 
$$
 x_0 = 
 \frac{U_{j-1}(\frac{\beta}{2})-x_0U_{j-2}(\frac{\beta}{2})}
 {U_j(\frac{\beta}{2})-x_0U_{j-1}(\frac{\beta}{2})}.
$$
Using \eqref{eq:cheby}, this equation can be rewritten as
$$
 (x_0^2+\beta x_0+1) \, U_{j-1}(\ts\frac{\beta}{2}) = 0,
$$
which means that either $x_0^2+\beta x_0+1=0$ or $U_{j-1}(\frac{\beta}{2})=0$. 

The zeros of $x_0^2+\beta x_0+1$ are the fixed points $x_0=x_0^\pm$ given in \eqref{eq:q-fp} and leading to the quasi-CMV solutions previously found. Therefore, if $U_j(\frac{\beta}{2})\ne0$ for every $j\ge1$, the spurious solutions we are interested in may arise only from two alternatives: $e_1r_1^2=x_0^+$, $e_2r_2^2=x_0^-$ or $e_1r_1^2=x_0^-$, $e_2r_2^2=x_0^+$. However, \eqref{eq:q-inv-ex} shows that both options lead to $e_0r_0^2=\beta-x_0^+-x_0^-=0$, thus none of them yield a solution of the Darboux transformation with parameters. 

Therefore, the kind of spurious solutions we are looking for only appear when $\beta=\beta_{j,k}$ is a zero of $U_{j-1}(\frac{\beta}{2})$,  
$$
 \beta_{j,k} = 2\cos\left(\frac{k}{j}\pi\right), 
 \qquad
 k=1,2,\dots,j-1.
$$
Actually, for these values of $\beta$ all the solutions are spurious because quasi-CMV solutions require $|\beta|>2$.

Consider the simplest case $j=2$, so that $\beta=0$. Then, 
\eqref{eq:q-rec-x} generates a 2-periodic sequence $x_0,-x_0^{-1},x_0,-x_0^{-1},\dots$ for every initial value $x_0\ne0$. Hence, if $s_1=0$, the solutions of the Darboux transformation with parameters whose factors $A$ and $E$ have eventually 4-periodic diagonals appear for $\beta=0$. The parameters $e_1r_1^2,e_2r_2^2\in\R\setminus\{0\}$ describing these solutions are only restricted by \eqref{eq:q-cond-r0}, which now reads as $e_1r_1^2 + e_2r_2^2 \ne 0$. Although these solutions are necessarily spurious, the associated functionals are Hermitian when $e_1r_1^2=e_2r_2^2$, as required by \eqref{eq:q-ex-herm2}. In the Hermitian case, $A$ and $E$, obtained from \eqref{eq:q-inv-ex}, are given in terms of $e=e_1=e_2$ and $r=r_1=r_2$ by
\begin{equation} \label{eq:beta=0}
\begin{aligned} 
 & A = 
 \left(
 \begin{smallmatrix} 
	\\
	r_0
	\\[3pt] 
	\overline{t}_0 & \kern3pt r
	\\[3pt] 
	t_0 & \kern4pt 0 & \kern3pt r
	\\[3pt]
	& \kern3pt \overline{t}_1 & \kern3pt 0 & \kern2pt 1/r
	\\[3pt]
	& & \kern3pt t_1 & \kern2pt 0 & 1/r
	\\[3pt]
	& & & \kern2pt \overline{t}_0 & 0 & r
	\\[3pt]
	& & & & t_0 & 0 & r
	\\[-3pt]
	& & & & & \scriptsize\ddots & \scriptsize\ddots & \scriptsize\ddots
 \end{smallmatrix}
 \right),
 \qquad
 \begin{aligned}
 	& r_0 = \frac{\sqrt{2}}{r}, 
	\\[6pt]
	& t_0 = e\frac{\alpha}{r}, \qquad t_1 = -e\alpha r.
 \end{aligned}
 \\[2pt]
 & E = e
 \left(
 \begin{smallmatrix} 
	\\
	-1
	\\[3pt] 
	& \kern4pt 1
	\\[3pt] 
	& & \kern4pt 1
	\\[3pt]
	& & & -1	
	\\[3pt]
	& & & & -1
	\\[3pt]	
	& & & & & \kern4pt 1	
	\\[3pt]
	& & & & & & \kern4pt 1	
	\\[-3pt]
	& & & & & & & \scriptsize\ddots
 \end{smallmatrix}
 \right).
\end{aligned}
\end{equation}
The corresponding spurious solution has the form
$$
 {\cal C} =
 \left(\begin{smallmatrix}
 	-e\frac{\overline\alpha}{r^2} & \frac{\sqrt{2}}{r^2} & 0 & 0 & 0 & 0 
	& \kern5pt 0 & \kern5pt 0 & \kern5pt 0 & \kern10pt 0 & \cdots
	\\[2pt]
 	-\frac{\overline\alpha^2\vartriangle}{\sqrt{2}} 
	& e\overline\alpha \vartriangle & 0 & r^2 & 0 & 0 & \kern5pt 0 & \kern5pt 0 
	& \kern5pt 0 & \kern10pt 0 & \cdots
	\\[2pt]
 	\frac{\triangledown}{\sqrt{2}} & e\frac{\alpha}{r^2} & 0 & 0 & 0 & 0 
	& \kern5pt 0 & \kern5pt 0 & \kern5pt 0 & \kern10pt 0 & \cdots 
	\\[2pt]
 	e\frac{\overline\alpha^3\vartriangle}{\sqrt{2}} 
	& -\overline\alpha^2\vartriangle & 0 & -e\overline\alpha\vartriangle & 0 
	& \frac{1}{r^2} & \kern5pt 0 & \kern5pt 0 & \kern5pt 0 & \kern10pt 0 
	& \cdots
	\\[2pt]
 	-e\frac{\alpha\vartriangle}{\sqrt{2}} & 0 & \frac{1}{r^2} & 0 & 0 & 0 
	& \kern5pt 0 & \kern5pt 0 & \kern5pt 0 & \kern10pt 0 & \cdots
	\\[2pt]
 	\frac{\overline\alpha^4\vartriangle}{\sqrt{2}} 
	& -e\overline\alpha^3\vartriangle & 0 & -\overline\alpha^2\vartriangle & 0 
	& e\overline\alpha\vartriangle & \kern5pt 0 & \kern5pt r^2 & \kern5pt 0 
	& \kern10pt 0 & \cdots
	\\[2pt]
 	-\frac{\alpha^2\vartriangle}{\sqrt{2}} & 0 & e\alpha \vartriangle & 0 & r^2 
	& 0 & \kern5pt 0 & \kern5pt 0 & \kern5pt 0 & \kern10pt 0 & \cdots
	\\[2pt]
 	-e\frac{\overline\alpha^5\vartriangle}{\sqrt{2}} 
	& \overline\alpha^4\vartriangle & 0 & e\overline\alpha^3\vartriangle & 0 
	& -\overline\alpha^2\vartriangle & \kern5pt 0 
	& \kern5pt -e\overline\alpha\vartriangle & \kern5pt 0 
	& \kern10pt \frac{1}{r^2} & \cdots
	\\[2pt]
 	e\frac{\alpha^3\vartriangle}{\sqrt{2}} & 0 & -\alpha^2\vartriangle & 0 
	& -e\alpha\vartriangle & 0 & \kern5pt \frac{1}{r^2} & \kern5pt 0 
	& \kern5pt 0 & \kern10pt 0 & \cdots
	\\[5pt]
	\dots & \dots & \dots & \dots & \dots & \dots & \kern5pt \dots 
	& \kern5pt \dots & \kern5pt \dots & \kern10pt \dots & \dots 
	\\[4pt]
 \end{smallmatrix}\right),
 \qquad
 \begin{aligned}
 	& \kern-3pt \vartriangle \; = r^2+r^{-2},
 	\\[6pt]
 	& \triangledown = r^2-r^{-2}.
 \end{aligned}
$$
\end{ex}
 
\section{Conclusions and Outlook} 
\label{sec:CO}

We have developed a theory of Darboux transformations for CMV matrices, the unitary analogue of Jacobi matrices. They share many properties with the Darboux transformations of Jacobi matrices, among them the equivalence with (Laurent) polynomial modifications of the underlying measures and, as a consequence, the almost isospectrality. Indeed, the Szeg\H o connection between orthogonal polynomials on the real line and the unit circle identifies the Darboux transformations of Jacobi matrices as the Szeg\H o projection of the Darboux transformations of CMV matrices introduced in this paper.   

Nevertheless, the fact that Darboux for CMV has to deal with unitary instead of Hermitian matrices causes some dissimilarities with the Jacobi case which are worth highlighting. In particular, the unitarity forces us to use Hermitian Laurent polynomials instead of simply real polynomials, so that, in contrast with the Jacobi case, the minimal step of Darboux for CMV involves polynomials with two zeros. These facts cause a number of differences between Darboux for Jacobi and CMV which we summarize below, together with the open problems that they suggest: 
\begin{itemize}
\item Darboux transformations of Jacobi matrices are based on a factorization of a matrix which retains the hermiticity of the Jacobi one. However, the Darboux transformations of CMV matrices introduced in this paper need the factorization of matrices which do not share the unitarity of CMV, but are also Hermitian. The reason for this is that the method we have used to obtain a unitary version of Darboux is to relate a unitary matrix with a Hermitian one to which we apply the standard Darboux procedure for self-adjoint operators. Is it possible to develop a new theory of Darboux transformations for unitary operators without abandoning unitarity (i.e. based on splittings of unitary operators built out of the previous ones)?
\item Darboux transformations of Jacobi matrices are usually implemented via LU factorizations of non-symmetric tridiagonal matrices associated with monic instead of orthonormal polynomials. This allows for the unified treatment of the general quasi-definite case. Nevertheless, in the positive definite case they can be formulated in terms of Cholesky factorizations. This opens a way for the CMV generalization of Darboux transformations as defined in this paper, while Section~\ref{ssec:quasi} shows that generalized Cholesky factorizations permit their extension to the quasi-definite case. However, a question remains: Is there any CMV analogue of the LU version for the Darboux transformations (i.e. based on matrix representations with respect ``monic" zig-zag bases)? 
\item Darboux transformations for Jacobi matrices can be implemented via Cholesky factorizations and reversed ones or, equivalently, using the Cholesky factor as a change of basis relating Jacobi matrices by conjugation. While the latter method is directly generalizable to CMV matrices, the first one seems hard to tackle in the CMV case due to the drawbacks in recovering an infinite matrix from its image by an Hermitian Laurent polynomial. This leads to the following question: Given a Hermitian Laurent polynomial of degree one, is there any simple procedure to recover a CMV matrix from the result of evaluating the Laurent polynomial on such a matrix?  
\item In the Jacobi case, the Darboux transformations with parameters can be identified with the inverse Darboux transformations, which correspond exactly to the Geronimus transformations of measures on the real line. The Geronimus transformations of measures on the unit circle are a particular case of dividing a measure by an Hermitian Laurent polynomial and adding Dirac deltas at their zeros. These more general transformations of measures on the unit circle correspond exactly to the inverse Darboux transformations of CMV matrices, whose matrix realization is given by the Darboux transformations with parameters. However, these latter transformations present spurious solutions which are not CMV neither unitary matrices, and can be eventually associated with non-Hermitian functionals. Is there any interesting interpretation of these spurious solutions? Can they be understood in terms of orthonormal Laurent polynomials with respect to non-standard or matrix inner products? 
\end{itemize} 

A positive answer to the last item is suggested by the recently found connection between higher degree Darboux transformations of Jacobi matrices and Sobolev orthogonal polynomials on the real line \cite{DGAM,DM}. A similar connection for the CMV case should be the starting point for a new approach to non-standard orthogonality on the unit circle. Other promising lines of future research are indicated by the success of Darboux techniques for Jacobi matrices in problems of numerical linear algebra, the study of integrable systems or the search for bispectral situations. The recent connection found between CMV matrices and the so called quantum walks \cite{CGMV} also suggests a possible use of Darboux for CMV in the study of discrete quantum dynamical systems.

\begin{appendices}

\section{Cholesky factorizations} 
\label{app:cholesky}

Given an infinite Hermitian complex matrix $M$, the existence of a generalized Cholesky factorization (GCF) 
\begin{equation} \label{eq:D}
 M = AEA^+, 
 \qquad 
 A\in\mathscr{T},
 \qquad 
 E\in\mathscr{S},
\end{equation} 
is equivalent to state that $M$ is quasi-definite, i.e. all its leading submatrices are non-singular. Obviously the condition is necessary because, denoting by the subindex $n$ the leading submatrix of order $n$, \eqref{eq:D} implies that $M_n = A_nE_nA_n^+$. The sufficiency of the condition follows by an inductive reasoning on the order of the leading submatrices of $M$. More precisely, if $M_n=A_nE_nA_n^+$ is the GCF of the non-singular leading submatrix $M_n$ and
$$
\begin{gathered}
 M_{n+1} = 
 \begin{pmatrix}
 	M_n & Y_n^+ \\ Y_n & y_n
 \end{pmatrix},
 \quad
 Y_n\in\C^n, 
 \quad
 y_n\in\R,
 \\
 A_{n+1} = 
 \begin{pmatrix}
 	A_n & 0 \\ X_n & x_n
 \end{pmatrix},
 \quad
 X_n\in\C^n, 
 \quad
 x_n>0;
 \qquad
 E_{n+1} = 
 \begin{pmatrix}
 	E_n & 0 \\ 0 & e_n
 \end{pmatrix},
 \quad
 e_n\in\{1,-1\},
\end{gathered} 
$$
then
$$
 M_{n+1}=A_{n+1}E_{n+1}A_{n+1}^+
 \,\Leftrightarrow\, 
 \begin{cases}
 	A_nE_nX_n^+ = Y_n^+,
 	\\ 
 	X_nE_nX_n^+ + e_nx_n^2 = y_n,
 \end{cases}
 \kern-9pt\Leftrightarrow\,
 \begin{cases}
 	X_n^+ = E_nA_n^{-1}Y_n^+,
 	\\ 
 	e_nx_n^2 = y_n - Y_nM_n^{-1}Y_n^+.
 \end{cases}
$$
These equations have solutions $X_n\in\C^n$, $e_n\in\{1,-1\}$, $x_n>0$, whenever $M_{n+1}$ is non-singular because, then, $y_n \ne Y_nM_n^{-1}Y_n^+$ since otherwise $(-Y_nM_n^{-1},1)M_{n+1}=0$. This proves that the quasi-definiteness of $M$ guarantees the existence of the GCF of its leading submatrices, obtained by enlarging those of the smaller leading submatrices. This fact is key to ensure the existence of the factorization \eqref{eq:D} for the infinite matrix $M$ because we can take $A$ and $E$ as the only infinite matrices whose leading submatrices of order $n$ are respectively $A_n$ and $E_n$ for all $n$. 

GCF of quasi-definite Hermitian infinite matrices $M$ involve only admissible products (in the sense introduced in Section~\ref{sec:DT-LP-HM}), but the reversed ones 
\begin{equation} \label{eq:R}
 M = A^+EA,
 \qquad 
 A\in\mathscr{T},
 \qquad 
 E\in\mathscr{S},
\end{equation}
can be nonsense due to the presence of non-admissible products. A class of infinite matrices for which the existence of reversed GCF is a well posed problem concerning admissible products, are the band matrices, which lead to band triangular factors $A$. 

Nevertheless, even for finite matrices, quasi-definiteness is not necessary neither sufficient for the existence of reversed GCF. For instance, 
$
 \left(\begin{smallmatrix} 1&1\\1&0 \end{smallmatrix}\right)
$
has no reversed factorization despite its quasi-definiteness, while 
$
 \left(\begin{smallmatrix} 0&1\\1&1 \end{smallmatrix}\right) =
 \left(\begin{smallmatrix} 1&1\\0&1 \end{smallmatrix}\right)
 \left(\begin{smallmatrix} -1&0\\0&1 \end{smallmatrix}\right)
 \left(\begin{smallmatrix} 1&0\\1&1 \end{smallmatrix}\right)
$
is a reversed factorization of a non-quasi-definite matrix. Actually, the existence of a reversed GCF of a finite matrix $M$ is equivalent to the non-singularity of all the principal submatrices which are a south-east corner of $M$ (in what follows we will say that $M$ is `reversed' quasi-definite). This follows from similar inductive arguments to those given previously for GCF. 

The problem of the reversed quasi-definite condition is that it is nonsense for infinite matrices. The core of the difficulties in characterizing the existence of reversed GCF for infinite matrices is that, in contrast to the non-reversed ones, they are not necessarily obtained by enlarging those of the leading submatrices because \eqref{eq:R} does not imply $M_n=A_n^+E_nA_n$. This argument also shows that the relation between the reversed GCF of the leading submatrices is non-trivial. Indeed, an infinite Hermitian matrix having a reversed GCF may have no such a reversed factorization for any leading submatrix. This is illustrated by  
$$ 
 \left(\begin{smallmatrix}
 	0 & \kern3pt 1 \\[4pt]
 	1 & \kern3pt 0 & -1 \\[4pt]
 	& \kern3pt -1 & 0 & 1 \\[4pt]
 	&& 1 & 0 & -1 \\[4pt]
 	&&& -1 & 0 & 1 \\[-3pt]
 	&&&& \ddots & \ddots & \ddots
 \end{smallmatrix}\right)
 = A^+EA,
 \kern7pt
 A =
 \left(\begin{smallmatrix}
 	1 \\[4pt]
 	1 & \kern5pt 1 \\[4pt]
 	& \kern5pt 1 & \kern5pt 1 \\[4pt]
	&& \kern5pt 1 & \kern5pt 1 \\[4pt]
 	&&& \kern5pt 1 & \kern5pt 1 \\[-3pt]
 	&&&& \ddots & \ddots
 \end{smallmatrix}\right),
 \kern7pt
 E =
 \left(\begin{smallmatrix}
 	-1 \\[4pt]
 	& \kern2pt 1 \\[4pt]
 	&& \kern-1pt -1 \\[4pt]
	&&& \kern2pt 1 \\[4pt]
 	&&&& \kern-1pt -1 \\[-3pt]
 	&&&&& \ddots
 \end{smallmatrix}\right), 
$$
where no leading submatrix is reversed quasi-definite due to its null last diagonal entry. Another example is \eqref{eq:LS} for $\beta=0$, which has reversed GCF given by \eqref{eq:beta=0}. 

These drawbacks of the reversed GCF dissapear when restricting the attention to standard Cholesky factorizations (CF), i.e. $E=I$. The same kind of arguments given at the beginning of this section prove that the existence of a CF, 
$$
 M=AA^+, \qquad A\in\mathscr{T},
$$
for an infinite Hermitian matrix $M$ is equivalent to the positivity of its leading principal minors. This condition also characterizes those Hermitian matrices $M$ which are positive definite, i.e. $XMX^+>0$ for every finitely non-null row vector $X\ne0$. Besides, positive definiteness is necessary for the existence of a reversed CF, 
$$
 M=A^+A, \qquad A\in\mathscr{T},
$$
because $XMX^+=XA^+AX^+=\|AX^+\|^2>0$ for every finitely non-null vector $X$ since $\ker_R(A)=\{0\}$. 

In the case of finite matrices, positive definiteness is also characterized by the positivity of the principal minors corresponding to all the south-east corner submatrices. As a consequence, the existence of CF and reversed CF takes place simultaneously and is equivalent to positive definiteness. However, establishing a similar result for infinite matrices requires more technical hypothesis and arguments to deal with the non-trivial relation between the reversed factorizations of an infinite matrix and its leading submatrices. This is accounted for in the following proposition, which also summarizes the previous results for convenience. 

\begin{prop} \label{prop:cholesky}
If $M$ is an infinite Hermitian complex matrix, then
$$
\begin{gathered} 
 M \; \text{has a GCF} 
 \;\Leftrightarrow\;
 M \; \text{is quasi-definite} 
 \\  
 M \; \text{has a CF}
 \;\Leftrightarrow\;
 M \; \text{is positive definite}
 \;\Leftarrow\;
 M \; \text{has a reversed CF}
\end{gathered}
$$
If, besides, $M$ is band matrix with bounded entries and its lower and upper diagonals have only non-null entries, then
$$
 M \; \text{has a reversed CF} 
 \;\Leftrightarrow\;
 M \; \text{is positive definite} 
$$
\end{prop}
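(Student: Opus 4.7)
Only the converse in the final equivalence requires argument; the forward implication ``reversed CF $\Rightarrow$ positive definite'' has already been observed just before the statement. The plan is to realize the reversed Cholesky factor as an entrywise limit of reversed Cholesky factors of finite leading truncations, with the band structure making every matrix product admissible and stable under the limit.

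First I would consider, for each $N\ge 0$, the leading submatrix $M^{[N]}$ of order $N+1$. As a principal submatrix of the positive definite Hermitian matrix $M$, $M^{[N]}$ is positive definite; in finite dimension this implies the existence of a (unique) reversed Cholesky factorization $M^{[N]}=(A^{[N]})^{+}A^{[N]}$ with $A^{[N]}$ lower triangular of positive diagonal, which can be obtained by performing the standard Cholesky factorization of the reversed matrix $J_{N}M^{[N]}J_{N}$, where $J_{N}$ denotes the reversal permutation of size $N+1$, and then undoing the permutation. Since the standard Cholesky factor of a finite positive definite band matrix inherits the same bandwidth (as seen inductively, in the same spirit as the band structure for the Darboux factor derived in Section~\ref{sec:Chris}), and since $J_{N}M^{[N]}J_{N}$ has the same bandwidth $w$ as $M$, the factor $A^{[N]}$ is lower triangular and banded with bandwidth at most $w$.

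Next I would exploit the bounded entries of $M$ to obtain uniform control on the entries of $A^{[N]}$. The diagonal of $(A^{[N]})^{+}A^{[N]}=M^{[N]}$ reads
\begin{equation*}
 \sum_{k}|A^{[N]}_{kj}|^{2}=M_{jj},
\end{equation*}
a sum of at most $w+1$ nonnegative terms, so $|A^{[N]}_{ij}|\le(\sup_{k}|M_{kk}|)^{1/2}$ uniformly in $N,i,j$. A Cantor diagonal extraction in the product topology then yields a subsequence $(N_{r})$ along which $A^{[N_{r}]}_{ij}\to A_{ij}$ for every $(i,j)$, the limit matrix $A$ being lower triangular and banded with bandwidth at most $w$. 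For each fixed $(i,j)$ the identity $M_{ij}=\sum_{l}\overline{A^{[N_{r}]}_{li}}A^{[N_{r}]}_{lj}$ involves at most $w+1$ non-null terms once $N_{r}>\max(i,j)+w$, so termwise passage to the limit yields $M_{ij}=(A^{+}A)_{ij}$; the product $A^{+}A$ is admissible thanks to the band structure of $A$.

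The hard part will be showing that $A\in\mathscr{T}$, i.e.\ that $A_{ii}>0$ for every $i$, and this is precisely where the non-nullity of the extreme diagonals of $M$ together with the algebraic positive definiteness of $M$ enter decisively. The extreme reversed-CF equation $M_{i,i+w}=\overline{A^{[N]}_{i+w,i}}\,A^{[N]}_{i+w,i+w}$, the hypothesis $M_{i,i+w}\ne 0$, and the uniform upper bound on $|A^{[N]}_{i+w,i}|$ together force a uniform lower bound on $|A^{[N]}_{i+w,i+w}|$. Propagating this downward along the band by means of the backward recursion defining the reversed Cholesky factorization of $M^{[N]}$, and using at each step that the relevant south-east principal submatrix of $M$ is positive definite so that the associated Schur complement stays bounded away from zero, I would obtain uniform lower bounds $A^{[N_{r}]}_{ii}\ge c_{i}>0$ which survive the limit, giving $A_{ii}\ge c_{i}>0$. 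This quantitative control of the backward recursion under mere algebraic positivity, combined with the non-degeneracy of the extreme diagonals of $M$, is the main technical obstacle; once it is overcome, $A\in\mathscr{T}$ together with $M=A^{+}A$ complete the proof.
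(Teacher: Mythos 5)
Your truncation-and-limit scheme is essentially the paper's own proof in more elementary clothing: the paper likewise takes the reversed Cholesky factors $A_n$ of the leading submatrices (their existence being the finite-dimensional characterization of positive definiteness), pads them to $\hat A_n=A_n\oplus I$, extracts a limit from the uniform bound $\|\hat A_n\|^2\le\|M\|$ --- there by weak operator compactness together with a lemma upgrading weak to strong convergence for uniformly banded, uniformly bounded sequences --- and then passes to the limit in $\hat M_n=\hat A_n^+\hat A_n$. Your substitutes (the entrywise bound $\sum_k|A^{[N]}_{kj}|^2=M_{jj}$, a Cantor diagonal subsequence, and termwise passage inside the fixed band) are correct and avoid the operator theory altogether; likewise the identity $M_{i,i+w}=\overline{A^{[N]}_{i+w,i}}\,A^{[N]}_{i+w,i+w}$ is exactly how the hypothesis on the outer diagonals enters the paper's argument, and it does give a uniform lower bound on $A^{[N]}_{ii}$, hence $A_{ii}>0$ in the limit, for all $i\ge w$.

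The genuine gap is the step you yourself single out, and the mechanism you propose for it does not work. Positive definiteness in the sense used in this appendix (positivity on finitely non-null vectors, equivalently positivity of all leading minors) makes each south-east Schur complement of each truncation positive, but gives no lower bound uniform in $N$, so the bounds $A^{[N_r]}_{ii}\ge c_i>0$ for $i<w$ do not follow. Concretely, let $M$ be tridiagonal ($w=1$) with diagonal $(1,2,2,2,\dots)$ and all off-diagonal entries equal to $1$, i.e.\ the Gram matrix of $e_1,\ e_1+e_2,\ e_2+e_3,\dots$; every leading minor of $M$ equals $1$, so $M$ is Hermitian, banded, bounded, positive definite, with non-null outer diagonals, yet the reversed Cholesky factor of the truncation of order $N$ satisfies $(A^{[N]}_{00})^2=\det M^{[N]}/\det(\text{south-east corner of order }N-1)=1/N\to0$. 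Thus no $c_0>0$ exists and your limiting factor acquires a vanishing $(0,0)$ entry; in fact, for this $M$ one can check (either by running the scalar recursion $x_{j+1}=1/(2-x_j)$ that any banded factor must satisfy, or by a Gram-matrix rigidity argument) that no reversed Cholesky factorization with factor in $\mathscr{T}$ exists at all, so this step cannot be repaired under the purely algebraic notion of positive definiteness. Your Schur-complement bound does become valid, with $c_i^2=\varepsilon$, if positivity is understood in the uniform operator sense $M\ge\varepsilon I$. You are in good company: the published proof disposes of the positivity of the diagonal of the limit with the one-line remark that it follows from $M=A^+A$ and the non-null outer diagonals of $M$, an argument which --- exactly like yours --- only controls $A_{ii}$ for $i\ge w$; the first $w$ diagonal entries are precisely where additional input (a strengthened positivity hypothesis, or extra structure of the specific matrices $\ell(\mathcal{C})$ to which the proposition is applied) is required, and as written your step (and the claimed uniform lower bound) fails there.
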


\begin{proof}
It only remains to prove the left-handed implication of the last statement. The positive definiteness of $M$ is equivalent to the existence of a reversed CF, $M_n=A_n^+A_n$, for every leading submatrix $M_n$ of $M$. The subindex $n$ in $A_n$ only refers to the dependence on the order of the leading submatrix $M_n$. In other words, we cannot assume that all the factors $A_n$ are leading submatrices of the same infinite matrix $A$ because $A_k$ is not necessarily a leading submatrix of $A_n$ for $k<n$. Our strategy consists in proving that, despite the non-trivial relation between the different matrix factors $A_n$, a subsequence of them ``converges" in some sense to an infinite matrix $A$ which provides a reversed CF, $M=A^+A$.

Bearing in mind that $M$ is a band matrix, the boundedness of its entries is equivalent to state that it represents a bounded operator $X \mapsto MX$ in the Hilbert space $\mathfrak{H}$ of square-summable sequences. If $P_n$ is the orthogonal projection of $\mathfrak{H}$ onto the subspace $\mathfrak{H}_n$ spanned by the first $n$ canonical vectors of $\mathfrak{H}$, then 
$$
 \hat{M}_n := P_nMP_n = M_n \oplus I = \hat{A}_n^+\hat{A}_n,
 \qquad
 \hat{A}_n := A_n \oplus I,
$$ 
where $M_n$ and $A_n$ act on $\mathfrak{H}_n$, while the identity acts on its orthogonal subspace. Since $\|\hat{A}_n\|^2=\|\hat{M}_n\|\le\|M\|$, there exists a subsequence of $\hat{A}_n$ weakly converging to some bounded operator $A$ on $\mathfrak{H}$ \cite[Theorem 5.70]{K2011}. The proposition is proved if we show that $M=A^+A$ and $A\in\mathscr{T}$, where, as in the case of $M$, we identify $A$ with its matrix representation in the canonical basis of $\mathfrak{H}$.

Suppose that $M$ is $2N+1$-diagonal. Then, each $M_n$ is also $2N+1$-diagonal, hence $A_n$ is $N+1$-band lower triangular with positive diagonal entries. Since weak convergence implies convergence of the matrix coefficients, we conclude that $A$ is $N+1$-band lower triangular with non-negative diagonal entries. To prove that $A\in\mathscr{T}$ we only need to see that its diagonal entries are non-null. This follows directly from the identity $M=A^+A$, which we will prove below, and the hypothesis that the lower and upper diagonals of $M$ have no null entries.     

As for the identity $M=A^+A$, note first that $\hat{M}_{n+N}X=MX$ for every $X\in\mathfrak{H}_n$ because $M$ is $2N+1$-diagonal. In consequence, $\hat{M}_n$ 
is strongly convergent to $M$ because 
$$
 \|(\hat{M}_{n+N}-M)X\| = \|(\hat{M}_{n+N}-M)(X-P_nX)\| \le 2\|M\|\|X-P_nX\|
$$ 
and $\|X-P_nX\|\to0$ for every $X\in\mathfrak{H}$. 

Let us denote also by $\hat{A}_n$ the subsequence weakly converging to $A$, no misunderstanding will appear from this abuse of language. In contrast with strong convergence, weak limits do not commute with compositions of operators, so we cannot conclude directly that $M=A^+A$ just taking weak limits in $\hat{M}_n=\hat{A}_n^+\hat{A}_n$. Luckily enough, the weak convergence of $\hat{A}_n$ implies its strong convergence due to the $N+1$-band structure of $\hat{A}_n$ and the uniform bound $\|\hat{A}_n\|\le\|M\|$ (see Lemma~\ref{lem:ws} below). Bearing in mind that strong convergence preserves compositions \cite{Kato}, taking strong limits in $\hat{M}_n=\hat{A}_n^+\hat{A}_n$ we finally obtain $M=A^+A$.     
\end{proof}

The proof of the previous proposition makes use of the following lemma.

\begin{lem} \label{lem:ws}
Let $B_n$ be a sequence of infinite band matrices with uniformly bounded entries and a uniformly bounded size of the band. Then, if $B_n$ is weakly convergent, it is also strongly convergent.
\end{lem}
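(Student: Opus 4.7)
The plan is to reduce to finitely supported test vectors, where weak operator convergence collapses to norm convergence automatically, and then propagate to all of $\mathfrak{H}$ using a uniform operator-norm bound on the $B_n$.

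First I would record that the two hypotheses together force a uniform operator bound. If every $B_n$ is $(2N+1)$-diagonal and $|(B_n)_{i,j}|\le C$, then estimating each row by Cauchy--Schwarz on its at most $2N+1$ non-zero entries gives
\begin{equation*}
 \|B_nX\|^2 \,\le\, (2N+1)^2\,C^2\,\|X\|^2, \qquad X\in\mathfrak{H},
\end{equation*}
so $\|B_n\|\le(2N+1)C$ independently of $n$. Since weak convergence implies entrywise convergence, the weak limit $B$ is also $(2N+1)$-diagonal with $|B_{i,j}|\le C$ and the same operator bound. Writing $C_n=B_n-B$, I now have a sequence of $(2N+1)$-band operators with $C_n\to 0$ weakly and $M:=\sup_n\|C_n\|<\infty$, and it suffices to prove $C_nX\to 0$ in norm for every $X\in\mathfrak{H}$.

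The second step is to observe that on finitely supported vectors, the conclusion is immediate. If $X\in\mathfrak{H}_k$, the band structure forces $C_nX$ to lie in the fixed finite-dimensional subspace $\mathfrak{H}_{k+N}$; on a finite-dimensional space weak convergence coincides with norm convergence, so $\|C_nX\|\to 0$.

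Finally, for an arbitrary $X\in\mathfrak{H}$ and $\varepsilon>0$, I would choose a finitely supported $X'$ with $\|X-X'\|<\varepsilon/(2M)$ and estimate
\begin{equation*}
 \|C_nX\| \,\le\, \|C_nX'\| + M\,\|X-X'\| \,<\, \|C_nX'\| + \tfrac{\varepsilon}{2},
\end{equation*}
then pick $n$ large enough that $\|C_nX'\|<\varepsilon/2$ by the previous step. There is no real obstacle here: once the uniform norm bound is in hand, the argument is the standard ``weak $\Rightarrow$ strong on a dense subspace, extended by uniform boundedness'' pattern. The only mildly technical point is the initial row-wise Cauchy--Schwarz estimate that converts ``bounded entries $+$ bounded bandwidth'' into a uniform operator bound, but no finer spectral information about the $B_n$ is needed.
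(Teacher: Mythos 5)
Your argument is correct and is essentially the proof given in the paper: establish a uniform operator bound from the bounded band and bounded entries, use entrywise (weak) convergence to get norm convergence of $(B_n-B)X$ for finitely supported $X$ (where the band structure confines everything to a fixed finite-dimensional subspace), and extend to all of $\mathfrak{H}$ by the standard $\varepsilon/2$ approximation with the uniform bound. The only difference is that you spell out the row-wise Cauchy--Schwarz estimate $\|B_n\|\le(2N+1)C$, which the paper merely asserts.
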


\begin{proof}
First, $\|B_n\|$ is bounded since $B_n$ is uniformly banded with uniformly bounded entries. Therefore, the weak limit $B$ of $B_n$ is also bounded. Since the entries of $B$ are the limits of the entries of $B_n$, the weak limit $B$ inherits the band structure of $B_n$. Due to this band structure, if $X\in\mathfrak{H}$ is finitely non-null, $(B_n-B)X$ only involves a finite $n$-independent number of matrix entries of both, $B_n$ and $B$. Therefore, $\|(B_n-B)X\|\to0$ for every finitely non-null vector $X$. Given an arbitrary vector $X\in\mathfrak{H}$, consider the inequalities 
$$
\begin{aligned}
 \|(B_n-B)X\| & \le \|(B_n-B)(X-P_kX)\| + \|(B_n-B)P_kX\|
 \\
 & \le (\sup_n\|B_n\|+\|B\|)\|X-P_kX\| + \|(B_n-B)P_kX\|.
\end{aligned}
$$
We know that $\|X-P_kX\|\overset{k}{\to}0$, while $\|(B_n-B)P_kX\|\overset{n}{\to}0$ because $P_kX$ is finitely non-null for any fixed $k$. Thus, we can make both summands of the last line arbitrarily small by choosing $k$ and $n$ big enough. This implies that $\|(B_n-B)X\|\to0$, proving that $B$ is a strong limit of $B_n$. 
\end{proof}

The inductive proof of the existence of GCF given at the beginning of this section also shows that it is unique. On the contrary, the proof of Proposition \ref{prop:cholesky} suggests that a positive definite Hermitian matrix can have multiple reversed CF, at least so many as limit points has the sequence $\hat{A}_n$ arising from the CF, $M_n=A_n^+A_n$, of the leading submatrices $M_n$. The examples presented in the previous sections prove that this is indeed the case, and reversed GCF are in general not unique.

A more abstract proof of the uniqueness of a GCF sheds light on the lack of uniqueness for reversed factorizations and its relation with the algebra of infinite matrices. Suppose that 
$$
 A_1E_1A_1^+=A_2E_2A_2^+, \qquad A_i\in\mathscr{T}, \qquad E_i\in\mathscr{S}.
$$
Using the associativity laws of Proposition~\ref{prop:asoc}, we get
$E_1A_1^+(A_2^+)^{-1}=A_1^{-1}A_2E_2$. The left-hand side of this equality is upper triangular, while the right-hand side is lower triangular, so both must be diagonal. Thus, $A_1^+(A_2^+)^{-1}=D=A_1^{-1}A_2$ is a positive diagonal matrix and $E_1=E_2$. Taking care of the associativity again with Proposition~\ref{prop:asoc}, we find that $D^+D=(A_1^+(A_2^+)^{-1})^+(A_1^{-1}A_2)=I$, which implies that $D=I$ and $A_2=A_1$.

The above proof of the uniqueness of GCF works equally well for finite or infinite matrices thanks to the associativity of the involved operations, ensured by Proposition~\ref{prop:asoc} in the case of infinite matrices. A similar proof of uniqueness also works for reversed GCF of finite matrices, but fails in the case of infinite matrices because Proposition~\ref{prop:asoc} does not guarantee the associativity of the required operations. To show this, assume that  
$$
 A_1^+E_1A_1=A_2^+E_2A_2, \qquad A_i\in\mathscr{T}, \qquad E_i\in\mathscr{S},
$$
where $A_i$ are band matrices so that the products are admissible. Proposition~\ref{prop:asoc} allows us to deduce that 
$$
 A_1^+E_1 = (A_1^+E_1A_1)A_1^{-1} = (A_2^+E_2A_2)A_1^{-1} 
 = A_2^+(E_2A_2A_1^{-1}).
$$
However, when trying to move to the left the factor $A_2^+$ we only can conclude that
$$
 (A_2^+)^{-1}A_1^+E_1 = (A_2^+)^{-1}(A_1^+E_1) 
 = (A_2^+)^{-1}A_2^+(E_2A_2A_1^{-1}),
$$
but the right-hand side cannot be further simplified using Proposition~\ref{prop:asoc} because $(A_2^+)^{-1}$ is upper triangular and $A_1^{-1}$ is lower triangular. Thus, the lack of associativity for products of infinite matrices is the algebraic reason for the lack of uniqueness of reversed GCF. This problem does not appear for finite matrices, whose reversed GCF are unique.
 
If ${\cal C}$ is a CMV matrix and $\ell$ is a Hermitian Laurent polynomial with $N_z$ zeros, then $\ell({\cal C})$ is a Hermitian $2N_z+1$-diagonal matrix with bounded entries due to the unitarity of $\cal C$. Besides, the lower and upper bands of $\ell({\cal C})$ have no null entries, as follows from the zig-zag shape \eqref{eq:CMVshape} of $\cal C$. Therefore, all the previous results apply to the matrices $\ell({\cal C})$ whose CF and reversed CF are involved in Darboux for CMV.  
     
\end{appendices}

\vskip0.75cm
  
\noindent{\bf \large Acknowledgements}

\bigskip
 
The work of M. J. Cantero, L. Moral and L. Vel\'azquez has been partially supported by the Spanish Government together with the European Regional Development Fund (ERDF) under grants MTM2011-28952-C02-01 (from MICINN) and MTM2014-53963-P (from MINECO), and by Project E-64 of Diputaci\'{o}n General de Arag\'{o}n (Spain).

The work of F. Marcell\'an has been partially supported by the research project MTM2012-36732-C03-01 from Direcci\'on General de Investigaci\'on Cient\'{\i}fica y T\'ecnica, Ministerio de Econom\'{\i}a y Competitividad of Spain.

We also wish to thank the anonymous referee for very useful suggestions.


\begin{thebibliography}{}

{\color{red}
}

\bibitem{QRW} Aharonov, Y., Davidovich, L., Zagury, N., Quantum random walks, Phys. Rev. A  \textbf{48} (1993), no. 2, 1687--1690.

{\color{red}
}

\bibitem{AFMa} \'Alvarez-Fern\'andez, C., Ma\~nas, M., Orthogonal Laurent polynomials on the unit circle, extended CMV ordering and 2D Toda type integrable hierarchies, Adv. Math. \textbf{240} (2013), 132--193.  

\bibitem{AG94} Ammar, G. S., Gragg, W. B., Schur flows for orthogonal Hessenberg matrices, Hamiltonian and gradient flows, algorithms and control, pp. 27--34, Fields Inst. Commun. \textbf{3}, Amer. Math. Soc., Providence, R. I., 1994. 
 
\bibitem{AGR1986} Ammar, G. S., Gragg, W .B., Reichel L., On the eigenproblem for orthogonal matrices, Proc. 25th Conference on Decision and Control (Athens, 1986), pp. 1963--1966, New York, 1986.

{\color{red}

}

\bibitem{BuMar} Bueno, M. I., Marcell\'an, F., Darboux transformations and perturbations of linear functionals, Linear Algebra Appl. \textbf {384} (2004), 215--242.

\bibitem{BuMar2} Bueno, M. I., Marcell\'an, F., Polynomial perturbations of bilinear functionals and Hessenberg matrices, Linear Algebra Appl. \textbf {414} (2006), 64--83.
 
\bibitem{BuIs} Buhmann, M. D., Iserles, A., On orthogonal polynomials transformed by the QR algorithm, J. Comput. Appl. Math. \textbf{43} (1992), no. 1-2, 117--134.
 
\bibitem{BGE1991} Bunse-Gerstner, A., Elsner, L., Schur parameters pencils for the solution of the unitary eigenproblem, Linear Algebra Appl. \textbf{154/156} (1991), 741--778.

\bibitem{CGMV} Cantero, M. J., Gr\"unbaum, F. A., Moral, L., Vel\'azquez, L., Matrix-valued Szeg\H o polynomials and quantum random walks, Comm. Pure Appl. Math. \textbf{63} (2010), no. 4, 464--507. 
 
\bibitem{CMMV-DVZ} Cantero M. J., Marcell\'an F., Moral L., Vel\'azquez L., in preparation.

\bibitem{CMV2003} Cantero, M. J., Moral, L., Vel\'azquez, L., Five-diagonal matrices and zeros of orthogonal polynomials on the unit circle, Linear Algebra Appl. \textbf {362} (2003), 29--56.
 
\bibitem{CMV2005} Cantero, M. J., Moral, L., Vel\'azquez, L., Minimal representations of unitary operators and orthogonal polynomials on the unit circle, Linear Algebra Appl. \textbf {408} (2005), 40--65.

{\color{red}


 
}

\bibitem{Cooke} Cooke, R. G., Infinite matrices and sequence spaces, reprinted edition, Dover Publ., New York, 2014.

\bibitem{Darboux1} Darboux G., Sur la represention sph\'eriques de surfaces, Comp. Hebd. Rend. Acad. Sci. \textbf{94} (1882), 1343--1345.

\bibitem{Darboux2} Darboux, G., Sur une proposition relative aux \'equations lin\'eaires, Comp. Hebd. Rend. Acad. Sci. \textbf{94} (1882), 1456--1459.

\bibitem{DarHerMar} Daruis, L., Hern\'andez, J., Marcell\'an, F., Spectral transformations for Hermitian Toeplitz matrices, J. Comput. Appl. Math. \textbf{202} (2007), no. 2, 155--176.

\bibitem{DG} Delsarte, P., Genin, Y., The split Levinson algorithm, IEEE Trans. Acoust. Speech Signal Process. \textbf{34} (1986), no. 3, 470--478. 

{\color{red}
}
 
\bibitem{DGAM} Derevyagin, M., Garc\'{\i}a-Ardila, J. C., Marcell\'an, F., Multiple Geronimus transformations, Linear Algebra Appl. \textbf{454} (2014), 158-183.
 
\bibitem{DM} Derevyagin, M., Marcell\'an, F., A note on the Geronimus transformation and Sobolev orthogonal polynomials, Numer. Algorithms \textbf{67} (2014), no. 2, 271--287. 

\bibitem{DVZ} Derevyagin, M., Vinet, L., Zhedanov, A., CMV matrices and Little and Big -1 Jacobi Polynomials, Constr. Approx. \textbf{36} (2012), no. 3, 513--535.

{\color{red}
}

\bibitem{EK} Elhay, S., Kautsky, J., Jacobi matrices for measures modified by a rational factor, Numer. Algorithms \textbf{6} (1994), no. 3-4, 205--227.

\bibitem{FaGe} Faybusovich, L., Gekhtman, M., On Schur flows, J. Phys. A \textbf{32} (1999), no. 25, 4671--4680.

{\color{red}
}

\bibitem{Galant} Galant, D., An implementation of Christoffel's theorem in the theory of orthogonal polynomials, Math. Comp. \textbf{25} (1971), 111--113. 

\bibitem{Galant2} Galant, D., Algebraic methods for modified orthogonal polynomials, Math. Comp. \textbf{59} (1992), no. 200, 541--546.

\bibitem{GarHerMar} Garza, L., Hern\'andez, J., Marcell\'an, F., Orthogonal  polynomials and measures on the unit circle. The Geronimus transformations, J. Comput. Appl. Math. \textbf{233} (2010), no. 5, 1220--1231.

 
\bibitem{GarHerMar2} Garza, L., Hern\'andez, J., Marcell\'an, F., Spectral transformations of measures supported on the unit circle and the Szeg\H{o} transformation, Numer. Algorithms \textbf{49} (2008), no. 1-4, 169--185.

{\color{red}
}

\bibitem{GarMar2} Garza, L., Marcell\'an, F., Linear spectral transformations and Laurent polynomials, Mediterr. J. Math. \textbf{6} (2009), no. 3, 273--289.

\bibitem{Gau} Gautschi, W., An algorithmic implementation of the generalized Christoffel theorem, in Numerical Integration (G. H\"ammerlin, ed.), Internat. Ser. Numer. Math. \textbf{57}, Birkh\"auser, Basel, 1982, pp. 89--106.

\bibitem{Gau2} Gautschi, W., The interplay between classical analysis and (numerical) linear algebra---a tribute to Gene H. Golub, Electron. Trans. Numer. Anal. 13 (2002), 119--147.

\bibitem{GeNe} Gekhtman, M., Nenciu, I., Multi-Hamiltonian structure for the finite defocusing Ablowitz-Ladik equation, Comm. Pure Appl. Math. \textbf{62} (2009), no. 2, 147--182.

\bibitem{Ger54} Geronimus, Ya. L., Polynomials orthogonal on a circle and their applications, Amer. Math. Soc. Translation \textbf{1954} (1954), no. 104, 79 pp.

\bibitem{Gol} Golinski\u{\i}, L. B., Schur flows and orthogonal polynomials on the unit circle (Russian) Mat. Sb. \textbf{197} (2006), no. 8, 41--62; translation in Sb. Math. \textbf{197} (2006), no. 7-8, 1145--1165. 

\bibitem{GoKa} Golub, G. H., Kautsky, J., Calculation of Gauss quadratures with multiple free and fixed knots, Numer. Math. \textbf{41} (1983), no. 2, 147--163.

\bibitem{Gragg} Gragg, W. B., The QR algorithm for unitary Hessenberg matrices, J. Comput. Appl. Math. \textbf{16} (1986), no. 1, l--8.

\bibitem{GrSz} Grenander, U., Szeg\H o, G., Toeplitz forms and their applications, second edition, Chelsea Publishing Co., New York, 1984.

\bibitem{Gr2011} Gr\"unbaum, F. A., The Darboux process and a noncommutative bispectral problem: some explorations and challenges, Geometric aspects of analysis and mechanics, pp. 161--177, Progr. Math., 292, Birkh\"auser/Springer, New York, 2011.

\bibitem{GH96} Gr\"unbaum, F. A., Haine, L., Orthogonal polynomials satisfying differential equations: the role of the Darboux transformation, Symmetries and integrability of difference equations (Est\'erel, PQ, 1994), pp. 143--154, CRM Proc. Lecture Notes \textbf{9}, Amer. Math. Soc., Providence, R. I., 1996. 

\bibitem{GH97} Gr\"unbaum, F. A., Haine, L., Bispectral Darboux transformations: an extension of the Krall polynomials, Internat. Math. Res. Notices 1997, no. 8, 359--392. 

\bibitem{GHH} Gr\"unbaum, F. A., Haine, L., Horozov, E., Some functions that generalize the Krall-Laguerre polynomials, J. Comput. Appl. Math. \textbf{106} (1999), no. 2, 271--297.

{\color{red} 
}

\bibitem{HK} Harnard, J., Kasman, A. (editors), The bispectral problem (Montreal, PQ, 1997), CRM Proc. Lecture Notes, 14, Amer. Math. Soc., Providence, R. I., 1998.

\bibitem{HTI} Hirota, R., Tsujimoto, S., Imai, T., Difference scheme of soliton equations, Future directions of nonlinear dynamics in physical and biological systems (Lyngby, 1992), pp. 7--15, NATO Adv. Sci. Inst. Ser. B Phys. \textbf{312}, Plenum, New York, 1993. 

\bibitem{HuVanB} Humet, M., Van Barel, M., Algorithms for the Geronimus transformation for orthogonal polynomials on the unit circle, J. Comput. Appl. Math. \textbf {267} (2014), 195--217.

\bibitem{I1941} Infeld, L., On a new treatment of some eigenvalue problems, Phys. Rev. \textbf{59} (1941), no. 9, 737--747.

\bibitem{IH1948} Infeld, L., Hull, T. E., The factorization method, hydrogen intensities and related problems, Phys. Rev. \textbf{74} (1948), no. 8,
905--909.

\bibitem{IH1951} Infeld, L.,  Hull, T. E., The factorization method, Rev. Modern Physics \textbf{23} (1951), no. 1, 21--68.
 
\bibitem{IsLi} Ismail M. E. H., Li, X., On sieved orthogonal polynomials IX: Orthogonality on the unit circle, Pacific J. Math. \textbf{153} (1992), no. 2, 289--297.

\bibitem{IsWi} Ismail, M. E. H., Witte, N. S., Discriminants and functional equations for polynomials orthogonal on the unit circle, J. Approx. Theory \textbf{110} (2001), no. 2, 200--228. 

\bibitem{KaMo} Kac, M., van Moerbeke, P., On an explicitly soluble system of nonlinear differential equations related to certain Toda lattices, Adv. Math. \textbf{16} (1975), 160--169. 

\bibitem{Kato} Kato, T., Perturbation Theory for Linear Operators, second edition, Grundlehren der Mathematischen Wissenschaften, Band 132, Springer-Verlag, Berlin-New York, 1976. 

\bibitem{KaGo} Kautsky, J., Golub, G. H., On the calculation of Jacobi matrices, Linear Algebra Appl. \textbf{52/53} (1983), 439--455.

\bibitem{KN2004} Killip, R., Nenciu, I., Matrix models for circular ensembles, Int. Math. Res. Not. (2004), no. 50, 2665--2701.

\bibitem{KN2007} Killip, R., Nenciu, I., CMV: The unitary analogue of Jacobi matrices, Comm. Pure Appl. Math. \textbf{60} (2007), no. 8, 1148--1188.
 
\bibitem{K2011} Kubrusly, C. S., The Elements of Operator Theory, second edition, Birkh\"auser/Springer, New York, 2011.

\bibitem{LN12} Li, L.-C., Nenciu, I., The periodic defocusing Ablowitz-Ladik equation and the geometry of Floquet CMV matrices, Adv. Math. \textbf{231} (2012), no. 6, 3330--3388. 

\bibitem{Maddox} Maddox, I. J., Infinite matrices of operators, Lecture Notes in Mathematics \textbf{786}, Springer, Berlin, 1980.

\bibitem{MarHer} Marcell\'an  F., Hern\'andez, J., Christoffel transforms and Hermitian linear functionals, Mediterr. J.  Math. \textbf{2} (2005), no. 4, 451--458.
 
\bibitem{MaSa} Marcell\'an, F., Sansigre, G., Orthogonal polynomials on the unit circle: Symmetrization and quadratic decomposition, J. Approx. Theory \textbf{65} (1991), no. 1, 109--119.

\bibitem{MatSal2} Matveev, V. B., Salle, M. A., Differential-difference evolution equations. II. Darboux transformation for the Toda lattice. 
Lett. Math. Phys. \textbf{3} (1979), no. 5, 425--429. 

\bibitem{MatSal} Matveev, V. B., Salle, M. A., Darboux transformations and solitons, Springer Series in Nonlinear Dynamics, Springer-Verlag, Berlin, 1991.

\bibitem{Meyer} Meyer, D. A., From quantum cellular automata to quantum lattice gases, J. Stat. Phys. \textbf{85} (1996), no. 5-6, 551--574.

\bibitem{MiRO} Mielnik, B., Rosas-Ortiz, O., Factorization: little or great algorithm?, J. Phys. A \textbf{37} (2004), no. 43, 10007--10035.

\bibitem{Moutard} Moutard, T., Sur la construction des \`equations de la forme $\frac{1}{z}\frac{d^2z}{dxdy} = \lambda(x,y)$, qui admettent une int\'egrale g\'en\'erale explicite, J. de lÕEcole Poly. \textbf{45} (1878), 1--11.

\bibitem{MuNa} Mukaihira, A., Nakamura, Y., Schur flow for orthogonal polynomials on the unit circle and its integrable discretization, J. Comput. Appl. Math. \textbf{139} (2002), no. 1, 75--94. 

\bibitem{Nenciu05} Nenciu, I., Lax pairs for the Ablowitz-Ladik system via orthogonal polynomials on the unit circle, Int. Math. Res. Not. 2005, no. 11, 647--686.

\bibitem{Nenciu06} Nenciu, I., CMV matrices in random matrix theory and integrable systems: a survey, J. Phys. A \textbf{39} (2006), no. 28, 8811--8822. 

\bibitem{NiSo} Nikishin, E. M., Sorokin, V. N., Rational approximations and orthogonality, Trans. Math. Mono. \textbf{92}, Amer. Math. Soc., Providence, RI, 1991.

\bibitem{Sch1} Schr\"{o}dinger, E., A method of determining quantum-mechanical eigenvalues and eigenfunctions, Proc. Roy. Irish Acad. Sect. A \textbf{46} (1940), 9--16.

\bibitem{Sch2} Schr\"{o}dinger, E., Further studies on solving eigenvalue problems by factorization, Proc. Roy. Irish Acad. Sect. A \textbf{46} (1941), 183--206.

\bibitem{ShSi} Shivakumar, P. N., Sivakumar, K. C., A review of infinite matrices and their applications, Linear Algebra Appl. \textbf{430} (2009), no. 4, 976--998.

\bibitem{Si2005} Simon, B., Orthogonal polynomials on the unit circle. Part 1 and 2, Amer. Math. Soc. Colloq. Publ. Series \textbf{54}, Vol. 1 and 2, Amer. Math. Soc., Providence, R. I., 2005.
 
\bibitem{SiII2005} Simon, B., OPUC on one foot, Bull. Amer. Math Soc. \textbf{42} (2005), no. 4, 431--460.

\bibitem{Si2007} Simon, B., CMV matrices: Five years after, J. Comput. Appl. Math. \textbf{208} (2007), no. 1, 120--154.

\bibitem{Si2007bis} Simon, B., Zeros of OPUC and long time asymptotics of Schur and related flows, Inverse Probl. Imaging \textbf{1} (2007), no. 1, 189--215. 

{\color{red}
}

\bibitem{SpViZh} Spiridonov, V., Vinet, L., Zhedanov, A., Spectral transformations, self-similar reductions and orthogonal polynomials, J. Phys. A \textbf{30} (1997), no. 21, 7621--7637.

\bibitem{SpZh} Spiridonov, V., Zhedanov, A., Discrete Darboux transformations,
the discrete-time Toda lattice, and the Askey-Wilson polynomials, Methods Appl. Anal. \textbf{2} (1995), no. 4, 369--398.

\bibitem{Sz} Szeg\H{o}, G., Orthogonal polynomials, Amer. Math. Soc. Colloq. Publ. \textbf{23}, Amer. Math. Soc., Providence, R.I., 1939; 4th edition, 1975.

\bibitem{Teschl} Teschl, G., Jacobi operators and completely integrable nonlinear lattices, Mathematical Surveys and Monographs \textbf{72}, Amer. Math. Soc., Providence, R. I., 2000. 

\bibitem{Toda} Toda, M., Theory of nonlinear lattices, 2nd edition, Springer Series in Solid-State Sciences \textbf{20}, Springer-Verlag, Berlin, 1989.

\bibitem{Wall} Wall, S. H., Analytic Theory of Continued Fractions, D. Van Nostrand Company, Inc., New York, N. Y., 1948.

\bibitem{WG02} Wang, T.-L., Gragg, W. B., Convergence of the shifted QR algorithm for unitary Hessenberg matrices, Math. Comp. \textbf{71} (2002), no. 240, 1473--1496.
    
\bibitem{W1993} Watkins, D. S., Some perspectives on the eigenvalue problem, SIAM Rev. \textbf{35} (1993), no. 3, 430--471.

\bibitem{We1980} Weidmann, J., Linear Operators in Hilbert Spaces, Graduate Texts in Mathematics \textbf{68}, Springer-Verlag, New York-Berlin, 1980.

\bibitem{Wil1937} Williamson, J., Quasi-unitary matrices, Duke Math. J. \textbf{3} (1937), no. 4, 715--725. 

\bibitem{Yoon} Yoon, G. J., Darboux transforms and orthogonal polynomials, Bull. Korean Math. Soc. \textbf{39} (2002), no. 3, 359--376.

\bibitem{Zhe} Zhedanov, A., Rational spectral transformations and orthogonal polynomials, J. Comput. Appl. Math. \textbf{85} (1997), no. 1, 67--86.




  




 

\end{thebibliography}
\end{document}